\numberwithin{equation}{section}
\newcommand{\scp}[2]{\langle #1 , #2 \rangle}
\newcommand{\norm}[1]{\lVert #1 \rVert}
\newcommand{\abs}[1]{\left| #1 \right|}
\DeclareMathOperator{\Opdef}{Op}
\newcommand{\OpN}[1]{\Opdef_{\hbar,#1}}
\DeclareMathOperator{\Opt}{Op_{\hbar,t}}
\DeclareMathOperator{\Op}{Op_\hbar}
\DeclareMathOperator{\OpW}{Op_\hbar^\mathrm{w}}
\DeclareMathOperator{\dist}{dist}
\DeclareMathOperator{\sgn}{sgn}
\DeclareMathOperator{\Tr}{\mathrm{Tr}}
\DeclareMathOperator{\supp}{\mathrm{supp}}
\DeclareMathOperator{\re}{Re}
\DeclareMathOperator{\im}{Im}
\DeclareMathOperator{\spec}{\mathrm{spec}}
\DeclareMathOperator{\Vol}{Vol}
\newcommand{\R}{\mathbb{R}}
\newcommand{\C}{\mathbb{C}}
\newcommand{\N}{\mathbb{N}}
\newcommand{\Z}{\mathbb{Z}}
\newtheorem{thm}{Theorem}[section]
\newtheorem{lemma}[thm]{Lemma}
\newtheorem{prop}[thm]{Proposition}
\newtheorem{corollary}[thm] {Corollary}
\newtheorem{notation}[thm]{Notation}
\theoremstyle{definition}
\newtheorem{definition}[thm] {Definition}
\newtheorem{assumption}[thm]{Assumption}
\newtheorem{remark}[thm]{Remark}
\newtheorem{obs}[thm]{Observation}
\newtheorem{example}[thm]{Example}
\begin{document}

\title{Optimal semiclassical spectral asymptotics for differential operators with non-smooth
  coefficients}

\author{S{\o}ren Mikkelsen}
   \affil{\small{School of Mathematics, University of Bristol\\ Bristol
BS8 1UG, United Kingdom} }

\maketitle

\begin{abstract}
We consider differential operators defined as Friedrichs extensions of quadratic forms with non-smooth coefficients. We prove a two term optimal asymptotic for the Riesz means of these operators and thereby also reprove an optimal Weyl law under certain regularity conditions. The methods used are then extended to consider more general admissible operators perturbed by a rough differential operator and obtaining optimal spectral asymptotics again under certain regularity conditions. For the Weyl law we assume the coefficients are differentiable with H\"older continuous derivatives and for the Riesz means we assume the coefficients are two times differentiable with H\"older continuous derivatives.
\end{abstract}

 \tableofcontents



\section{Introduction}
 In this paper we will study (uniformly) elliptic self-adjoint differential operators $A(\hbar)$ acting in $L^2(\R^d)$ defined as Friedrichs extensions of sesquilinear forms given by
  \begin{equation}\label{int_gen_sf}
   \mathcal{A}_\hbar[\varphi,\psi] =  \sum_{\abs{\alpha},\abs{\beta}\leq m}  \int_{\R^d} a_{\alpha\beta}(x) (\hbar D_x)^\beta\varphi(x) \overline{(\hbar D_x)^\alpha\psi(x)} \, dx, \qquad \varphi,\psi \in \mathcal{D}(   \mathcal{A}_\hbar),
  \end{equation}
 where $\hbar>0$ is the semiclassical parameter, $\mathcal{D}(   \mathcal{A}_\hbar)$ is the associated form domain and for $\alpha\in\N_0^d$, we have used the standard notation 
\begin{equation*}
  (\hbar D_x)^\alpha = \prod_{j=1}^d (-i\hbar\partial_{x_j})^{\alpha_j}.
\end{equation*}
Our exact assumptions on the functions $a_{\alpha\beta}$ will be stated later. For $\gamma$ in $[0,1]$ we will analyse the asymptotics as $\hbar$ tends to zero of the traces
\begin{equation}
	\Tr[(A(\hbar))_{-}^\gamma ],
\end{equation}
where we have used the notation $(x)_{-} = \max(0,-x)$. In the case $\gamma=0$ we use the convention that $(x)_{-}^0 = \boldsymbol{1}_{(-\infty,0]}(x)$, where $\boldsymbol{1}_{(-\infty,0]}$ is the characteristic function for the set $(-\infty,0]$. There are no technical obstructions in considering higher values of $\gamma$, but more regularity of the coefficients will be needed to obtain sharp remainder estimates.   

These kinds of traces are in the literature denoted as Riesz means, when $\gamma>0$, and counting functions when $\gamma=0$. Both objects have been studied extensively in the literature in different variations. For background on the Weyl law (asymptotic expansions of the counting function) and the connection to physics we refer the reader to the surveys  \cite{weyl_hist,MR3556544,MR510064}.

In the case of smooth coefficients and with some regularity conditions it was first
proven in \cite{MR724029} by Helffer and Robert that an estimate of the following type
\begin{equation}\label{Helffer_Robert_res}
 \big| \Tr[\boldsymbol{1}_{(-\infty,\lambda_0]}(A(\hbar))] - \frac{1}{(2\pi\hbar)^d} \int_{\R^{2d}}  \boldsymbol{1}_{(-\infty,\lambda_0]}(a_0(x,p)) \, dxdp \big| \leq C\hbar^{1-d},
\end{equation}
is true, where
\begin{equation}\label{int_princ_sym}
  a_0(x,p) =  \sum_{\abs{\alpha},\abs{\beta}\leq m} a_{\alpha\beta}(x)p^{\alpha+\beta},
\end{equation}
and $\lambda_0$ is a number such that there exists a strictly larger number $\lambda$ for which
$a_0^{-1}((-\infty,\lambda])$ is compact. Moreover, $\lambda_0$ is assumed to be non-critical for
$a_0(x,p)$. A number $\lambda_0$ is a non-critical value when
\begin{equation*}
  |\nabla a_0(x,p)| \geq c>0 \quad\text{for all } (x,p) \in a_0^{-1}(\{\lambda_0\}).
\end{equation*}
We will from here on always use that $a_0(x,p)$ is given by \eqref{int_princ_sym} when we are working with a sesquilinear form $ \mathcal{A}_\hbar$ given by \eqref{int_gen_sf}.
Actually they proved \eqref{Helffer_Robert_res} for a larger class of
pseudo-differential operators. The high energy analog was proven by
 H\"{o}rmander in \cite{MR609014}, where the operator was defined to act in $L^2(M)$, where $M$ is
a smooth compact manifold without a boundary. For the Riesz means an asymptotic expansion was obtained by Helffer and Robert in \cite{MR1061661} under the same conditions as above and again for this larger class of pseudo-differential operators. For the high energy case they where studied by H\"{o}rmander in \cite{MR0257589}.

When considering the assumptions above one question immediately arise:
\begin{enumerate}
\item\label{Main_question_1} What happens if the coefficients are not smooth? Can such results
  still be proven with optimal errors?
%
%
%
\end{enumerate}
A considerable body of literature have been devoted to this questions. Most prominent are the works of Ivrii \cite{MR1974450,MR1631419,ivrii2019microlocal1,MR1240575,MR1974451,MR1807155,MR2179891,MR3556544} and Zielinski \cite{MR1612880,MR1635856,MR1620550,MR1736710,MR2105486,MR2245259,MR2343462,MR2335576,MR2952218}. A more detailed review of the literature will be given in  section~\ref{sec.lit} after presenting the result obtained in this work. 
\subsection{Assumptions and main results I}

Before we state our assumptions and results on the Riesz means we need the following definition to clarify terminology.  
\begin{definition}
  For $k$ in $\N$ and $\mu$ in $(0,1]$ we denote by $C^{k,\mu}(\R^d)$
  the subspace of $C^{k}(\R^d)$ defined by
  \begin{equation*}
  	\begin{aligned}
    C^{k,\mu}(\R^d) = \big\{ f\in C^{k}(\R^d) \, \big| \, \exists C>0:   |\partial_x^{\alpha} f(x) - \partial_x^{\alpha} f(y&) |  \leq C |x-y|^{\mu} 
    \\
    & \forall \alpha \in\N^d \text{ with } \abs{\alpha}=k \text{ and } \forall x,y\in\R^d \big\}.
    \end{aligned}
  \end{equation*}
In the case $\mu=0$ we use the convention
  \begin{equation*}
    C^{k,0}(\R^d) = C^{k}(\R^d). 
  \end{equation*}
\end{definition}
We will for both the Weyl law and the Riesz means make the following assumption on the sesquilinear form.
\begin{assumption}[Tempered variation model]\label{ses_assump_gen}
 Let  $(k,\mu)$ be numbers in $\N\times[0,1]$ and $\mathcal{A}_\hbar$ be a sesquilinear form given by
   \begin{equation*}
   \mathcal{A}_\hbar[\varphi,\psi] =  \sum_{\abs{\alpha},\abs{\beta}\leq m}  \int_{\R^d} a_{\alpha\beta}(x) (\hbar D_x)^\beta\varphi(x) \overline{(\hbar D_x)^\alpha\psi(x)} \, dx, \qquad \varphi,\psi \in \mathcal{D}(   \mathcal{A}_\hbar).
  \end{equation*}
 Assume the coefficients $a_{\alpha\beta}(x)$ are in $C^{k,\mu}(\R^d)$ and satisfy that $a_{\alpha\beta}(x) = \overline{a_{\beta\alpha}(x)}$ for all multi-indices $\alpha$ and $\beta$. For all multi-indices $\alpha$ and $\beta$ assume that
  \begin{enumerate}[label={$(\roman*)$}]
  \item\label{ass.symbol_1} There is a $\zeta_0>0$ such that $\min_{x\in\R^d}(a_{\alpha\beta}(x))> - \zeta_0$.
  \item\label{ass.symbol_2} There is a $\zeta_1>\zeta_0$ and $C_1,M>0$ such that
    \begin{equation*}
      a_{\alpha\beta}(x) + \zeta_1 \leq C_1(a_{\alpha\beta}(y)+\zeta_1)(1+|x-y|)^M,
    \end{equation*}
    for all $x,y$ in $\R^d$.
  \item\label{ass.symbol_3} For all $\delta$ in $\N_0^d$ with $|\delta|\leq k$ there is a $c_\delta>0$ such that
    \begin{equation*}
      \abs{\partial_{x}^\delta a_{\alpha\beta}(x)} \leq  c_\delta(a_{\alpha\beta}(x) + \zeta_1).
    \end{equation*}
  \end{enumerate}
 Suppose there exists a constant $C_2$ such that
  \begin{equation}\label{ellip_assumption}
    \sum_{\abs{\alpha}=\abs{\beta}= m} a_{\alpha\beta}(x) p^{\alpha+\beta} \geq C_2 \abs{p}^{2m},
  \end{equation}
  for all $(x,p)$ in $\R^d_x\times\R_p^d$.
\end{assumption}
The assumption that the coefficients are in $C^{k,\mu}(\R^d)$ is that we need some regularity to get optimal asymptotic results. The assumption $a_{\alpha\beta}(x) = \overline{a_{\beta\alpha}(x)}$  is made to ensure the form is symmetric. The assumptions \ref{ass.symbol_1}--\ref{ass.symbol_3}  ensure that the coefficients are of tempered variation. Assumption~\ref{ass.symbol_2} is the condition that for all $\alpha$ and $\beta$ $a_{\alpha\beta}(x) + \zeta_1$ is a tempered weight. Note that due to the semiclassical structure of the problem these assumptions are needed to be true for all $\alpha$ and $\beta$ and not just $|\alpha|=|\beta|=m$.  The ``ellipticity'' assumption \ref{ellip_assumption} will help ensure the existence of eigenvalues.  

We can now state our two main results. Firstly we have that   
\begin{thm}\label{B.Weyl_law_thm_irr_cof}
Let $A(\hbar)$ be the Friedrichs extension of a sesquilinear form $\mathcal{A}_\hbar$ which satisfies Assumption~\ref{ses_assump_gen} with the numbers $(1,\mu)$ where $\mu>0$. Suppose there is a $\nu>0$ such that the set $a_0^{-1}((-\infty,\nu])$ is compact and there is $c>0$ such that
  \begin{equation}\label{non_critical_main_thm_1}
    |\nabla_p a_0(x,p)| \geq c \quad\text{for all } (x,p)\in a_0^{-1}(\{0\}).
  \end{equation}
 Then we have
  \begin{equation*}
   \big |\Tr[\boldsymbol{1}_{(-\infty,0]}(A(\hbar))] - \frac{1}{(2\pi\hbar)^d} \int_{\R^{2d}} \boldsymbol{1}_{(-\infty,0]}( a_{0}(x,p)) \,dx dp \big| \leq C \hbar^{1-d},
  \end{equation*}
  for all sufficiently small $\hbar$.
\end{thm}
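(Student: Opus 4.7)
The plan is to reduce the statement to a smooth-coefficient Weyl law by mollifying the coefficients at an $\hbar$-dependent scale and controlling both the spectral shift and the phase-space integral introduced by the mollification.

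First, I fix a standard non-negative mollifier $\rho$ supported in the unit ball with unit integral, set $\rho_\epsilon(x)=\epsilon^{-d}\rho(x/\epsilon)$ with $\epsilon=\hbar^{s}$ for some $s\in(0,1)$ to be chosen, and define $a_{\alpha\beta}^\epsilon := a_{\alpha\beta}\ast\rho_\epsilon$, the associated smoothed form $\mathcal{A}_\hbar^\epsilon$ and its Friedrichs extension $A^\epsilon(\hbar)$. From the $C^{1,\mu}$ hypothesis, $\|a_{\alpha\beta}-a_{\alpha\beta}^\epsilon\|_\infty\leq C\epsilon^{1+\mu}$, higher derivatives satisfy $|\partial^k a_{\alpha\beta}^\epsilon|\leq C\epsilon^{1+\mu-k}(a_{\alpha\beta}+\zeta_1)$ for $k\geq 2$, and the tempered-variation conditions of Assumption~\ref{ses_assump_gen} are preserved. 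Combined with the ellipticity~\eqref{ellip_assumption} this yields the form bracketing
\begin{equation*}
|(\mathcal{A}_\hbar-\mathcal{A}_\hbar^\epsilon)[\varphi,\varphi]|\leq C\epsilon^{1+\mu}\bigl(\mathcal{A}_\hbar^\epsilon[\varphi,\varphi]+\|\varphi\|^2\bigr),
\end{equation*}
whence by min-max
\begin{equation*}
\Tr[\boldsymbol{1}_{(-\infty,-\eta]}(A^\epsilon(\hbar))]\leq \Tr[\boldsymbol{1}_{(-\infty,0]}(A(\hbar))]\leq \Tr[\boldsymbol{1}_{(-\infty,\eta]}(A^\epsilon(\hbar))]
\end{equation*}
for some $\eta=\eta(\epsilon)\leq C\epsilon^{1+\mu}$.

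The second step is to prove a semiclassical Weyl asymptotic for the smooth operator: for $|\lambda|\leq C\epsilon^{1+\mu}$,
\begin{equation*}
\Tr[\boldsymbol{1}_{(-\infty,\lambda]}(A^\epsilon(\hbar))]=(2\pi\hbar)^{-d}\int_{\R^{2d}}\boldsymbol{1}_{(-\infty,\lambda]}(a_0^\epsilon)\,dxdp+O(\hbar^{1-d}),
\end{equation*}
with a remainder that is uniform in $\epsilon$ in the allowed range. This would be derived via a Helffer--Sj\"ostrand functional-calculus representation of $\boldsymbol{1}_{(-\infty,\lambda]}(A^\epsilon(\hbar))$ combined with a Tauberian/short-time wave propagation argument, carried out in a symbol class in which the metric in $x$ is rescaled by $\epsilon^{-1}$; the key is to choose the time horizon used in the propagation and the smoothing scale compatibly so that the remainder stays at the optimal order. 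The phase-space volume is then compared with the original through
\begin{equation*}
\Bigl|\int\boldsymbol{1}_{(-\infty,\lambda]}(a_0^\epsilon)-\int\boldsymbol{1}_{(-\infty,0]}(a_0)\Bigr|\leq C\epsilon^{1+\mu},
\end{equation*}
which follows from $\|a_0^\epsilon-a_0\|_\infty\leq C\epsilon^{1+\mu}$ together with the bound $\mathrm{vol}(\{|a_0|\leq t\})\leq Ct$ for small $t>0$. This last bound is a consequence of $|\nabla_p a_0|\geq c$ on $\{a_0=0\}$, via the implicit function theorem applied slice-by-slice in $p$ and coarea, combined with the compactness of $a_0^{-1}((-\infty,\nu])$.

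Finally, choosing $s=1/(1+\mu)$ turns $\epsilon^{1+\mu}\hbar^{-d}$ into exactly $\hbar^{1-d}$, and all assembled errors fit within the stated bound. The hardest step is clearly the smoothed Weyl law in the previous paragraph: standard Helffer--Robert results do not deliver a remainder that is uniform in the mollification scale $\epsilon$, so one must execute a multiscale or coherent-state argument adapted to the $\epsilon$-dependent symbol class, and this is where I expect the bulk of the technical contribution of the paper to sit.
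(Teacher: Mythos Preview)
Your outline is essentially the paper's argument. The one structural difference is that instead of working with a single mollified operator $A^\epsilon(\hbar)$ and shifting the spectral parameter to $\pm\eta$, the paper builds two \emph{framing} operators $A_\varepsilon^{\pm}(\hbar)$ by adding $\pm C_1\varepsilon^{1+\mu}\sum_{|\alpha|\le m}\langle(\hbar D)^\alpha\varphi,(\hbar D)^\alpha\varphi\rangle$ to the mollified form, so that $A_\varepsilon^{-}\leq A\leq A_\varepsilon^{+}$ as quadratic forms and the comparison happens at the fixed energy $0$ for both framing operators; this is a cosmetic simplification that avoids tracking a varying threshold $\lambda$ through the Weyl law. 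Your scale choice $\epsilon=\hbar^{1/(1+\mu)}$ coincides with the paper's $\varepsilon=\hbar^{1-\delta}$, $\delta=\mu/(1+\mu)$.

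Your diagnosis of where the real work lies is exactly right. The bulk of the paper is devoted to setting up a calculus of ``rough'' pseudo-differential operators with symbols in classes $\Gamma_{0,\varepsilon}^{m,\tau}$ (the $\varepsilon$-rescaled metric in $x$ you anticipate), proving composition, functional calculus via Helffer--Sj\"ostrand, a short-time ($|t|\lesssim\hbar^{1-\delta/2}$) approximation to the propagator, and a microhyperbolicity argument \`a la Ivrii/Dimassi--Sj\"ostrand to show negligibility of the propagator trace for larger times. These feed into a Tauberian theorem to give the Weyl law for $A_\varepsilon^\pm(\hbar)$ with remainder $O(\hbar^{1-d})$ uniform in $\varepsilon\in[\hbar^{1-\delta},1]$; this uniform Weyl law (Theorem~\ref{B.Weyl.law.rough_1}) is precisely the ``smoothed Weyl law'' step you flag as hardest.
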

\begin{remark}
	The assumption that there is a $\nu>0$ such that the set $a_0^{-1}((-\infty,\nu])$ is compact is needed to ensure that we only have pure point spectrum in $(-\infty,0]$. Due to the ellipticity assumption this is really an assumption on the coefficients $a_{\alpha\beta}(x)$.    
\end{remark}
Secondly for the Reisz means with $\gamma$ in $(0,1]$ we have
\begin{thm}\label{B.Riesz_means_thm_irr_cof}
Assume $\gamma$ is in $(0,1]$ and let $A(\hbar)$ be the Friedrichs extension of a sesquilinear form $\mathcal{A}_\hbar$ which satisfies Assumption~\ref{ses_assump_gen} with the numbers $(2,\mu)$. If $\gamma=1$ we suppose $\mu>0$ and if $\gamma<1$ we may suppose $\mu=0$. Lastly, suppose there is a $\nu>0$ such that the set $a_0^{-1}((-\infty,\nu])$ is compact and there is $c>0$ such that
  \begin{equation}\label{non_critical_main_thm}
    |\nabla_p a_0(x,p)| \geq c \quad\text{for all } (x,p)\in a_0^{-1}(\{0\}).
  \end{equation}
  Then we have
  \begin{equation*}
   \big |\Tr[(A(\hbar))^\gamma_{-}] - \frac{1}{(2\pi\hbar)^d} \int_{\R^{2d}} ( a_{0}(x,p))^{\gamma}_{-} +\hbar \gamma a_1(x,p) ( a_{0}(x,p))^{\gamma-1}_{-} \,dx dp \big| \leq C \hbar^{1+\gamma-d},
  \end{equation*}
  for all sufficiently small $\hbar$ where the symbol $a_1(x,p)$ is defined as
  \begin{equation}
	a_{1}(x,p) =  i \sum_{|\alpha|, |\beta|\leq m}\sum_{j=1}^d \frac{\beta_j-\alpha_j}{2} \partial_{x_j} a_{\alpha\beta} (x) p^{\alpha+\beta-\eta_j}, 
\end{equation}
  where $\eta_j$ is the multi index with all entries zero except the j'th which is one. 
\end{thm}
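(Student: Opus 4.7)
The plan is to reduce to a smooth problem by mollifying the coefficients at an $\hbar$-dependent scale $\varepsilon = \hbar^\rho$, apply a quantitative version of the smooth two-term Riesz-means expansion, and then optimize $\rho$. Fix a nonnegative mollifier $\chi\in C_c^\infty(\R^d)$ with $\int\chi=1$, set $\chi_\varepsilon(x):=\varepsilon^{-d}\chi(x/\varepsilon)$, and define the smoothed coefficients $a_{\alpha\beta}^\varepsilon := a_{\alpha\beta}\ast \chi_\varepsilon$, the associated form $\mathcal{A}_\hbar^\varepsilon$, its Friedrichs extension $A^\varepsilon(\hbar)$, and the corresponding principal/sub-principal symbols $a_0^\varepsilon, a_1^\varepsilon$. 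Under Assumption~\ref{ses_assump_gen} with $(2,\mu)$, standard mollifier estimates give $\norm{a_{\alpha\beta}-a_{\alpha\beta}^\varepsilon}_\infty \lesssim \varepsilon^{2+\mu}$ and $\norm{\partial^\ell a_{\alpha\beta}^\varepsilon}_\infty \lesssim \varepsilon^{2+\mu-\ell}$ for $\ell\geq 3$, while the symmetry, tempered-variation and ellipticity conditions persist for $\varepsilon$ small enough by direct verification in convolution.

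\textbf{Comparison of traces.} To control $\Tr[(A(\hbar))^\gamma_-]-\Tr[(A^\varepsilon(\hbar))^\gamma_-]$, I would first cut off the function $f(x)=(x)^\gamma_-$ at a fixed positive distance below $\nu$; this is legitimate since both operators have pure point spectrum in $(-\infty,\nu/2]$ by compactness of $a_0^{-1}((-\infty,\nu])$ and the analogous property for $a_0^\varepsilon$. For $\gamma=1$ the cut-off function is Lipschitz and admits an almost analytic extension, so one can use a Helffer--Sj\"ostrand representation; for $\gamma<1$, I would first write $(x)^\gamma_-$ as an Abel-type integral of $(x+s)_-$ in $s$ and reduce to the previous case. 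Either way, the comparison reduces to bounding
\begin{equation*}
\int_\C \bar\partial\tilde f(z)\,\Tr\bigl[(z-A(\hbar))^{-1}(A(\hbar)-A^\varepsilon(\hbar))(z-A^\varepsilon(\hbar))^{-1}\bigr]\,dL(z),
\end{equation*}
where the form-difference $A(\hbar)-A^\varepsilon(\hbar)$ is relatively bounded by $\varepsilon^{2+\mu}(A(\hbar)+C)$; together with the ellipticity-based a priori counting function estimate of size $\hbar^{-d}$ this yields $|\Tr[f(A(\hbar))-f(A^\varepsilon(\hbar))]|\lesssim \varepsilon^{2+\mu}\hbar^{\gamma-d}$.

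\textbf{Smooth expansion and integral matching.} For $A^\varepsilon(\hbar)$ I would apply a quantitative two-term expansion of Helffer--Robert type (a suitable version of which the author presumably establishes earlier in the paper for admissible smooth operators):
\begin{equation*}
\Tr\bigl[(A^\varepsilon(\hbar))^\gamma_-\bigr] = (2\pi\hbar)^{-d}\!\!\int_{\R^{2d}}\!\!\bigl((a_0^\varepsilon)^\gamma_- + \hbar\gamma\, a_1^\varepsilon(a_0^\varepsilon)^{\gamma-1}_-\bigr)\,dxdp + O\bigl(\hbar^{1+\gamma-d}\varepsilon^{-N}\bigr),
\end{equation*}
with $N$ an explicit integer counting how many derivatives are lost in the parametrix or the functional calculus. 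The $C^1$-convergence $a_0^\varepsilon\to a_0$ preserves \eqref{non_critical_main_thm} uniformly in $\varepsilon$. Replacing $(a_0^\varepsilon,a_1^\varepsilon)$ by $(a_0,a_1)$ in the phase-space integrals costs $O(\varepsilon^{2+\mu}\hbar^{-d})$ for the principal term and $O(\varepsilon^{1+\mu}\hbar^{1-d})$ for the subprincipal one, the latter after using the non-criticality and the coarea formula to make sense of the (possibly singular) integrand $a_1(a_0)^{\gamma-1}_-$ when $\gamma<1$.

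\textbf{Optimization and main obstacle.} Combining the three estimates gives a total remainder $O(\varepsilon^{2+\mu}\hbar^{\gamma-d} + \hbar^{1+\gamma-d}\varepsilon^{-N})$; balancing with $\varepsilon=\hbar^{1/(2+\mu+N)}$ yields the claimed bound $O(\hbar^{1+\gamma-d})$ once one checks the exponents are at least $1+\gamma-d$. The main obstacle will be the quantitative smooth-case expansion: the classical proofs of the Helffer--Robert Riesz-means asymptotic do not track explicit dependence on the $C^k$-norms of the coefficients, and when $\gamma<1$ the singular subprincipal integrand $a_1(a_0)^{\gamma-1}_-$ forces one to work with a non-smooth functional calculus from the outset. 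I expect these technical inputs --- particularly the quantitative smooth expansion with explicit $\varepsilon^{-N}$ dependence --- to be developed separately in earlier sections of the paper, reducing the present proof essentially to the careful trace and integral comparisons sketched above together with the final optimization in $\rho$.
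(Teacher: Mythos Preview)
Your balancing argument in the final step does not close. With the claimed remainder $O(\varepsilon^{2+\mu}\hbar^{\gamma-d}+\hbar^{1+\gamma-d}\varepsilon^{-N})$, making the first term at most $\hbar^{1+\gamma-d}$ forces $\varepsilon^{2+\mu}\le \hbar$, hence $\varepsilon\to 0$, while the second term requires $\varepsilon^{-N}\le C$, hence $\varepsilon$ bounded below. At your balanced scale $\varepsilon=\hbar^{1/(2+\mu+N)}$ the common error is $\hbar^{\gamma-d+(2+\mu)/(2+\mu+N)}$, which is $\hbar^{1+\gamma-d}$ only when $N=0$. In other words, the scheme works \emph{only} if the two-term expansion for the mollified operator has error $C\hbar^{1+\gamma-d}$ with $C$ \emph{independent of $\varepsilon$}. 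A smooth Helffer--Robert expansion applied off the shelf to $A^\varepsilon(\hbar)$ will not give this: the constants in the parametrix/functional calculus depend on high $C^k$-norms of the coefficients, and these blow up like $\varepsilon^{2+\mu-k}$.

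This is precisely the missing idea that the paper supplies. Rather than a single approximation, it builds framing operators $A_\varepsilon^-\le A\le A_\varepsilon^+$ (Proposition~\ref{B.construc_framing_op}) and then develops an entire ``rough'' pseudo-differential calculus for symbol classes $\Gamma^{m,\tau}_{0,\varepsilon}$ in which the $\varepsilon$-blow-up only enters after $\tau$ $x$-derivatives. The key output is Theorem~\ref{B.Weyl.law.rough_2}: for such operators of regularity $\tau\ge 2$ and any $\varepsilon\ge\hbar^{1-\delta}$, the Riesz-means expansion holds with remainder $C\hbar^{1+\gamma-d}$ uniformly in $\varepsilon$. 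With this in hand the proof uses min-max monotonicity, $\Tr[(A_\varepsilon^+)^\gamma_-]\le \Tr[(A)^\gamma_-]\le \Tr[(A_\varepsilon^-)^\gamma_-]$, bypassing your resolvent trace-comparison step entirely (which, incidentally, is also too optimistic: the Helffer--Sj\"ostrand argument you sketch naturally yields $\varepsilon^{2+\mu}\hbar^{-d}$, not $\varepsilon^{2+\mu}\hbar^{\gamma-d}$, since $(\cdot)^\gamma_-$ is only $\gamma$-H\"older at $0$). The remaining phase-space integral comparisons cost $\varepsilon^{2+\mu}\hbar^{-d}$ and $\varepsilon^{1+\mu}\hbar^{1-d}$, and the choice $\varepsilon=\hbar^{(1+\gamma)/(2+\mu)}$ makes both at most $\hbar^{1+\gamma-d}$.
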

\begin{remark}
In the case where $\gamma \leq \frac{1}{3}$ we may assume the coefficients are in $C^{1,\mu}(\R^d)$, with $\mu\geq \frac{2\gamma}{1-\gamma}$ and still obtain sharp estimates. Further details are given in Remark~\ref{Estimate_error_reisz_1}. 

There are no technical obstruction in considering larger values of $\gamma$. For a $\gamma>1$ we will need the coefficients to have $\lceil \gamma \rceil$ derivatives if $\gamma$ is not an integer in general. If $\gamma$ is an integer we will need the coefficients to be in $C^{\gamma,\mu}(\R^d)$ for some $\mu>0$. The expansions will also consist of  $\lceil \gamma \rceil +1$ terms, where these terms can be calculated explicitly.
\end{remark}
\begin{example}
Let $A(x)$ be a $d\times d$--dimensional symmetric matrix and suppose that $A(x)$ is positive definite for all $x$ in $\R^d$. We assume that each entry in $a_{ij}(x)$ of $A(x)$  are in $C^{1,\mu}(\R^d)$, where $\mu>0$, and satisfies assumption \ref{ass.symbol_1}--\ref{ass.symbol_3} from Assumption~\ref{ses_assump_gen}. We will further assume that $\det(A(x)) \geq c(1+ |x|^2)^{d+1}$ for all $x$. Then one could be interested in the number of eigenvalues less than or equal to $E^2$ for the second order differential operator  with quadratic form
\begin{equation}
	\langle  A(x)(-i\nabla_x) \varphi, (-i\nabla_x)\varphi \rangle_{L^2(\R^d)}.
\end{equation}
In this example we will denote this number by $\mathcal{N}(E^2)$. This is equivalent to counting the number of eigenvalues less than or equal to zero of the operator with quadratic form  
\begin{equation}
	\langle  A(x)(-iE^{-1}\nabla_x) \varphi, (-iE^{-1}\nabla_x)\varphi \rangle_{L^2(\R^d)} -\langle   \varphi, \varphi \rangle_{L^2(\R^d)} .
\end{equation}
Treating $E^{-1}$ as the semiclassical parameter. It is a simple calculation to check that all assumptions for Theorem~\ref{B.Weyl_law_thm_irr_cof} are satisfied. This gives us that
  \begin{equation*}
   \big |\mathcal{N}(E^2) - \frac{1}{(2\pi)^d} \int_{\R^{2d}} \boldsymbol{1}_{(-\infty,E^2]}( \langle A(x)p,p\rangle) \,dx dp \big| \leq C E^{1-d},
  \end{equation*}
for $E$ suffciently large. Note that the phase space integral has this more familiar form
 \begin{equation*}
    \frac{1}{(2\pi)^d} \int_{\R^{2d}} \boldsymbol{1}_{(-\infty,E^2]}( \langle A(x)p,p\rangle) \,dx dp =   E^d \frac{\Vol_d(B(0,1))}{(2\pi)^d} \int_{\R^d} \frac{1}{\sqrt{\det(A(x))}} \, dx,
  \end{equation*}
where $\Vol_d(B(0,1))$ is the $d$-dimensional volume of the unit ball.
\end{example}
\subsection{Assumptions and main results II}
The main motivation and the novel results, to the best of my knowledge, in this work are the following results, where we consider spectral asymptotic of admissible operators perturbed by differential operators defined as Friedrichs extensions of sesquilinear forms with non-smooth coefficients. We will need some regularity conditions on the admissible operator, which will be the same as the assumption made in \cite[Chapter 3]{MR897108}. We will here recall them:

\begin{assumption}\label{admis_op_assump}
 For $\hbar$ in $(0,\hbar_0]$ let $B(\hbar)$ be an admissible operator with tempered weight $m$ and symbol
 \begin{equation}
 	b_\hbar(x,p) = \sum_{j\geq0} \hbar^j b_j(x,p).
 \end{equation}
 Assume that
  \begin{enumerate}[label={$(\roman*)$}]
  \item\label{ass.op_1} The operator $B(\hbar)$ is symmetric on $\mathcal{S}(\R^d)$ for all  $\hbar$ in $(0,\hbar_0]$.
  
  \item\label{ass.op_2} There is a $\zeta_0>0$ such that 
  \begin{equation*}
  \min_{(x,p)\in\R^d\times\R^d}(b_0(x,p))> - \zeta_0.
\end{equation*}	%
  \item\label{ass.op_3} There is a $\zeta_1>\zeta_0$ such that $b_0+\zeta_1$ is a tempered weight and $b_j$ is in $\Gamma_{0,1}^{b_0+\zeta_1} \left(\R^{2d}\right)$.
  \end{enumerate}
\end{assumption}
The definition of a tempered weight is recalled in Section~\ref{Sec:def_rough_op}, where the definition of the space $\Gamma_{0,1}^{b_0+\zeta_1} \left(\R^{2d}\right)$ is also given. We can now state the second set of main theorems.
\begin{thm}\label{B.Weyl_law_thm_adop_plus_irr_cof}
Let $B(\hbar)$ be an admissible operator satisfing Assumption~\ref{admis_op_assump} with $\hbar$ in $(0,\hbar_0]$ and $\mathcal{A}_\hbar$ be a sesquilinear form satisfying Assumption~\ref{ses_assump_gen} with the numbers $(1,\mu)$ with $\mu>0$. 
Define the operator $\tilde{B}(\hbar)$ as the Friedrichs extension of the form sum $B(\hbar) + \mathcal{A}_\hbar$ and let
\begin{equation}
	\tilde{b}_0(x,p) = b_0(x,p) +  \sum_{\abs{\alpha},\abs{\beta}\leq m} a_{\alpha\beta}(x)p^{\alpha+\beta}.
\end{equation}
  We suppose there is a $\nu$ such that $\tilde{b}_0^{-1}((-\infty,\nu])$ is compact and there is $c>0$ such that
  \begin{equation}\label{non_critical_main_thm_2}
    |\nabla_p \tilde{b}_0(x,p)| \geq c \quad\text{for all } (x,p)\in \tilde{b}_0^{-1}(\{0\}).
  \end{equation}
  Then we have
  \begin{equation*}
   \big |\Tr[\boldsymbol{1}_{(-\infty,0]}(\tilde{B}(\hbar))] - \frac{1}{(2\pi\hbar)^d} \int_{\R^{2d}} \boldsymbol{1}_{(-\infty,0]}( \tilde{b}_0(x,p)) \,dx dp \big| \leq C \hbar^{1-d},
  \end{equation*}
  for all sufficiently small $\hbar$.
\end{thm}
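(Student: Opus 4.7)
The plan is to reduce the combined admissible-plus-rough case to Theorem~\ref{B.Weyl_law_thm_irr_cof} by regularising the rough form via mollification, treating the resulting smooth problem with standard semiclassical calculus, and comparing the two via spectral shift bounds.

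First I would carry out a phase-space localisation. The compactness of $\tilde{b}_0^{-1}((-\infty,\nu])$ together with the tempered-weight hypothesis on $b_0+\zeta_1$ and the ellipticity of $\mathcal{A}_\hbar$ force, uniformly in small $\hbar$, the spectral information of $\tilde{B}(\hbar)$ below $0$ to come from a fixed compact $K\subset\R^{2d}$ containing $\tilde{b}_0^{-1}(\{0\})$. A standard IMS/Agmon-type argument, inserting a smooth cutoff adapted to $K$ and using that $\tilde{b}_0$ is bounded below by a positive constant off $K$, implements this reduction at the level of the form. With that in hand, regularise the coefficients: set $a_{\alpha\beta}^\varepsilon = a_{\alpha\beta}*\rho_\varepsilon$ for some $\varepsilon=\varepsilon(\hbar)$ to be chosen later and let $\tilde{B}^\varepsilon(\hbar)$ be the Friedrichs extension of the form sum $B(\hbar) + \mathcal{A}_\hbar^\varepsilon$. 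Then $\tilde{B}^\varepsilon(\hbar)$ is admissible with smooth principal symbol $\tilde{b}_0^\varepsilon$, which inherits the non-criticality of $\tilde{b}_0$ on $\tilde{b}_0^{-1}(\{0\})$ for $\hbar$ small, so the Helffer--Robert-type estimate \eqref{Helffer_Robert_res}, as already used for the smooth case underlying Theorem~\ref{B.Weyl_law_thm_irr_cof}, yields the sharp Weyl asymptotic for $\tilde{B}^\varepsilon(\hbar)$ with phase-space symbol $\tilde{b}_0^\varepsilon$.

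The final step is the two comparisons returning from $\tilde{B}^\varepsilon(\hbar)$ to $\tilde{B}(\hbar)$. The phase-space side is routine: the H\"older bound gives $|\tilde{b}_0-\tilde{b}_0^\varepsilon|\leq C\varepsilon^{1+\mu}$ on $K$, the non-criticality of $\tilde{b}_0$ implies that the sublevel set $\{|\tilde{b}_0|\leq C\varepsilon^{1+\mu}\}$ has volume $O(\varepsilon^{1+\mu})$, and with $\varepsilon\sim\hbar$ the resulting difference of integrals is $O(\hbar^{1+\mu-d})=o(\hbar^{1-d})$. The trace side is handled by min-max bracketing: Assumption~\ref{ses_assump_gen}\ref{ass.symbol_3} and the H\"older regularity give $\mathcal{A}_\hbar \geq \mathcal{A}_\hbar^\varepsilon - R_\varepsilon$ and $\mathcal{A}_\hbar \leq \mathcal{A}_\hbar^\varepsilon + R_\varepsilon$ in the form sense, with $R_\varepsilon$ controlled by $\varepsilon^\mu$ times the elliptic form. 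The spectral shift produced by $R_\varepsilon$ is then estimated by reusing the resolvent and coherent-state machinery from the proof of Theorem~\ref{B.Weyl_law_thm_irr_cof}, now applied to the symbol $\tilde{b}_0$ that already incorporates the admissible part $b_0$.

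The main obstacle, I expect, is exactly this last bracketing step: the spectral shift between $\tilde{B}(\hbar)$ and $\tilde{B}^\varepsilon(\hbar)$ must be controlled at the scale $C\hbar^{1-d}$, not merely the naive $O(\hbar^{\mu-d})$ coming from crudely Weyl-counting eigenvalues shifted by $R_\varepsilon$. This is the sharpness issue already present in Theorem~\ref{B.Weyl_law_thm_irr_cof}; importing the argument here additionally requires checking that cross terms from products of $R_\varepsilon$ with the admissible symbol calculus of $B(\hbar)$, appearing through resolvent identities or a Helffer--Sj\"ostrand representation of $\boldsymbol{1}_{(-\infty,0]}$, contribute only $O(\hbar^{1-d})$ to the relevant trace-class norms. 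Because $B(\hbar)$ has symbol in the standard admissible class the symbolic bookkeeping remains clean, but verifying that the mollification and commutator errors are of the correct order is the genuine technical work.
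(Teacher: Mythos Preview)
Your overall architecture (mollify the rough form, bracket by min--max, compare phase-space volumes) matches the paper's, but the central step is misidentified and, as written, fails.

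You propose to apply the Helffer--Robert estimate \eqref{Helffer_Robert_res} to the mollified operator $\tilde{B}^\varepsilon(\hbar)$ with $\varepsilon=\varepsilon(\hbar)\to 0$. This is the gap. After convolution at scale $\varepsilon$, the coefficients $a_{\alpha\beta}^\varepsilon$ are smooth, but by Proposition~\ref{B.smoothning_of_func} their $x$-derivatives of order $j\geq 2$ blow up like $\varepsilon^{1+\mu-j}$. The constants in \eqref{Helffer_Robert_res} depend on \emph{all} symbol seminorms, so they diverge as $\varepsilon\to 0$; the estimate gives nothing uniform in $\hbar$. Your remark that this is ``already used for the smooth case underlying Theorem~\ref{B.Weyl_law_thm_irr_cof}'' is a misconception: Theorem~\ref{B.Weyl_law_thm_irr_cof} does \emph{not} reduce to classical Helffer--Robert. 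It reduces to Theorem~\ref{B.Weyl.law.rough_1}, the Weyl law for \emph{rough} $\hbar$-$\varepsilon$-admissible operators, whose proof (Sections~\ref{Sec:def_rough_op}--\ref{B.construction_propagator}) is precisely the work of tracking these $\varepsilon^{-}$ powers through the full semiclassical machinery (functional calculus, short-time propagator, Tauberian argument) under the constraint $\varepsilon\geq\hbar^{1-\delta}$.

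Two secondary points. First, your choice $\varepsilon\sim\hbar$ is too small: it corresponds to $\delta=0$ and violates the microlocal uncertainty principle of Remark~\ref{B.assumption_epsilon}. The paper takes $\varepsilon=\hbar^{1-\delta}$ with $\delta=\mu/(1+\mu)$, so that $\varepsilon^{1+\mu}=\hbar$ exactly matches the target remainder. Second, the form error is $C\varepsilon^{1+\mu}$, not $\varepsilon^{\mu}$ (see \eqref{B.diff_framing_1} in Proposition~\ref{B.construc_framing_op}); this is what makes the bracketing by $\tilde{B}_\varepsilon^{\pm}(\hbar)=B(\hbar)+A_\varepsilon^{\pm}(\hbar)$ tight enough. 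Once these framing operators are in hand, the paper simply observes that adding the genuinely smooth admissible $B(\hbar)$ to the rough $A_\varepsilon^{\pm}(\hbar)$ still yields an $\hbar$-$\varepsilon$-admissible operator of regularity $1$ satisfying Assumption~\ref{B.self_adj_assumption}, so Theorem~\ref{B.Weyl.law.rough_1} applies directly and the rest is verbatim the proof of Theorem~\ref{B.Weyl_law_thm_irr_cof}. Your IMS/Agmon localisation and the separate ``spectral shift'' analysis are not needed.
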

Next we give the result for Riesz means for this type of operator.
\begin{thm}\label{B.Riesz_means_thm_adop_plus_irr_cof}
Assume $\gamma$ is in $(0,1]$. 
Let $B(\hbar)$ be an admissible operator satisfing Assumption~\ref{admis_op_assump} with $\hbar$ in $(0,\hbar_0]$ and $\mathcal{A}_\hbar$ be a sesquilinear form satisfying Assumption~\ref{ses_assump_gen} with the numbers $(2,\mu)$. If $\gamma=1$ we suppose $\mu>0$ and if $\gamma<1$ we may suppose $\mu=0$. 
  
Define the operator $\tilde{B}(\hbar)$ as the Friedrichs extension of the form sum $B(\hbar) + \mathcal{A}_\hbar$ and let
\begin{equation}
	\tilde{b}_0(x,p) = b_0(x,p) +  \sum_{\abs{\alpha},\abs{\beta}\leq m} a_{\alpha\beta}(x)p^{\alpha+\beta}.
\end{equation}
  We suppose there is a $\nu$ such that $\tilde{b}_0^{-1}((-\infty,\nu])$ is compact and there is $c>0$ such that
  \begin{equation}\label{non_critical_main_thm_3}
    |\nabla_p \tilde{b}_0(x,p)| \geq c \quad\text{for all } (x,p)\in \tilde{b}_0^{-1}(\{0\}),
  \end{equation}
Then we have
  \begin{equation*}
   \big |\Tr[(\tilde{B}(\hbar))^\gamma_{-}] - \frac{1}{(2\pi\hbar)^d} \int_{\R^{2d}} ( \tilde{b}_{0}(x,p))^{\gamma}_{-} +\hbar \gamma \tilde{b}_1(x,p) ( \tilde{b}_{0}(x,p))^{\gamma-1}_{-} \,dx dp \big| \leq C \hbar^{1+\gamma-d},
  \end{equation*}
  for all sufficiently small $\hbar$ where the symbol $\tilde{b}_1(x,p)$ is defined as
  \begin{equation}
	\tilde{b}_{1}(x,p) = b_1(x,p)+  i \sum_{|\alpha|, |\beta|\leq m}\sum_{j=1}^d \frac{\beta_j-\alpha_j}{2} \partial_{x_j} a_{\alpha\beta} (x) p^{\alpha+\beta-\eta_j}.
\end{equation}
\end{thm}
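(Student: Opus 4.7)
The plan is to reduce this theorem to Theorem~\ref{B.Riesz_means_thm_irr_cof} together with the classical smooth-coefficient semiclassical calculus for admissible operators from \cite[Chapter 3]{MR897108}. The non-smoothness of the problem lives entirely in $\mathcal{A}_\hbar$, while $B(\hbar)$ already has a smooth symbol $b_\hbar$ to which the usual pseudo-differential calculus applies; the task is thus to combine the rough-coefficient techniques developed for the proof of Theorem~\ref{B.Riesz_means_thm_irr_cof} with the admissible operator machinery already used to prove Theorem~\ref{B.Weyl_law_thm_adop_plus_irr_cof}.

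First, regularise the coefficients at scale $\varepsilon=\hbar^\delta$ for a $\delta\in(0,1)$ to be optimised. Setting $a_{\alpha\beta}^\varepsilon = a_{\alpha\beta}\ast\rho_\varepsilon$ for a standard mollifier, the $C^{2,\mu}$-regularity from Assumption~\ref{ses_assump_gen} yields $\|a_{\alpha\beta}-a_{\alpha\beta}^\varepsilon\|_\infty \leq C\varepsilon^{2+\mu}$ together with $\|\partial^\delta a_{\alpha\beta}^\varepsilon\|_\infty\leq C_\delta\varepsilon^{-(|\delta|-2-\mu)_+}$. Let $\mathcal{A}_\hbar^\varepsilon$ be the corresponding smoothed form and $\tilde B^\varepsilon(\hbar)$ the Friedrichs extension of $B(\hbar)+\mathcal{A}_\hbar^\varepsilon$. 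Then $\tilde B^\varepsilon(\hbar)$ is itself an admissible operator with smooth total symbol $\tilde b_\hbar^\varepsilon = b_\hbar + \sum_{\alpha,\beta}a^\varepsilon_{\alpha\beta}(x)p^{\alpha+\beta}$ and, for $\varepsilon$ small, the non-criticality hypothesis \eqref{non_critical_main_thm_3} persists in a neighbourhood of zero. Applying the standard smooth two-term asymptotic (Helffer--Sj\"ostrand functional calculus plus a Tauberian argument, as in \cite{MR1061661} and already used in Theorem~\ref{B.Riesz_means_thm_irr_cof}) to $\tilde B^\varepsilon(\hbar)$ gives
\begin{equation*}
\Tr[(\tilde B^\varepsilon(\hbar))_-^\gamma] = \frac{1}{(2\pi\hbar)^d}\int_{\R^{2d}}\Big[(\tilde b_0^\varepsilon)_-^\gamma + \hbar\gamma\,\tilde b_1^\varepsilon (\tilde b_0^\varepsilon)_-^{\gamma-1}\Big]\,dxdp + R(\hbar,\varepsilon),
\end{equation*}
with $R(\hbar,\varepsilon) = O(\varepsilon^{-N}\hbar^{1+\gamma-d})$ for some finite $N$.

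The crucial step is to control the trace-level error introduced by the regularisation, i.e.\ to bound $|\Tr[(\tilde B(\hbar))_-^\gamma]-\Tr[(\tilde B^\varepsilon(\hbar))_-^\gamma]|$. The form perturbation satisfies $|\mathcal{A}_\hbar[\varphi]-\mathcal{A}_\hbar^\varepsilon[\varphi]|\leq C\varepsilon^{2+\mu}(\mathcal{A}_\hbar^\varepsilon[\varphi]+\|\varphi\|^2)$, which via the min--max principle produces a multiplicative bound on the eigenvalues in a fixed window around zero. Combining this with the sharp eigenvalue count $N(\tilde B(\hbar),0)=O(\hbar^{-d})$ established in Theorem~\ref{B.Weyl_law_thm_adop_plus_irr_cof}, and invoking a Birman--Solomyak/Ky~Fan type inequality adapted to the non-smooth function $(x)_-^\gamma$, one obtains a trace-stability estimate of order $\varepsilon^{2+\mu}\hbar^{\gamma-d}$. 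An analogous Hölder bound, together with the non-criticality of $\tilde b_0$ on $\tilde b_0^{-1}(\{0\})$, handles the difference between the phase-space integrals with coefficients $a_{\alpha\beta}$ versus $a_{\alpha\beta}^\varepsilon$. Optimising in $\varepsilon=\hbar^{1/(2+\mu)}$ then absorbs all errors into $O(\hbar^{1+\gamma-d})$.

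The hardest part is the trace-stability estimate when $\gamma<1$, because $(x)_-^\gamma$ is only Hölder continuous at the origin, so direct Helffer--Sj\"ostrand functional calculus does not yield the bound with the sharp exponent. One has to couple the variational form estimates with the sharp control on the density of eigenvalues near zero coming from the non-critical value hypothesis, and this is precisely where the regularity $(k,\mu)=(2,\mu)$ is exploited. All remaining steps (the smooth admissible calculus and the comparison of the phase-space integrals) are by now routine given the earlier theorems.
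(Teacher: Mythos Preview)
Your proposal has a genuine gap that prevents it from closing. You write that applying the standard smooth two-term asymptotic to $\tilde B^\varepsilon(\hbar)$ yields a remainder $R(\hbar,\varepsilon)=O(\varepsilon^{-N}\hbar^{1+\gamma-d})$ for some finite $N$, and then claim that optimising $\varepsilon=\hbar^{1/(2+\mu)}$ absorbs all errors into $O(\hbar^{1+\gamma-d})$. But with that choice of $\varepsilon$ one gets $\varepsilon^{-N}\hbar^{1+\gamma-d}=\hbar^{1+\gamma-d-N/(2+\mu)}$, which is strictly worse than the target for every $N>0$. The loss $\varepsilon^{-N}$ comes from the fact that the constants in the smooth Helffer--Robert expansion depend on finitely many symbol seminorms, and for the mollified coefficients $|\partial_x^\alpha a_{\alpha\beta}^\varepsilon|\sim \varepsilon^{-(|\alpha|-2-\mu)_+}$ these blow up. No choice of $\varepsilon=\hbar^\delta$ with $\delta>0$ can simultaneously make $\varepsilon^{2+\mu}\hbar^{-d}\leq C\hbar^{1+\gamma-d}$ and $\varepsilon^{-N}\leq C$. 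This is precisely the obstacle that the paper's entire rough pseudo-differential calculus (Sections~3--6) is built to overcome: Theorem~\ref{B.Weyl.law.rough_2} gives the Riesz-means expansion for the $\varepsilon$-dependent framing operators with an error that is genuinely $O(\hbar^{1+\gamma-d})$ uniformly in $\varepsilon\in[\hbar^{1-\delta},1]$, because the $\varepsilon$-dependence is tracked through the regularity index $\tau$ at every step.

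The paper's actual proof proceeds differently and avoids your trace-stability step altogether. One constructs \emph{framing} operators $\tilde B^\pm_\varepsilon(\hbar)=B(\hbar)+A^\pm_\varepsilon(\hbar)$ (via Proposition~\ref{B.construc_framing_op} and Lemma~\ref{lemma_weyl_q_framin_form}) satisfying $\tilde B^-_\varepsilon(\hbar)\leq \tilde B(\hbar)\leq \tilde B^+_\varepsilon(\hbar)$ as quadratic forms; these are $\hbar$-$\varepsilon$-admissible of regularity $2$, so Theorem~\ref{B.Weyl.law.rough_2} applies to each of them directly. Min--max then gives $\Tr[(\tilde B^+_\varepsilon)_-^\gamma]\leq \Tr[(\tilde B)_-^\gamma]\leq \Tr[(\tilde B^-_\varepsilon)_-^\gamma]$, and one only has to compare the two phase-space integrals with the target integral, exactly as in the proof of Theorem~\ref{B.Riesz_means_thm_irr_cof}. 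This sidesteps the Birman--Solomyak/Ky~Fan issue you flag for $\gamma<1$: no operator-theoretic H\"older estimate on $(x)_-^\gamma$ is ever needed.
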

\subsection{Previous work}\label{sec.lit}
The first results with an optimal Weyl law without full regularity was
proven in the papers
\cite{MR1736710,MR1612880,MR1620550,MR1635856} by Zielinski. In these papers
Zielinski obtained an optimal Weyl law under the assumption that
the coefficients are differentiable with Lipschitz continuous first
derivative.  Zielinski did not in those papers consider the
semiclassical setting. These results were generalised by Ivrii in the
semiclassical setting in \cite{MR1807155}. Here the coefficients are
assumed to be differentiable and with a H\"older continuous first
derivative. This was further generalised by Bronstein and Ivrii
in \cite{MR1974450}, where they reduced the assumptions further by
assuming the first derivative to have modulus of continuity
$\mathcal{O}(|\log(x-y)|^{-1})$. All these papers considered
differential operators acting in $L^2(M)$, where $M$ is
a compact manifold both with and without a boundary. In \cite{MR2105486} Zielinski  considers the semiclassical setting with differential operators acting in $L^2(\R^d)$ and proves an optimal Weyl Law under the assumption that all coefficients are one time differentiable with a H\"older continuous derivative. Moreover, it is assumed that the coefficients and the derivatives is bounded. However, it is remarked that it is possible to consider unbounded coefficients in a framework of tempered variation models. As we have seen above this is indeed the case and thereby proven this minor technical generalisation.   

Another question one could ask is can such results be obtained without a non-critical condition? This question have also been studied in the literature and it is possible for
Schr\"{o}dinger operators to prove optimal Weyl laws without a non-critical condition by a multiscale argument  see
\cite{MR1974450,MR1631419,ivrii2019microlocal1,MR1240575}, this
approach is also described in \cite{MR1343781}.  This multiscale argument can
be seen as a discreet approach and a continuous version have been
proved and used in \cite{MR2013804}.  The essence of this approach is
to localise and locally introduce a non-critical condition by
unitary conjugation. Then by an optimal Weyl law with a non-critical
condition one obtain the right asymptotics locally. The last step is
to average out the localisations.  Ivrii has also considered
multiscale analysis for higher order differential operators but to
treat these cases extra assumptions on the Hessian of the principal
symbol is needed see \cite{ivrii2019microlocal1,MR1974451}. There is
also another approach by Zielinski see \cite{MR2245259,MR2343462,MR2335576},
where he proves optimal Weyl laws without a non-critical condition, but
with an extra assumption on a specific phase space volume.

In the case of Riesz means sharp remainder estimates for the asymptotic expansions has been obtained by Bronstein and Ivrii in \cite{MR1974450}, for differential operators acting in $L^2(M)$, where $M$ is a compact closed manifold. However, it is mentioned in a remark that the results of the paper needs to be combined with the methods in section 4.3 in \cite{MR1631419} to obtain the reminder estimate for the Riesz means. Result on Riesz means can also be found in  \cite[Chapter 4]{ivrii2019microlocal1} by Ivrii. Again this is for differential operators acting in acting in $L^2(M)$, where $M$ is a compact closed manifold. In chapter 11 of \cite{ivrii2019microlocal1} unbounded domains are also considered but in the large eigenvalue/high energy case. In this chapter all results are stated for ``smooth'' coefficients. However, it is remarked in the start that the case of non-smooth coefficients are left to the reader.    
The results given here on the Reisz means for differential operators associated to quadratic forms are a small generalisation of previous known results. 

%
%
%
%
%
\section{Preliminaries and notation}

\subsection{Notation and definitions}

We will here mainly set up the non standard notation used and some
definitions. We will in the following use the notation
\begin{equation*}
  \lambda(x) = (1+\abs{x}^2)^{\frac{1}{2}},
\end{equation*}
for $x$ in $\R^d$ and not the usual bracket notation. Moreover, for
more vectors $x_1,x_2,x_3$ from $\R^d$ we will use the convention
\begin{equation*}
  \lambda(x_1,x_2,x_3) = (1+\abs{x_1}^2+\abs{x_2}^2+\abs{x_3}^2)^{\frac{1}{2}},
\end{equation*}
and similar in the case of $2$ or even more vectors. For the natural numbers we use the following conventions
\begin{equation*}
  \N= \{1,2,3,\dots \} \quad\text{and}\quad \N_0 = \{0\} \cup \N.
\end{equation*}
When working with the Fourier transform we will use the following semiclassical
version for $\hbar>0$
\begin{equation*}
  \mathcal{F}_\hbar[ \varphi ](p) \coloneqq \int_{\R^d} e^{-i\hbar^{-1} \langle x,p \rangle}\varphi(x) \, dx,
\end{equation*}
and with inverse given by
\begin{equation*}
  \mathcal{F}_\hbar^{-1}[\psi] (x) \coloneqq \frac{1}{(2\pi\hbar)^d} \int_{\R^d} e^{i\hbar^{-1} \langle x,p \rangle}\psi(p) \, dp,
\end{equation*}
where $\varphi$ and $\psi$ are elements of $\mathcal{S}(\R^d)$.

We will by $\mathcal{L}(\mathcal{B}_1,\mathcal{B}_2)$ denote the linear bounded operators from the space $\mathcal{B}_1$ into $\mathcal{B}_1$ and $\mathcal{L}(\mathcal{B}_1)$ denotes the linear bounded operators from the space $\mathcal{B}_1$ into  itself. For an operator $A$ acting in a Hilbert space we will denote the spectrum of $A$ by
\begin{equation*}
	\spec(A).
\end{equation*}

As we later will need stationary phase asymptotics, we will for sake of completeness recall it here.
\begin{prop}\label{B.quad_stationary_phase}
  Let $B$ be a invertible, symmetric real $n\times n$ matrix and
  $(u,v)\rightarrow a(u,v;\hbar)$ be a function in
  $C^\infty(\R_u^{d} \times \R_v^{n})$ for all $\hbar$ in
  $(0,\hbar_0]$. We suppose $v\rightarrow a(u,v;\hbar)$ has compact
  support for all $u$ in $\R_u^{d}$ and $\hbar$ in
  $(0,\hbar_0]$. Moreover we let
  \begin{equation*}
    I(u;a,B,\hbar) = \int_{\R^{n}} e^{\frac{i}{2\hbar} \langle B v,v\rangle} a(u,v;\hbar) \, dv.
  \end{equation*}
  Then for each $N$ in $\mathbb{N}$ we have
  \begin{equation*}
    I(u;a,B,\hbar)  = (2\pi\hbar)^{\frac{n}{2}}
    \frac{e^{i\frac{\pi}{4} \sgn(B)}}{\abs{\det(B)}^{\frac12}}
    \sum_{j=0}^N \frac{\hbar^j}{j!} \Big(\frac{\langle B^{-1}
      D_v,D_v\rangle}{2i}\Big)^j a(u,v;\hbar) \Big|_{v=0} +
    \hbar^{N+1} R_{N+1}(u;\hbar),
  \end{equation*}
  where $\sgn(B)$ is the difference between the number of positive and
  negative eigenvalues of $B$. Moreover there exists a constant $c_n$
  only depending on the dimension such the error term $R_{N+1}$
  satisfies the bound
  \begin{equation*}
    \abs{R_{N+1}(u;\hbar)} \leq c_n \bigg\lVert \frac{\langle B^{-1} D_v,D_v\rangle^{N+1}}{(N+1)!}  a(u,v;\hbar)  \bigg\rVert_{H^{[\frac{n}{2}]+1}(\R_v^n)},
  \end{equation*}
  where $ [\tfrac{n}{2}]$ is the integer part of $\tfrac{n}{2}$ and
  $\norm{\cdot}_{H^{[\tfrac{n}{2}]+1}(\R_v^n)}$ is the Sobolev norm.
\end{prop}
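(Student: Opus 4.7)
The plan is to prove this via the Fourier transform / Parseval approach, which for an exact quadratic phase makes the constants (the $\sgn(B)$--factor, $|\det B|^{-1/2}$, and the shape of the differential operator $\langle B^{-1}D_v,D_v\rangle$) fall out explicitly and reduces the whole argument to Taylor's theorem in $\hbar$.

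First I would compute, as a tempered distribution in $v$, the Fourier transform of the Gaussian $e^{\frac{i}{2\hbar}\langle Bv,v\rangle}$. Diagonalising $B$ by an orthogonal change of variables reduces this to a product of one-dimensional oscillatory integrals $\int_{\R} e^{i\lambda t^2/(2\hbar)-it\xi}\,dt$, each evaluated by contour deformation; taking the product over the eigenvalues gives
\[
  \mathcal{F}\bigl[e^{\frac{i}{2\hbar}\langle Bv,v\rangle}\bigr](\xi) = \frac{(2\pi\hbar)^{n/2}e^{i\frac{\pi}{4}\sgn(B)}}{|\det B|^{1/2}}\,e^{-\frac{i\hbar}{2}\langle B^{-1}\xi,\xi\rangle}.
\]
Since $a(u,\cdot;\hbar)$ is smooth with compact support, $\hat a(u,\xi;\hbar)$ lies in $\mathcal{S}(\R_\xi^n)$ and Parseval rewrites $I(u;a,B,\hbar)$ as an integral of $\hat a$ against this Gaussian.

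Next, I would Taylor expand $e^{-\frac{i\hbar}{2}\langle B^{-1}\xi,\xi\rangle}$ in powers of $\hbar$ to order $N$ with integral remainder. For each $0\leq j\leq N$ the term proportional to $\int \langle B^{-1}\xi,\xi\rangle^j \hat a(u,\xi;\hbar)\,d\xi$ is identified by Fourier inversion with $\langle B^{-1}D_v,D_v\rangle^j a(u,v;\hbar)\big|_{v=0}$ up to explicit constants; collecting them and using $-i/2=1/(2i)$ reproduces exactly the main terms stated in the proposition with the correct prefactor $(2\pi\hbar)^{n/2}|\det B|^{-1/2}e^{i\frac{\pi}{4}\sgn(B)}$.

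The remainder estimate is where the real work is, and this is the main technical step. Taylor's integral remainder gives a pointwise bound $|E_{N+1}(\xi;\hbar)|\leq |\langle B^{-1}\xi,\xi\rangle|^{N+1}/(2^{N+1}(N+1)!)$, independent of $\hbar$, so $R_{N+1}(u;\hbar)$ is bounded by a constant times $\int |\langle B^{-1}\xi,\xi\rangle^{N+1} \hat a(u,\xi;\hbar)|\,d\xi$. Inserting the factor $(1+|\xi|^2)^{-([n/2]+1)}(1+|\xi|^2)^{[n/2]+1}$, applying Cauchy--Schwarz, and using $(1+|\xi|^2)^{-([n/2]+1)}\in L^1(\R^n)$ reduces the bound to the $L^2(d\xi)$-norm of $(1+|\xi|^2)^{([n/2]+1)/2}\langle B^{-1}\xi,\xi\rangle^{N+1}\hat a(u,\xi;\hbar)$. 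Plancherel then recognises this quantity as the Sobolev norm $\bigl\|\langle B^{-1}D_v,D_v\rangle^{N+1} a(u,\cdot;\hbar)\bigr\|_{H^{[\frac{n}{2}]+1}(\R_v^n)}$, giving the claimed estimate once the factor $1/(N+1)!$ is absorbed. I do not anticipate any real obstacle beyond keeping track of the constants and performing the contour-deformation evaluation in Step~1 carefully enough to extract the correct $\sgn(B)$--factor.
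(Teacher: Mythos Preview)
Your proof is correct and is essentially the standard Fourier--Parseval argument; the paper does not give its own proof but simply cites \cite{MR897108} and \cite{MR2952218}, and your outline matches the proof found in those references (in particular the one in Zworski's book). There is nothing to add.
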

A proof of the proposition can be found in e.g. \cite{MR897108} or
\cite{MR2952218}. For the sesquilinear forms we consider we will use the following terminology
\begin{definition}\label{B.def_non_critical}
  For a  sesquilinear form $\mathcal{A}_\hbar$ given by
   \begin{equation*}
   \mathcal{A}_\hbar[\varphi,\psi] =  \sum_{\abs{\alpha},\abs{\beta}\leq m}  \int_{\R^d} a_{\alpha\beta}(x) (\hbar D_x)^\beta\varphi(x) \overline{(\hbar D_x)^\alpha\psi(x)} \, dx, \qquad \varphi,\psi \in \mathcal{D}(   \mathcal{A}_\hbar),
  \end{equation*}
 we call a number $E$ in $\R$ non-critical if there exists $c>0$ such that
  \begin{equation*}
    |\nabla_p a_0(x,p)| \geq c \quad\text{for all } (x,p)\in a_0^{-1}(\{E\}),
  \end{equation*}
  where
  \begin{equation*}
    a_0(x,p) =  \sum_{\abs{\alpha},\abs{\beta}\leq m} a_{\alpha\beta}(x)p^{\alpha+\beta}.
  \end{equation*}
\end{definition}

\begin{definition}\label{B.def_quad_elliptic}
  A  sesquilinear form $\mathcal{A}_\hbar$ given by
   \begin{equation*}
   \mathcal{A}_\hbar[\varphi,\psi] =  \sum_{\abs{\alpha},\abs{\beta}\leq m}  \int_{\R^d} a_{\alpha\beta}(x) (\hbar D_x)^\beta\varphi(x) \overline{(\hbar D_x)^\alpha\psi(x)} \, dx, \qquad \varphi,\psi \in \mathcal{D}(   \mathcal{A}_\hbar),
  \end{equation*}
is called elliptic if there exists a strictly positive constant $C$ such that
  \begin{equation}
    \sum_{\abs{\alpha}=\abs{\beta}= m} a_{\alpha\beta}(x) p^{\alpha+\beta} \geq C \abs{p}^{2m},
  \end{equation}
 for all $(x,p)$ in $\R^d_x\times\R_p^d$.
\end{definition}

\subsection{Approximation of quadratic forms}\label{B.approx_op_section}
 We will here construct our approximating (framing) quadratic forms and prove that some properties of the original quadratic form can be inherited by the approximations. The construction is similar to the ones used in \cite{MR1974450,MR2179891,MR1631419,MR1974451,MR2105486}  to construct their framing operators. The first example of approximation non smooth coefficients in this way appeared in \cite{MR518297}, to the authors knowledge. The crucial part in this construction is  Proposition~\ref{B.smoothning_of_func}, for which a proof can be found in \cite{MR1974450, ivrii2019microlocal1}. 
\begin{prop}\label{B.smoothning_of_func}
  Let $f$ be in $C^{k,\mu}(\R^d)$ for a $\mu$ in $[0,1]$. Then for
  every $\varepsilon >0$ there exists a function $f_\varepsilon$ in
  $C^\infty(\R^d)$ such that
  \begin{equation}\label{B.smoothning_of_func_bounds}
    \begin{aligned}
      \abs{\partial_x^\alpha f_\varepsilon(x) -\partial_x^\alpha f(x)
      } \leq{}& C_\alpha \varepsilon^{k+\mu-\abs{\alpha}} \qquad
      \abs{\alpha}\leq k,
      \\
      \abs{\partial_x^\alpha f_\varepsilon(x)} \leq{}& C_\alpha
      \varepsilon^{k+\mu-\abs{\alpha}} \qquad \abs{\alpha}\geq k+1,
    \end{aligned}
  \end{equation}
where the constants is independent of $\varepsilon$. 
\end{prop}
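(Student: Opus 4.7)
The plan is to prove this by a standard mollification argument, but with a well-chosen mollifier so that the vanishing-moment trick kills the Taylor expansion terms up to order $k$. Concretely, fix a non-negative radial $\rho\in C_c^\infty(\R^d)$ with $\int \rho = 1$ and, crucially, with vanishing moments $\int y^\beta \rho(y)\,dy = 0$ for every $1\le |\beta|\le k$; such $\rho$ can be obtained by taking linear combinations of a standard mollifier and its rescalings. Set $\rho_\varepsilon(x)=\varepsilon^{-d}\rho(x/\varepsilon)$ and define
\[
  f_\varepsilon(x) = (f*\rho_\varepsilon)(x) = \int_{\R^d} f(x-y)\,\rho_\varepsilon(y)\,dy.
\]
Then $f_\varepsilon\in C^\infty(\R^d)$, and the two estimates in \eqref{B.smoothning_of_func_bounds} are proved by two different arguments depending on whether we sit below or above the regularity threshold $k$.

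For $|\alpha|\le k$, I would move derivatives through the convolution to write $\partial^\alpha f_\varepsilon = (\partial^\alpha f)*\rho_\varepsilon$ and then expand $\partial^\alpha f(x-y)$ in a Taylor series in $y$ around $y=0$ to order $k-|\alpha|$. Since the $k$-th derivatives of $f$ are $\mu$-Hölder, the Taylor remainder is bounded by $C|y|^{k-|\alpha|+\mu}$, uniformly in $x$. The polynomial terms of order $1,\dots,k-|\alpha|$ integrate against $\rho_\varepsilon$ to zero thanks to the vanishing-moments choice, so only the constant term (reproducing $\partial^\alpha f(x)$) and the remainder survive, yielding
\[
  |\partial^\alpha f_\varepsilon(x)-\partial^\alpha f(x)| \le C\int |y|^{k-|\alpha|+\mu}\rho_\varepsilon(y)\,dy \le C_\alpha\,\varepsilon^{k+\mu-|\alpha|}.
\]

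For $|\alpha|\ge k+1$, direct differentiation inside the convolution is not possible, so I would split $\alpha=\alpha'+\alpha''$ with $|\alpha'|=k$ and $|\alpha''|=|\alpha|-k\ge 1$, and put the $\alpha''$ derivatives on the mollifier: $\partial^\alpha f_\varepsilon = (\partial^{\alpha'}f)*(\partial^{\alpha''}\rho_\varepsilon)$. Because $\alpha''\ne 0$, one has $\int \partial^{\alpha''}\rho_\varepsilon = 0$, so
\[
  \partial^\alpha f_\varepsilon(x) = \int\bigl(\partial^{\alpha'}f(x-y)-\partial^{\alpha'}f(x)\bigr)\,\partial^{\alpha''}\rho_\varepsilon(y)\,dy,
\]
and the Hölder bound $|\partial^{\alpha'}f(x-y)-\partial^{\alpha'}f(x)|\le C|y|^{\mu}$ combined with the scaling $\|\partial^{\alpha''}\rho_\varepsilon\|_{L^1}\le C\varepsilon^{-|\alpha''|}$ gives the claimed $C_\alpha\varepsilon^{\mu-|\alpha''|}=C_\alpha\varepsilon^{k+\mu-|\alpha|}$.

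The only genuinely delicate point is selecting a mollifier with the vanishing moments up to order $k$, which is what allows the full $\varepsilon^{k+\mu-|\alpha|}$ rate below the regularity threshold; without it one only recovers $\varepsilon^{1-|\alpha|+(\text{rest})}$. Everything else reduces to Taylor's theorem with remainder, the Hölder hypothesis applied to $\partial^{\alpha'}f$ with $|\alpha'|=k$, and standard scaling of $\rho_\varepsilon$. Since the estimates are pointwise and the arguments above are uniform in $x$, this proves the proposition; a detailed write-up is in the references cited in the statement.
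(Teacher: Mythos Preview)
Your argument is essentially correct and rests on the same idea the paper points to: a mollifier whose positive-order moments vanish, so that the Taylor polynomial terms drop out and only the H\"older remainder survives. There is one small but genuine slip: you cannot take $\rho$ to be \emph{non-negative} and simultaneously have $\int y^\beta \rho\,dy=0$ for $|\beta|=2$, since $\int y_j^2 \rho(y)\,dy>0$ for any nontrivial $\rho\ge 0$. Your own parenthetical construction (linear combinations of a mollifier and its rescalings) necessarily produces a sign-changing $\rho$; just drop ``non-negative'' and everything goes through, as positivity is never used in your estimates.

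The paper does not give its own proof but refers to \cite{MR1974450,ivrii2019microlocal1} and remarks that one takes a \emph{Schwartz} mollifier $\omega$ rather than a compactly supported one. The reason is that a Schwartz $\omega$ can be chosen with $\widehat{\omega}\equiv 1$ near the origin, so that \emph{all} positive-order moments vanish at once, and hence a single $\omega$ works for every regularity index $k$. Your compactly supported $\rho$ must depend on $k$ (a nonzero $\rho\in C_c^\infty$ cannot have all positive moments vanish, by Paley--Wiener). Both routes deliver the stated bounds; the Schwartz choice is marginally cleaner for the paper since the same mollifier is reused for every coefficient $a_{\alpha\beta}$ regardless of the pair $(k,\mu)$ in Assumption~\ref{ses_assump_gen}.
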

The function $f_\varepsilon$ is a smoothing (mollification) of
$f$. Usually this is done by convolution with a compactly supported
smooth function. However here one uses a Schwartz function in the
convolution in order to ensure the stated error terms. The convolution
with a compactly supported smooth function will in most cases \enquote{only}
give an error of order $\varepsilon$.
The ideas used to construct the approximating or framing quadratic forms are of the same type as the ideas used to construct the framing operators in
\cite{MR2105486}.
\begin{prop}\label{B.construc_framing_op}
Let $\mathcal{A}_\hbar$ be an elliptic sesquilinear form given by
   \begin{equation*}
   \mathcal{A}_\hbar[\varphi,\psi] =  \sum_{\abs{\alpha},\abs{\beta}\leq m}  \int_{\R^d} a_{\alpha\beta}(x) (\hbar D_x)^\beta\varphi(x) \overline{(\hbar D_x)^\alpha\psi(x)} \, dx, \qquad \varphi,\psi \in \mathcal{D}(   \mathcal{A}_\hbar).
  \end{equation*}
 Assume the coefficients $a_{\alpha\beta}(x)$ are in $C^{k,\mu}(\R^d)$, for a pair $(k,\mu)$ in $\N\times[0,1]$, bounded from below and satisfies that $a_{\alpha\beta}(x) = \overline{a_{\beta\alpha}(x)}$ for all multi indices $\alpha$ and $\beta$. Then for all $\varepsilon>0$ there exists a pair of sesquilinear forms  $\mathcal{A}_{\hbar,\varepsilon}^{+}$ and  $\mathcal{A}_{\hbar,\varepsilon}^{-}$ such that
 \begin{equation}
 	\mathcal{A}_{\hbar,\varepsilon}^{-}[\varphi,\varphi]\leq \mathcal{A}_{\hbar}[\varphi,\varphi]\leq\mathcal{A}_{\hbar,\varepsilon}^{+}[\varphi,\varphi]
 \end{equation}
  for all $\varphi$ in $\mathcal{D}(   \mathcal{A}_\hbar)$. Moreover, for all sufficiently small $\varepsilon$ the sesquilinear forms  $\mathcal{A}_{\hbar,\varepsilon}^{\pm}$ will be elliptic. If  $E$ is a non-critical value of $ \mathcal{A}_{\hbar}$ then $E$ will also be non-critical for  $\mathcal{A}_{\hbar,\varepsilon}^{\pm}$ for all $\varepsilon$ sufficiently small. The sesquilinear forms  $\mathcal{A}_{\hbar,\varepsilon}^{\pm}$ is explicit given by
 \begin{equation}\label{B.def_approx_quad_form}
 	\begin{aligned}
   \mathcal{A}_{\hbar,\varepsilon}^{\pm}[\varphi,\psi] = {}& \sum_{\abs{\alpha},\abs{\beta}\leq m}  \int_{\R^d} a_{\alpha\beta}^\varepsilon (x) (\hbar D_x)^\beta\varphi(x) \overline{(\hbar D_x)^\alpha\psi(x)} \, dx 
   \\
   & \pm C_1 \varepsilon^{k+\mu}  \sum_{\abs{\alpha}\leq m}  \int_{\R^d} (\hbar D_x)^\alpha\varphi(x) \overline{(\hbar D_x)^\alpha\psi(x)} \, dx, \qquad \varphi,\psi \in \mathcal{D}(   \mathcal{A}_\hbar),
   	\end{aligned}
  \end{equation}
 where $a_{\alpha\beta}^\varepsilon(x)$  are the smoothed functions of $a_{\alpha\beta}(x)$ according to Proposition~\ref{B.smoothning_of_func} and $C_1$ is some positive constant.
\end{prop}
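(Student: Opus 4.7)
The construction is prescribed by \eqref{B.def_approx_quad_form}, so I would mollify each coefficient by convolution with a single real-valued Schwartz kernel, producing $a_{\alpha\beta}^\varepsilon\in C^\infty(\R^d)$ satisfying $|a_{\alpha\beta}^\varepsilon(x)-a_{\alpha\beta}(x)|\leq c_{\alpha\beta}\varepsilon^{k+\mu}$ uniformly on $\R^d$ via Proposition~\ref{B.smoothning_of_func}. Since the kernel is real, mollification commutes with complex conjugation, so $a_{\alpha\beta}^\varepsilon=\overline{a_{\beta\alpha}^\varepsilon}$ is inherited from the original symmetry and each $\mathcal{A}_{\hbar,\varepsilon}^{\pm}$ is Hermitian on $\mathcal{D}(\mathcal{A}_\hbar)$ (the diagonal correction is manifestly so). It then remains to verify three claims: the framing inequality, ellipticity, and preservation of non-critical values.

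\textbf{Framing.} I would introduce the real Hermitian form
\begin{equation*}
\mathcal{R}_\varepsilon[\varphi,\varphi]=\sum_{|\alpha|,|\beta|\leq m}\int_{\R^d}(a_{\alpha\beta}-a_{\alpha\beta}^\varepsilon)(x)\,(\hbar D_x)^\beta\varphi(x)\,\overline{(\hbar D_x)^\alpha\varphi(x)}\,dx,
\end{equation*}
and apply pointwise Cauchy--Schwarz together with the elementary inequality $2|zw|\leq|z|^2+|w|^2$ to obtain $|\mathcal{R}_\varepsilon[\varphi,\varphi]|\leq C\varepsilon^{k+\mu}\sum_{|\alpha|\leq m}\|(\hbar D_x)^\alpha\varphi\|_{L^2}^2$, with $C$ depending only on $d$, $m$ and the constants $c_{\alpha\beta}$. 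Choosing $C_1\geq C$ then gives $\mathcal{A}_{\hbar,\varepsilon}^{-}[\varphi,\varphi]\leq\mathcal{A}_\hbar[\varphi,\varphi]\leq\mathcal{A}_{\hbar,\varepsilon}^{+}[\varphi,\varphi]$.

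\textbf{Ellipticity and non-critical value.} The symbol $a_0^{\varepsilon,\pm}$ of $\mathcal{A}_{\hbar,\varepsilon}^{\pm}$ differs from $a_0$ by a polynomial in $p$ of degree $\leq 2m$ whose coefficients are $O(\varepsilon^{k+\mu})$, and the top-order correction $\pm C_1\varepsilon^{k+\mu}\sum_{|\alpha|=m}p^{2\alpha}$ is comparable to $\pm C_1\varepsilon^{k+\mu}|p|^{2m}$ via the multinomial identity $|p|^{2m}=\sum_{|\alpha|=m}\binom{m}{\alpha}p^{2\alpha}$. Hence the ellipticity constant is perturbed only by $O(\varepsilon^{k+\mu})$ and $\mathcal{A}_{\hbar,\varepsilon}^{\pm}$ stays elliptic for $\varepsilon$ small. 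For a non-critical value $E$, one has $a_0^{\varepsilon,\pm}\to a_0$ and $\nabla_p a_0^{\varepsilon,\pm}\to\nabla_p a_0$ locally uniformly with rate $\varepsilon^{k+\mu}$; ellipticity forces $\{a_0^{\varepsilon,\pm}=E\}$ to lie close to $\{a_0=E\}$, where continuity of $\nabla_p a_0$ gives $|\nabla_p a_0|\geq c/2$, and a triangle inequality concludes $|\nabla_p a_0^{\varepsilon,\pm}|\geq c/4$ on $\{a_0^{\varepsilon,\pm}=E\}$ for all sufficiently small $\varepsilon$.

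\textbf{Main obstacle.} The delicate point is the framing step, where cross terms in $(\alpha,\beta)$ must be absorbed into the diagonal correction $\sum_{|\alpha|\leq m}\|(\hbar D_x)^\alpha\varphi\|^2$ while preserving the $\varepsilon^{k+\mu}$ rate; this is handled cleanly by the pointwise inequality above after summing over the finite index set, with all combinatorial factors absorbed into $C_1$. A secondary subtlety is that the level set $\{a_0=E\}$ may be unbounded in $x$, so the passage from local to uniform lower bounds on $|\nabla_p a_0^{\varepsilon,\pm}|$ there requires the ellipticity and below-boundedness hypotheses to provide a tubular neighbourhood of $\{a_0=E\}$ on which the non-critical bound persists uniformly.
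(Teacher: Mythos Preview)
Your proposal is correct and follows essentially the same route as the paper: mollify the coefficients via Proposition~\ref{B.smoothning_of_func}, bound the difference form by $C\varepsilon^{k+\mu}\sum_{|\alpha|\leq m}\|(\hbar D_x)^\alpha\varphi\|^2$ to obtain the framing, and then argue ellipticity and the non-critical property by comparing symbols and gradients with $O(\varepsilon^{k+\mu})$ errors. The only cosmetic difference is that the paper uses the weighted Young inequality $ab\leq \tfrac{1}{2\varepsilon^{k+\mu}}a^2+\tfrac{\varepsilon^{k+\mu}}{2}b^2$ in the framing step, whereas you first pull out the uniform bound $|a_{\alpha\beta}-a_{\alpha\beta}^\varepsilon|\leq c\,\varepsilon^{k+\mu}$ and then apply $2|zw|\leq|z|^2+|w|^2$; both yield the same diagonal control. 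Your explicit remark about uniformity on the (possibly $x$-unbounded) level set is well taken---the paper handles this with the same ``by continuity'' step, relying on ellipticity to confine $p$ and on the uniform-in-$x$ smoothing bounds from Proposition~\ref{B.smoothning_of_func}.
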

\begin{proof}
  We start by considering the form $ \mathcal{A}_{\hbar,\varepsilon}$  given by
 \begin{equation*}
   \mathcal{A}_{\hbar,\varepsilon}[\varphi,\psi] =  \sum_{\abs{\alpha},\abs{\beta}\leq m}  \int_{\R^d} a_{\alpha\beta}^\varepsilon (x) (\hbar D_x)^\beta\varphi(x) \overline{(\hbar D_x)^\alpha\psi(x)} \, dx, 
  \end{equation*}
  where we have replaced the coefficients of $ \mathcal{A}_{\hbar}$ with smooth functions made according to
  Proposition~\ref{B.smoothning_of_func}. For $\varphi$ in
  $\mathcal{D}( \mathcal{A}_{\hbar}) \cap \mathcal{D}( \mathcal{A}_{\hbar,\varepsilon})$ we
  have by Cauchy-Schwarz inequality
  \begin{equation}\label{B.diff_framing_1}
    \begin{aligned}
      |  \mathcal{A}_{\hbar}[\varphi,\varphi]
      -  \mathcal{A}_{\hbar,\varepsilon}[\varphi,\varphi] | \leq {}&
      \sum_{\abs{\alpha},\abs{\beta}\leq m}| \langle 
      (a_{\alpha\beta}-a_{\alpha\beta}^\varepsilon)(\hbar D)^\beta
      \varphi, (\hbar D)^\alpha \varphi \rangle |
      \\
      \leq{}& \sum_{\abs{\alpha},\abs{\beta}\leq m}
      \frac{1}{2\varepsilon^{k+\mu}}
      \norm{(a_{\alpha\beta}-a_{\alpha\beta}^\varepsilon)(\hbar
        D)^\beta \varphi}_{L^2(\R^d)}^2 +
      \frac{\varepsilon^{k+\mu}}{2} \norm{(\hbar D)^\alpha
        \varphi}_{L^2(\R^d)}^2
      \\
      \leq{}& c \varepsilon^{k+\mu} \sum_{\abs{\alpha}\leq m} \langle
      (\hbar D)^{\alpha} \varphi , (\hbar D)^{\alpha} \varphi\rangle,
    \end{aligned}
  \end{equation}
  where we in the last inequality have used Proposition~\ref{B.smoothning_of_func}. From this inequality we get that  $\mathcal{D}( \mathcal{A}_{\hbar})= \mathcal{D}( \mathcal{A}_{\hbar,\varepsilon})$. We recognise the last bound in \eqref{B.diff_framing_1} as the
  quadratic form associated to $(I-\hbar^2\Delta)^m$. Hence for
  sufficiently choice of constant $C_1$ we can choose the approximating forms $ \mathcal{A}_{\hbar,\varepsilon}^{\pm}$ to be given by \eqref{B.def_approx_quad_form} such that 
   \begin{equation}
 	\mathcal{A}_{\hbar,\varepsilon}^{-}[\varphi,\varphi]\leq \mathcal{A}_{\hbar}[\varphi,\varphi]\leq\mathcal{A}_{\hbar,\varepsilon}^{+}[\varphi,\varphi]
 \end{equation}
  for all $\varphi$ in $\mathcal{D}(   \mathcal{A}_\hbar)$.
  
 In order to show that we can choose the forms $ \mathcal{A}_{\hbar,\varepsilon}^{\pm}$ elliptic we note that
  \begin{equation}\label{B.diff_framing_2}
    \begin{aligned}
      \sum_{\abs{\alpha}=\abs{\beta}= m}& a_{\alpha\beta}^\varepsilon
      (x) p^{\alpha+\beta} \pm C_1\varepsilon^{k+\mu}\abs{p}^{2m}
      \\
      ={}&\sum_{\abs{\alpha}=\abs{\beta}= m}
      (a_{\alpha\beta}^\varepsilon (x) -a_{\alpha\beta} (x) )
      p^{\alpha+\beta} +\sum_{\abs{\alpha}=\abs{\beta}= m}
      a_{\alpha\beta} (x) p^{\alpha+\beta} \pm
     C_1 \varepsilon^{k+\mu}\abs{p}^{2m}
      \\
      \geq& (C-(C_1-c)\varepsilon^{k+\mu}) \abs{p}^{2m}
      \geq \tilde{C} \abs{p}^{2m},
    \end{aligned}
  \end{equation}
  for sufficiently small $\varepsilon$ and all $(x,p)$ in
  $\R_x^d\times \R_p^d$. This gives us that both forms $ \mathcal{A}_{\hbar,\varepsilon}^{\pm}$ are elliptic.  

  For the last part we assume $E$ is a non-critical value for the
  form $ \mathcal{A}_{\hbar}$. That is there exist a $c>0$ such that
  \begin{equation*}
    |\nabla_p a_0(x,p)| \geq c \quad\text{for all } (x,p)\in a_0^{-1}(\{E\}),
  \end{equation*}
  where
  \begin{equation*}
    a_0(x,p) =  \sum_{\abs{\alpha},\abs{\beta}\leq m} a_{\alpha\beta}(x)p^{\alpha+\beta}.
  \end{equation*}
  In order to prove that $E$ is a non-critical value for the framing
  forms we need to find an expression for
  $a_{\varepsilon,0}^{-1}(\{E\})$ for the framing operators, where we
  have omitted the $+$ and $-$ in the notation. By the ellipticity we
  can in the following calculations with out loss of generality assume
  $p$ belongs to a bounded set. We have
  \begin{equation}\label{B.diff_framing_3}
    \begin{aligned}
      a_{\varepsilon,0}(x,p)
      = \sum_{\abs{\alpha},\abs{\beta}\leq m}
      (a_{\alpha\beta}^\varepsilon(x) -
      a_{\alpha\beta}(x))p^{\alpha+\beta} \pm C_1
      \varepsilon^{k+\mu}(1+p^2)^m +
      \sum_{\abs{\alpha},\abs{\beta}\leq m}
      a_{\alpha\beta}(x)p^{\alpha+\beta}.
    \end{aligned}
  \end{equation}
  Since we can assume $p$ to be in a compact set we have that
  \begin{equation*}
    \Big| \sum_{\abs{\alpha},\abs{\beta}\leq m} (a_{\alpha\beta}^\varepsilon(x) - a_{\alpha\beta}(x))p^{\alpha+\beta}  \pm C_1 \varepsilon^{k+\mu}(1+p^2)^m \Big| \leq C \varepsilon^{k+\mu}. 
  \end{equation*}
  This combined with \eqref{B.diff_framing_3} implies the inclusion
  \begin{equation*}
    \{ (x,p) \in\R^{2d} \, |\,  a_{\varepsilon,0}(x,p) =E \} \subseteq \{ (x,p) \in\R^{2d} \, |\, | a_{0}(x,p) -E | \leq C \varepsilon^{k+\mu} \}. 
  \end{equation*}
  Hence for a sufficiently small $\varepsilon$ we have the inclusion
  \begin{equation}\label{B.diff_framing_4}
    \{ (x,p) \in\R^{2d} \, |\,  a_{\varepsilon,0}(x,p) =E \} \subseteq \{ (x,p) \in\R^{2d} \, |\,  \abs{\nabla_p a_{0}(x,p)} \geq \frac{c}{2} \}. 
  \end{equation}
  by continuity. For a point $(x,p)$ in
  $\{ (x,p) \in\R^{2d} \, |\, a_{\varepsilon,0}(x,p) =E \}$ we have
  \begin{equation}\label{B.diff_framing_5}
    \nabla_p a_{\varepsilon,0}(x,p)
      = \sum_{\abs{\alpha},\abs{\beta}\leq m}
      (a_{\alpha\beta}^\varepsilon(x) - a_{\alpha\beta}(x)) \nabla_p
      p^{\alpha+\beta} \pm C_1 \varepsilon^{k+\mu} \nabla_p (1+p^2)^m
      + \nabla_p a_{0}(x,p).
  \end{equation}
  Again since we can assume $p$ to be contained in a compact set we
  have
  \begin{equation*}
    \Big| \sum_{\abs{\alpha},\abs{\beta}\leq m} (a_{\alpha\beta}^\varepsilon(x) - a_{\alpha\beta}(x)) \nabla_p p^{\alpha+\beta}  \pm C_1 \varepsilon^{k+\mu}  \nabla_p (1+p^2)^m \Big| \leq C \varepsilon^{k+\mu}. 
  \end{equation*}
  Combining this with \eqref{B.diff_framing_4} and
  \eqref{B.diff_framing_5} we get
  \begin{equation}
    |\nabla_p a_{\varepsilon,0}(x,p)| \geq  |\nabla_p a_{0}(x,p)| - C \varepsilon^{k+\mu} \geq \frac{c}{2} -   C \varepsilon^{k+\mu} \geq \frac{c}{4}.
  \end{equation}
  where the last inequality is for $\varepsilon$ sufficiently
  small. This inequality proves $E$ is also a non-critical value of
  the framing forms.
\end{proof}
The framing forms constructed in the previous proposition are
forms with smooth coefficients. But when we take derivatives of
these coefficients we start to get negative powers of $\varepsilon$
from some point. Hence we can not associate a ``classic''  pseudo-differential
operator to the form. We will instead  in the following sections consider a ``rough'' theory for pseudo-differential
operators. Here we will  see that it is in fact possible to
verify most of the results from classic theory of pseudo-differential
operators in this rough theory. After this has been developed we will return to these
framing forms.

As a last remark of this section note that there is not a unique way to construct these framing forms.


\section{Definitions and properties of rough pseudo-differential
  operators}\label{Sec:def_rough_op}
In this section we will inspired by the approximation results in the
previous section define a class of pseudo-differential operators with
rough symbols. We will further state and prove some of the properties for these
operators. The definitions are very
similar to the definitions in the monograph \cite{MR897108} and we will see that the properties of these operators can be deduced from the results in the monograph \cite{MR897108}. 

\subsection{Definition of rough operators and rough symbols}
We start for the sake of completeness by recalling the definition of a tempered weight
function.
\begin{definition}
  A tempered weight function on $\R^D$ is a continuous function
  \begin{equation*}
    m:\R^D \rightarrow [0,\infty[, 
  \end{equation*}
  for which there exists positive constants $C_0$, $N_0$ such that for
  all points $x_1$ in $\R^D$ the estimate
  \begin{equation*}
    m(x) \leq C_0 m(x_1) \lambda(x_1-x)^{N_0},
  \end{equation*}
  holds for all points $x$ in $\R^D$.
\end{definition}
For our purpose here we will consider the cases where $D=2d$ or
$D=3d$.  These
types of functions are in the literature sometimes called order
functions. This is the case in the monographs
\cite{MR1735654,MR2952218}. But we have chosen the name tempered
weights to align with the terminology in the monographs
\cite{MR897108,MR2304165}. We can now define the symbols we will be working with.
\begin{definition}[Rough symbol]\label{B.def_rough_symbol}
  Let $\Omega \subseteq \R_x^d \times \R_p^d \times \R_y^d$ be open,
  $\rho$ be in $[0,1]$, $\varepsilon>0$, $\tau$ be in $\Z$ and $m$ a
  tempered weight function on $\R_x^d \times \R_p^d \times \R_y^d$. We
  call a function $a_\varepsilon$ a rough symbol of regularity $\tau$
  with weights $(m,\rho, \varepsilon)$ if $a_\varepsilon$ is in
  $C^{\infty}(\Omega)$ and satisfies that
  \begin{equation}\label{B.symbolb_est_def}
  	\begin{aligned}
  |\partial_x^\alpha\partial_p^\beta \partial_y^\gamma
    a_\varepsilon(x,p,y)|
    \leq
      C_{\alpha\beta\gamma}
      \varepsilon^{\min(0,\tau-\abs{\alpha}-\abs{\gamma})} m (x,p,y)
      \lambda(x,p,y)^{-\rho (\abs{\alpha}+\abs{\beta}+\abs{\gamma})} 
    \end{aligned}
  \end{equation}
  for all $(x,p,y)$ in $\Omega$ and $\alpha$, $\beta$, $\gamma$ in
  $\N^d$, where the constants $C_{\alpha\beta\gamma}$'s do not depend on $\varepsilon$. The space of these functions is denoted
  $\Gamma_{\rho,\varepsilon}^{m,\tau} \left(\Omega\right)$.
\end{definition}
\begin{remark}
The space $\Gamma_{\rho,\varepsilon}^{m,\tau} \left(\Omega\right)$ can be turned into a Fr\'{e}chet space with semi norms associated to the estimates in \eqref{B.symbolb_est_def}.
It is important to note that the semi norms on  $\Gamma_{\rho,\varepsilon}^{m,\tau} \left(\Omega\right)$ should be chosen weighted such that the norms associated to a set of numbers $\alpha,\beta,\gamma$ will be bounded by the constant $C_{\alpha\beta\gamma}$ and hence independent of $\varepsilon$.

  If $\varepsilon$ is equal to $1$, then these symbols are the same as
  the symbols defined in the monograph~\cite{MR897108}
  (Definition~{II-$10$}).  We will always assume $\varepsilon\leq1$ as we are interested in the cases of very small $\varepsilon$.
  
  We will later call a function $a_\varepsilon(x,p)$ or $b_\varepsilon(p,y)$ a
  rough symbol if it satisfies the above definition in the two
  variables $x$ and $p$ or $p$ and $y$. This more general definition
  is made in order to define the different forms of quantisation and
  the interpolation between them.
  If we say a symbol of regularity $\tau$ with tempered weight $m$ we implicit assume that $\rho=0$. This type of rough symbols is contained in the class of rough
  symbols consider in \cite[Section 2.3 and
  4.6]{ivrii2019microlocal1}.
\end{remark}
\begin{remark}\label{B.assumption_epsilon}
	We will later assume that a rough symbol is a tempered weight. When this is done we will implicit assume that the constants from the definition of a tempered weight is independent of $\varepsilon$. This is an important assumption since we need the estimates we make to be uniform for $\hbar$ in $(0,\hbar_0]$ with $\hbar_0>0$ sufficiently small and then for a choice of $\delta$ in $(0,1)$ we need the estimates to be uniform for $\varepsilon$ in $[\hbar^{1-\delta},1]$.
	
	Essentially the constants will be uniform for both  $\hbar$ in $(0,\hbar_0]$ and $\varepsilon$ in $(0,1]$, but if $\varepsilon\leq\hbar$ then the estimates will diverge in the semiclassical parameter. Hence we will assume the lower bound on $\varepsilon$.
The assumption that $\varepsilon\geq\hbar^{1-\delta}$ is in \cite{ivrii2019microlocal1,MR1631419} called a microlocal uncertainty principal. In \cite{ivrii2019microlocal1,MR1631419} there is two parameter instead of just one. This other parameter can be used to scale in the $p$-variable.    
\end{remark}

As we are interested in asymptotic expansions in the semiclassical
parameter we will define $\hbar$-$\varepsilon$-admissible symbols,
which is the symbols depending on the semiclassical parameter $\hbar$
for which we can make an expansion in $\hbar$.
\begin{definition}\label{B.def_admis_sym}
  With the notation from Definition~\ref{B.def_rough_symbol}. We call a symbol $a_\varepsilon(\hbar)$
  $\hbar$-$\varepsilon$-admissible of regularity $\tau$ with weights
  $(m,\rho, \varepsilon)$ in $\Omega$, if for fixed $\varepsilon$ and a $\hbar_0>0$ the map that takes $\hbar$
  into $a_\varepsilon(\hbar)$ is smooth from $(0,\hbar_0]$ into
  $\Gamma_{\rho,\varepsilon}^{m,\tau} \left(\Omega\right)$ such that
  there exists a $N_0$ in $\N$ such for all $N\geq N_0$ we have
  \begin{equation*}
    a_\varepsilon(x,p,y;\hbar) = a_{\varepsilon,0}(x,p,y) + \hbar a_{\varepsilon,1}(x,p,y) + \dots + \hbar^N a_{\varepsilon,N}(x,p,y) + \hbar^{N+1} r_N(x,p,y;\hbar),
  \end{equation*}
  where $a_{\varepsilon,j}$ is in
  $\Gamma_{\rho,\varepsilon,-2j}^{m,\tau_j} \left(\Omega\right)$ with
  the notation $\tau_j=\tau-j$ and $r_N$ is a symbol satisfying the
  bounds
  \begin{equation*} \begin{aligned}
    \hbar^{N+1} |\partial_x^\alpha& \partial_p^\beta \partial_y^\gamma
    r_N(x,p,y;\hbar)|
    \\
    \leq{}& C_{\alpha\beta\gamma} \hbar^{\kappa_1(N)}
    \varepsilon^{-\abs{\alpha}-\abs{\gamma}
    }m(x,y,p) \lambda(x,y,p)^{-\rho(\kappa_2(N)+
      \abs{\alpha}+\abs{\beta} +\abs{\gamma})},
   \end{aligned} \end{equation*}
  where $\kappa_1$ is a positive strictly increasing function and
  $\kappa_2$ is non-decreasing function. For $k$ in $\Z$
  $\Gamma_{\rho,\varepsilon,k}^{m,\tau} \left(\Omega\right)$ is the space
  of rough symbols of regularity $\tau$ with weights
  $(m(1+|(x,y,p)|)^{k\rho},\rho, \varepsilon)$.
\end{definition}
\begin{remark}
  We will also use the terminology $\hbar$-$\varepsilon$-admissible
  for symbols in two variables, where the definition is the same just
  in two variables. This definition is slightly different to the
  \enquote{usual} definition of an $\hbar$-admissible symbol
  \cite[Definition~{II-$11$}]{MR897108}. One
  difference is in the error term. 
  The functions $\kappa_1$ and $\kappa_2$
  will in most cases be dependent on the tempered weight function through the constants in the definition of a tempered weight, the
  regularity $\tau$ and the dimension $d$.  It should be noted that
  the function $\kappa_2$ might be constant negative.
\end{remark}
We will now define the pseudo-differential operators associated to the
rough symbols. We will call them rough pseudo-differential operators.
\begin{definition}\label{B.pseudo-differential-operator-def}
  Let $m$ be a tempered weight function on
  $\R^d_x\times\R^d_p\times\R_y^d$, $\rho$ in $[0,1]$, $\varepsilon>0$
  and $\tau$ in $\Z$. For a rough symbol $a_\varepsilon$ in
  $\Gamma_{\rho,\varepsilon}^{m,\tau}(\R^d_x\times\R^d_p\times\R_y^d)$
  we associate the operator $\Op(a_\varepsilon)$ defined by
  \begin{equation*}
    \Op(a_\varepsilon)\psi(x) = \frac{1}{(2\pi \hbar)^{d}} \int_{\R^{2d}} e^{i \hbar^{-1} \langle{x-y},{p}\rangle } 
    a_\varepsilon( x,p,y) \psi(y) \, d y \, d p,
  \end{equation*}
  for $\psi$ in $\mathcal{S}(\R^d)$.
\end{definition}
\begin{remark}
  With the notation from
  Definition~\ref{B.pseudo-differential-operator-def}. We remark that
  the integral in the definition of $\Op(a_\varepsilon)\psi(x)$ shall
  be considered as an oscillating integral. By applying the techniques
  for oscillating integrals it can be proven that $\Op(a_\varepsilon)$
  is a continuous linear operator from $\mathcal{S}(\R^d)$ into
  itself. The proof of this is analogous to the proof in
  \cite{MR897108} in the non-rough case. By duality it can
  also be defined as an operator from $\mathcal{S}'(\R^d)$ into
  $\mathcal{S}'(\R^d)$. \end{remark}
\begin{definition}
  We call an operator $A_\varepsilon(\hbar)$ from
  $\mathcal{L}(\mathcal{S}(\R^d),L^2(\R^d))$
  $\hbar$-$\varepsilon$-admissible of regularity $\tau \geq0$ with
  tempered weight $m$ if for fixed $\varepsilon$ and a $\hbar_0>0$ the map
  \begin{equation*}
    A_\varepsilon: (0,\hbar_0] \rightarrow\mathcal{L}(\mathcal{S}(\R^d),L^2(\R^d))
  \end{equation*}
  is smooth and there exists a sequence $a_{\varepsilon,j}$ in
  $\Gamma_{0,\varepsilon}^{m,{\tau_j}}(\R^d_x\times\R^d_p\times\R_y^d)$,
  where $\tau_0=\tau$ and $\tau_{j+1}= \tau_j-1$ and a sequence $R_N$
  in $\mathcal{L}(L^2(\R^d))$ such that for $N\geq N_0$, $N_0$
  sufficient large,
  \begin{equation}\label{B:op_for_prin_sym}
    A_\varepsilon(\hbar)=\sum_{j=0}^N \hbar^j  \Op( a_{\varepsilon,j}) + \hbar^{N+1}  R_N(\varepsilon,\hbar),
  \end{equation}
  and
  \begin{equation*}
    \hbar^{N+1}  \norm{R_N(\varepsilon,\hbar)}_{\mathcal{L}(L^2(\R^d))} \leq \hbar^{\kappa(N)} C_N ,
  \end{equation*}
  for a strictly positive increasing function $\kappa$.
\end{definition}
\begin{remark}
  By the results in Theorem~\ref{B.cal-val-thm} we have that if the
  tempered weight function $m$ is in $L^\infty(\R^d)$. Then for a
  $\hbar$-$\varepsilon$-admissible symbol $a_\varepsilon(\hbar)$ of
  regularity $\tau\geq0$ with tempered weight $m$ the operator
  $A_\varepsilon(\hbar)=\Op(a_\varepsilon(\hbar))$ is a
  $\hbar$-$\varepsilon$-admissible operator of regularity $\tau$.
\end{remark}
\begin{remark}
When we have an operator $  A_\varepsilon(\hbar)$  with an expansion 
  \begin{equation*}
    A_\varepsilon(\hbar)=\sum_{j\geq0} \hbar^j  \Op( a_{\varepsilon,j}),
  \end{equation*}
  where the sum is understood as a formal sum and in the sense that for all $N$ sufficiently large there exists $R_N$ in $\mathcal{L}(L^2(\R^d))$ such that the operator is of the same form as in \eqref{B:op_for_prin_sym}. Then we call the symbol $a_{\varepsilon,0}$ the principal symbol and the symbol $a_{\varepsilon,1}$ the subprincipal symbol. 
\end{remark}

\begin{definition}
  Let $A_\varepsilon(\hbar)$ be a $\hbar$-$\varepsilon$-admissible of
  regularity $\tau$ with tempered weight $m$. For any $t$ in $[0,1]$
  we call all $\hbar$-$\varepsilon$-admissible symbols
  $b_\varepsilon(\hbar)$ in
  $\Gamma_{0,\varepsilon}^{m,\tau}(\R^d_x\times\R^d_p)$ such,
  \begin{equation*}
    A_\varepsilon(\hbar) \psi(x) =  \frac{1}{(2\pi \hbar)^{d}} \int_{\R^{2d}} e^{i \hbar^{-1} \langle{x-y},{p}\rangle } b_\varepsilon( (1-t)x +ty,p;\hbar) \psi(y) \, d y \, d p,
  \end{equation*}
  for all $\psi\in\mathcal{S}(\R^d)$ and all $\hbar\in]0,\hbar_0]$, where $\hbar_0$ is a strictly positive number,
  rough $t$-$\varepsilon$-symbols of regularity $\tau$ associated to
  $A_\varepsilon(\hbar)$.
\end{definition}
\begin{notation}
  In general for a symbol $b_\varepsilon(\hbar)$ in
  $\Gamma_{\rho,\varepsilon}^{m,\tau}(\R^d_x\times\R^d_p)$ and $\psi$
  in $\mathcal{S}(\R^d)$ we will use the notation
  \begin{equation*}
    \Opt(b_\varepsilon)\psi(x) = \frac{1}{(2\pi\hbar)^d} \int_{\R^{2d}}  e^{i\hbar^{-1} \langle x-y,p\rangle} b_\varepsilon( (1-t)x +ty,p;\hbar) \psi(y) \, d y \, d p
  \end{equation*}
  We have the special case of Weyl quantisation when $t=\frac12$,
  which is the one we will work the most with. In this case we write
  \begin{equation*}
    \OpN{\frac12}(b_\varepsilon) = \OpW(b_\varepsilon).
  \end{equation*}
\end{notation}
For some application we will need stronger assumptions than
$\hbar$-$\varepsilon$-admissibility of our operators. The operators
satisfying these stronger assumptions will be called strongly
$\hbar$-$\varepsilon$-admissible operators with some regularity. As an
example we could consider a symbol $a_\varepsilon(x,p)$ in
$\Gamma_{\rho,\varepsilon}^{m,\tau} (\R_x^{d}\times\R^d_p)$. For this
symbol define
$\tilde{a}_\varepsilon(x,p,y) =a_\varepsilon(tx+(1-t)y,p) $ and ask if
this symbol is in
$\Gamma_{\rho,\varepsilon}^{\tilde{m},\tau}
(\R_x^{d}\times\R^d_p\times\R^d_y)$, where
$\tilde{m}(x,p,y)=m(tx+(1-t)y,p)$. The answer will not in general be
positive. Hence in general we can not ensure decay in the variables
$(x,p,y)$ when viewing a function of $(x,p)$ as a function of
$(x,p,y)$. With this in mind we define a new class of symbols and
strongly $\hbar$-$\varepsilon$-admissible operators.
\begin{definition}
  A symbol $a_\varepsilon$ belongs to the class
  $\tilde{\Gamma}_{\rho,\varepsilon}^{m,\tau}
  (\R_x^{d}\times\R^d_p\times\R^d_y)$ if $a_\varepsilon$ is in
  $\Gamma_{0,\varepsilon}^{m,\tau} (\R_x^{d}\times\R^d_p\times\R^d_y)$
  and there exists a positive $\nu$ such that
  \begin{equation*}
    a_\varepsilon \in \Gamma_{\rho,\varepsilon}^{m,\tau} (\Omega_\nu),
  \end{equation*}
  where $\Omega_\nu=\{(x,p,y)\in \R^{3d} \,|\, \abs{x-y}<\nu\}$.
\end{definition}
\begin{definition}
  We call the family of operators
  $A_\varepsilon(\hbar)=\Op(a_\varepsilon(\hbar))$ strongly
  $\hbar$-$\varepsilon$-admissible of regularity $\tau$ if
  $a_\varepsilon(\hbar)$ is an $\hbar$-$\varepsilon$-admissible symbol
  of regularity $\tau$ with respect to the weights $(m,0,\varepsilon)$
  on $ \R_x^{d}\times\R^d_p\times\R^d_y$ and the weights
  $(m,\rho,\varepsilon)$ on
  $\Omega_\nu=\{(x,p,y)\in \R^{3d} \,|\, \abs{x-y}<\nu\}$ for a
  positive $\nu$.
\end{definition}
\begin{remark}
  We note that a strongly $\hbar$-$\varepsilon$-admissible
  operator is also $\hbar$-$\varepsilon$-admissible but as a
  consequence of the definition the error term of a strongly
  $\hbar$-$\varepsilon$-admissible operator will be a
  pseudo-differential operator and not just a bounded operator as for
  the $\hbar$-$\varepsilon$-admissible operators.
\end{remark}

Before we start proving/stating results about these operators we make the following observation.

\begin{obs}\label{obs_connection_symbol}
	 Let $m$ be a tempered weight function on
  $\R^d_x\times\R^d_p\times\R_y^d$, $\rho$ in $[0,1]$, $0<\varepsilon\leq1$
  and $\tau$ in $\N_0$. Consider  a rough symbol $a_\varepsilon$ in
  $\Gamma_{\rho,\varepsilon}^{m,\tau}(\R^d_x\times\R^d_p\times\R_y^d)$. We suppose that there is a $\delta$ in $(0,1)$ such $\varepsilon \geq \hbar^{1-\delta}$ and consider the operator $\Op(a_\varepsilon)$ associated with $a_\varepsilon$. We define the unitary dilation operator $\mathcal{U}_{\varepsilon}$ by
  \begin{equation*}
  	\mathcal{U}_{\varepsilon} f(x) = \varepsilon^{d/2} f(\varepsilon x),
  \end{equation*}
  for $f$ in $L^2(\R^d)$. We observe that with this operator we obtain the following equality
    \begin{equation*}
  	\mathcal{U}_{\varepsilon} \Op(a_\varepsilon(x,p,y)) \mathcal{U}_{\varepsilon}^{*} = \Opdef_{\hbar^{\delta}}(a_\varepsilon(\varepsilon x, \tfrac{\hbar^{1-\delta}}{\varepsilon} p,\varepsilon y)).
  \end{equation*}
 Set
 \begin{equation*}
 	a_{\varepsilon}^{\#}(x,p,y) = a_\varepsilon(\varepsilon x, \tfrac{\hbar^{1-\delta}}{\varepsilon} p,\varepsilon y).
 \end{equation*}
 Since we have that $a_\varepsilon$ is in $\Gamma_{\rho,\varepsilon}^{m,\tau}(\R^d_x\times\R^d_p\times\R_y^d)$ we get for all $\alpha, \beta, \gamma$ in $\N^d_0$ that
   \begin{equation*}
  	\begin{aligned}
 	\MoveEqLeft  |\partial_x^\alpha\partial_p^\beta \partial_y^\gamma a_{\varepsilon}^{\#}|
 	\\
    &\leq
      C_{\alpha\beta\gamma} \varepsilon^{\min(0,\tau-\abs{\alpha}-\abs{\gamma})+\abs{\alpha}+\abs{\gamma}} \left(\tfrac{\hbar^{1-\delta}}{\varepsilon}\right)^{\abs{\beta}}  m (\varepsilon x, \tfrac{\hbar^{1-\delta}}{\varepsilon} p,\varepsilon y)\lambda(\varepsilon x, \tfrac{\hbar^{1-\delta}}{\varepsilon} p,\varepsilon y)^{-\rho (\abs{\alpha}+\abs{\beta}+\abs{\gamma})} 
      \\
      &\leq
      C_{\alpha\beta\gamma}C_0 \varepsilon^{\min(0,\tau-\abs{\alpha}-\abs{\gamma})+\abs{\alpha}+\abs{\gamma}} \left(\tfrac{\hbar^{1-\delta}}{\varepsilon}\right)^{\abs{\beta}}  m ( x,  p, y)\lambda((1-\varepsilon) x,(1- \tfrac{\hbar^{1-\delta}}{\varepsilon}) p,(1-\varepsilon) y)^{N_0} 
        \\
        &\leq  \tilde{C}_{\alpha\beta\gamma}   \tilde{m} (x,p,y), 
    \end{aligned}
  \end{equation*}
 where we have used that  $  \hbar^{1-\delta} \leq \varepsilon\leq1$ and used the notation $\tilde{m} = m\lambda^{N_0}$. This shows that $a_{\varepsilon}^{\#}$ is in $\Gamma_{0,1}^{\tilde{m},0}(\R^d_x\times\R^d_p\times\R_y^d)$ by definition of the set. As mentioned earlier this is the type of symbols usually considered in the literature.
\end{obs}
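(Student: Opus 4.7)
The plan is to establish Observation~\ref{obs_connection_symbol} in two steps: first the intertwining identity
\begin{equation*}
\mathcal{U}_{\varepsilon} \Op(a_\varepsilon) \mathcal{U}_{\varepsilon}^{*} = \Opdef_{\hbar^{\delta}}(a_{\varepsilon}^{\#}),
\end{equation*}
and then the symbol-class statement $a_{\varepsilon}^{\#} \in \Gamma_{0,1}^{\tilde{m},0}(\R^{3d})$. I would treat these as two essentially independent computations: the first is an elementary change of variables in an oscillatory integral, and the second is a chain-rule bookkeeping exercise in which the assumption $\varepsilon \geq \hbar^{1-\delta}$ is used precisely to absorb the negative powers of $\varepsilon$ coming from differentiation.

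For the intertwining identity, my approach is to expand the right-hand side of $\mathcal{U}_{\varepsilon} \Op(a_\varepsilon) \mathcal{U}_{\varepsilon}^{*} f(x) = \varepsilon^{d/2}[\Op(a_\varepsilon)(\mathcal{U}_{\varepsilon}^{*} f)](\varepsilon x)$ by writing out Definition~\ref{B.pseudo-differential-operator-def}, then substituting $y = \varepsilon y'$ and $p = (\hbar^{1-\delta}/\varepsilon) p'$. Tracking Jacobians, the prefactor $(2\pi\hbar)^{-d}$ rebalances against the factor $\varepsilon^d \cdot (\hbar^{1-\delta}/\varepsilon)^d = \hbar^{d(1-\delta)}$ to give exactly $(2\pi\hbar^{\delta})^{-d}$, while the phase becomes $\hbar^{-1}\langle \varepsilon(x-y'), (\hbar^{1-\delta}/\varepsilon) p'\rangle = \hbar^{-\delta}\langle x-y', p'\rangle$. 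This yields precisely $\Opdef_{\hbar^{\delta}}(a_{\varepsilon}^{\#}) f(x)$.

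For the symbol-class statement, the key is to apply the chain rule
\begin{equation*}
\partial_x^\alpha \partial_p^\beta \partial_y^\gamma a_{\varepsilon}^{\#}(x,p,y) = \varepsilon^{|\alpha|+|\gamma|}\left(\tfrac{\hbar^{1-\delta}}{\varepsilon}\right)^{|\beta|} (\partial_x^\alpha \partial_p^\beta \partial_y^\gamma a_\varepsilon)(\varepsilon x, \tfrac{\hbar^{1-\delta}}{\varepsilon} p, \varepsilon y),
\end{equation*}
insert the rough symbol estimate \eqref{B.symbolb_est_def}, and note that the factor $\varepsilon^{\min(0,\tau-|\alpha|-|\gamma|)+|\alpha|+|\gamma|} = \varepsilon^{\max(|\alpha|+|\gamma|,\tau)}$ is bounded by $1$ (using $\tau \geq 0$ and $\varepsilon \leq 1$), while $(\hbar^{1-\delta}/\varepsilon)^{|\beta|} \leq 1$ by the uncertainty-principle hypothesis. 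The tempered weight property of $m$ then gives $m(\varepsilon x, \tfrac{\hbar^{1-\delta}}{\varepsilon} p, \varepsilon y) \leq C_0 m(x,p,y)\lambda((1-\varepsilon)x, (1 - \tfrac{\hbar^{1-\delta}}{\varepsilon})p, (1-\varepsilon)y)^{N_0}$, and since the translation arguments have modulus at most $\lambda(x,p,y)$, the whole expression is bounded by $\tilde{C}_{\alpha\beta\gamma}\tilde{m}(x,p,y)$ with $\tilde{m} = m\lambda^{N_0}$, yielding $a_{\varepsilon}^{\#} \in \Gamma_{0,1}^{\tilde{m},0}$.

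The main obstacle is the careful bookkeeping of exponents: one must verify that every factor of $\varepsilon^{-1}$ or $\hbar^{-(1-\delta)}$ arising from differentiation is compensated either by an outer power of $\varepsilon$ from the dilation or by the uncertainty constraint $\varepsilon \geq \hbar^{1-\delta}$. Since we are landing in the class $\Gamma_{0,1}^{\tilde{m},0}$ with $\rho = 0$, no decay in $(x,p,y)$ needs to be produced by the proof, which is what makes the exponent count close; if one instead tried to preserve the factor $\rho > 0$ in the symbol class, the argument would fail because the rescaling in the $p$-variable destroys decay in $p$. This restriction to $\rho = 0$ should be noted explicitly, and the uniformity of all constants in $\varepsilon \in [\hbar^{1-\delta}, 1]$ and $\hbar \in (0,\hbar_0]$ (as flagged in Remark~\ref{B.assumption_epsilon}) should be verified at each step.
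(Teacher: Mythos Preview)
Your proposal is correct and follows exactly the approach of the paper: the intertwining identity via change of variables (which the paper simply states), followed by the chain rule and tempered-weight estimate for the symbol bound. One small slip: the exponent $\min(0,\tau-|\alpha|-|\gamma|)+|\alpha|+|\gamma|$ equals $\min(|\alpha|+|\gamma|,\tau)$, not $\max$, but since both $\tau\geq 0$ and $|\alpha|+|\gamma|\geq 0$ with $\varepsilon\leq 1$, your conclusion that this factor is bounded by $1$ remains valid.
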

In what follows we will use this observation to establish the symbolic calculus for the rough pseudo-differential operators. We will here mainly focus on the  results we will need later in the proofs of our main theorems.  

We will now prove a connection between operators with symbols in the
class
$\tilde{\Gamma}_{\rho,\varepsilon}^{m,\tau}
(\R_x^{d}\times\R^d_p\times\R^d_y)$ and $t$-quantised operators.
\begin{thm}\label{B.connection_quantisations}
  Let $a_\varepsilon$ be a symbol in
  $\tilde{\Gamma}_{\rho,\varepsilon}^{m,\tau}
  (\R_x^{d}\times\R^d_p\times\R^d_y)$ of regularity $\tau\geq0$ with
  weights $(m,\rho,\varepsilon)$ and
  \begin{equation*}
    A_\varepsilon(\hbar) \psi(x) = \frac{1}{(2\pi\hbar)^d} \int_{\R^{2d}} e^{i\hbar^{-1}\langle x-y , p \rangle} a_\varepsilon(x,p,y) \psi(y) \, dy \,dp.
  \end{equation*}	
  We suppose there is a $\delta$ in $(0,1)$ such
  $\varepsilon \geq \hbar^{1-\delta}$. Then for every $t$ in $[0,1]$
  we can associate a unique $t$-$\varepsilon$-symbol $b_t$ of
  regularity $\tau$ with weights $(\tilde{m},\rho,\varepsilon)$, where
  $\tilde{m}(x,p)=m(x,x,p)$. The $t$-$\varepsilon$-symbol $b_t$ is
  defined by the oscillating integral
  \begin{equation*}
    b_t(x,p,\hbar) = \frac{1}{(2\pi\hbar)^d}  \int_{\R^{2d}}  e^{i\hbar^{-1}\langle u,q \rangle} a_\varepsilon (x+tu,p+q,x-(1-t)u) \, dq \,du 
  \end{equation*}
  and symbol $b_t$ has the following asymptotic expansion
  \begin{equation*}
    b_t(x,p;\hbar) = \sum_{j=0}^N \hbar^j a_{\varepsilon,j}(x,p) + \hbar^{N+1} r_{\varepsilon,N+1}(x,p;\hbar),
  \end{equation*}
  where
  \begin{equation*}
    a_{\varepsilon,j} (x,p) =  \frac{(-i)^j}{j!} \langle D_u , D_p\rangle^j a_\varepsilon(x+tu, p,x-(1-t)u) \Big|_{u=0},
  \end{equation*}
  and the error term satisfies that
  \begin{equation*}
    \hbar^{N+1} |\partial_x^\alpha\partial_p^\beta r_{N+1}(x,p,\hbar) | 
    \leq   C_{d,N,\alpha,\beta}  \hbar^{N +1} \varepsilon^{-(\tau-N-2-d-\abs{\alpha})_{-}}  m(x,p,x)  \lambda(x,p)^{\rho N_0 } ,
  \end{equation*}
  for all $\alpha$ and $\beta$ in $\N^d$. In particular we have that
  \begin{equation*} \begin{aligned}
    a_{\varepsilon,0}(x,p) &= a_\varepsilon(x,p,x)
    \\
    a_{\varepsilon,1} (x,p) &= (1-t) (\nabla_y D_p
    a_\varepsilon)(x,p,x) - t (\nabla_x D_p a_\varepsilon) (x,p,x).
   \end{aligned} \end{equation*}
\end{thm}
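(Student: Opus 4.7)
The plan is to derive the formula for $b_t$ by matching two integral representations of $A_\varepsilon(\hbar)$ and then to obtain the asymptotic expansion by applying the stationary phase formula of Proposition~\ref{B.quad_stationary_phase}.

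First I would compare the given expression for $A_\varepsilon(\hbar)\psi$ with the desired $t$-quantisation, namely
\begin{equation*}
\frac{1}{(2\pi\hbar)^d}\int_{\R^{2d}} e^{i\hbar^{-1}\scp{x-y}{p}} a_\varepsilon(x,p,y)\psi(y)\,dy\,dp
= \frac{1}{(2\pi\hbar)^d}\int_{\R^{2d}} e^{i\hbar^{-1}\scp{x-y}{p}} b_t((1-t)x+ty,p;\hbar)\psi(y)\,dy\,dp.
\end{equation*}
Performing the change of variables $z=(1-t)x+ty$, $u=x-y$ in the Schwartz kernels (so that $x=z+tu$ and $y=z-(1-t)u$) and using Fourier inversion in the $p$ variable produces precisely the claimed formula
\begin{equation*}
b_t(z,p;\hbar) = \frac{1}{(2\pi\hbar)^d}\int_{\R^{2d}} e^{i\hbar^{-1}\scp{u}{q}} a_\varepsilon(z+tu,p+q,z-(1-t)u)\,du\,dq.
\end{equation*}
The hypothesis $a_\varepsilon\in\tilde{\Gamma}^{m,\tau}_{\rho,\varepsilon}(\R^{3d})$ guarantees that the amplitude is smooth in the region $\abs{u}<\nu$ with the correct symbol bounds, and the standard cutoff-plus-integration-by-parts argument outside $\abs{u}<\nu$ makes the oscillatory integral well defined.

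Next I would apply Proposition~\ref{B.quad_stationary_phase} to this integral with quadratic phase $\scp{u}{q}$ on $\R^{2d}$. The corresponding symmetric matrix
\begin{equation*}
B = \begin{pmatrix} 0 & I_d \\ I_d & 0 \end{pmatrix}
\end{equation*}
satisfies $|\det B|=1$, $\sgn(B)=0$ and $B^{-1}=B$, so with integration variable $v=(u,q)$ the differential operator $\scp{B^{-1}D_v}{D_v}/(2i)$ reduces to $D_u\cdot D_q/i$. Since $\partial_q$ at $q=0$ acting on $a_\varepsilon(\cdot,p+q,\cdot)$ agrees with $\partial_p$ acting on $a_\varepsilon(\cdot,p,\cdot)$, the $j$-th term produced by stationary phase is exactly
\begin{equation*}
a_{\varepsilon,j}(x,p) = \frac{(-i)^j}{j!}\scp{D_u}{D_p}^j a_\varepsilon(x+tu,p,x-(1-t)u)\Big|_{u=0}.
\end{equation*}
Setting $j=0$ immediately gives $a_{\varepsilon,0}(x,p)=a_\varepsilon(x,p,x)$, while expanding $j=1$ using $\partial_{u_k}=t\partial_{x_k}-(1-t)\partial_{y_k}$ at $u=0$ and collecting the factor $(-i)^2=-1$ yields the stated expression for $a_{\varepsilon,1}$.

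The principal obstacle is controlling the remainder uniformly in $\varepsilon$. Proposition~\ref{B.quad_stationary_phase} bounds the $N$-th order remainder by the $H^{[d]+1}$ Sobolev norm of $\scp{D_u}{D_q}^{N+1}$ applied to the amplitude, so in order to estimate $\partial_x^\alpha\partial_p^\beta r_{N+1}$ one must differentiate $a_\varepsilon(x+tu,p+q,x-(1-t)u)$ up to $N+1+d+1$ times in each of the $u$ and $q$ arguments together with up to $\abs{\alpha}$ further derivatives in $x$. Each $u$-derivative acts as a linear combination of $\partial_x$ and $\partial_y$ on the first and third arguments of $a_\varepsilon$, and by Definition~\ref{B.def_rough_symbol} any such derivative of total order beyond $\tau$ costs a factor $\varepsilon^{-1}$, while $q$-derivatives only contribute $\lambda^{-\rho}$ factors, which are absorbed into $\lambda(x,p)^{\rho N_0}$ through the tempered-weight inequality applied to $m$. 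Collecting these factors produces precisely the exponent $-(\tau-N-2-d-\abs{\alpha})_{-}$ on $\varepsilon$. The assumption $\varepsilon\geq\hbar^{1-\delta}$ enters through the rescaling in Observation~\ref{obs_connection_symbol}, which reduces the problem to a standard symbol class (with parameter $\hbar^\delta$) and thereby ensures that the constants appearing in the Sobolev estimate are uniform in $\varepsilon$, giving the claimed bound on $\hbar^{N+1} r_{N+1}$.
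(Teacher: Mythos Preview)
Your argument is correct in outline and corresponds to the alternative route the paper itself mentions at the end of its proof (``Alternatively one can also follow the proof in \cite[Theorem II-27]{MR897108} and do the full stationary phase argument''). The paper's actual proof, however, takes a different and more economical path: it first conjugates $A_\varepsilon(\hbar)$ by the dilation $\mathcal{U}_\varepsilon$ of Observation~\ref{obs_connection_symbol}, which turns the rough symbol $a_\varepsilon$ into a standard symbol $a_\varepsilon^{\#}$ with semiclassical parameter $\hbar^\delta$, then invokes the classical result \cite[Theorem II-27]{MR897108} for $A_\varepsilon^{\#}(\hbar)$, and finally undoes the conjugation, checking by an explicit change of variables that $b_t^{\#}(\varepsilon^{-1}x,\tfrac{\varepsilon}{\hbar^{1-\delta}}p;\hbar^\delta)=b_t(x,p;\hbar)$ and similarly for each term of the expansion and the remainder. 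In this way the paper never redoes the stationary phase analysis; all the analytic work is outsourced to the non-rough theorem, and the only thing to verify is how the rescaling transforms the formulas and the remainder bound.

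Your direct approach buys a self-contained argument that makes the appearance of the exponent $-(\tau-N-2-d-|\alpha|)_{-}$ transparent (it is exactly the count of $u$-derivatives coming from $\langle D_u,D_q\rangle^{N+1}$, the Sobolev norm of order $[d]+1$, and $\partial_x^\alpha$). Two points are worth tightening, though. First, Proposition~\ref{B.quad_stationary_phase} requires compact support of the amplitude in the integration variable, so you need a cutoff in \emph{both} $u$ and $q$, and you must show that the tail contribution is $\mathcal{O}(\hbar^\infty)$ by non-stationary phase; this is where the assumption $\varepsilon\geq\hbar^{1-\delta}$ is really used, since each integration by parts in $u$ produces a factor $\hbar\varepsilon^{-1}\leq\hbar^{\delta}$. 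Second, your last paragraph slightly conflates the two viewpoints: once you have counted the $\varepsilon$-powers directly from the symbol estimates, the rescaling of Observation~\ref{obs_connection_symbol} is not needed as a separate step to ``ensure uniformity'' --- it is the paper's alternative route, not an additional ingredient in yours.
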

\begin{remark}
  It can be noted
  that in order for the error term not to diverge, when
  the semiclassical parameter tends to zero, one needs to take $N$
  such that
  \begin{equation*}
    \tau -1-d + \delta(N+2+d) \geq 0.
  \end{equation*}	
  If the symbol is a polynomial in one of the variables or both then
  the asymptotic expansion will be exact and a finite sum. This is in
  particular the case when \enquote{ordinary} differential operators are
  considered.
\end{remark}
\begin{proof}
	Let $\mathcal{U}_{\varepsilon}$ be the unitary dilation operator as defined in Observation~\ref{obs_connection_symbol} and define the operator $ A_\varepsilon^{\#}(\hbar)$ by conjugation with $\mathcal{U}_{\varepsilon}$. This new operator is defined by 
  \begin{equation*}
   A_\varepsilon^{\#}(\hbar) \psi(x) = \frac{1}{(2\pi\hbar^{\delta})^d} \int_{\R^{2d}} e^{i\hbar^{-\delta}\langle x-y , p \rangle} a_\varepsilon^{\#}(x,p,y) \psi(y) \, dy \,dp,
  \end{equation*}	
where $\psi$ is a Schwartz function.	From Observation~\ref{obs_connection_symbol} we have that $a_\varepsilon^{\#}$ is in $\tilde{\Gamma}_{0,1}^{m,0}(\R^d_x\times\R^d_p\times\R_y^d)$. So by \cite[Theorem II-27]{MR897108} the result is true for the operator $ A_\varepsilon^{\#}(\hbar)$. Due to the identity
  \begin{equation*}
   A_\varepsilon(\hbar)  = \mathcal{U}_{\varepsilon}^{*} A_\varepsilon^{\#}(\hbar) \mathcal{U}_{\varepsilon},
  \end{equation*}
what remains is to conjugate the terms in the representation of  $A_\varepsilon^{\#}(\hbar)$ by $\mathcal{U}_{\varepsilon}^{*}$. First we observe that
  \begin{equation}\label{proof_connection_t_sym_1}
  	\begin{aligned}
    \MoveEqLeft b_t^{\#}(\varepsilon^{-1} x, \tfrac{\varepsilon}{\hbar^{1-\delta}}  p;\hbar^{\delta}) 
    \\
    ={}& \frac{1}{(2\pi\hbar^{\delta})^d}  \int_{\R^{2d}}   e^{i\hbar^{-\delta}\langle u,q \rangle} a_\varepsilon^{\#} (\varepsilon^{-1} x+tu,\tfrac{\varepsilon}{\hbar^{1-\delta}}p+q,\varepsilon^{-1}x-(1-t)u) \, dq \,du 
    \\
     ={}& \frac{1}{(2\pi\hbar^{\delta})^d}  \int_{\R^{2d}}  e^{i\hbar^{-\delta}\langle u,q \rangle} a_\varepsilon (x+t \varepsilon u,p+ \tfrac{\hbar^{1-\delta}}{\varepsilon}q,x-(1-t) \varepsilon u) \, dq \,du 
      \\
     ={}& \frac{1}{(2\pi\hbar)^d}   \int_{\R^{2d}}  e^{i\hbar^{-1}\langle u,q \rangle} a_\varepsilon (x+t u,p+ q,x-(1-t)  u) \, dq \,du
     \\
     ={}&    b_t(x,p,\hbar).
    \end{aligned}
  \end{equation}
Next  by using the identity obtained in \eqref{proof_connection_t_sym_1} we get that
  \begin{equation}\label{proof_connection_t_sym_1.5}
	\begin{aligned}
  	 A_\varepsilon(\hbar)  = {}& \mathcal{U}_{\varepsilon}^{*} A_\varepsilon^{\#}(\hbar) \mathcal{U}_{\varepsilon} 
	 = \mathcal{U}_{\varepsilon}^{*}   \Opdef_{\hbar^{\delta},t}(b_t^{\#}) \mathcal{U}_{\varepsilon} 
	 \\
	  ={}&  \Opt (b_t^{\#}(\varepsilon^{-1} x, \tfrac{\varepsilon}{\hbar^{1-\delta}}  p;\hbar^{\delta}) ) = \Opt (b_t)
   	\end{aligned}
  \end{equation}
From  the  asymptotic expansion of $ b_t^{\#}(x,p;\hbar^\delta) $ obtained in \cite[Theorem II-27]{MR897108}  we have that 
  \begin{equation}\label{proof_connection_t_sym_2}
    b_t^{\#}(x,p;\hbar^\delta) = \sum_{j=0}^N (\hbar^\delta)^j a_{\varepsilon,j}^{\#}(x,p) + (\hbar^\delta)^{N+1} r_{\varepsilon,N+1}^{\#}(x,p;\hbar^{\delta}).
  \end{equation}
In order to arrive at the stated expansion of $ b_t(x,p,\hbar)$ we have to find an expression of $a_{\varepsilon,j}^{\#}(\varepsilon^{-1}x,\tfrac{\varepsilon}{\hbar^{1-\delta}} p)$ in terms of $a_\varepsilon$. By definition of $a_{\varepsilon,j}^{\#}$ we have that
 \begin{equation} \label{proof_connection_t_sym_3}
    a_{\varepsilon,j}^{\#} (\varepsilon^{-1}x, \tfrac{\varepsilon}{\hbar^{1-\delta}}p) =  \frac{(-i)^j}{j!} \hbar^{(1-\delta) j}   \langle D_u , D_p\rangle^j a_\varepsilon ( x+t u, p, x-(1-t)u) \Big|_{u=0}.
  \end{equation}
What remains is to prove that the error term follows the desired estimate. The error term is give by
  \begin{equation} \label{proof_connection_t_sym_4}
  r_{\varepsilon,N+1}(x,p;\hbar) = r_{\varepsilon,N+1}^{\#}(\varepsilon^{-1}x,\tfrac{\varepsilon}{\hbar^{1-\delta}}p;\hbar^{\delta}).
  \end{equation}
In order to see that this function satisfied the stated estimate one needs to go back to the proof of \cite[Theorem II-27]{MR897108} and consider the exact definition of the function. But from here it is a straight forward argument to see that that the desired bound is indeed true. Combining \eqref{proof_connection_t_sym_1}, \eqref{proof_connection_t_sym_1.5} \eqref{proof_connection_t_sym_2}, \eqref{proof_connection_t_sym_3} and \eqref{proof_connection_t_sym_4} we obtain the desired result.

Alternatively one can also follow the proof in \cite[Theorem II-27]{MR897108} and do the full stationary phase argument. This gives a slightly longer proof but one obtains the same results.
\end{proof}
From this Theorem we immediate obtain the following Corollary.
\begin{corollary}\label{B.connection_t_quantisations}
  Let $t_1$ be in $[0,1]$ and $b_{t_1}$ be a
  $t_1$-$\varepsilon$-symbol of regularity $\tau\geq0$ with weights
  $(m,\rho,\varepsilon)$ and suppose $\varepsilon\geq\hbar^{1-\delta}$
  for a $\delta$ in $(0,1)$. Let $A_\varepsilon(\hbar)$ be the
  associated operator acting on a Schwarzt function by the formula
  \begin{equation*}
    A_\varepsilon(\hbar) \psi(x) = \frac{1}{(2\pi\hbar)^d}  \int_{\R^{2d}} e^{i\hbar^{-1}\langle x-y, p\rangle} b_{t_1}((1-t_1)x+t_1y,p) \psi(y) \,dy \,dp.
  \end{equation*}
  Then for every $t_2$ in $[0,1]$ we can associate an admissible
  $t_2$-$\varepsilon$-symbol given by the expansion
  \begin{equation*}
    b_{t_2}(\hbar) = \sum_{j=0}^N \hbar^j b_{t_2,j}+  \hbar^{N+1} r_{\varepsilon,N+1}(x,p;\hbar),
  \end{equation*}
  where
  \begin{equation*}
    b_{t_2,j}(x,p) = \frac{( t_1-t_2)^j}{j!} (\nabla_x D_p)^j b_{t_1}(x,p),
  \end{equation*}
  and the error term satisfies that
  \begin{equation*}
    \hbar^{N+1} |\partial_x^\alpha\partial_p^\beta r_{N+1}(x,p,\hbar) | 
    \leq   C_{d,N,\alpha,\beta}  \hbar^{N +1} \varepsilon^{-(\tau-N-2-d-\abs{\alpha})_{-}}  m(x,p)  \lambda(x,p)^{\rho N_0 }, 
  \end{equation*}
  for all $\alpha$ and $\beta$ in $\N^d$, the number $N_0$ is the
  number connected to the tempered weight $m$.
\end{corollary}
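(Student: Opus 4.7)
The plan is to deduce the corollary from Theorem~\ref{B.connection_quantisations} by viewing the $t_1$-quantization of $b_{t_1}$ as a pseudo-differential operator with a three-variable symbol. Concretely, define
\begin{equation*}
a_\varepsilon(x,p,y) = b_{t_1}((1-t_1)x + t_1 y, p),
\end{equation*}
so that $A_\varepsilon(\hbar) = \Op(a_\varepsilon)$ in the sense of Definition~\ref{B.pseudo-differential-operator-def}. First I would verify that $a_\varepsilon$ belongs to $\tilde{\Gamma}_{\rho,\varepsilon}^{\tilde m,\tau}(\R^d_x \times \R^d_p \times \R^d_y)$ with the weight $\tilde m(x,p,y) = m((1-t_1)x + t_1 y, p)$: the derivative bounds follow from those on $b_{t_1}$ via the chain rule (each $x$-derivative produces a factor $(1-t_1)$, each $y$-derivative a factor $t_1$), the temperedness of $\tilde m$ follows from that of $m$, and the enhanced decay on $\Omega_\nu = \{|x-y|<\nu\}$ is automatic because $(1-t_1)x + t_1 y$ differs from $x$ by $O(|x-y|)$, so that $\lambda((1-t_1)x + t_1 y, p) \sim \lambda(x,p,y)$ on $\Omega_\nu$.

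Next I apply Theorem~\ref{B.connection_quantisations} with $t=t_2$ to conclude that $A_\varepsilon(\hbar) = \Opt[t_2](b_{t_2})$ for a unique admissible $t_2$-$\varepsilon$-symbol $b_{t_2}(\hbar)$, with weights $(\tilde m(x,x,p), \rho, \varepsilon) = (m(x,p), \rho, \varepsilon)$, and with expansion whose coefficients are
\begin{equation*}
b_{t_2,j}(x,p) = \frac{(-i)^j}{j!} \langle D_u, D_p\rangle^j a_\varepsilon(x+t_2 u, p, x-(1-t_2)u) \Big|_{u=0}.
\end{equation*}
The key computation is then simply substituting our $a_\varepsilon$: the first argument of $b_{t_1}$ becomes
\begin{equation*}
(1-t_1)(x+t_2 u) + t_1(x - (1-t_2)u) = x + (t_2 - t_1)u,
\end{equation*}
so $a_\varepsilon(x+t_2 u, p, x-(1-t_2)u) = b_{t_1}(x + (t_2-t_1)u, p)$. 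Applying $\langle D_u, D_p\rangle^j = (-i)^{2j}\langle \partial_u, \partial_p\rangle^j$ and differentiating in $u$ (each $\partial_{u_k}$ producing a factor $(t_2-t_1)$) and setting $u=0$ yields
\begin{equation*}
b_{t_2,j}(x,p) = \frac{(-i)^j(t_1-t_2)^j}{j!} \langle \partial_x,\partial_p\rangle^j b_{t_1}(x,p) = \frac{(t_1-t_2)^j}{j!}(\nabla_x D_p)^j b_{t_1}(x,p),
\end{equation*}
matching the stated formula.

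Finally, the remainder estimate transports directly from Theorem~\ref{B.connection_quantisations}: the bound stated there for $r_{N+1}$ with weight $m(x,p,x)$ becomes the claimed bound with weight $m(x,p)$, since the only substitution involved is $\tilde m(x,x,p) = m(x,p)$, and the power of $\varepsilon^{-(\tau - N - 2 - d - |\alpha|)_-}$ and the factor $\lambda(x,p)^{\rho N_0}$ are preserved verbatim. The only part requiring mild care, and the natural candidate for the main obstacle, is the initial verification that $a_\varepsilon$ really sits in $\tilde{\Gamma}_{\rho,\varepsilon}^{\tilde m,\tau}$ with the right uniform constants in $\varepsilon$; once that is in place the rest is mechanical application of the theorem together with the chain-rule computation above.
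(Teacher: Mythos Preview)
Your proposal is correct and is precisely the approach the paper intends: the corollary is stated immediately after Theorem~\ref{B.connection_quantisations} with the remark ``From this Theorem we immediately obtain the following Corollary,'' and your reduction via $a_\varepsilon(x,p,y)=b_{t_1}((1-t_1)x+t_1y,p)$ together with the chain-rule identity $(1-t_1)(x+t_2u)+t_1(x-(1-t_2)u)=x+(t_2-t_1)u$ is exactly the mechanism that makes this immediate. The paper also notes, as an aside, the alternative direct route via a Taylor expansion of the kernel in $t_1$ at $t_2$ followed by integration by parts, but your argument matches the primary derivation.
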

This corollary can also be proven directly by considering the kernel
as an oscillating integral and the integrant as a function in the
variable $t_1$. To obtain the corollary do a Taylor expansion in $t_1$
at the point $t_2$, then preform integration by parts a number of times and
then one would recover the result.
\subsection{Composition of rough pseudo-differential operators}
With the rough pseudo-differential operators defined and the ability
to interpolate between the different quantisations our next aim is
results concerning composition of rough pseudo-differential
operators. This is done in the following theorem. We will here omit the proof as it is analogous to the proof of Theorem~\ref{B.connection_quantisations}. The idea is to conjugate the operators with $\mathcal{U}_\varepsilon$ use the results from e.g. \cite{MR897108} and then conjugate with $\mathcal{U}_\varepsilon^{*}$.
\begin{thm}\label{B.composition-t-symbol-thm}
  Let $A_\varepsilon(\hbar)$ and $B_\varepsilon(\hbar)$ be two
  t-quantised operators given by
  \begin{equation*}
    A_\varepsilon(\hbar) \psi(x) = \frac{1}{(2\pi\hbar)^d}  \int_{\R^{2d}}e^{i\hbar^{-1}\langle x-z, p\rangle} a_\varepsilon((1-t)x+tz,p) \psi(z) \,dz \,dp
  \end{equation*}
  and
  \begin{equation*}
    B_\varepsilon(\hbar) \psi(z) = \frac{1}{(2\pi\hbar)^d} \int_{\R^{2d}}e^{i\hbar^{-1}\langle z-y, q\rangle} b_\varepsilon((1-t)z+ty,q) \psi(y) \,dy \,dq,
  \end{equation*}
  where $a_\varepsilon$ and $b_\varepsilon$ be two rough symbols of
  regularity $\tau_1,\tau_2 \geq 0$ with weights
  $(m_1, \rho , \varepsilon)$ and $(m_2, \rho , \varepsilon)$
  respectively. We suppose there exists a number $\delta\in(0,1)$ such that
  $\varepsilon\geq\hbar^{1-\delta}$.  Then the operator
  $C_\varepsilon(\hbar) = A_\varepsilon(\hbar) \circ
  B_\varepsilon(\hbar)$ is strongly $\hbar$-$\varepsilon$-admissible
  and $C_ \varepsilon(\hbar) = \Opt(c_\varepsilon)$, where
  $c_\varepsilon$ is a rough admissible symbol of regularity
  $\tau = \min(\tau_1,\tau_2)$ with weights
  $(m_1m_2, \rho , \varepsilon)$. The symbol $c_\varepsilon$ satisfies
  the following: For every $N \geq N_\delta$ we have
  \begin{equation*}
    c_\varepsilon(\hbar) = \sum_{j=0}^N \hbar^j c_{\varepsilon,j} + \hbar^{N+1} r_{\varepsilon,N+1}(a_\varepsilon,b_\varepsilon;\hbar)
  \end{equation*}
  with
  \begin{equation*}
    c_{\varepsilon,j}(x,p) = \frac{(i\sigma(D_u,D_\mu;D_v,D_\nu))^j}{j!} [\tilde{a}_\varepsilon(x,p;u,v,\mu,\nu)\tilde{b}_\varepsilon(x,p;u,v,\mu,\nu)]\Big|_{\substack{u=v=0\\\mu=\nu=0}},
  \end{equation*}
  where
  \begin{equation*}
    \begin{aligned}
      \sigma(u,\mu;v,\nu) &= \langle v,\mu \rangle - \langle u, \nu
      \rangle
      \\
      \tilde{a}_\varepsilon(x,p;u,v,\mu,\nu) &= a_\varepsilon(x+tv
      +t(1-t)u,\nu+(1-t)\mu +p)
      \\
      \tilde{b}_\varepsilon(x,p;u,v,\mu,\nu) &= b_\varepsilon(x+(1-t)v
      - t(1-t)u,\nu-t\mu +p).
    \end{aligned}
  \end{equation*}
  Moreover the error term
  $r_{\varepsilon,N+1}(a_\varepsilon,b_\varepsilon;\hbar)$ satisfies
  that for every multi indices $\alpha,\beta$ in $\N^d$ there exists a
  constant $C(N,\alpha,\beta)$ independent of $a_\varepsilon$ and
  $b_\varepsilon$ and a natural number $M$ such that:
  \begin{equation*}
  	\begin{aligned}
    \hbar^{N+1}|\partial^\alpha_x\partial^\beta_p
    r_{\varepsilon,N+1}(a_\varepsilon,b_\varepsilon;x,p,\hbar)|
    \leq {}& C \varepsilon^{-\abs{\alpha}}\hbar^{\delta(\tau-N-2d-2)_{-}
      + \tau -2d-1}
    \mathcal{G}_{M,\tau}^{\alpha,\beta}(a_\varepsilon,m_1,b_\varepsilon,m_2)
    \\
    &\times m_1(x,\xi) m_2(x,\xi) \lambda(x,\xi)^{-\rho (\tilde{N}(M) + \abs{\alpha}+ \abs{\beta})},
   \end{aligned}
  \end{equation*}
  where
  \begin{equation*}
  	\begin{aligned}
\mathcal{G}_{M,\tau}^{\alpha,\beta}(a_\varepsilon,m_1,b_\varepsilon,m_2)
    = \sup_{{\substack{\abs{\gamma_1 +
            \gamma_2}+\abs{\eta_1 + \eta_2}\leq M \\ (x,\xi) \in
          \R^{2d}}}} & \varepsilon^{(\tau-M)_{-}+\abs{\alpha}}   \lambda(x,\xi)^{\rho(\abs{\gamma_1 + \gamma_2}+\abs{\eta_1 + \eta_2})}
         \\
         &\times  \frac{ \abs{\partial_x^\alpha\partial_\xi^\beta
        ( \partial_{x}^{\gamma_1} \partial_{\xi}^{\eta_1}a_\varepsilon(x,\xi) \partial_{x}^{\gamma_2} \partial_{\xi}^{\eta_2}b_\varepsilon(x,\xi))}}{m_1(x,\xi)
      m_2(x,\xi)}.
      \end{aligned}
  \end{equation*}
  The function $\tilde{N}(M)$ is also depending on the weights $m_1$,
  $m_2$ and the dimension $d$.
\end{thm}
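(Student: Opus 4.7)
The plan is to reduce the assertion to the standard composition theorem for classical pseudo-differential operators (as in \cite[Theorem II-35]{MR897108}) via the unitary dilation $\mathcal{U}_{\varepsilon}$ introduced in Observation~\ref{obs_connection_symbol}, exactly as in the proof of Theorem~\ref{B.connection_quantisations}. Concretely, I first set
\begin{equation*}
a_\varepsilon^{\#}(x,p) = a_\varepsilon(\varepsilon x, \tfrac{\hbar^{1-\delta}}{\varepsilon} p), \qquad b_\varepsilon^{\#}(x,p) = b_\varepsilon(\varepsilon x, \tfrac{\hbar^{1-\delta}}{\varepsilon} p),
\end{equation*}
and verify (as in Observation~\ref{obs_connection_symbol}) that $a_\varepsilon^{\#} \in \Gamma_{0,1}^{\tilde{m}_1,0}$ and $b_\varepsilon^{\#} \in \Gamma_{0,1}^{\tilde{m}_2,0}$ with $\tilde{m}_i = m_i \lambda^{N_{0,i}}$, using only that $\hbar^{1-\delta}\le \varepsilon \le 1$. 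Since $\mathcal{U}_\varepsilon$ is unitary and conjugation is multiplicative,
\begin{equation*}
\mathcal{U}_\varepsilon (A_\varepsilon(\hbar)\circ B_\varepsilon(\hbar))\mathcal{U}_\varepsilon^* = \Opdef_{\hbar^\delta,t}(a_\varepsilon^{\#}) \circ \Opdef_{\hbar^\delta,t}(b_\varepsilon^{\#}).
\end{equation*}

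Next I apply the classical $t$-quantised composition result to the right hand side with semiclassical parameter $\hbar^{\delta}$. This yields a $t$-symbol $c_\varepsilon^{\#}(\hbar^\delta)$ and an asymptotic expansion
\begin{equation*}
c_\varepsilon^{\#}(x,p;\hbar^\delta) = \sum_{j=0}^{N} (\hbar^\delta)^j c_{\varepsilon,j}^{\#}(x,p) + (\hbar^\delta)^{N+1} r_{\varepsilon,N+1}^{\#}(x,p;\hbar^\delta),
\end{equation*}
where each $c_{\varepsilon,j}^{\#}$ is the familiar $j$-th term built from the Moyal bracket applied to $a_\varepsilon^{\#}$ and $b_\varepsilon^{\#}$. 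Then I undo the dilation by showing
\begin{equation*}
c_\varepsilon(x,p;\hbar) = c_\varepsilon^{\#}(\varepsilon^{-1}x, \tfrac{\varepsilon}{\hbar^{1-\delta}}p;\hbar^\delta),
\end{equation*}
which gives $A_\varepsilon(\hbar)\circ B_\varepsilon(\hbar) = \Opt(c_\varepsilon)$. A direct chain rule computation on each $c_{\varepsilon,j}^{\#}$ absorbs a factor $\varepsilon^j(\hbar^{1-\delta}/\varepsilon)^j = \hbar^{(1-\delta)j}$, so that together with the prefactor $(\hbar^\delta)^j$ one recovers $\hbar^j$ in front of the stated $c_{\varepsilon,j}$. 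Derivatives of the product of $\tilde{a}_\varepsilon$ and $\tilde{b}_\varepsilon$ in $u,v,\mu,\nu$, evaluated at $0$, reproduce the Moyal form $(i\sigma(D_u,D_\mu;D_v,D_\nu))^j/j!$ in the claim.

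The main obstacle is the error term and verifying the precise form of $\mathcal{G}_{M,\tau}^{\alpha,\beta}$. For this I go back to the oscillatory-integral/stationary phase proof of the classical composition theorem as presented in \cite{MR897108}: the remainder $r_{\varepsilon,N+1}^{\#}$ comes from a finite-order Taylor remainder in the symplectic variables, controlled by Proposition~\ref{B.quad_stationary_phase}, whose bound is a supremum over a bounded number of derivatives of $a_\varepsilon^{\#} b_\varepsilon^{\#}$. Pulling this bound back through the dilation converts each $x$-derivative into $\varepsilon^{-1}\partial_x$ and each $p$-derivative into $(\varepsilon/\hbar^{1-\delta})\partial_p$; the net powers of $\varepsilon$ and $\hbar^{1-\delta}$ collected from the remainder of order $N+1$ and the $\delta$-scaling of the prefactor $\hbar^{\delta(N+1)}$ combine, after using the regularity threshold that converts excess derivatives of $a_\varepsilon,b_\varepsilon$ into factors $\varepsilon^{(\tau-M)_-}$, to $\hbar^{\delta(\tau-N-2d-2)_- + \tau - 2d -1}$ with the stated $\varepsilon^{-|\alpha|}$ factor from further $x$-derivatives of the remainder.

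Finally, the tempered weight inequality is used once more to replace $m_i(\varepsilon x,\tfrac{\hbar^{1-\delta}}{\varepsilon}p)$ by $m_i(x,p)\lambda(x,p)^{N_{0,i}}$, which produces the factor $\lambda(x,p)^{-\rho(\tilde{N}(M)+|\alpha|+|\beta|)}$ after choosing $M$ large enough; the $\rho$-gain comes entirely from the region $\{|x-y|<\nu\}$ where strong admissibility gives $\lambda^{-\rho}$ decay (which is preserved by the dilation, as in Observation~\ref{obs_connection_symbol}). Putting these ingredients together yields both the expansion and the remainder bound, and simultaneously shows $C_\varepsilon(\hbar)$ is strongly $\hbar$-$\varepsilon$-admissible of regularity $\min(\tau_1,\tau_2)$ with weight $m_1 m_2$.
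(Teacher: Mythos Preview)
Your approach is exactly the one the paper indicates: it omits the proof entirely, noting only that it is analogous to the proof of Theorem~\ref{B.connection_quantisations}, namely conjugate by the dilation $\mathcal{U}_\varepsilon$, apply the classical composition result from \cite{MR897108} with semiclassical parameter $\hbar^\delta$, and conjugate back. Your write-up simply fills in the details the paper leaves to the reader, so there is nothing to compare.
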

\begin{remark}
  The number $N_\delta$ is explicit and it is the smallest number such
  that
  \begin{equation*}
    \delta(N_\delta +2d + 2-\tau) +\tau > 2d+1.
  \end{equation*}
  This restriction is made in order to ensure that the error term is
  estimated by the semiclassical parameter raised to a positive
  power.  The main difference between this result and the classical analog is that for this new class there is a minimum of terms
  in the expansion of the symbol for the composition in order to
  obtain an error that does not diverge as $\hbar\rightarrow0$.

  The form of the $c$'s we obtain in the theorem is sometimes expressed as
  \begin{equation*} \begin{aligned}
    c_\varepsilon(x,\xi;\hbar) = &e^{i\hbar
      \sigma(D_u,D_\mu;D_v,D_\nu)} [ a_\varepsilon(x+tv+t(1-t)u,\nu
    +(1- t)\mu + \xi)
    \\
    \hbox{}&\times b_\varepsilon(x+(1-t)v - t(1-t)u,\nu - t\mu + \xi)
    ] \Big|_{\substack{u=v=0 \\ \mu=\nu=0}}.
   \end{aligned} \end{equation*}
\end{remark}
\begin{remark}[Particular cases of
  Theorem~\ref{B.composition-t-symbol-thm}]
  We will see the 3 most important cases for this presentation of the composition for
  $t$-quantised operators. We suppose the assumptions of
  Theorem~\ref{B.composition-t-symbol-thm} is
  satisfied. 
  \paragraph{$\boldsymbol{t=0}$:} In this case the amplitude will be independent of $u$
    hence we have
    \begin{equation*}
      c_\varepsilon(x,p;\hbar) = e^{i\hbar \langle D_y,D_q,\rangle} [ a_\varepsilon(x,q) b_\varepsilon(y,p) ] \Big|_{\substack{y=x \\ p=q}}.
    \end{equation*}
    This gives the formula
    \begin{equation*}
      c_{\varepsilon,j}(x,p) = \sum_{\abs{\alpha}=j} \frac{1}{\alpha!} \partial_p^\alpha a_\varepsilon(x,p) D_x^\alpha b_\varepsilon(x,p).
    \end{equation*}
  \paragraph{$\boldsymbol{t=1}$:} This case is similar to the one above, except a change
    of signs. The composition formula is given by
    \begin{equation*}
      c_\varepsilon(x,p;\hbar) = e^{-i\hbar \langle D_y,D_q,\rangle} [ a_\varepsilon(y,p) b_\varepsilon(x,q) ] \Big|_{\substack{y=x \\ p=q}}.
    \end{equation*}
    This gives the formula
    \begin{equation*}
      c_{\varepsilon,j}(x,p) = (-1)^j \sum_{\abs{\alpha}=j} \frac{1}{\alpha!} D_x^\alpha a_\varepsilon(x,p) \partial_p^\alpha b_\varepsilon(x,p).
    \end{equation*}
  \paragraph {$\boldsymbol{t=}\tfrac{\boldsymbol{1}}{\boldsymbol{2}}$ (Weyl-quatisation):} 
  After a bit of additional work we can arrive at the usual formula in this case as well. That is we obtain that
    \begin{equation*}
      c_\varepsilon(x,p;\hbar) = e^{i\frac{\hbar}{2} \sigma(D_x,D_p;D_y,D_q)} [ a_\varepsilon(x,p) b_\varepsilon(y,q) ] \Big|_{\substack{y=x \\ p=q}}
    \end{equation*}
    with
    \begin{equation*}
      c_{\varepsilon,j}(x,p) = \Big(\frac{i}{2} \Big)^j \frac{1}{j!} [\sigma(D_x,D_p;D_y,D_q)]^j  a_\varepsilon(x,p) b_\varepsilon(y,q) \Big|_{\substack{y=x \\ p=q}}.
    \end{equation*}
    The last equation can be rewritten by some algebra to the classic
    formula
    \begin{equation*}
      c_{\varepsilon,j}(x,p) = \sum_{\abs{\alpha}+\abs{\beta}=j} \frac{1}{\alpha!\beta!}\Big(\frac{1}{2} \Big)^{\abs{\alpha}}\Big(-\frac{1}{2} \Big)^{\abs{\beta}} (\partial_p^\alpha D_x^\beta a_\varepsilon) (\partial_p^\beta D_x^\alpha b_\varepsilon)(x,p).
    \end{equation*}

In all three cases we can note that the symbols for the compositions of operators is the same as in the non-rough case.
\end{remark}
We now have composition of operators given by a single symbol. The
next result generalises the previous to composition of strongly
$\hbar$-$\varepsilon$-admissible operators. Moreover it verifies that
the strongly $\hbar$-$\varepsilon$-admissible operators form an
algebra. More precisely we have.
\begin{thm}\label{B.composition-weyl-thm}
  Let $A_\varepsilon(\hbar)$ and $B_\varepsilon(\hbar)$ be two
  strongly $\hbar$-$\varepsilon$-admissible operators of regularity
  $\tau_a\geq0$ and $\tau_b\geq0$. with weights
  $(m_1, \rho , \varepsilon)$ and $(m_2, \rho , \varepsilon)$
  respectively and of the form
  \begin{equation*}
    A_\varepsilon(\hbar) = \OpW(a_\varepsilon) \quad\text{and}\quad B_\varepsilon(\hbar) = \OpW(b_\varepsilon).
  \end{equation*}
  We suppose $\varepsilon \geq \hbar^{1-\delta}$ for a $\delta$ in
  $(0,1)$ and let $\tau=\min(\tau_a,\tau_b)$. Then is
  $C_\varepsilon(\hbar)=A_\varepsilon(\hbar)\circ
  B_\varepsilon(\hbar)$ a strongly $\hbar$-$\varepsilon$-admissible
  operators of regularity $\tau\geq0$ with weights
  $(m_1 m_2, \rho , \varepsilon)$. The symbol
  $c_\varepsilon(x,p;\hbar)$ of $C_\varepsilon(\hbar)$ has for
  $N\geq N_\delta$ the expansion
  \begin{equation*}
    c_\varepsilon(x,p;\hbar) = \sum_{j=0}^N \hbar^j c_{\varepsilon,j}(x,p) + \hbar^{N+1} \mathcal{R}_\varepsilon(a_\varepsilon(\hbar), b_\varepsilon(\hbar);\hbar),
  \end{equation*}
  where
  \begin{equation*}
    c_{\varepsilon,j}(x,p) = \sum_{\abs{\alpha}+\abs{\beta}+k+l=j} \frac{1}{\alpha!\beta!}\Big(\frac{1}{2} \Big)^{\abs{\alpha}}\Big(-\frac{1}{2} \Big)^{\abs{\beta}} (\partial_p^\alpha D_x^\beta a_{\varepsilon,k}) (\partial_p^\beta D_x^\alpha b_{\varepsilon,l})(x,p).
  \end{equation*}
  The symbols $a_{\varepsilon,k}$ and $b_{\varepsilon,l}$ are from the
  expansion of $a_\varepsilon$ and $b_\varepsilon$ respectively. Let
  \begin{equation*}
    a_\varepsilon(x,p) = \sum_{k=0}^N \hbar^j a_{\varepsilon,j}(x,p) + \hbar^{N+1} r_{\varepsilon,N+1}(a_\varepsilon,x,p;\hbar)
  \end{equation*}
  and equvalint for $b_\varepsilon(x,p)$. Then for every multi indices
  $\alpha$, $\beta$ there exists a constant $C(\alpha,\beta,N)$
  independent of $a_\varepsilon$ and $b_\varepsilon$ and an integer
  $M$ such that
  \begin{equation*}
  \begin{aligned}
    \hbar&^{N+1} |\partial_x^\alpha \partial_p^\beta
    \mathcal{R}_\varepsilon(a_\varepsilon(\hbar),
    b_\varepsilon(\hbar);x,p;\hbar) |
    \\
    &\leq C(\alpha,\beta,N) \hbar^{\delta(\tau-N-2d-2)_{-}+ \tau - 2d
      - 1} \varepsilon^{-\abs{\alpha}} m_1(x,p) m_2(x,p)
    \lambda(x,p)^{-\rho(\tilde{N}(M)+\abs{\alpha}+\abs{\beta})}
    \\
    &\phantom{\leq}{} \times \Big[ \sum_{j=0}^N \{
    \mathcal{G}^{\alpha,\beta}_{M,\tau}(a_{\varepsilon,j},m_1,
    r_{\varepsilon,N+1}(b_\varepsilon(\hbar)),m_2) +
    \mathcal{G}^{\alpha,\beta}_{M,\tau}(r_{\varepsilon,N+1}(a_{\varepsilon}(\hbar)),m_1,b_{\varepsilon,j},m_2)\}
    \\
    &\phantom{\leq}{} + \; \sum_{\mathclap{N\leq j+k \leq 2N}} \;
    \mathcal{G}^{\alpha,\beta}_{M,\tau}(a_{\varepsilon,j},m_1,b_{\varepsilon,k},m_2)
    +
    \mathcal{G}^{\alpha,\beta}_{M,\tau}(r_{\varepsilon,N+1}(a_{\varepsilon}(\hbar)),m_1,
    r_{\varepsilon,N+1}(b_\varepsilon(\hbar)),m_2) \Big],
    \end{aligned}
  \end{equation*}
  where
  \begin{equation*}
  	\begin{aligned}
    \mathcal{G}_{M,\tau}^{\alpha,\beta}(a_\varepsilon,m_1,b_\varepsilon,m_2)
    = \sup_{{\substack{\abs{\gamma_1 +
            \gamma_2}+\abs{\eta_1 + \eta_2}\leq M \\ (x,\xi) \in
          \R^{2d}}}} & \varepsilon^{(\tau-M)_{-}+\abs{\alpha}} \frac{
      \abs{\partial_x^\alpha\partial_\xi^\beta
        ( \partial_{x}^{\gamma_1} \partial_{\xi}^{\eta_1}a_\varepsilon(x,\xi) \partial_{x}^{\gamma_2} \partial_{\xi}^{\eta_2}b_\varepsilon(x,\xi))}}{m_1(x,\xi)
      m_2(x,\xi)}
    \\
    \hbox{}&\times \lambda(x,\xi)^{\rho(\abs{\gamma_1 +
        \gamma_2}+\abs{\eta_1 + \eta_2})}.
        \end{aligned}
  \end{equation*}
  The function $\tilde{N}(M)$ is also depending on the weights $m_1$,
  $m_2$ and the dimension $d$.
\end{thm}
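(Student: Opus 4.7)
The plan is to reduce the claim to Theorem~\ref{B.composition-t-symbol-thm} by substituting the asymptotic expansions of the admissible symbols $a_\varepsilon(\hbar)$ and $b_\varepsilon(\hbar)$ and collecting terms by total order in $\hbar$. First, by admissibility, for any $N \geq N_\delta$ one has
\begin{equation*}
a_\varepsilon(\hbar) = \sum_{k=0}^N \hbar^k a_{\varepsilon,k} + \hbar^{N+1} r^a_{\varepsilon,N+1}(\hbar), \qquad b_\varepsilon(\hbar) = \sum_{l=0}^N \hbar^l b_{\varepsilon,l} + \hbar^{N+1} r^b_{\varepsilon,N+1}(\hbar),
\end{equation*}
where each principal component $a_{\varepsilon,k}$ lies in $\Gamma_{\rho,\varepsilon,-2k}^{m_1,\tau_a-k}$ (analogously for $b$), and where the remainders satisfy the bounds of Definition~\ref{B.def_admis_sym}.

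Using the linearity of $\OpW$, the composition $C_\varepsilon(\hbar) = \OpW(a_\varepsilon) \circ \OpW(b_\varepsilon)$ splits into the double sum over pairs $(k,l)$ of Weyl compositions of principal components, plus the cross-terms pairing a principal component with a remainder, plus the tail pairing the two remainders. For each principal-principal composition with $k+l \leq N$ I apply Theorem~\ref{B.composition-t-symbol-thm} with $t=\tfrac12$, expanding up to order $N-k-l$. In the Weyl case the formula for the composition symbol reduces to the standard Moyal product, so after collecting all contributions of total order $\hbar^j$ for $j \leq N$ one recovers exactly
\begin{equation*}
c_{\varepsilon,j}(x,p) = \sum_{\abs{\alpha}+\abs{\beta}+k+l=j} \frac{1}{\alpha!\beta!}\Bigl(\tfrac{1}{2}\Bigr)^{\abs{\alpha}}\Bigl(-\tfrac{1}{2}\Bigr)^{\abs{\beta}} (\partial_p^\alpha D_x^\beta a_{\varepsilon,k})(\partial_p^\beta D_x^\alpha b_{\varepsilon,l})(x,p),
\end{equation*}
matching the claimed formula. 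The strong $\hbar$-$\varepsilon$-admissibility of $C_\varepsilon(\hbar)$ follows because Theorem~\ref{B.composition-t-symbol-thm} already preserves strong admissibility at the level of single symbols, and the algebra structure is closed under finite sums.

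The error $\mathcal{R}_\varepsilon$ is then assembled from four pieces: the remainders produced by Theorem~\ref{B.composition-t-symbol-thm} for each principal-principal composition with $k+l \leq N$; the diagonal tail from principal-principal compositions with $N < k+l \leq 2N$ that fall past the truncation; the mixed products $a_{\varepsilon,j} \# r^b_{\varepsilon,N+1}$ and $r^a_{\varepsilon,N+1} \# b_{\varepsilon,j}$; and the double-remainder composition $r^a_{\varepsilon,N+1} \# r^b_{\varepsilon,N+1}$. Each is estimated by invoking the error bound of Theorem~\ref{B.composition-t-symbol-thm} with the appropriate input symbols, whose right-hand side is exactly the functional $\mathcal{G}^{\alpha,\beta}_{M,\tau}$ of those inputs. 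Summing yields the four $\mathcal{G}^{\alpha,\beta}_{M,\tau}$-groups displayed in the statement.

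The main obstacle is purely combinatorial bookkeeping: distributing every term of order strictly greater than $\hbar^N$ into exactly one of the four error families, tracking the associated weights $m_1$, $m_2$ and the exponents in $\lambda(x,p)$, and checking that the worst-case factor $\hbar^{\delta(\tau-N-2d-2)_{-}+\tau-2d-1}$ from Theorem~\ref{B.composition-t-symbol-thm} dominates each piece uniformly for $\varepsilon \in [\hbar^{1-\delta},1]$. The hypothesis $\varepsilon \geq \hbar^{1-\delta}$ is exactly what is needed to balance the negative powers of $\varepsilon$ generated by differentiating rough symbols against the positive powers of $\hbar$ gained from each order of the stationary phase expansion underlying Theorem~\ref{B.composition-t-symbol-thm}.
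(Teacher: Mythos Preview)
Your proposal is correct and matches the paper's approach exactly: the paper itself gives only a one-sentence proof, stating that the result follows by applying Theorem~\ref{B.composition-t-symbol-thm} a number of times together with the fact that the error operator of a strongly $\hbar$-$\varepsilon$-admissible operator is itself a quantised pseudo-differential operator. Your write-up simply fills in the bookkeeping behind that sentence.
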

The proof of this theorem is an application of
Theorem~\ref{B.composition-t-symbol-thm} a number of times and
recalling that the error operator of a strongly
$\hbar$-$\varepsilon$-admissible operator of some regularity is a
quantised pseudo-differential operator.
\subsection{Rough pseudo-differential operators as operators on $L^2(\R^d)$}
So far we have only considered operators acting on $\mathcal{S}(\R^d)$
or $\mathcal{S}'(\R^d)$. Hence they can be viewed as unbounded
operators acting in $L^2(\R^d)$ with domain $\mathcal{S}(\R^d)$. The
question is then when is this a bounded operator? The first theorem of
this section gives a criteria for when the operator can be extended to
a bounded operator. This theorem is a Calderon-Vaillancourt type
theorem and the proof uses the Calderon-Vaillancourt Theorem for the
non-rough pseudo-differential operators. We will not recall this
theorem but refer to \cite{MR897108,MR2952218,MR1735654}.
\begin{thm}\label{B.cal-val-thm}
  Let $a_\varepsilon$ be in
  $\Gamma_{0,\varepsilon}^{m,\tau}(\R^d_x\times\R_p^d)$, where $m$ is
  a bounded tempered weight function, $\tau\geq0$ and suppose there exists a
  $\delta$ in $(0,1)$ such that
  $\varepsilon\geq\hbar^{1-\delta}$. Then there exists a constant
  $C_d$ and an integer $k_d$ only depending on the dimension such that
  \begin{equation*}
    \norm{\OpW(a_\varepsilon)\psi}_{L^2(\R^d)} \leq C_d \sup_{\substack{\abs{\alpha},\abs{\beta}\leq k_d \\ (x,p)\in \R^{2d}}} \varepsilon^{\abs{\alpha}} \abs{\partial_x^\alpha \partial_p^\beta a_\varepsilon(x,p)} \norm{\psi}_{L^2(\R^d)},
  \end{equation*}
  for all $\psi$ in $\mathcal{S}(\R^d)$. Especially can
  $\OpW(a_\varepsilon)$ be extended to a bounded operator on
  $L^2(\R^d)$.
\end{thm}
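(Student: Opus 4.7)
The plan is to reduce the statement to the classical (non-rough) Calderon--Vaillancourt theorem for Weyl quantization with semiclassical parameter $\hbar^\delta$, via the unitary dilation $\mathcal{U}_\varepsilon$ introduced in Observation~\ref{obs_connection_symbol}. The hypothesis $\varepsilon \geq \hbar^{1-\delta}$ is precisely what makes the resulting rescaled symbol a genuine symbol in a classical class with semi-norms independent of $\hbar$ and $\varepsilon$.

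First, I would set $a_\varepsilon^{\#}(x,p) = a_\varepsilon(\varepsilon x, \tfrac{\hbar^{1-\delta}}{\varepsilon} p)$ and use Observation~\ref{obs_connection_symbol} (adapted to symbols in two variables) to obtain the identity
\begin{equation*}
\mathcal{U}_\varepsilon \, \OpW(a_\varepsilon) \, \mathcal{U}_\varepsilon^{*} = \OpWn{\hbar^\delta}(a_\varepsilon^{\#}).
\end{equation*}
Since $\mathcal{U}_\varepsilon$ is unitary on $L^2(\R^d)$, the operator norms of $\OpW(a_\varepsilon)$ and $\OpWn{\hbar^\delta}(a_\varepsilon^{\#})$ coincide. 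By the observation, $a_\varepsilon^{\#}$ lies in $\Gamma_{0,1}^{\tilde m,0}(\R^d\times\R^d)$ for some bounded tempered weight $\tilde m$ (bounded because $m$ is), with semi-norms controlled by those of $a_\varepsilon$ uniformly in $\hbar$ and $\varepsilon$.

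Next, I would invoke the classical Calderon--Vaillancourt theorem for Weyl quantization with semiclassical parameter $\hbar^\delta$, as stated for example in \cite{MR897108} or \cite{MR2952218}. This yields constants $C_d$ and $k_d$ depending only on $d$ such that, for every $\phi$ in $\mathcal{S}(\R^d)$,
\begin{equation*}
\norm{\OpWn{\hbar^\delta}(a_\varepsilon^{\#})\phi}_{L^2(\R^d)} \leq C_d \sup_{\substack{\abs{\alpha},\abs{\beta}\leq k_d \\ (x,p)\in\R^{2d}}} \abs{\partial_x^\alpha \partial_p^\beta a_\varepsilon^{\#}(x,p)} \, \norm{\phi}_{L^2(\R^d)}.
\end{equation*}

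The last step is the chain rule bookkeeping. Direct differentiation gives
\begin{equation*}
\partial_x^\alpha \partial_p^\beta a_\varepsilon^{\#}(x,p) = \varepsilon^{\abs{\alpha}} \Bigl(\tfrac{\hbar^{1-\delta}}{\varepsilon}\Bigr)^{\abs{\beta}} (\partial_x^\alpha \partial_p^\beta a_\varepsilon)\bigl(\varepsilon x, \tfrac{\hbar^{1-\delta}}{\varepsilon} p\bigr),
\end{equation*}
and using $\tfrac{\hbar^{1-\delta}}{\varepsilon}\leq 1$, the factor in the $p$-derivatives drops out, giving $\sup \abs{\partial_x^\alpha \partial_p^\beta a_\varepsilon^{\#}} \leq \varepsilon^{\abs{\alpha}} \sup \abs{\partial_x^\alpha \partial_p^\beta a_\varepsilon}$. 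Setting $\phi = \mathcal{U}_\varepsilon \psi$ and using unitarity of $\mathcal{U}_\varepsilon$ then produces the claimed inequality with the same constants $C_d$ and $k_d$.

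The only real obstacle is tracking these rescalings: the $\varepsilon^{\abs{\alpha}}$ factor in the theorem statement is exactly what appears naturally from the $x$-dilation, while the crucial role of the microlocal uncertainty hypothesis $\varepsilon \geq \hbar^{1-\delta}$ is to absorb the $p$-dilation factor into $1$ rather than have it contribute a diverging power of $\hbar^{-1}$. Once this scaling is set up, no further analytic work is required and the boundedness (and hence the $L^2$ extension) follows immediately from unitarity.
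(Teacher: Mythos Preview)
Your proposal is correct and follows essentially the same route as the paper's proof: conjugate by the dilation $\mathcal{U}_\varepsilon$ from Observation~\ref{obs_connection_symbol}, apply the classical Calderon--Vaillancourt theorem to the rescaled symbol $a_\varepsilon^{\#}$ with semiclassical parameter $\hbar^\delta$, and then track the chain-rule factors to obtain the $\varepsilon^{|\alpha|}$ weight while absorbing the $p$-dilation factor using $\hbar^{1-\delta}/\varepsilon\leq 1$. The paper's argument is organized identically.
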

\begin{proof}
Let $\mathcal{U}_\varepsilon$ be the unitary dilation operator as defined in Observation~\ref{obs_connection_symbol}. We have that 
  \begin{equation}\label{cal-vai-proof-1}
  \begin{aligned}
    \norm{\OpW(a_\varepsilon)\psi}_{L^2(\R^d)}  &=   \norm{\mathcal{U}_\varepsilon^{*}\mathcal{U}_\varepsilon\OpW(a_\varepsilon) \mathcal{U}_\varepsilon^{*}\mathcal{U}_\varepsilon\psi}_{L^2(\R^d)} 
    =   \norm{\OpW(a_\varepsilon^{\#}) \mathcal{U}_\varepsilon\psi}_{L^2(\R^d)} 
    \end{aligned}
  \end{equation}
By our assumptions and Observation~\ref{obs_connection_symbol} we note that the symbol $a_\varepsilon^{\#}$ satisfies the assumptions of the classical Calderon-Vaillancourt theorem. This gives us a constant $C_d$ and an integer $k_d$ only depending on the dimension such that
  \begin{equation}\label{cal-vai-proof-2}
  \begin{aligned}
    \norm{\OpW(a_\varepsilon^{\#}) \mathcal{U}_\varepsilon\psi}_{L^2(\R^d)} \leq C_d \sup_{\substack{\abs{\alpha},\abs{\beta}\leq k_d \\ (x,p)\in \R^{2d}}}  \abs{\partial_x^\alpha \partial_p^\beta a_\varepsilon^{\#}(x,p)} \norm{\psi}_{L^2(\R^d)},
    \end{aligned}
  \end{equation}
where we have used the unitarity of $\mathcal{U}_\varepsilon$. By the definition of $a_\varepsilon^{\#}$ we have that
  \begin{equation}\label{cal-vai-proof-3}
  \begin{aligned}
    \sup_{\substack{\abs{\alpha},\abs{\beta}\leq k_d \\ (x,p)\in \R^{2d}}}  \abs{\partial_x^\alpha \partial_p^\beta a_\varepsilon^{\#}(x,p)} ={}&  \sup_{\substack{\abs{\alpha},\abs{\beta}\leq k_d \\ (x,p)\in \R^{2d}}}  \abs{\partial_x^\alpha \partial_p^\beta a_\varepsilon(\varepsilon x, \tfrac{\hbar^{1-\delta}}{\varepsilon} p)}
    \leq \sup_{\substack{\abs{\alpha},\abs{\beta}\leq k_d \\ (x,p)\in \R^{2d}}} \varepsilon^{\abs{\alpha}}  \abs{\partial_x^\alpha \partial_p^\beta a_\varepsilon( x,  p)}
    \end{aligned}
  \end{equation}
Combining \eqref{cal-vai-proof-1}, \eqref{cal-vai-proof-2} and \eqref{cal-vai-proof-3} we arrive at the desired result. This completes the proof.  
\end{proof}
We can now give a criteria for the rough pseudo-differential operators
to be trace class. The criteria will be sufficient but not necessary.
Hence it does not provide a full characteristic for the set of rough
pseudo-differential operators which are trace class.
\begin{thm}\label{B.thm_est_tr}
  There exists a constant $C(d)$ only depending on the dimension such
\begin{equation*}
    \norm{\OpW(a_\varepsilon)}_{\Tr} \leq  \frac{ C(d)} {\hbar^d} \sum_{\abs{\alpha}+\abs{\beta}\leq 2d+2}  \varepsilon^{\abs{\alpha}}   \hbar^{\delta\abs{\beta}}  \int_{\R^{2d}} |\partial_x^\alpha \partial_p^\beta a_\varepsilon(x,p)|  \,dxdp.
  \end{equation*}
  for every $a_\varepsilon$ in
  $\Gamma_{0,\varepsilon}^{m,\tau}(\R^d_x\times\R_p^d)$ with
  $\tau\geq0$.
\end{thm}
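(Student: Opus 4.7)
The plan is to reduce to the classical (non-rough) trace-class estimate via the unitary dilation $\mathcal{U}_\varepsilon$ from Observation~\ref{obs_connection_symbol}, exactly as was done in the proof of Theorem~\ref{B.cal-val-thm}. Fix $\delta\in(0,1)$ with $\varepsilon\geq\hbar^{1-\delta}$ (as mandated by the microlocal uncertainty principle, see Remark~\ref{B.assumption_epsilon}). Since the trace norm is invariant under unitary conjugation,
\[
\|\OpW(a_\varepsilon)\|_{\Tr}
= \|\mathcal{U}_\varepsilon \OpW(a_\varepsilon)\mathcal{U}_\varepsilon^*\|_{\Tr}
= \|\OpWn{\hbar^\delta}(a_\varepsilon^{\#})\|_{\Tr},
\]
where $a_\varepsilon^{\#}(x,p)=a_\varepsilon(\varepsilon x,\tfrac{\hbar^{1-\delta}}{\varepsilon} p)$, and by Observation~\ref{obs_connection_symbol} this new symbol lies in the ordinary (non-rough) class $\Gamma_{0,1}^{\tilde{m},0}(\R^d_x\times\R^d_p)$.

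Next I would apply the standard semiclassical trace-class estimate from \cite{MR897108}: for a sufficiently regular symbol $b$,
\[
\|\OpWn{h}(b)\|_{\Tr}
\leq \frac{C(d)}{h^d}\sum_{|\alpha|+|\beta|\leq 2d+2} h^{|\beta|}\int_{\R^{2d}} |\partial_x^\alpha\partial_p^\beta b(x,p)|\,dx\,dp,
\]
taken with $h=\hbar^\delta$ and $b=a_\varepsilon^{\#}$. The factor $h^{|\beta|}$ is the standard gain from semiclassical scaling $\xi\mapsto h\xi$ in the momentum variable and can equivalently be read off from the $h=1$ trace-class estimate. The remaining task is to translate each summand back to the original symbol $a_\varepsilon$. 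The chain rule produces
\[
\partial_x^\alpha\partial_p^\beta a_\varepsilon^{\#}(x,p)
= \varepsilon^{|\alpha|}\bigl(\tfrac{\hbar^{1-\delta}}{\varepsilon}\bigr)^{|\beta|}(\partial_u^\alpha\partial_v^\beta a_\varepsilon)\bigl(\varepsilon x,\tfrac{\hbar^{1-\delta}}{\varepsilon} p\bigr),
\]
and the substitution $u=\varepsilon x$, $v=\tfrac{\hbar^{1-\delta}}{\varepsilon} p$ contributes a Jacobian $\hbar^{(1-\delta)d}$ when one passes from the $(x,p)$-integral to the $(u,v)$-integral.

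Collecting powers of $\hbar$ the overall prefactor becomes $\hbar^{-\delta d}\cdot\hbar^{-(1-\delta)d}=\hbar^{-d}$, while each term acquires the interior factor
\[
\hbar^{\delta|\beta|}\cdot\varepsilon^{|\alpha|-|\beta|}\hbar^{(1-\delta)|\beta|}
= \varepsilon^{|\alpha|-|\beta|}\hbar^{|\beta|}.
\]
A final invocation of $\varepsilon\geq\hbar^{1-\delta}$ gives $\varepsilon^{-|\beta|}\hbar^{(1-\delta)|\beta|}\leq 1$, and therefore $\varepsilon^{|\alpha|-|\beta|}\hbar^{|\beta|}\leq\varepsilon^{|\alpha|}\hbar^{\delta|\beta|}$, which is exactly the form stated in the theorem. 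The only real difficulty here is bookkeeping: one must invoke the semiclassical trace-class estimate in the correct form (with $h^{|\beta|}$ on the $p$-derivatives), and then keep careful track of how the chain rule, the Jacobian of the dilation, and the microlocal uncertainty constraint $\varepsilon\geq\hbar^{1-\delta}$ combine to deliver the precise powers of $\varepsilon$ and $\hbar$ appearing in the statement.
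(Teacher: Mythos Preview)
Your proof is correct and follows exactly the same route as the paper: conjugate by the dilation $\mathcal{U}_\varepsilon$, apply the classical trace-class estimate \cite[Theorem~II--49]{MR897108} with semiclassical parameter $\hbar^\delta$, and undo the change of variables. In fact you make explicit the final inequality $\varepsilon^{-|\beta|}\hbar^{(1-\delta)|\beta|}\leq 1$ (from $\varepsilon\geq\hbar^{1-\delta}$) that converts $\varepsilon^{|\alpha|-|\beta|}\hbar^{|\beta|}$ into the stated $\varepsilon^{|\alpha|}\hbar^{\delta|\beta|}$, a step the paper's proof elides when passing from \eqref{B.thm_est_tr_eq_1}--\eqref{B.thm_est_tr_eq_2} to the conclusion.
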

\begin{proof}
Let $\mathcal{U}_\varepsilon$ be the unitary dilation operator as defined in Observation~\ref{obs_connection_symbol}. We have by the unitary invariance of the trace norm that 
  \begin{equation*}
    \norm{\OpW(a_\varepsilon)}_{\Tr} =   \norm{\mathcal{U}_\varepsilon \OpW(a_\varepsilon) \mathcal{U}_\varepsilon^{*}}_{\Tr} =  \norm{ \OpW(a_\varepsilon^{\#}) }_{\Tr}.
  \end{equation*}
  From our assumptions and Observation~\ref{obs_connection_symbol} we get that $a_\varepsilon^{\#}$ satisfies the assumption for \cite[Theorem II-49]{MR897108}. From this theorem we get the exsistence of a constant $C(d)$ only depending on the dimension such that
    \begin{equation}\label{B.thm_est_tr_eq_1}
     \norm{ \OpW(a_\varepsilon^{\#}) }_{\Tr} \leq  \frac{ C(d)} {\hbar^{\delta d}} \sum_{\abs{\alpha}+\abs{\beta}\leq 2d+2} \hbar^{\delta \abs{\beta}}  \int_{\R^{2d}} |\partial_x^\alpha \partial_p^\beta a_\varepsilon^{\#}(x,p)]  \,dxdp.
  \end{equation}
  By the definition of $\partial_x^\alpha \partial_p^\beta a_\varepsilon(\varepsilon x, \tfrac{\hbar^{1-\delta}}{\varepsilon} p)$ we have that
  \begin{equation}\label{B.thm_est_tr_eq_2}
  	\begin{aligned}
  	  \int_{\R^{2d}} |\partial_x^\alpha \partial_p^\beta a_\varepsilon^{\#}(x,p)|  \,dxdp &=   \int_{\R^{2d}}|\partial_x^\alpha \partial_p^\beta a_\varepsilon(\varepsilon x, \tfrac{\hbar^{1-\delta}}{\varepsilon} p)|  \,dxdp
	 \\
	&= \varepsilon^{\abs{\alpha}}  \left(\tfrac{\hbar^{1-\delta}}{\varepsilon}\right)^{\abs{\beta}} \int_{\R^{2d}} |[\partial_x^\alpha \partial_p^\beta a_\varepsilon](\varepsilon x, \tfrac{\hbar^{1-\delta}}{\varepsilon} p)|  \,dxdp
	\\
	&= \hbar^{(\delta-1)d} \varepsilon^{\abs{\alpha}}  \left(\tfrac{\hbar^{1-\delta}}{\varepsilon}\right)^{\abs{\beta}}  \int_{\R^{2d}} |[\partial_x^\alpha \partial_p^\beta a_\varepsilon](x,  p)|  \,dxdp
	 \end{aligned}
  \end{equation}
  combining \eqref{B.thm_est_tr_eq_1} and \eqref{B.thm_est_tr_eq_2} we get that
    \begin{equation*}
    \norm{\OpW(a_\varepsilon)}_{\Tr} \leq  \frac{ C(d)} {\hbar^d} \sum_{\abs{\alpha}+\abs{\beta}\leq 2d+2}  \varepsilon^{\abs{\alpha}}   \hbar^{\delta\abs{\beta}}  \int_{\R^{2d}} |\partial_x^\alpha \partial_p^\beta a_\varepsilon(x,p)|  \,dxdp.
  \end{equation*}
  This is the desired estimate and this concludes the proof.
\end{proof}
The previous theorem gives us a sufficient condition for the rough
pseudo-differential operators to be trace class. The next theorem
gives the form of the trace for the rough pseudo-differential
operators.
\begin{thm}\label{B.trace_formula}
  Let $a_\varepsilon$ be in
  $\Gamma_{0,\varepsilon}^{m,\tau}(\R^d_x\times\R_p^d)$ with
  $\tau\geq0$ and suppose
  $\partial_x^\alpha \partial_p^\beta a_\varepsilon(x,p)$ is an
  element of $L^1(\R^d_x\times\R_p^d)$ for all
  $\abs{\alpha}+\abs{\beta}\leq 2d+2$. Then $\OpW(a_\varepsilon)$ is
  trace class and
  \begin{equation*}
    \Tr(\OpW(a_\varepsilon))=\frac{1}{(2\pi\hbar)^d} \int_{\R^{2d}} a_\varepsilon(x,p) \,dxdp.
  \end{equation*}	
\end{thm}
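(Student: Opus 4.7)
The plan is to proceed exactly as in the proofs of Theorem~\ref{B.cal-val-thm} and Theorem~\ref{B.thm_est_tr}: conjugate $\OpW(a_\varepsilon)$ by the unitary dilation $\mathcal{U}_\varepsilon$ from Observation~\ref{obs_connection_symbol} to reduce to a standard (non-rough) Weyl quantisation, invoke the classical trace formula for pseudo-differential operators from \cite{MR897108}, and then change variables to express the integral in terms of $a_\varepsilon$ rather than $a_\varepsilon^{\#}$.

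First, the trace class property of $\OpW(a_\varepsilon)$ is immediate from Theorem~\ref{B.thm_est_tr}: under the hypothesis that $\partial_x^\alpha \partial_p^\beta a_\varepsilon$ lies in $L^1(\R^{2d})$ for every $\abs{\alpha}+\abs{\beta}\leq 2d+2$, the bound in that theorem yields a finite trace-norm estimate. Hence I only need to compute the trace.

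Second, using the unitary invariance of the trace and the identity $\mathcal{U}_\varepsilon \OpW(a_\varepsilon) \mathcal{U}_\varepsilon^{*} = \Opdef_{\hbar^\delta}^{\mathrm{w}}(a_\varepsilon^{\#})$ from Observation~\ref{obs_connection_symbol}, I would write
\begin{equation*}
\Tr(\OpW(a_\varepsilon)) = \Tr\bigl(\Opdef_{\hbar^\delta}^{\mathrm{w}}(a_\varepsilon^{\#})\bigr).
\end{equation*}
Observation~\ref{obs_connection_symbol} shows $a_\varepsilon^{\#}$ lies in the classical class $\Gamma_{0,1}^{\tilde m,0}$, so the trace formula for standard Weyl operators (Theorem II-56 in \cite{MR897108}, applied with the semiclassical parameter $\hbar^\delta$) gives
\begin{equation*}
\Tr\bigl(\Opdef_{\hbar^\delta}^{\mathrm{w}}(a_\varepsilon^{\#})\bigr) = \frac{1}{(2\pi\hbar^\delta)^d} \int_{\R^{2d}} a_\varepsilon^{\#}(x,p)\,dx\,dp.
\end{equation*}
The applicability of this formula is guaranteed by the $L^1$ hypothesis on $a_\varepsilon$, which translates (via the change of variables below) into an $L^1$ hypothesis on $a_\varepsilon^{\#}$ and its derivatives.

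Finally, performing the substitution $y = \varepsilon x$, $q = \tfrac{\hbar^{1-\delta}}{\varepsilon} p$ with Jacobian $\varepsilon^{-d}(\tfrac{\hbar^{1-\delta}}{\varepsilon})^{-d} = \hbar^{-(1-\delta)d}$, I obtain
\begin{equation*}
\int_{\R^{2d}} a_\varepsilon^{\#}(x,p)\,dx\,dp = \int_{\R^{2d}} a_\varepsilon\bigl(\varepsilon x, \tfrac{\hbar^{1-\delta}}{\varepsilon} p\bigr)\,dx\,dp = \hbar^{-(1-\delta)d} \int_{\R^{2d}} a_\varepsilon(y,q)\,dy\,dq.
\end{equation*}
Combining the two displays gives $\Tr(\OpW(a_\varepsilon)) = (2\pi\hbar)^{-d}\int a_\varepsilon\,dx\,dp$, as desired. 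There is no real obstacle here; the only point requiring a small check is that the classical trace formula applies to $a_\varepsilon^{\#}$, which follows from the change-of-variable identity above together with the $L^1$ integrability assumed on the derivatives of $a_\varepsilon$.
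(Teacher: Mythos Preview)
Your proof is correct and follows essentially the same route as the paper: trace-class from Theorem~\ref{B.thm_est_tr}, then conjugate by $\mathcal{U}_\varepsilon$, apply the classical trace formula from \cite{MR897108} (the paper cites Theorem~II-53 rather than II-56), and undo the change of variables. The computations match line for line.
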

\begin{proof}
	That our operator is trace class under the assumptions follows from Theorem~\ref{B.thm_est_tr}. To obtain the formula for the trace let $\mathcal{U}_\varepsilon$ be the unitary dilation operator as defined in Observation~\ref{obs_connection_symbol}. Then by \cite[Theorem~{II-$53$}]{MR897108} and Observation~\ref{obs_connection_symbol} we get that
\begin{equation*}
	\begin{aligned}
	\Tr(\OpW(a_\varepsilon)) &= \Tr(\mathcal{U}_\varepsilon\OpW(a_\varepsilon)\mathcal{U}_\varepsilon^{*}) = \Tr(\OpW(a_\varepsilon^{\#}))
	\\
	&= \frac{1}{(2\pi\hbar^\delta)^d} \int_{\R^{2d}} a_\varepsilon(\varepsilon x, \tfrac{\hbar^{1-\delta}}{\varepsilon} p) \,dxdp = \frac{1}{(2\pi\hbar)^d}  \int_{\R^{2d}} a_\varepsilon(x,p) \,dxdp.
	\end{aligned}
\end{equation*}
This identity concludes the proof.
\end{proof}
The last thing for this section is a sharp G{\aa}rdinger inequality, which we will need later.
\begin{thm}\label{B.sharp_gaar}
  Let $a_\varepsilon$ be a bounded rough symbol of regularity
  $\tau\geq0$ which satisfies
  \begin{equation*}
    a_\varepsilon (x,p) \geq 0 \quad\text{for all } (x,p) \in \R_x^d\times\R^d_p,
  \end{equation*}
  and suppose there exists $\delta\in(0,1)$ such that
  $\varepsilon>\hbar^{1-\delta}$. Then there exists a $C_0>0$ and
  $\hbar_0>0$ such
  \begin{equation*}
    \langle \OpW(a_\varepsilon) g,g \rangle \geq -\hbar^{\delta} C \norm{g}_{L^2(\R^d)}
  \end{equation*}
  for all $g$ in $L^2(\R^d)$ and $\hbar$ in $(0,\hbar_0]$.
\end{thm}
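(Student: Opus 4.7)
The plan is to reduce the statement to the classical semiclassical sharp G\aa{}rding inequality via the unitary dilation trick used throughout this section, namely Observation~\ref{obs_connection_symbol}. Let $\mathcal{U}_\varepsilon$ be the unitary dilation and set $a_\varepsilon^{\#}(x,p) = a_\varepsilon(\varepsilon x, \tfrac{\hbar^{1-\delta}}{\varepsilon} p)$. By that observation, $a_\varepsilon^{\#}$ belongs to the standard (non-rough) symbol class $\Gamma_{0,1}^{\tilde m, 0}(\R^{2d})$ with $\tilde m$ bounded (since $a_\varepsilon$ is bounded), and one has the conjugation identity $\mathcal{U}_\varepsilon \OpW(a_\varepsilon) \mathcal{U}_\varepsilon^{*} = \OpWn{\hbar^\delta}(a_\varepsilon^{\#})$. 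The crucial point is that pointwise positivity is preserved by the dilation: $a_\varepsilon \geq 0$ on $\R^{2d}$ directly implies $a_\varepsilon^{\#} \geq 0$ on $\R^{2d}$.

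Next I would invoke the classical semiclassical sharp G\aa{}rding inequality (as found in \cite{MR897108,MR2952218}) with semiclassical parameter $\hbar^{\delta}$, applied to the non-negative standard symbol $a_\varepsilon^{\#}$. This yields a constant $C>0$ such that
\begin{equation*}
\langle \OpWn{\hbar^\delta}(a_\varepsilon^{\#}) f, f \rangle \geq - \hbar^{\delta} C \norm{f}_{L^2(\R^d)}^2
\end{equation*}
for all $f$ in $L^2(\R^d)$ and $\hbar$ sufficiently small. Substituting $f = \mathcal{U}_\varepsilon g$ and using the conjugation identity together with the unitarity of $\mathcal{U}_\varepsilon$ gives
\begin{equation*}
\langle \OpW(a_\varepsilon) g, g \rangle = \langle \OpWn{\hbar^\delta}(a_\varepsilon^{\#}) \mathcal{U}_\varepsilon g, \mathcal{U}_\varepsilon g \rangle \geq -\hbar^{\delta} C \norm{\mathcal{U}_\varepsilon g}_{L^2(\R^d)}^2 = -\hbar^{\delta} C \norm{g}_{L^2(\R^d)}^2,
\end{equation*}
which is the desired conclusion.

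The main obstacle, such as it is, lies in verifying that the constant $C$ produced by the classical inequality is independent of both $\hbar$ and $\varepsilon$ in the relevant range. The classical sharp G\aa{}rding constant depends only on finitely many symbol seminorms of $a_\varepsilon^{\#}$, and Observation~\ref{obs_connection_symbol} shows that precisely under the microlocal uncertainty assumption $\varepsilon \geq \hbar^{1-\delta}$ (together with $\varepsilon \leq 1$) these seminorms are bounded uniformly: the factor $\hbar^{1-\delta}/\varepsilon \leq 1$ prevents any blow-up from the $p$-rescaling, while the factor $\varepsilon \leq 1$ controls the $x$-rescaling. Once this uniformity is checked, no further analysis is needed, and the result follows from the off-the-shelf classical estimate.
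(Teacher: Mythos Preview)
Your proposal is correct and follows essentially the same approach as the paper: conjugate by the unitary dilation $\mathcal{U}_\varepsilon$ from Observation~\ref{obs_connection_symbol}, apply the classical sharp G\aa{}rding inequality with effective semiclassical parameter $\hbar^\delta$ to the non-negative standard symbol $a_\varepsilon^{\#}$, and undo the conjugation using unitarity. Your additional paragraph making the uniformity of the constant explicit is a welcome clarification, and your squared norm on the right-hand side is in fact the correct formulation (the unsquared norm in the displayed statement appears to be a typo).
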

\begin{proof}
 Again let $\mathcal{U}_\varepsilon$ be the unitary dilation operator as defined in Observation~\ref{obs_connection_symbol}. After conjugation with this operator we will be able to use the \enquote{usual} semiclassical sharp G{\aa}rdinger
  inequality (see e.g. \cite[Theorem 4.32]{MR2952218}) with the semiclassical parameter
  $\hbar^\delta$. We have that
  \begin{equation}\label{B.sharp_gaar_eq_1}
    \begin{aligned}
      \langle \OpW(a_\varepsilon) g,g \rangle 
      &= \langle \mathcal{U}_\varepsilon\OpW(a_\varepsilon)\mathcal{U}_\varepsilon^{*}\mathcal{U}_\varepsilon g,\mathcal{U}_\varepsilon g \rangle 
      =  \langle \OpW(a_\varepsilon^{\#})\mathcal{U}_\varepsilon g,\mathcal{U}_\varepsilon g \rangle
      \\
      &\geq - C_0 \hbar^{\delta} \norm{\mathcal{U}_\varepsilon g}_{L^2(\R^d)} =  - C_0 \hbar^{\delta} \norm{g}_{L^2(\R^d)} 
    \end{aligned}
  \end{equation}
The existence of the numbers $C_0$ and $\hbar_0$ is ensured by the \enquote{usual} semiclassical sharp G{\aa}rdinger
  inequality. This is the desired estimate and this ends the proof.
\end{proof}
\section{Self-adjointness and functional calculus for rough pseudo-differential operator}
In this section we will establish a functional calculus for rough pseudo-differential operators. In order for this to be well defined we will also give a set of assumptions that ensures essential self-adjointness of the operators. We could as in the previous section use unitary conjugations to obtain non-rough operators and the use the results for this type of operators. We have chosen not to do this, since in the construction we will need to control polynomials in the derivatives of our symbols. So to make it transparent that the regularity of these polynomials are as desired we have chosen to do the full construction. The construction is based on the results of Helffer--Robert in \cite{MR724029} and this approach is also described in \cite{MR897108 }. The method they used is based on the Mellin transform, where we will use the Helffer-Sj{\"o}strand formula instead. The construction of a functional calculus using this formula can be found in the monographs \cite{MR2952218,MR1735654}. There is also a construction of the functional calculus using Fourier theory in \cite{ivrii2019microlocal1}, but we have not tried to adapt this to the case studied here.  
 
 \subsection{Essential self-adjointness of rough pseudo-differential operators}
 First we will give criteria for the operator to be lower semi-bounded
and essential self-adjoint.
\begin{assumption}\label{B.self_adj_assumption}
  Let $A_\varepsilon(\hbar)$ be a $\hbar$-$\varepsilon$-admissible
  operator of regularity $\tau$ and suppose that
\begin{enumerate}[label={$(\roman*)$}]
  \item\label{B.H.1} $A_\varepsilon(\hbar)$ is symmetric on
    $\mathcal{S}(\R^n)$ for all $\hbar$ in $]0,\hbar_0]$.
  \item\label{B.H.2} The principal symbol $a_{\varepsilon,0}$
    satisfies that
    \begin{equation*}
      \min_{(x,p) \in \R^{2n}} a_{\varepsilon,0}(x,p) = \zeta_0 > -\infty. 
    \end{equation*}
  \item\label{B.H.3} Let $\zeta_1 < \zeta_0$ and $\zeta_1 \leq
    0$. Then $a_{\varepsilon,0} - \zeta_1$ is a tempered weight
    function with constants independent of $\varepsilon$ and
    \begin{equation*}
      a_{\varepsilon,j} \in \Gamma_{0,\varepsilon}^{a_{\varepsilon,0} - \zeta_1,\tau-j} \left(\R_x^{d}\times\R^d_p\right),
    \end{equation*}
    for all $j$ in $\N$.
  \end{enumerate}
\end{assumption}
\begin{thm}\label{B.self_adjoint_thm_1}
  Let $A_\varepsilon(\hbar)$, for $\hbar$ in $(0,\hbar_0]$, be a
  $\hbar$-$\varepsilon$-admissible operator of regularity $\tau\geq 1$
  with tempered weight $m$ and symbol
  \begin{equation*}
    a_\varepsilon(\hbar) = \sum_{j\geq 0} \hbar^j  a_{\varepsilon,j}.
  \end{equation*}
  Suppose that $A_\varepsilon(h)$ satisfies
  Assumption~\ref{B.self_adj_assumption}. Then there exists $\hbar_1$
  in $(0,\hbar_0]$ such that for all $\hbar$ in $(0,\hbar_1]$
  $A_\varepsilon(\hbar)$ is essential self-adjoint and lower
  semi-bounded.
\end{thm}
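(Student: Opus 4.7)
The plan is a parametrix argument combined with the sharp G\aa rding inequality. First I would establish lower semi-boundedness. After passing to Weyl quantisation via Theorem~\ref{B.connection_quantisations}, write $A_\varepsilon(\hbar) = \OpW(a_{\varepsilon,0}) + \hbar\OpW(\tilde a_{\varepsilon,1}) + \hbar^2 R_\varepsilon(\hbar)$, where the Weyl symbols still satisfy the bounds of hypothesis~\ref{B.H.3}. Hypothesis~\ref{B.H.2} gives $a_{\varepsilon,0} - \zeta_1 \geq \zeta_0 - \zeta_1 > 0$, so Theorem~\ref{B.sharp_gaar} yields $\langle \OpW(a_{\varepsilon,0} - \zeta_1)\varphi, \varphi\rangle \geq -C\hbar^\delta\norm{\varphi}_{L^2}^2$, while the remaining lower-order terms are bounded on $L^2$ uniformly in $\varepsilon\in[\hbar^{1-\delta},1]$ by Theorem~\ref{B.cal-val-thm}. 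Hence $A_\varepsilon(\hbar) \geq \zeta_1 - C_1\hbar^\delta$ on $\mathcal{S}(\R^d)$ for $\hbar$ small, giving the claimed lower semi-bound.

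Next I would fix a real $\mu < \zeta_1 - 2$ so that $A_\varepsilon(\hbar) - \mu \geq 1$ on $\mathcal{S}(\R^d)$ for small $\hbar$, and construct a Weyl parametrix $B_{\varepsilon,N}(\hbar) = \OpW\!\bigl(\sum_{j=0}^N \hbar^j b_{\varepsilon,j}\bigr)$ with $b_{\varepsilon,0} = (a_{\varepsilon,0}-\mu)^{-1}$ and the higher-order symbols $b_{\varepsilon,j}$ defined recursively so that the composition expansion of Theorem~\ref{B.composition-weyl-thm} cancels order by order. The essential verification is that $b_{\varepsilon,0}$ lies in $\Gamma_{0,\varepsilon}^{1,\tau}(\R^{2d})$ with seminorms uniform in $\varepsilon$: each $\partial_x^\alpha\partial_p^\beta b_{\varepsilon,0}$ is a finite sum of terms of the form $\prod_k (\partial^{\gamma_k} a_{\varepsilon,0})/(a_{\varepsilon,0}-\mu)^{r+1}$, and hypothesis~\ref{B.H.3} bounds each numerator factor by $c_{\gamma_k}\varepsilon^{\min(0,\tau-|\gamma_k|)}(a_{\varepsilon,0}-\zeta_1)$, while the estimate $|a_{\varepsilon,0}-\mu| \geq \max\{|\mu-\zeta_1|,a_{\varepsilon,0}-\zeta_1\}$ lets the denominator absorb the resulting powers of $(a_{\varepsilon,0}-\zeta_1)$. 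An analogous bookkeeping places each $b_{\varepsilon,j}$ in a suitable class with an additional power of $(a_{\varepsilon,0}-\zeta_1)^{-1}$.

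With the parametrix in hand, Theorem~\ref{B.composition-weyl-thm} yields
\[(A_\varepsilon(\hbar) - \mu) B_{\varepsilon,N}(\hbar) = I + \hbar^{N+1} S_{\varepsilon,N}(\hbar),\]
where by the explicit form of the composition remainder combined with the Calder\'on-Vaillancourt bound in Theorem~\ref{B.cal-val-thm}, $\norm{S_{\varepsilon,N}(\hbar)}_{\mathcal{L}(L^2(\R^d))}$ is uniformly bounded in $\hbar\in(0,\hbar_0]$ and $\varepsilon\in[\hbar^{1-\delta},1]$. Choosing $N$ large and then $\hbar_1>0$ small enough, we arrange $\hbar^{N+1}\norm{S_{\varepsilon,N}} < \tfrac{1}{2}$ for $\hbar\in(0,\hbar_1]$, so that $I+\hbar^{N+1}S_{\varepsilon,N}$ is invertible on $L^2(\R^d)$ and $B_{\varepsilon,N}(\hbar)(I+\hbar^{N+1}S_{\varepsilon,N})^{-1}$ is a bounded right inverse of $A_\varepsilon(\hbar) - \mu$. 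In particular the range of $A_\varepsilon(\hbar)-\mu$ on $\mathcal{S}(\R^d)$ is dense in $L^2(\R^d)$, and the standard criterion for symmetric semi-bounded operators (density of the range of $T-\mu$ for some real $\mu$ below the lower bound suffices for essential self-adjointness) concludes the proof.

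The main obstacle will be the symbol bookkeeping in the parametrix step, namely verifying that the recursively constructed $b_{\varepsilon,j}$ remain in the appropriate rough-symbol classes with seminorms independent of $\varepsilon$ on the range $[\hbar^{1-\delta},1]$, and that the composition remainder really is $O(\hbar^{N+1})$ in operator norm. Both rely on $\tau\geq 1$ (so that at least one derivative can be taken without incurring an $\varepsilon^{-1}$ loss) and on the uniformity of the tempered-weight constants built into hypothesis~\ref{B.H.3}, as flagged in Remark~\ref{B.assumption_epsilon}.
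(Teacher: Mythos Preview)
Your parametrix construction is along the right lines, but the G\aa rding step for lower semi-boundedness has a genuine gap. Theorem~\ref{B.sharp_gaar} as stated requires a \emph{bounded} rough symbol, and $a_{\varepsilon,0}-\zeta_1$ is in general unbounded (indeed by hypothesis~\ref{B.H.3} it is itself the tempered weight controlling all the $a_{\varepsilon,j}$). Likewise your appeal to Theorem~\ref{B.cal-val-thm} to bound $\hbar\OpW(\tilde a_{\varepsilon,1})$ in operator norm fails for the same reason: that theorem needs a bounded tempered weight, but $\tilde a_{\varepsilon,1}\in\Gamma_{0,\varepsilon}^{a_{\varepsilon,0}-\zeta_1,\tau-1}$ and $a_{\varepsilon,0}-\zeta_1$ is unbounded. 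So neither the principal nor the subprincipal term can be controlled this way, and your lower semi-boundedness argument does not go through as written. Since the essential-self-adjointness criterion you invoke (dense range of $T-\mu$ for real $\mu$ below the lower bound) presupposes that lower bound, the gap propagates.

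The paper sidesteps both issues with a shorter argument. It picks any real $t<\zeta_0$ and uses only the \emph{single} parametrix symbol $b_{\varepsilon,t}=(a_{\varepsilon,0}-t)^{-1}\in\Gamma_{0,\varepsilon}^{(a_{\varepsilon,0}-\zeta_1)^{-1},\tau}$. The composition $(A_\varepsilon(\hbar)-t)\OpW(b_{\varepsilon,t})$ then has principal symbol $1$ and remainder of order $\hbar$ whose weight is $(a_{\varepsilon,0}-\zeta_1)\cdot(a_{\varepsilon,0}-\zeta_1)^{-1}=1$, i.e.\ bounded, so Calder\'on--Vaillancourt legitimately applies and one gets $(A_\varepsilon(\hbar)-t)\OpW(b_{\varepsilon,t})=I+\hbar S_N$ with $\sup_\hbar\norm{S_N}<\infty$. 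For small $\hbar$ this makes $A_\varepsilon(\hbar)-t$ surjective onto $L^2$, and for a symmetric operator surjectivity of $T-t$ at a single real $t$ already forces essential self-adjointness (this is the cited Proposition~3.11 of Schm\"udgen). Lower semi-boundedness then comes for free: the same construction works for \emph{every} $t<\zeta_0$, so all such $t$ lie in the resolvent set. Your $N$-term recursive parametrix is correct but unnecessary here; one term already gives an $O(\hbar)$ remainder, which is all that is needed for invertibility.
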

\begin{proof}
  We let $t<\zeta_0$, where $\zeta_0$ is the number from
  Assumption~\ref{B.self_adj_assumption}. For this $t$ we define the
  symbol
  \begin{equation*}
    b_{\varepsilon,t}(x,p) = \frac{1}{a_{\varepsilon,0}(x,p) -t}.
  \end{equation*}
  By assumption we have that
  $b_{\varepsilon,t} \in \Gamma_{0,\varepsilon}^{(a_{\varepsilon,0} -
    \gamma_1)^{-1},\tau}(\R_x^d\times\R^d_p)$. For $N$ sufficiently
  large we get by assumption that 
  \begin{equation*}
  \begin{aligned}
    (A_\varepsilon(\hbar) - t) \OpW(b_{\varepsilon,t}) ={}& \sum_{j=0}^N
    \hbar^j \OpW( a_{\varepsilon,j}) \OpW(b_{\varepsilon,t}) + \hbar^{N+1} R_N(\hbar)\OpW(b_{\varepsilon,t})
    \\
    ={}& I + \hbar S_N(\varepsilon,\hbar).
    \end{aligned}
  \end{equation*}
The formula for composition of operators and the Calderon-Vaillancourt theorem gives us that the   operator $S_N$ satisfies the estimate
  \begin{equation}
  \sup_{\hbar\in(0,\hbar_0]}  \norm{S_N(\varepsilon,\hbar)}_{\mathcal{L}(L^2(\R^d))}<\infty
  \end{equation}
  We note that if
  $\hbar$ is chosen such that
  $\hbar\norm{S_N(\varepsilon,\hbar)}_{\mathcal{L}(L^2(\R^d))}<1$ then
  the operator $ I + \hbar S_N(\varepsilon,\hbar)$ will be
  invertible. 
  
  The Calderon-Vaillancourt theorem gives us that
  $ \OpW(b_{\varepsilon,t})$ is a bounded operator. This implies that
  the expression
  $ \OpW(b_{\varepsilon,t}) ( I + \hbar S_N(\varepsilon,\hbar))^{-1}$
  is a well defined bounded operator. Hence we have that the operator
  $(A_\varepsilon(\hbar) - t) $ maps its domain surjective onto all of
  $L^2(\R^d)$. By \cite[Proposition 3.11]{MR2953553} this implies that
  $A_\varepsilon(\hbar)$ is essential self-adjoint.

  Since we have for all $t<\zeta_0$ that
  $(A_\varepsilon(\hbar) - t) $ maps its domain surjective onto all of
  $L^2(\R^d)$ they are all in the resolvent set and hence the operator
  has to be lower semi-bounded.
\end{proof}
\subsection{The resolvent of a rough pseudo-differential operator}
A main part in the construction of the functional calculus is to prove that the resolvent of a $\hbar$-$\varepsilon$-admissible
  operator of regularity $\tau$, which satisfies Assumption~\ref{B.self_adj_assumption}, is an operator of the same type. this is the content of the following Theorem. 
\begin{thm}\label{B.approx_of_resolvent}
  Let $A(\hbar)$, for $\hbar$ in $(0,\hbar_0]$, be a
  $\hbar$-$\varepsilon$-admissible operator of regularity
  $\tau \geq 1$ with tempered weight $m$ and symbol
  \begin{equation*}
    a_\varepsilon(\hbar) = \sum_{j\geq 0} \hbar^j a_{\varepsilon,j}.
  \end{equation*}
  Suppose that $A_\varepsilon(h)$ satisfies
  Assumption~\ref{B.self_adj_assumption} with the numbers $\zeta_0$
  and $\zeta_1$. For $z$ in $\C\setminus[\zeta_1,\infty)$ we define
  the sequence of symbols
  \begin{equation}\label{B.res.symbolform_1}
    \begin{aligned}
      b_{\varepsilon,z,0} &= (a_{\varepsilon,0} - z)^{-1}
      \\
      b_{\varepsilon,z,j+1} &= - b_{\varepsilon,z,0} \cdot
      \sum_{\substack{l + \abs{\alpha} + \abs{\beta} + k = j+1\\ 0\leq
          l \leq j }} \frac{1}{\alpha ! \beta !}
      \frac{1}{2^{\abs{\alpha}}} \frac{1}{(-2)^{\abs{\beta}}}
      (\partial_p^{\alpha} D_x^{\beta} a_{\varepsilon,k})
      (\partial_p^{\beta} D_x^{\alpha} b_{\varepsilon,z,l}),
    \end{aligned}
  \end{equation}
  for $j\geq1$. Moreover we define
  \begin{equation*}
    B_{\varepsilon,z,M}(\hbar) = \sum_{j=0}^M \hbar^j b_{\varepsilon,z,j}.
  \end{equation*}
  Then for $N$ in $\N$ we have that
  \begin{equation}
    (A_\varepsilon(h) - z ) \OpW B_{\varepsilon,z,N} = I + h^{N+1} \Delta_{z,N+1}(h),
  \end{equation}
  with
  \begin{equation}
    \hbar^{N+1}\norm{ \Delta_{z,N+1}(h)}_{\mathcal{L}(L^2(\R^d))} \leq C \hbar^{\kappa(N)} \left( \frac{\abs{z}}{\dist(z,[\zeta_1,\infty))} \right)^{q(N)},
  \end{equation}
  where $\kappa$ is a positive strictly increasing function and $q(N)$
  is a positive integer depending on $N$. In particular we have for
  all $z$ in $\C\setminus[\zeta_1,\infty)$ and all $\hbar$ in
  $(0,\hbar_1]$ ($\hbar_1$ sufficient small and independent of $z$),
  that $(A_\varepsilon(h) - z)^{-1}$ is a
  $\hbar$-$\varepsilon$-admissible operator with respect to the
  tempered weight $(a_{\varepsilon,0}-\zeta_1)^{-1}$ and of
  regularity $\tau$ with symbol:
  \begin{equation}
    B_z(h) = \sum_{j\geq0} h^j b_{\varepsilon,z,j}.
  \end{equation}
\end{thm}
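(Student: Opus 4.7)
The strategy is the standard parametrix construction for the resolvent, adapted to the rough symbol calculus developed above.

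First I would verify that the principal symbol $b_{\varepsilon,z,0}=(a_{\varepsilon,0}-z)^{-1}$ lies in $\Gamma_{0,\varepsilon}^{(a_{\varepsilon,0}-\zeta_1)^{-1},\tau}(\R^d_x\times\R^d_p)$ with seminorms controlled by a positive power of $|z|/\dist(z,[\zeta_1,\infty))$. The key pointwise bound is
\begin{equation*}
\frac{a_{\varepsilon,0}-\zeta_1}{|a_{\varepsilon,0}-z|}\leq C\,\frac{|z|+|\zeta_1|}{\dist(z,[\zeta_1,\infty))},
\end{equation*}
obtained by splitting into the cases $a_{\varepsilon,0}-\zeta_1\leq 2|z-\zeta_1|$ and $a_{\varepsilon,0}-\zeta_1\geq 2|z-\zeta_1|$. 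Derivative bounds come from iteratively differentiating $(a_{\varepsilon,0}-z)\,b_{\varepsilon,z,0}=1$: each application of $\partial_x$ or $\partial_p$ produces an additional factor of $b_{\varepsilon,z,0}$ together with a derivative of $a_{\varepsilon,0}$, which Assumption~\ref{B.self_adj_assumption} controls by $a_{\varepsilon,0}-\zeta_1$ along with the appropriate $\varepsilon$-loss $\varepsilon^{\min(0,\tau-|\alpha|-|\gamma|)}$.

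Next, the symbols $b_{\varepsilon,z,j}$ for $j\geq 1$ are defined recursively so as to cancel the expansion of $(A_\varepsilon(\hbar)-z)\circ\OpW(B_{\varepsilon,z,N})$ order by order. By Theorem~\ref{B.composition-weyl-thm}, the coefficient of $\hbar^{j+1}$ in the symbol of the composition equals $(a_{\varepsilon,0}-z)\,b_{\varepsilon,z,j+1}$ plus a sum of terms involving $b_{\varepsilon,z,l}$ with $l\leq j$ and $a_{\varepsilon,k}$ with $k\leq j+1$; the formula \eqref{B.res.symbolform_1} is precisely the one that makes this coefficient vanish, while the $\hbar^0$ coefficient is $1$. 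A straightforward induction, combining Assumption~\ref{B.self_adj_assumption} with the Step 1 estimates on $b_{\varepsilon,z,0}$, then gives $b_{\varepsilon,z,j}\in\Gamma_{0,\varepsilon}^{(a_{\varepsilon,0}-\zeta_1)^{-1},\tau-j}$ with seminorms polynomial in $|z|/\dist(z,[\zeta_1,\infty))$.

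For the remainder, the symbol error from Theorem~\ref{B.composition-weyl-thm} is controlled by the seminorms $\mathcal{G}^{\alpha,\beta}_{M,\tau}$, which by the previous step are polynomial in $|z|/\dist(z,[\zeta_1,\infty))$; Theorem~\ref{B.cal-val-thm} then converts this into the claimed operator bound on $\Delta_{z,N+1}(\hbar)$. Choosing $\hbar$ so small that $\hbar^{N+1}\|\Delta_{z,N+1}\|_{\mathcal{L}(L^2(\R^d))}\leq 1/2$, the factor $I+\hbar^{N+1}\Delta_{z,N+1}$ is invertible on $L^2(\R^d)$ by Neumann series, so $\OpW(B_{\varepsilon,z,N})(I+\hbar^{N+1}\Delta_{z,N+1})^{-1}$ is a right inverse of $A_\varepsilon(\hbar)-z$; essential self-adjointness from Theorem~\ref{B.self_adjoint_thm_1} then promotes it to the full resolvent, and the bounds on the $b_{\varepsilon,z,j}$ identify the asymptotic expansion of its symbol. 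The main technical obstacle is the simultaneous bookkeeping in Steps 1 and 2 of the $z$-dependent growth $(|z|/\dist(z,[\zeta_1,\infty)))^{q(N)}$ arising from each factor $b_{\varepsilon,z,0}$ and of the $\varepsilon$-loss $\varepsilon^{\min(0,\tau-|\alpha|-|\gamma|)}$ that appears once the regularity budget $\tau$ is exhausted; once this is arranged cleanly, the remaining steps follow the classical Helffer--Robert scheme described in \cite[Chapter~II]{MR897108}.
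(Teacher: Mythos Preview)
Your proposal is correct and follows essentially the same route as the paper: verify $b_{\varepsilon,z,0}\in\Gamma_{0,\varepsilon}^{(a_{\varepsilon,0}-\zeta_1)^{-1},\tau}$ with $z$-polynomial seminorms, use the Weyl composition theorem to see that the recursion \eqref{B.res.symbolform_1} kills the $\hbar^l$-coefficients for $1\le l\le N$, and bound the remainder via Theorem~\ref{B.cal-val-thm}. The only substantive difference is that the paper does not leave your ``straightforward induction'' implicit: it first proves (Lemmas~\ref{B.approx_of_resolvent_lemma_1}--\ref{B.approx_of_resolvent_lemma_3}) that each $b_{\varepsilon,z,j}$ is a polynomial $\sum_{k=1}^{2j-1}d_{\varepsilon,j,k}\,b_{\varepsilon,z,0}^{k+1}$ with $d_{\varepsilon,j,k}\in\Gamma_{0,\varepsilon}^{(a_{\varepsilon,0}-\zeta_1)^k,\tau-j}$, and derives the derivative bounds and the $z$-growth from this structural form via the elementary estimate $(a_{\varepsilon,0}-\zeta_1)/|a_{\varepsilon,0}-z|\le |z-\zeta_1|/\dist(z,[\zeta_1,\infty))$ (obtained by the law of sines rather than your case split). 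This explicit polynomial form is what makes the bookkeeping you flag as ``the main technical obstacle'' clean, and it is reused later in the functional calculus (Theorem~\ref{B.func_calc}); your inductive sketch would reproduce it, but the paper isolates it as separate lemmas precisely so that the $z$-power and $\varepsilon$-loss can be read off term by term.
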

Before we can prove the theorem we will need some lemma's with the
same setting. It is in these lemma's we will see that the symbol for the resolvent has the same regularity. From these lemmas we can also find the explicit formula's for every symbol in the expansion.
\begin{lemma}\label{B.approx_of_resolvent_lemma_1}
  Let the setting be as in Theorem~\ref{B.approx_of_resolvent}. For
  every $j$ in $\mathbb{N}$ we have
  \begin{equation}\label{B.res.symbolform_2}
    b_{\varepsilon,z,j} = \sum_{k=1}^{2j-1} d_{\varepsilon, j ,k} b_{\varepsilon,z,0}^{k+1},
  \end{equation}	
  where $d_{\varepsilon, j ,k}$ are universal polynomials in
  $\partial_p^\alpha\partial_x^\beta a_{\varepsilon,l}$ for
  $\abs{\alpha}+\abs{\beta}+l\leq j$ and
  $d_{\varepsilon, j ,k} \in
  \Gamma_{0,\varepsilon}^{(a_0-\zeta_1)^k,\tau-j}$ for all $k$,
  $1\leq k \leq 2j-1$.  In particular we have that
  \begin{equation*}
    b_{\varepsilon,z,1}=-a_{\varepsilon,1} b_{\varepsilon,z,0}^2.
  \end{equation*}
\end{lemma}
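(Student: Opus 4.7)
The plan is to prove the lemma by induction on $j$, using the recurrence \eqref{B.res.symbolform_1} together with the fact that any derivative of $b_{\varepsilon,z,0} = (a_{\varepsilon,0}-z)^{-1}$ can be expanded via Fa\`a di Bruno into polynomials in derivatives of $a_{\varepsilon,0}$ times higher powers of $b_{\varepsilon,z,0}$. More precisely, for any multi-index $\mu$ with $|\mu|\geq 1$, we have
\begin{equation*}
\partial^{\mu} b_{\varepsilon,z,0} = \sum_{n=1}^{|\mu|} b_{\varepsilon,z,0}^{n+1} P_{\mu,n}(a_{\varepsilon,0}),
\end{equation*}
where $P_{\mu,n}$ is a universal polynomial in derivatives of $a_{\varepsilon,0}$ of total order $|\mu|$ with exactly $n$ factors (and an analogous statement holds for $\partial^{\mu} b_{\varepsilon,z,0}^{m}$, the highest power being $m+|\mu|$). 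This is the only ``non-recursive'' input needed.

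For the base case $j=1$, I would just evaluate the recurrence. The only indices satisfying $l+|\alpha|+|\beta|+k=1$ with $l=0$ are $(k,|\alpha|,|\beta|)\in\{(1,0,0),(0,1,0),(0,0,1)\}$. The first gives $-a_{\varepsilon,1}b_{\varepsilon,z,0}^{2}$. The last two contribute $\tfrac12(\partial_p^{\alpha}a_{\varepsilon,0})(D_x^{\alpha}b_{\varepsilon,z,0})$ and $-\tfrac12(D_x^{\beta}a_{\varepsilon,0})(\partial_p^{\beta}b_{\varepsilon,z,0})$, and using $D_x b_{\varepsilon,z,0}= i b_{\varepsilon,z,0}^{2}\partial_x a_{\varepsilon,0}$ together with $\partial_p b_{\varepsilon,z,0}= -b_{\varepsilon,z,0}^{2}\partial_p a_{\varepsilon,0}$ and $D_x=-i\partial_x$, these two contributions cancel pairwise. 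Hence $b_{\varepsilon,z,1}=-a_{\varepsilon,1}b_{\varepsilon,z,0}^{2}$, which fits the target form with $d_{\varepsilon,1,1}=-a_{\varepsilon,1}\in\Gamma_{0,\varepsilon}^{a_{\varepsilon,0}-\zeta_1,\tau-1}$ by Assumption~\ref{B.self_adj_assumption}\ref{B.H.3}.

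For the inductive step, assuming the statement for all $l<j$, I would substitute into the recurrence and rewrite each factor. For $l=0$ the factor $\partial_p^{\beta}D_x^{\alpha}b_{\varepsilon,z,0}$ expands, by the Fa\`a di Bruno formula above, into a sum of terms $b_{\varepsilon,z,0}^{n+1}$ (for $0\leq n\leq|\alpha|+|\beta|$) with coefficients that are universal polynomials in derivatives of $a_{\varepsilon,0}$. For $l\geq 1$ the induction hypothesis gives $b_{\varepsilon,z,l}=\sum_{k'=1}^{2l-1}d_{\varepsilon,l,k'}b_{\varepsilon,z,0}^{k'+1}$, and applying the Leibniz rule together with the same Fa\`a di Bruno identity for $\partial^{(\alpha_2,\beta_2)}b_{\varepsilon,z,0}^{k'+1}$ produces sums of universal polynomials in derivatives of the $a_{\varepsilon,l'}$'s multiplied by powers of $b_{\varepsilon,z,0}$. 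Multiplying by the prefactor $-b_{\varepsilon,z,0}(\partial_p^{\alpha}D_x^{\beta}a_{\varepsilon,k})$ then yields terms of the form (polynomial in derivatives of $a_{\varepsilon,l'}$)$\cdot b_{\varepsilon,z,0}^{r+1}$. Collecting by the exponent $r$ defines the $d_{\varepsilon,j,r}$.

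The remaining work is bookkeeping. First, checking the power range: the minimum exponent $r+1=2$ is realised by the index $(l,|\alpha|,|\beta|,k)=(0,0,0,j)$ which produces $-a_{\varepsilon,j}b_{\varepsilon,z,0}^{2}$; the maximum exponent is bounded by $2l+|\alpha|+|\beta|+1= 2l+(j-l-k)+1=j+l-k+1\leq 2j$, i.e.\ $r\leq 2j-1$. Second, checking the polynomial structure: a factor $\partial_p^{\alpha}D_x^{\beta}a_{\varepsilon,k}$ has weighted order $|\alpha|+|\beta|+k=j-l\leq j$; each factor of $\partial a_{\varepsilon,0}$ coming from differentiating $b_{\varepsilon,z,0}$ has order at most $|\alpha|+|\beta|\leq j$; and each factor coming from differentiating $d_{\varepsilon,l,k'}$ has order (by induction) at most $l+|\alpha_1|+|\beta_1|\leq j-k\leq j$. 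Third, verifying the symbol class: by Assumption~\ref{B.self_adj_assumption}\ref{B.H.3} each $a_{\varepsilon,l'}$ is a symbol with weight $a_{\varepsilon,0}-\zeta_1$ and regularity $\tau-l'$; a product of $k$ such factors (counting the factors of $\partial a_{\varepsilon,0}$ as factors with weight $a_{\varepsilon,0}-\zeta_1$) lies in $\Gamma_{0,\varepsilon}^{(a_{\varepsilon,0}-\zeta_1)^{k},\tau-j}$, since the total ``weighted order'' is at most $j$. The main obstacle is purely combinatorial: keeping the three simultaneous counts (power of $b_{\varepsilon,z,0}$, weighted order of each polynomial factor, and total number of factors of $(a_{\varepsilon,0}-\zeta_1)$) consistent through the Leibniz and Fa\`a di Bruno expansions; none of the individual steps are delicate, but the indexing requires care.
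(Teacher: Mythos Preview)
Your proposal is correct and follows essentially the same route as the paper: induction on $j$, a direct computation for the base case $j=1$ with the Poisson-bracket cancellation, and in the inductive step the combination of the Leibniz rule with the Fa\`a di Bruno expansion of $\partial^{\mu}b_{\varepsilon,z,0}^{m}$. The only organisational difference is that the paper isolates the key identity $\partial_p^{\beta}\partial_x^{\alpha}(d_{\varepsilon,j,k}\,b_{\varepsilon,z,0}^{k+1})=\sum_{n=0}^{|\alpha|+|\beta|}\tilde d_{\varepsilon,j,k,n,\alpha,\beta}\,b_{\varepsilon,z,0}^{k+1+n}$ into a separate lemma (Lemma~\ref{B.approx_of_resolvent_lemma_2}) before running the induction, whereas you perform this expansion inline; one minor slip is that your power count ``$2l+|\alpha|+|\beta|+1$'' undercounts by $1$ in the case $l=0$, but the conclusion $r\leq 2j-1$ is unaffected since $j\geq2$ in the inductive step.
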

In order to prove this Lemma we will be needing the following Lemma:
\begin{lemma}\label{B.approx_of_resolvent_lemma_2}
  Let the setting be as in
  Lemma~\ref{B.approx_of_resolvent_lemma_1}. For any $j$ and $k$ in
  $\mathbb{N}$ we let
  $d_{\varepsilon, j ,k} b_{\varepsilon,z,0}^{k+1}$ be one of the
  elements in the expansion of $b_{\varepsilon,z,j}$. Then for all
  multi indices $\alpha$ and $\beta$ it holds that
  \begin{equation*}
    \partial_p^\beta \partial_x^\alpha d_{\varepsilon, j ,k} b_{\varepsilon,z,0}^{k+1}  = \sum_{n=0}^{\abs{\alpha}+\abs{\beta}} \tilde{d}_{\varepsilon, j ,k, n, \alpha , \beta} b_{\varepsilon,z,0}^{k+1+n},
  \end{equation*}
  where $ \tilde{d}_{\varepsilon, j ,k, n, \alpha , \beta}$ are
  polynomials in
  $\partial_x^{\alpha'} \partial_p^{\beta'} a_{\varepsilon,k}$ with
  $\abs{\alpha'}+\abs{\beta'}+k \leq j +\abs{\alpha}+\abs{\beta}$ of
  degree at most $k+n$ and they are of regularity at least
  $\tau-j-\abs{\alpha}$. They are determined only by $\alpha$, $\beta$
  and $d_{\varepsilon, j ,k}$.
\end{lemma}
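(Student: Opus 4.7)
The plan is to induct on $N = |\alpha|+|\beta|$. The base case $N=0$ is immediate: we take $\tilde d_{\varepsilon,j,k,0,0,0} = d_{\varepsilon,j,k}$, so that $n=0$ is the only term; the degree bound $\leq k$, the polynomial structure, and the regularity $\tau-j$ are then inherited directly from Lemma~\ref{B.approx_of_resolvent_lemma_1}.

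For the inductive step, assume the statement holds for every pair of multi-indices of total order $N$. Given $\alpha$, $\beta$ with $|\alpha|+|\beta| = N+1$, peel off a single derivative, writing either $\partial_x^\alpha\partial_p^\beta = \partial_{x_i}(\partial_x^{\alpha'}\partial_p^\beta)$ or $\partial_x^\alpha\partial_p^\beta = \partial_{p_i}(\partial_x^\alpha\partial_p^{\beta'})$. The two cases are parallel, but the $x$-case is the only one that consumes regularity, so I focus on it. Apply the inductive hypothesis to $\partial_x^{\alpha'}\partial_p^\beta(d_{\varepsilon,j,k} b_{\varepsilon,z,0}^{k+1})$, then differentiate once more using the Leibniz rule together with the key identity $\partial_{x_i} b_{\varepsilon,z,0} = -(\partial_{x_i} a_{\varepsilon,0})\, b_{\varepsilon,z,0}^{2}$, which follows from $b_{\varepsilon,z,0} = (a_{\varepsilon,0}-z)^{-1}$.

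This produces two families of terms. First, $(\partial_{x_i}\tilde d_{\varepsilon,j,k,n,\alpha',\beta})\, b_{\varepsilon,z,0}^{k+1+n}$ preserves the power of $b_{\varepsilon,z,0}$, increases the polynomial index bound $|\alpha''|+|\beta''|+l$ from $j+N$ to $j+N+1$, keeps the degree $\leq k+n$, and lowers the regularity by one to $\tau-j-|\alpha|$. Second, $-(k+1+n)\tilde d_{\varepsilon,j,k,n,\alpha',\beta}(\partial_{x_i}a_{\varepsilon,0})\, b_{\varepsilon,z,0}^{k+2+n}$ raises the power from $k+1+n$ to $k+1+(n+1)$, inserts one extra factor $\partial_{x_i}a_{\varepsilon,0}$ (which has index sum $1\leq j+N+1$ and regularity at least $\tau-1\geq \tau-j-|\alpha|$), and raises the polynomial degree from $\leq k+n$ to $\leq k+(n+1)$. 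Regrouping the two families by the power of $b_{\varepsilon,z,0}$ yields the claimed decomposition with $n$ running from $0$ to $N+1$, and the three required attributes remain valid. The $p$-derivative case is identical except that no regularity is lost, so the bound $\tau - j - |\alpha|$ is trivially preserved.

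There is no real analytic obstacle here; the argument is a combinatorial Leibniz computation. The only point that requires attention is verifying \emph{simultaneously} that all three attributes (polynomial index bound, degree bound, regularity bound) are preserved at each step. This works because each differentiation raises $n$ by at most one, raises the polynomial index bound by exactly one, and consumes at most one unit of regularity—precisely as demanded by the statement. Since the coefficients at each stage are produced by a deterministic application of Leibniz plus the identity for $\partial b_{\varepsilon,z,0}$, the $\tilde d_{\varepsilon,j,k,n,\alpha,\beta}$ depend only on $\alpha,\beta$ and $d_{\varepsilon,j,k}$, as asserted.
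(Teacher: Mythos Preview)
Your induction on $N=|\alpha|+|\beta|$ is correct and complete: the base case is the hypothesis on $d_{\varepsilon,j,k}$, and the inductive step correctly tracks all three attributes (index bound, degree, regularity) through a single application of Leibniz together with the identity $\partial b_{\varepsilon,z,0}=-(\partial a_{\varepsilon,0})\,b_{\varepsilon,z,0}^{2}$. In particular, your regularity bookkeeping is right: for the $x$-derivative case, Term~1 drops the regularity by one (from $\tau-j-|\alpha'|$ to $\tau-j-|\alpha|$), while Term~2 keeps regularity $\min(\tau-j-|\alpha'|,\tau-1)\geq\tau-j-|\alpha|$, and the $p$-case loses nothing.

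The paper takes a different, non-inductive route: it expands $\partial_p^\beta\partial_x^\alpha(d_{\varepsilon,j,k}\,b_{\varepsilon,z,0}^{k+1})$ in one shot via the full Leibniz formula, splitting into the four groups (no derivative on $b$, only $p$-derivatives on $b$, only $x$-derivatives on $b$, mixed), and then applies the Fa\`a di Bruno formula (Theorem~\ref{B.Faa_di_bruno_x} and Corollary~\ref{B.Faa_di_bruno_xp}) to compute $\partial^\gamma b_{\varepsilon,z,0}^{k+1}$ explicitly as a polynomial in derivatives of $a_{\varepsilon,0}$ times a power of $b_{\varepsilon,z,0}$. This yields explicit closed-form expressions for the $\tilde d_{\varepsilon,j,k,n,\alpha,\beta}$, at the cost of invoking the multivariate Fa\`a di Bruno machinery. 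Your inductive approach is more elementary---it replaces Fa\`a di Bruno by the single-variable chain rule and a one-step recursion---and is arguably cleaner for verifying the three bounds, but it gives the coefficients only recursively rather than in closed form. For the purposes of the paper (where only the qualitative structure and the bounds matter, not the explicit constants), either approach suffices.
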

\begin{proof}
  The proof is an application of Theorem~\ref{B.Faa_di_bruno_x} (Fa\`a
  di {B}runo formula) and the Corollary~\ref{B.Faa_di_bruno_xp} to the
  formula. For our $\alpha,\beta$ we have by the Leibniz's formula that:
  \begin{equation*}
  \begin{aligned}
   \MoveEqLeft  \partial_p^\beta \partial_x^\alpha d_{\varepsilon, j ,k}
    b_{\varepsilon,z,0}^{k+1} 
    = \partial_p^\beta \big\{
    (\partial_x^{\alpha} d_{\varepsilon, j ,k})
    b_{\varepsilon,z,0}^{k+1} + \sum_{\substack{\abs{\gamma}=1\\
        \gamma\leq\alpha}}^{\abs{\alpha}}
    \binom{\alpha}{\gamma} \partial_x^{\alpha-\gamma} d_{\varepsilon,
      j ,k} \partial_x^\gamma b_{\varepsilon,z,0}^{k+1} \big\}
    \\
    ={} & (\partial_p^\beta \partial_x^{\alpha} d_{\varepsilon, j
      ,k})b_{\varepsilon,z,0}^{k+1} +
    \sum_{\substack{\abs{\zeta}=1\\\zeta\leq\beta}}^{\abs{\beta}}\binom{\beta}{\zeta} \partial_p^{\beta-\zeta} \partial_x^{\alpha}
    d_{\varepsilon, j ,k} \partial_p^\zeta b_{\varepsilon,z,0}^{k+1}
     + \sum_{\substack{\abs{\gamma}=1\\
        \gamma\leq\alpha}}^{\abs{\alpha}} \binom{\alpha}{\gamma}
    (\partial_p^\beta\partial_x^{\alpha-\gamma} d_{\varepsilon, j
      ,k}) \partial_x^\gamma b_{\varepsilon,z,0}^{k+1}
    \\
    & +
    \sum_{\substack{\abs{\zeta}=1\\\zeta\leq\beta}}^{\abs{\beta}}
    \sum_{\substack{\abs{\gamma}=1\\ \gamma\leq\alpha}}^{\abs{\alpha}}
    \binom{\beta}{\zeta} \binom{\alpha}{\gamma}
    (\partial_p^{\beta-\zeta}\partial_x^{\alpha-\gamma}
    d_{\varepsilon, j ,k}) \partial_p^\zeta \partial_x^\gamma
    b_{\varepsilon,z,0}^{k+1} .
    \end{aligned}
  \end{equation*}
  We will here consider each of the three sums separately for the
  first we get by the Fa\`a di {B}runo formula
  (Theorem~\ref{B.Faa_di_bruno_x})
  \begin{equation*}
  \begin{aligned}
    \MoveEqLeft \sum_{\substack{\abs{\zeta}=1\\\zeta\leq\beta}}^{\abs{\beta}}
    \binom{\beta}{\zeta} \partial_p^{\beta-\zeta} \partial_x^{\alpha}
    d_{\varepsilon, j ,k} \partial_p^\zeta b_{\varepsilon,z,0}^{k+1}
    \\
    & =
    \sum_{\substack{\abs{\zeta}=1\\\zeta\leq\beta}}^{\abs{\beta}}\binom{\beta}{\zeta} \partial_p^{\beta-\zeta} \partial_x^{\alpha}
    d_{\varepsilon, j ,k} \sum_{{n}=1}^{\abs{\zeta}} (-1)^{n}
    \frac{(k+n)!}{k!}  b_{\varepsilon,z,0}^{k+1+ n} \;\;
    \sum_{\mathclap{\substack{\zeta_1+\cdots+\zeta_n=\zeta\\\abs{\zeta_i}>0}}}
    \;\; c_{\zeta_1,\dots,\zeta_n} \partial_p^{\zeta_1}a_0
    \cdots \partial_p^{\zeta_n}a_0
    \\
    &= \sum_{n_\beta=1}^{\abs{\beta}} \Big\{
    \sum_{\substack{\abs{\zeta}\geq n_\beta \\ \zeta\leq
        \beta}}^{\abs{\beta}}
    c_{k,n_\beta,\beta,\zeta} \partial_p^{\beta-\zeta} \partial_x^{\alpha}
    d_{\varepsilon, j ,k}
    \sum_{\substack{\zeta_1+\cdots+\zeta_{n_\beta}=\zeta\\\abs{\zeta_i}>0}}
    c_{\zeta_1,\dots,\zeta_n} \partial_p^{\zeta_1}a_0
    \cdots \partial_p^{\zeta_{n_\beta}}a_0
    \Big\}b_{\varepsilon,z,0}^{k+1+ n_\beta}
    \\
    &= \sum_{n_\beta=1}^{\abs{\beta}}
    \tilde{d}_{\varepsilon,j,k,\alpha,\beta,n_\beta}
    b_{\varepsilon,z,0}^{k+1+ n_\beta}.
    \end{aligned}
  \end{equation*}
  This calculation gives that we have a polynomial structure, where the coefficients are 
  the polynomials $ \tilde{d}_{\varepsilon,j,k,\alpha,\beta,n_\beta}$, which themselfs  
  are polynomials in
  $\partial_x^{\alpha'} \partial_p^{\beta'} a_{\varepsilon,k}$ with
  $\abs{\alpha'}+\abs{\beta'}+k \leq j +\abs{\alpha}+\abs{\beta}$ and
  of regularity $\tau-j-\abs{\alpha}$.
  For the second sum we again use Fa\`a di {B}runo formula
  (Theorem~\ref{B.Faa_di_bruno_x}) and get
  \begin{equation*}
  \begin{aligned}
   \MoveEqLeft  \sum_{\substack{\abs{\gamma}=1\\
        \gamma\leq\alpha}}^{\abs{\alpha}} \binom{\alpha}{\gamma}
    (\partial_p^\beta\partial_x^{\alpha-\gamma} d_{\varepsilon, j
      ,k}) \partial_x^\gamma b_{\varepsilon,z,0}^{k+1}
    \\
    ={}&
    \sum_{\substack{\abs{\gamma}=1\\\gamma\leq\alpha}}^{\abs{\alpha}}\binom{\alpha}{\gamma} \partial_p^{\beta} \partial_x^{\alpha-\gamma}
    d_{\varepsilon, j ,k} \sum_{{n}=1}^{\abs{\gamma}} (-1)^{n}
    \frac{(k+n)!}{k!}  b_{\varepsilon,z,0}^{k+1+ n} \quad
    \sum_{\mathclap{\substack{\gamma_1+\cdots+\gamma_n=\gamma\\\abs{\gamma_i}>0}}}
    \;c_{\gamma_1,\dots,\gamma_n} \partial_x^{\gamma_1}a_0
    \cdots \partial_x^{\gamma_n}a_0
    \\
    ={}& \sum_{n_\alpha=1}^{\abs{\alpha}} \Big\{
    \sum_{\substack{\abs{\gamma}\geq n_\alpha \\ \gamma\leq
        \alpha}}^{\abs{\alpha}}
    c_{k,n_\alpha,\alpha,\gamma} \partial_p^{\beta} \partial_x^{\alpha-\gamma}
    d_{\varepsilon, j ,k} \quad
    \sum_{\mathclap{\substack{\gamma_1+\cdots+\gamma_{n_\alpha}=\gamma\\\abs{\gamma_i}>0}}}\;
    c_{\gamma_1,\dots,\gamma_n} \partial_x^{\gamma_1}a_0
    \cdots \partial_x^{\gamma_{n_\alpha}}a_0
    \Big\}b_{\varepsilon,z,0}^{k+1+ n_\alpha}
    \\
    ={}& \sum_{n_\alpha=1}^{\abs{\alpha}}
    \tilde{d}_{\varepsilon,j,k,\alpha,\beta,n_\alpha}
    b_{\varepsilon,z,0}^{k+1+ n_\alpha}.
    \end{aligned}
  \end{equation*}
 Again we have the polynomial structure,where the coefficients are 
  the polynomials $ \tilde{d}_{\varepsilon,j,k,\alpha,\beta,n_\alpha}$ again
  are polynomials in
  $\partial_x^{\alpha'} \partial_p^{\beta'} a_{\varepsilon,k}$ with
  $\abs{\alpha'}+\abs{\beta'}+k \leq j +\abs{\alpha}+\abs{\beta}$ and
  they are of at least regularity $\tau-j-\abs{\alpha}$.

  For the last sum we need a slightly modified version of the Fa\`a di
  {B}runo formula which is Corollary~\ref{B.Faa_di_bruno_xp}. If we
  use this we get that
%
  \begin{equation*}
  \begin{aligned}
   \MoveEqLeft  \sum_{\substack{\abs{\zeta}=1\\\zeta\leq\beta}}^{\abs{\beta}} 
    \sum_{\substack{\abs{\gamma}=1\\ \gamma\leq\alpha}}^{\abs{\alpha}}
    \binom{\beta}{\zeta} \binom{\alpha}{\gamma}
    (\partial_p^{\beta-\zeta}\partial_x^{\alpha-\gamma}
    d_{\varepsilon, j ,k}) \partial_p^\zeta \partial_x^\gamma
    b_{\varepsilon,z,0}^{k+1}
    \\
    ={}& \sum_{\substack{\abs{\zeta}=1\\\zeta\leq\beta}}^{\abs{\beta}}
    \sum_{\substack{\abs{\gamma}=1\\ \gamma\leq\alpha}}^{\abs{\alpha}}
    \binom{\beta}{\zeta} \binom{\alpha}{\gamma}
    (\partial_p^{\beta-\zeta}\partial_x^{\alpha-\gamma}
    d_{\varepsilon, j ,k}) \sum_{n=1}^{\abs{\zeta}+\abs{\gamma}} c_n
    b_{\varepsilon,z,0}^{k+1+n}
    \sum_{\mathcal{I}_n(\gamma,\zeta)}c_{\gamma_1\cdots\gamma_k}^{\zeta_1\cdots\zeta_k} \partial_p^{\zeta_1} \partial_x^{\gamma_1}
    a_{\varepsilon,0}
    \cdots \partial_p^{\zeta_n} \partial_x^{\gamma_n}
    a_{\varepsilon,0}
    \\
    ={}& \sum_{n=1}^{\abs{\alpha}+\abs{\beta}}
    \Big\{\sum_{\substack{\abs{\zeta} + \abs{\gamma} \geq n
        \\\zeta\leq\beta,\
        \gamma\leq\alpha}}^{\abs{\beta}+\abs{\alpha}}
    \sum_{\mathcal{I}_n(\gamma,\zeta)}
    c_{k,n,\alpha,\beta,\gamma,\zeta}
    (\partial_p^{\beta-\zeta}\partial_x^{\alpha-\gamma}
    d_{\varepsilon, j ,k})
   \partial_p^{\zeta_1} \partial_x^{\gamma_1} a_{\varepsilon,0}
    \cdots \partial_p^{\zeta_n} \partial_x^{\gamma_n}
    a_{\varepsilon,0} \Big\} b_{\varepsilon,z,0}^{k+1+n}
    \\
    ={}& \sum_{n=1}^{\abs{\alpha}+\abs{\beta}}
    \tilde{d}_{\varepsilon,j,k,n,\alpha,\beta}
    b_{\varepsilon,z,0}^{k+1+n},
    \end{aligned}
  \end{equation*}
%
  where $\tilde{d}_{\varepsilon,j,k,n,\alpha,\beta}$ are polynomials
  in $\partial_x^{\alpha'} \partial_p^{\beta'} a_{\varepsilon,k}$ with
  $\abs{\alpha'}+\abs{\beta'}+k \leq j +\abs{\alpha}+\abs{\beta}$ of
  degree at most $k+n$ and they are of regularity at least
  $\tau-j-\abs{\alpha}$. If we combine all of the above calculations
  we get the desired result:
  \begin{equation*}
    \partial_p^\beta \partial_x^\alpha d_{\varepsilon, j ,k} b_{\varepsilon,z,0}^{k+1}  = \sum_{n=0}^{\abs{\alpha}+\abs{\beta}} \tilde{d}_{\varepsilon, j ,k, n, \alpha , \beta} b_{\varepsilon,z,0}^{k+1+n},
  \end{equation*}
  where $ \tilde{d}_{\varepsilon, j ,k, n, \alpha , \beta}$ are
  polynomials in
  $\partial_x^{\alpha'} \partial_p^{\beta'} a_{\varepsilon,k}$ with
  $\abs{\alpha'}+\abs{\beta'}+k \leq j +\abs{\alpha}+\abs{\beta}$ of
  degree at most $k+n$ and they are of regularity at least
  $\tau-j-\abs{\alpha}$. The form of the polynomials is entirely
  determined by the multi-indices $\alpha,\beta$ and the polynomial
  $ d_{\varepsilon, j ,k} $.
\end{proof}
\begin{proof}[Proof of Lemma~\ref{B.approx_of_resolvent_lemma_1}]
  The proof will be induction in the parameter $j$. We start by
  considering the case $j=1$, where we by definition of
  $b_{\varepsilon,z,1}$ have
  \begin{equation*}
  \begin{aligned}
    b_{\varepsilon,z,1} &= - b_{\varepsilon,z,0} \cdot \sum_{
      \abs{\alpha} + \abs{\beta} + k = 1} \frac{1}{\alpha ! \beta !}
    \frac{1}{2^{\abs{\alpha}}} \frac{1}{(-2)^{\abs{\beta}}}
    (\partial_p^{\alpha} D_x^{\beta} a_{\varepsilon,k})
    (\partial_p^{\beta} D_x^{\alpha} b_{\varepsilon,z,0})
    \\
    &= - b_{\varepsilon,z,0} \big(
    a_{\varepsilon,1}b_{\varepsilon,z,0} -\frac{i}{2}
    \sum_{n=1}^d \partial_{p_n} a_{\varepsilon,0} \partial_{x_n}
    a_{\varepsilon,0}b_{\varepsilon,z,0}^2 - \partial_{x_n}
    a_{\varepsilon,0}\partial_{p_n} a_{\varepsilon,0}
    b_{\varepsilon,z,0}^2 \big)
    \\
    &= - a_{\varepsilon,1}b_{\varepsilon,z,0}^2.
    \end{aligned}
  \end{equation*}
  This calculation verifies the form of $b_{\varepsilon,z,1}$ stated
  in the lemma and that it has the form given by
  \eqref{B.res.symbolform_2} with
  $d_{\varepsilon,1,1}=-a_{\varepsilon,1}$ which is in the symbol
  class $\Gamma_{0,\varepsilon}^{(a_0-\zeta_1),\tau-1}$ by assumption.

  Assume the lemma to be correct for $b_{\varepsilon,z,j}$ and
  consider $b_{\varepsilon,z,j+1}$. By the definition of
  $b_{\varepsilon,z,j+1}$ and our assumption we have
  \begin{equation*}
  \begin{aligned}
    b_{\varepsilon,z,j+1} ={}& - b_{\varepsilon,z,0} \cdot
    \sum_{\substack{l + \abs{\alpha} + \abs{\beta} + k = j+1\\ 0\leq l
        \leq j }} \frac{1}{\alpha ! \beta !}
    \frac{1}{2^{\abs{\alpha}}} \frac{1}{(-2)^{\abs{\beta}}}
    (\partial_p^{\alpha} D_x^{\beta} a_{\varepsilon,k})
    (\partial_p^{\beta} D_x^{\alpha} b_{\varepsilon,z,l})
    \\
    ={}& - b_{\varepsilon,z,0} \Big\{\sum_{\abs{\alpha} + \abs{\beta} +
      k = j+1} c_{\alpha,\beta} (\partial_p^{\alpha} D_x^{\beta}
    a_{\varepsilon,k}) (\partial_p^{\beta} D_x^{\alpha}
    b_{\varepsilon,z,0})
    \\
    \hbox{}&\qquad+ \sum_{l=1}^j \sum_{m=1}^{2l-1}
    \sum_{l+\abs{\alpha} + \abs{\beta} + k = j+1} c_{\alpha,\beta}
    (\partial_p^{\alpha} D_x^{\beta} a_{\varepsilon,k})
    (\partial_p^{\beta} D_x^{\alpha}d_{\varepsilon, l ,m}
    b_{\varepsilon,z,0}^{m+1})\Big\}.
    \end{aligned}
  \end{equation*}
  We will consider each of the sums separately. To calculate the first
  sum we get by applying Corollary~\ref{B.Faa_di_bruno_xp}
  \begin{equation*}
  \begin{aligned}
    \MoveEqLeft \sum_{\abs{\alpha} + \abs{\beta} + k = j+1} c_{\alpha,\beta}
    (\partial_p^{\alpha} D_x^{\beta} a_{\varepsilon,k})
    (\partial_p^{\beta} D_x^{\alpha} b_{\varepsilon,z,0}) - c_{\alpha,\beta} a_{\varepsilon,j+1} b_{\varepsilon,z,0} 
    \\
    ={}&     \sum_{\substack{\abs{\alpha} + \abs{\beta} + k = j+1 \\
        \abs{\alpha} + \abs{\beta}\geq 1}} \Big\{c_{\alpha,\beta}
    (\partial_p^{\alpha} D_x^{\beta} a_{\varepsilon,k})
    \sum_{n=1}^{\abs{\alpha}+\abs{\beta}} c_n
    b_{\varepsilon,z,0}^{n+1}  \sum_{\mathcal{I}_n(\alpha,\beta)}c_{\alpha_1\cdots\alpha_n}^{\beta_1\cdots\beta_n} \partial_p^{\beta_1}
    D_x^{\alpha_1} a_{\varepsilon,0} \cdots \partial_p^{\beta_n}
    D_x^{\alpha_n} a_{\varepsilon,0}\Big\}
    \\
    ={}&
    \sum_{n=1}^{j+1} \Big\{ \sum_{\substack{\abs{\alpha} + \abs{\beta}
        + k = j+1 \\ \abs{\alpha} + \abs{\beta}\geq n}}
    \sum_{\mathcal{I}_n(\alpha,\beta)} c_{\alpha,\beta,n}
   (\partial_p^{\alpha} D_x^{\beta}
    a_{\varepsilon,k}) \partial_p^{\beta_1} D_x^{\alpha_1}
    a_{\varepsilon,0} \cdots \partial_p^{\beta_n} D_x^{\alpha_n}
    a_{\varepsilon,0} \Big\} b_{\varepsilon,z,0}^{n+1}
    \\
    ={}& \sum_{n=1}^{j+1} \tilde{d}_{\varepsilon,j,n,\alpha,\beta}
    b_{\varepsilon,z,0}^{n+1} - c_{\alpha,\beta} a_{\varepsilon,j+1} b_{\varepsilon,z,0} ,
    \end{aligned}
  \end{equation*}
  where $ \tilde{d}_{\varepsilon, j , n, \alpha , \beta}$ are
  polynomials in
  $\partial_x^{\alpha'} \partial_p^{\beta'} a_{\varepsilon,k}$ with
  $\abs{\alpha'}+\abs{\beta'}+k \leq j + 1$ of degree $n+1$ and of
  regularity at least $\tau-j-1$. The index set $\mathcal{I}_n(\alpha,\beta)$ is defined by
    \begin{equation*} \begin{aligned}
    \mathcal{I}_n(\alpha,\beta) =
    \{(\alpha_1,\dots,\alpha_n,&\beta_1,\dots,\beta_n) \in \N^{2nd} \
    \\
    & | \ \sum_{l=1}^n \alpha_l=\alpha, \, \sum_{l=1}^n \beta_l=\beta,
    \, \max(\abs{\alpha_l},\abs{\beta_l}) \geq1 \, \forall l \}.
   \end{aligned} \end{equation*}
  The form of the polynomials is
  determined by the multi indices $\alpha$ and $\beta$. Moreover we
  have that the polynomials
  $ \tilde{d}_{\varepsilon, j , n, \alpha , \beta}$ are elements of
  $\Gamma_{0,\varepsilon}^{(a_0-\zeta_1)^n,\tau-j-1}$.

  If we now consider the triple sum and apply
  Lemma~\ref{B.approx_of_resolvent_lemma_2} we get
  \begin{equation*}
  \begin{aligned}
    \MoveEqLeft \sum_{l=1}^j \sum_{m=1}^{2l-1} \sum_{l+\abs{\alpha} + \abs{\beta}
      + k = j+1} c_{\alpha,\beta} (\partial_p^{\alpha} D_x^{\beta}
    a_{\varepsilon,k}) (\partial_p^{\beta} D_x^{\alpha}d_{\varepsilon,
      l ,m} b_{\varepsilon,z,0}^{m+1})
    \\
    ={}& \sum_{l=1}^j \sum_{m=1}^{2l-1} \sum_{l+\abs{\alpha} +
      \abs{\beta} + k = j+1} c_{\alpha,\beta} (\partial_p^{\alpha}
    D_x^{\beta} a_{\varepsilon,k}) (-i)^{\abs{\alpha}} \sum_{n=0}^{
      \abs{\alpha}+\abs{\beta}} \tilde{d}_{\varepsilon, l ,m, n,
      \alpha,\beta} b_{\varepsilon,z,0}^{m+1+n}
    \\
    ={}& \sum_{m=1}^{2j-1} \sum_{l =\lceil \frac{m-1}{2} \rceil +1 }^j
    \sum_{l+\abs{\alpha} + \abs{\beta} + k = j+1} \sum_{n=0}^{
      \abs{\alpha}+\abs{\beta}} c_{\alpha,\beta} (\partial_p^{\alpha}
    D_x^{\beta} a_{\varepsilon,k}) (-i)^{\abs{\alpha}}
    \tilde{d}_{\varepsilon, l ,m, n, \alpha,\beta}
    b_{\varepsilon,z,0}^{m+1+n}
    \\
    ={}&\sum_{m=1}^{2j-1} \sum_{l =\lceil \frac{m-1}{2} \rceil +1 }^j
    \sum_{n=0}^{ j+1-l} \sum_{\abs{\alpha} + \abs{\beta} = n}
    c_{\alpha,\beta} (\partial_p^{\alpha} D_x^{\beta}
    a_{\varepsilon,j+1-l-n}) (-i)^{\abs{\alpha}}
    \tilde{d}_{\varepsilon, l ,m, n, \alpha,\beta}
    b_{\varepsilon,z,0}^{m+1+n},
    \end{aligned}
  \end{equation*}
  where the $\tilde{d}_{\varepsilon, l ,m, n \alpha,\beta}$ are the
  polynomials from Lemma~\ref{B.approx_of_resolvent_lemma_2}. The way
  we have expressed the sums ensures $k$ always is the fixed value
  $j+1-l-n$. From Lemma~\ref{B.approx_of_resolvent_lemma_2} we have
  that $\tilde{d}_{\varepsilon, l ,m, n, \alpha,\beta}$ are
  polynomials in
  $\partial_x^{\alpha'} \partial_p^{\beta'} a_{\varepsilon,m}$ with
  $\abs{\alpha'}+\abs{\beta'}+m \leq l+n\leq j+1$ of degree $n+m$ and
  with regularity at least $\tau-l-\abs{\alpha}$. Hence the factors
  $c_{\alpha,\beta} (\partial_p^{\alpha} D_x^{\beta}
  a_{\varepsilon,j+1-l-n}) (-i)^{\abs{\alpha}} \tilde{d}_{\varepsilon,
    l ,m, n, \alpha,\beta}$ will be polynomials in
  $\partial_x^{\alpha'} \partial_p^{\beta'} a_{\varepsilon,m}$ with
  $\abs{\alpha'}+\abs{\beta'}+m\leq j+1$ of degree $n+m+1$. The
  regularity of the terms will be at least
  \begin{equation*}
    \tau- l-\abs{\alpha} - (j+1-l-n) - \abs{\beta} = \tau-(j+1), 
  \end{equation*}
  where most terms will have more regularity. By rewriting and
  renaming some of the terms we get the following equality
  \begin{equation*}
    \sum_{l=1}^j \sum_{m=1}^{2l-1} \sum_{l+\abs{\alpha} + \abs{\beta} + k = j+1} c_{\alpha,\beta}  (\partial_p^{\alpha} D_x^{\beta} a_{\varepsilon,k}) (\partial_p^{\beta} D_x^{\alpha}d_{\varepsilon, l ,m} b_{\varepsilon,z,0}^{m+1})  = \sum_{n=1}^{2j}  \tilde{d}_{\varepsilon,j,n,\alpha,\beta}  b_{\varepsilon,z,0}^{n+1},
  \end{equation*}
  where $\tilde{d}_{\varepsilon,j,n,\alpha,\beta} $ again are
  polynomials in
  $\partial_x^{\alpha'} \partial_p^{\beta'} a_{\varepsilon,k}$ with
  $\abs{\alpha'}+\abs{\beta'}+k \leq j+1$ of degree $n+1$ of
  regularity at least $\tau-(j+1)$. By combing these calculation we
  arrive at the expression
  \begin{equation*}
  \begin{aligned}
    b_{\varepsilon,z,j+1} ={}& - b_{\varepsilon,z,0} \cdot
    \sum_{\substack{l + \abs{\alpha} + \abs{\beta} + k = j+1\\ 0\leq l
        \leq j }} \frac{1}{\alpha ! \beta !}
    \frac{1}{2^{\abs{\alpha}}} \frac{1}{(-2)^{\abs{\beta}}}
    (\partial_p^{\alpha} D_x^{\beta} a_{\varepsilon,k})
    (\partial_p^{\beta} D_x^{\alpha} b_{\varepsilon,z,l})
    \\
    ={}& - b_{\varepsilon,z,0} \Big\{ \sum_{n=0}^{j+1}
    \tilde{d}_{\varepsilon,j,n,\alpha,\beta} b_{\varepsilon,z,0}^{n+1}
    + \sum_{n=1}^{2j} \tilde{d}_{\varepsilon,j,n,\alpha,\beta}
    b_{\varepsilon,z,0}^{n+1}\Big\}
    \\
    ={}& \sum_{k=1}^{2j+1} d_{\varepsilon,j+1,k}
    b_{\varepsilon,z,0}^{k+1},
    \end{aligned}
  \end{equation*}
  where the polynomials $d_{\varepsilon,j+1,k}$ are universal
  polynomials in
  $\partial_x^{\alpha'} \partial_p^{\beta'} a_{\varepsilon,k}$ with
  $\abs{\alpha'}+\abs{\beta'}+k \leq j+1$ of degree $k$ and with
  regularity at least $\tau-j-1$. Hence they are elements of
  $\Gamma_{0,\varepsilon}^{(a_0-\zeta_1)^k,\tau-(j+1)}$.  This ends
  the proof.
\end{proof}
\begin{lemma}\label{B.approx_of_resolvent_lemma_3}
  Let the setting be as in Theorem~\ref{B.approx_of_resolvent}. For
  every $j$ in $\N$ and $\alpha$, $\beta$ in $\N^d$ there exists a
  number $c_{j,\alpha,\beta}>0$ such that
  \begin{equation*}
    \abs{\partial_p^\beta\partial_x^\alpha b_{\varepsilon,z,j}} \leq c_{j,\alpha,\beta} \varepsilon^{-(\tau-j-\abs{\alpha})_{-}}  (a_{\varepsilon,0}-\zeta_1)^{-1} \left( \frac{\abs{z-\zeta_1}}{\dist(z,[\zeta_1,\infty))}\right)^{2j+\abs{\alpha}+\abs{\beta}},
  \end{equation*}
  for all $z \in \C\setminus[\zeta_1,\infty)$ and all
  $(x,p) \in \R^d_x\times\R^d_p$.
\end{lemma}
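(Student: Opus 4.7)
The plan is to combine the explicit representation of $b_{\varepsilon,z,j}$ from Lemma~\ref{B.approx_of_resolvent_lemma_1} with the differentiation formula of Lemma~\ref{B.approx_of_resolvent_lemma_2}, and then estimate each resulting term pointwise. Applying $\partial_p^\beta\partial_x^\alpha$ termwise to $b_{\varepsilon,z,j} = \sum_{k=1}^{2j-1} d_{\varepsilon,j,k}\,b_{\varepsilon,z,0}^{k+1}$ yields
$$\partial_p^\beta\partial_x^\alpha b_{\varepsilon,z,j} = \sum_{k=1}^{2j-1}\sum_{n=0}^{|\alpha|+|\beta|} \tilde{d}_{\varepsilon,j,k,n,\alpha,\beta}\,b_{\varepsilon,z,0}^{k+1+n},$$
where by Lemma~\ref{B.approx_of_resolvent_lemma_2} the coefficient $\tilde{d}_{\varepsilon,j,k,n,\alpha,\beta}$ is a polynomial of degree at most $k+n$ in derivatives of the $a_{\varepsilon,\ell}$'s and has regularity at least $\tau-j-|\alpha|$. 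Everything then reduces to a pointwise bound on $|b_{\varepsilon,z,0}|$ and a symbol bound for the polynomial coefficient.

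The elementary first step is the resolvent-type inequality
$$|b_{\varepsilon,z,0}(x,p)| = \frac{1}{|a_{\varepsilon,0}(x,p)-z|} \leq \frac{2}{a_{\varepsilon,0}(x,p)-\zeta_1}\,\frac{|z-\zeta_1|}{\dist(z,[\zeta_1,\infty))},$$
which follows from the triangle inequality $|a_{\varepsilon,0}-\zeta_1|\leq |a_{\varepsilon,0}-z|+|z-\zeta_1|$ together with the two facts $|a_{\varepsilon,0}-z|\geq \dist(z,[\zeta_1,\infty))$ (valid since $a_{\varepsilon,0}\geq\zeta_0>\zeta_1$) and $\dist(z,[\zeta_1,\infty))\leq|z-\zeta_1|$. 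Raising to the power $k+1+n$ produces the quotient factor raised to that power together with $(a_{\varepsilon,0}-\zeta_1)^{-(k+1+n)}$. On the other hand, using $a_{\varepsilon,\ell}\in \Gamma_{0,\varepsilon}^{a_0-\zeta_1,\tau-\ell}$ and the degree/regularity information from Lemma~\ref{B.approx_of_resolvent_lemma_2} I would derive
$$|\tilde{d}_{\varepsilon,j,k,n,\alpha,\beta}| \leq C\,\varepsilon^{-(\tau-j-|\alpha|)_{-}}(a_{\varepsilon,0}-\zeta_1)^{k+n}.$$

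Multiplying the two bounds the factors $(a_{\varepsilon,0}-\zeta_1)^{k+n}$ and $(a_{\varepsilon,0}-\zeta_1)^{-(k+1+n)}$ collapse to $(a_{\varepsilon,0}-\zeta_1)^{-1}$, exactly as required by the statement. Since $|z-\zeta_1|/\dist(z,[\zeta_1,\infty))\geq 1$ and $k+1+n\leq 2j+|\alpha|+|\beta|$, every exponent can be uniformized to the maximum $2j+|\alpha|+|\beta|$, and summing the finitely many terms absorbs the combinatorial constants into a single $c_{j,\alpha,\beta}$. The main obstacle is the careful bookkeeping of the $\varepsilon$-exponent: one must ensure no hidden extra negative power of $\varepsilon$ enters through the polynomial coefficient. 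This is guaranteed precisely by the regularity statement in Lemma~\ref{B.approx_of_resolvent_lemma_2}, which records regularity $\tau-j-|\alpha|$ (reflecting that only $x$-derivatives consume regularity) rather than $\tau-j-|\alpha|-|\beta|$, so once that statement is invoked the remainder is algebraic exponent counting.
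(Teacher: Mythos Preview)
Your proposal is correct and follows essentially the same route as the paper: invoke Lemmas~\ref{B.approx_of_resolvent_lemma_1} and~\ref{B.approx_of_resolvent_lemma_2} to expand $\partial_p^\beta\partial_x^\alpha b_{\varepsilon,z,j}$ as a finite sum of terms $\tilde d_{\varepsilon,j,k,n,\alpha,\beta}\,b_{\varepsilon,z,0}^{k+1+n}$, bound the polynomial coefficient by $C\varepsilon^{-(\tau-j-|\alpha|)_-}(a_{\varepsilon,0}-\zeta_1)^{k+n}$, and control $|b_{\varepsilon,z,0}|$ by $(a_{\varepsilon,0}-\zeta_1)^{-1}$ times the quotient $|z-\zeta_1|/\dist(z,[\zeta_1,\infty))$.

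The only noticeable difference is in how the resolvent inequality is obtained. The paper splits into the cases $\re(z)<\zeta_1$ and $\re(z)\geq\zeta_1$ and in the second case uses the law of sines to deduce the sharp bound
\[
\frac{a_{\varepsilon,0}-\zeta_1}{|a_{\varepsilon,0}-z|}\leq \frac{|z-\zeta_1|}{\dist(z,[\zeta_1,\infty))}.
\]
Your triangle-inequality argument is shorter and more elementary but yields the same bound with an extra factor of $2$; since the constant is absorbed into $c_{j,\alpha,\beta}$ this makes no difference to the lemma.
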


\begin{proof}
  We start by considering the fraction
  $\frac{\abs{a_{\varepsilon,0}-\zeta_1}}{\abs{a_{\varepsilon,0}-z}}$
  and we will divide it into two cases according to the real part of
  $z$. If $\re(z)<\zeta_1$ then
  \begin{equation*}
    \frac{\abs{a_{\varepsilon,0}-\zeta_1}}{\abs{a_{\varepsilon,0}-z}} \leq 1 \leq  \frac{\abs{z-\zeta_1}}{\dist(z,[\zeta_1,\infty))}.
  \end{equation*}
  If instead $\re(z)\geq\zeta_1$ and $\abs{\im{z}}>0$ we have by the
  law of sines that
  \begin{equation*}
    \frac{\abs{a_{\varepsilon,0} - z}}{\sin(\phi_1)} = \frac{\abs{a_{\varepsilon,0}-\zeta_1}}{\sin(\phi_2)} \geq \abs{a_{\varepsilon,0}-\zeta_1},
  \end{equation*}
  where $\phi_1$ and $\phi_2$ are angles in the triangle with vertices $\zeta_1$, $a_{\varepsilon,0}$ and $z$.  We have in the above estimate used that $0< \sin(\phi_2)\leq 1$. If we apply this
  inequality and the law of sines again we arrive at the following
  expression
  \begin{equation*}
    \frac{\abs{a_{\varepsilon,0}-\zeta_1}}{\abs{a_{\varepsilon,0}-z}} \leq \frac{1}{\sin(\phi_1)} = \frac{\abs{z-\zeta_1}}{\abs{\im(z)}}.
  \end{equation*}	
  Combining these two cases we get the estimate
  \begin{equation}\label{B.res_aprox4.1}
    \frac{\abs{a_{\varepsilon,0}-\zeta_1}}{\abs{a_{\varepsilon,0}-z}} \leq \frac{\abs{z-\zeta_1}}{\dist(z,[\zeta_1,\infty))} \quad\quad\text{For all $z \in\C\setminus[\zeta_1,\infty)$.}
  \end{equation}
  If we now consider a
  given $b_{\varepsilon,z,j}$ and $\alpha,\beta$ in
  $\N^d$. Lemma~\ref{B.approx_of_resolvent_lemma_1} and
  Lemma~\ref{B.approx_of_resolvent_lemma_2} gives us
  \begin{equation*}
    \partial_p^\beta\partial_x^\alpha b_{\varepsilon,z,j} = \sum_{k=1}^{2j-1} \partial_p^\beta\partial_x^\alpha (d_{\varepsilon,j,k}  b_{\varepsilon,z,0}^{k+1}) = \sum_{k=1}^{2j-1}  \sum_{n=0}^{\abs{\alpha}+\abs{\beta}} \tilde{d}_{\varepsilon, j ,k, n, \alpha , \beta} b_{\varepsilon,z,0}^{k+1+n},
  \end{equation*}
  with $\tilde{d}_{\varepsilon,j,k,\alpha,\beta}$ in
  $\Gamma_{0,\varepsilon}^{(a_0-\zeta_1)^{k+n},\tau-j-\abs{\alpha}}$. By
  taking absolute value and applying \eqref{B.res_aprox4.1} we get
  that
  \begin{equation*}
  \begin{aligned}
    \abs{\partial_p^\beta\partial_x^\alpha b_{\varepsilon,z,j} } 
    \leq{}&
    \sum_{k=1}^{2j-1} \sum_{n=0}^{\abs{\alpha}+\abs{\beta}}
    \abs{\tilde{d}_{\varepsilon, j ,k, n, \alpha , \beta}
      b_{\varepsilon,z,0}^{k+1+n}}
    \\
    \leq{}& \abs{b_{\varepsilon,z,0}} \sum_{k=1}^{2j-1}
    \sum_{n=0}^{\abs{\alpha}+\abs{\beta}}
    \varepsilon^{-(\tau-j-\abs{\alpha})_{-}} c_{j,k,\alpha,\beta}
    \left(
      \frac{\abs{a_{\varepsilon,0}-\zeta_1}}{\abs{a_{\varepsilon,0}-z}}\right)^{k+n}
    \\
    \leq{}& c_{j,\alpha,\beta} \varepsilon^{-(\tau-j-\abs{\alpha})_{-}}
    (a_{\varepsilon,0}-\zeta_1)^{-1}
    \left(\frac{\abs{z-\zeta_1}}{\dist(z,[\zeta_1,\infty))}\right)^{2j+\abs{\alpha}+\abs{\beta}},
    \end{aligned}
  \end{equation*}
  where we have use that
  \begin{equation*}
    \abs{b_{\varepsilon,z,0}} = \frac{(a_{\varepsilon,0}-\zeta_1)}{|a_{\varepsilon,0}-z|(a_{\varepsilon,0}-\zeta_1)}  \leq \frac{1}{(a_{\varepsilon,0}-\zeta_1)}  \left(\frac{\abs{z-\zeta_1}}{\dist(z,[\zeta_1,\infty))}\right).
  \end{equation*} 
  We have now obtained the desired estimate and this ends the proof.
\end{proof}
\begin{proof}[Proof of Theorem~\ref{B.approx_of_resolvent}]
  By Lemma~\ref{B.approx_of_resolvent_lemma_1} the
  symbols $b_{\varepsilon,z,j}$ are in the class
  $\Gamma_{0,\varepsilon}^{(a_0-\zeta_1)^{-1},\tau-j}$ for every $j$
  in $\N$, where $b_{\varepsilon,z,j}$ is defined
  \eqref{B.res.symbolform_1}. Hence we have that
  \begin{equation*}
    B_{\varepsilon,z,N}(\hbar) = \sum_{j=0}^N \hbar^j b_{\varepsilon,z,j}.
  \end{equation*}
  is a well defined symbol for every $N$ in $\N$. Moreover as
  $(a_0-\zeta_1)^{-1}$ is a bounded function we have by
  Theorem~\ref{B.cal-val-thm} that $\OpW(B_{\varepsilon,z,N}(\hbar))$
  is a bounded operator. Now for $N$ sufficiently large we have by
  assumption
  \begin{equation*}
    A_\varepsilon(\hbar) -z = \OpW( a_{\varepsilon,0}-z)+ \sum_{k=1}^N \hbar^k \OpW( a_{\varepsilon,k}) + \hbar^{N+1}  R_N(\varepsilon,\hbar),
  \end{equation*}
  where the error term satisfies
  \begin{equation*}
    \hbar^{N+1}  \norm{R_N(\varepsilon,\hbar)}_{\mathcal{L}(L^2(\R^d))} \leq \hbar^{\kappa(N)} C_N 
  \end{equation*}
  for a positive strictly increasing function $\kappa$. If we consider
  the composition of $A_\varepsilon(\hbar)$ and
  $\OpW(B_{\varepsilon,z,N}(\hbar))$ we get
  \begin{equation}\label{B.res_con_1}
    \begin{aligned}
      \MoveEqLeft A_\varepsilon(\hbar)\OpW(B_{\varepsilon,z,N}(\hbar)) 
      \\
      &=
      \sum_{k=0}^N \hbar^{k} \OpW( a_{\varepsilon,k}) \sum_{j=0}^N
      \hbar^{j} \OpW(b_{\varepsilon,z,j})
      + \sum_{j=0}^N \hbar^{N+1+j} R_N(\varepsilon,\hbar)
      \OpW(b_{\varepsilon,z,j}).
    \end{aligned}
  \end{equation}
  If we consider the first part then this corresponds to a composition
  of two strongly $\hbar$-$\varepsilon$-admissible operators. As we
  want to apply Theorem~\ref{B.composition-weyl-thm} we need to ensure
  $N$ satisfies the inequality
  \begin{equation*}
    \delta(N +2d + 2-\tau) +\tau > 2d+1.
  \end{equation*}
  As this is the condition that ensures a positive power in front of
  the error term. If we assume $N$ satisfies the inequality. Then by
  Theorem~\ref{B.composition-weyl-thm} we have
  \begin{equation*}
  	\begin{aligned}
    \MoveEqLeft \sum_{k=0}^N \hbar^{k} \OpW( a_{\varepsilon,k}) \sum_{j=0}^N
    \hbar^{j} \OpW(b_{\varepsilon,z,j}) 
    \\
    &= \sum_{l=0}^N \hbar^l
    \OpW(c_{\varepsilon,l})
     + \hbar^{N+1}
    \OpW(\mathcal{R}_\varepsilon(a_\varepsilon(\hbar),
    B_\varepsilon,z,N(\hbar);\hbar)),
    \end{aligned}
  \end{equation*}
  where
  \begin{equation*}
    c_{\varepsilon,l}(x,p) = \sum_{\abs{\alpha}+\abs{\beta}+k+j=l} \frac{1}{\alpha!\beta!}\Big(\frac{1}{2} \Big)^{\abs{\alpha}}\Big(-\frac{1}{2} \Big)^{\abs{\beta}} (\partial_p^\alpha D_x^\beta a_{\varepsilon,k}) (\partial_p^\beta D_x^\alpha b_{\varepsilon,z,j})(x,p).
  \end{equation*}
  The error term satisfies for every multi indices $\alpha$, $\beta$
  in $\N^d$, that there exists a constant $C(\alpha,\beta,N)$ independent of
  $a_\varepsilon$ and $B_{\varepsilon,z,N}$ and an integer $M$ such
  that
  \begin{equation*}
  \begin{aligned}
   \hbar^{N+1} |\partial_x^\alpha \partial_p^\beta \mathcal{R}_\varepsilon
    (a_\varepsilon(\hbar), b_\varepsilon(\hbar);x,p;\hbar) | 
   & \leq
    C(\alpha,\beta,N) \hbar^{\delta(\tau-N-2d-2)_{-} + \tau -2d-1}
    \varepsilon^{-\abs{\alpha}}
    \\
    &\times \sum_{ j+k \leq 2N}
    \mathcal{G}^{\alpha,\beta}_{M,\tau-j-k}(a_{\varepsilon,k},(a_{\varepsilon,0}-\zeta_1),b_{\varepsilon,z,j},(a_{\varepsilon,0}-\zeta_1)^{-1}),
    \end{aligned}
  \end{equation*}
  where $\mathcal{G}^{\alpha,\beta}_{M,\tau-j-k}$ are as defined in
  Theorem~\ref{B.composition-weyl-thm}. By
  Lemma~\ref{B.approx_of_resolvent_lemma_3} we have for all
  $j+k\leq 2N$
  \begin{equation*}
  \begin{aligned}
    \MoveEqLeft \mathcal{G}^{\alpha,\beta}_{M,\tau-j-k}(a_{\varepsilon,k},(a_{\varepsilon,0}-\zeta_1),b_{\varepsilon,z,j},(a_{\varepsilon,0}-\zeta_1)^{-1})
    \\
    &= \sup_{\substack{\abs{\gamma_1 + \gamma_2}+\abs{\eta_1 +
          \eta_2}\leq M \\ (x,\xi) \in \R^{2d}}} \varepsilon^{(\tau -j
      - k -M)_{-}+\abs{\alpha}}
    \abs{\partial_x^\alpha\partial_\xi^\beta
      ( \partial_{x}^{\gamma_1} \partial_{\xi}^{\eta_1}a_{\varepsilon,k}(x,\xi) \partial_{x}^{\gamma_2} \partial_{\xi}^{\eta_2}b_{\varepsilon,z,j}(x,\xi))}
    \\
    &\leq C_{\alpha,\beta,M}\; \sup_{\mathclap{\substack{\abs{\gamma_1 +
          \gamma_2}+\abs{\eta_1 + \eta_2}\leq M \\ (x,\xi) \in
        \R^{2d}}}} \; \varepsilon^{(\tau -j - k
      -M)_{-}-(\tau-k-\gamma_1)_{-}-(\tau-j-\gamma_2)_{-}}\left( \frac{\abs{z-\zeta_1}}{\dist(z,[\zeta_1,\infty))}\right)^{2j+M+\abs{\alpha}+\abs{\beta}}
    \\
    &\leq C_{\alpha,\beta,M} \left(
      \frac{\abs{z-\zeta_1}}{\dist(z,[\zeta_1,\infty))}\right)^{2j+M+\abs{\alpha}+\abs{\beta}}.
      \end{aligned}
  \end{equation*}
  Now by Theorem~\ref{B.cal-val-thm}
 there exists a number $M_d$ such
  that
  \begin{equation*}
    \hbar^{N+1}\norm{\OpW(\mathcal{R}_\varepsilon(a_\varepsilon(\hbar), B_\varepsilon,z,N(\hbar);\hbar))}_{\mathcal{L}(L^2(\R^d))} \leq C \hbar^{\kappa(N)} \left( \frac{\abs{z-\zeta_1}}{\dist(z,[\zeta_1,\infty))}\right)^{M_d}.
  \end{equation*}
  If we now consider the symbols $c_{\varepsilon,l}(x,p)$ for
  $0\leq l \leq N$. For $l=0$ we have
  \begin{equation*}
    c_{\varepsilon,0}(x,p) = (a_{\varepsilon,0}(x,p)-z) b_{\varepsilon,z,0}(x,p) =1,
  \end{equation*}
  by definition of $b_{\varepsilon,z,0}(x,p) $. Now for
  $1\leq l \leq N$ we have
  \begin{equation*}
  	\begin{aligned}
    c_{\varepsilon,l} ={}& \sum_{\abs{\alpha}+\abs{\beta}+k+j=l}
    \frac{1}{\alpha!\beta!}\Big(\frac{1}{2}
    \Big)^{\abs{\alpha}}\Big(-\frac{1}{2} \Big)^{\abs{\beta}}
    (\partial_p^\alpha D_x^\beta a_{\varepsilon,k}) (\partial_p^\beta
    D_x^\alpha b_{\varepsilon,z,j})
    \\
    ={}& \sum_{\substack{\abs{\alpha}+\abs{\beta}+k+j=l\\0\leq j\leq
        l-1}} \frac{1}{\alpha!\beta!}\Big(\frac{1}{2}
    \Big)^{\abs{\alpha}}\Big(-\frac{1}{2} \Big)^{\abs{\beta}}
    (\partial_p^\alpha D_x^\beta a_{\varepsilon,k}) (\partial_p^\beta
    D_x^\alpha b_{\varepsilon,z,j}) +(a_{\varepsilon,0}-z)
    b_{\varepsilon,z,l}
    \\
    ={}& \sum_{\substack{\abs{\alpha}+\abs{\beta}+k+j=l\\0\leq j\leq
        l-1}} \frac{1}{\alpha!\beta!}\Big(\frac{1}{2}
    \Big)^{\abs{\alpha}}\Big(-\frac{1}{2} \Big)^{\abs{\beta}}
    (\partial_p^\alpha D_x^\beta a_{\varepsilon,k}) (\partial_p^\beta
    D_x^\alpha b_{\varepsilon,z,j}) 
    \\
    &-\sum_{\substack{\abs{\alpha} + \abs{\beta} + k +j = l\\
        0\leq j \leq l-1 }} \frac{1}{\alpha ! \beta !}
    \frac{1}{2^{\abs{\alpha}}} \frac{1}{(-2)^{\abs{\beta}}}
    (\partial_p^{\alpha} D_x^{\beta} a_{\varepsilon,k})
    (\partial_p^{\beta} D_x^{\alpha} b_{\varepsilon,z,j})
    \\
    ={}&0,
    \end{aligned}
  \end{equation*}
  by definition of $b_{\varepsilon,z,l}$. These two equalities implies that
  \begin{equation}\label{B.res_con_2}
    \sum_{k,j=0}^N \hbar^{k+j} \OpW( a_{\varepsilon,k})  \OpW(b_{\varepsilon,z,j})  =  I + \hbar^{N+1}\OpW(\mathcal{R}_\varepsilon(a_\varepsilon(\hbar), B_\varepsilon,z,N(\hbar);\hbar)).
  \end{equation}
  This was the first part of equation~\eqref{B.res_con_1}. If we now
  consider the second part of \eqref{B.res_con_1}:
  \begin{equation*}
    \sum_{j=0}^N \hbar^{N+1+j}  R_N(\varepsilon,\hbar) \OpW(b_{\varepsilon,z,j}).
  \end{equation*}
  By Theorem~\ref{B.cal-val-thm} and
  Lemma~\ref{B.approx_of_resolvent_lemma_3} there exist constants
  $M_d$ and $C$ such that
  \begin{equation*}
    \hbar^j \norm{\OpW(b_{\varepsilon,z,j})}_{\mathcal{L}(L^2(\R^d))} \leq C \left( \frac{\abs{z-\zeta_1}}{\dist(z,[\zeta_1,\infty))}\right)^{2j+M_d}
  \end{equation*}
  for all $j$ in $\{0,\dots,N\}$. Hence by assumption we have
  \begin{equation*}
    \sum_{j=0}^N \hbar^{N+1+j}  \norm{R_N(\varepsilon,\hbar) \OpW(b_{\varepsilon,z,j})}_{\mathcal{L}(L^2(\R^d))} \leq C \hbar^{\kappa(N)}  \left( \frac{\abs{z-\zeta_1}}{\dist(z,[\zeta_1,\infty))}\right)^{q(N)}.
  \end{equation*}
  Now by combining this with \eqref{B.res_con_1} and
  \eqref{B.res_con_2} we get
  \begin{equation*}
    (A_\varepsilon(h) - z ) \OpW B_{\varepsilon,z,N} = I + h^{N+1} \Delta_{z,N+1}(h)
  \end{equation*}
  with
  \begin{equation*}
    \hbar^{N+1}\norm{ \Delta_{z,N+1}(h)}_{\mathcal{L}(L^2(\R^d))} \leq C \hbar^{\kappa(N)} \left( \frac{\abs{z}}{\dist(z,[\zeta_1,\infty))} \right)^{q(N)},
  \end{equation*}
  where $\kappa$ is a positive strictly increasing function and $q(N)$
  is a positive integer depending on $N$. Which is the desired form and this ends the proof.
\end{proof}
\subsection{Functional calculus for rough pseudo-differential operators}
We are now almost ready to construct/prove a functional calculus for
operators satisfying Assumption~\ref{B.self_adj_assumption}. First we
need to settle some terminology and recall a theorem.
\begin{definition}\label{B.def_almost_analytic_ex}
  For a smooth function $f:\R\rightarrow\R$ we define the almost
  analytical extension $\tilde{f}:\C\rightarrow\C$ of $f$ by
  \begin{equation*}
    \tilde{f}(x+iy) = \left( \sum_{r=0}^n f^{(r)}(x)\frac{(iy)^r}{r!} \right) \sigma(x,y),
  \end{equation*}
  where $n \geq1$ and
  \begin{equation*}
    \sigma(x,y) = \omega\left(\tfrac{y}{\lambda(x)}\right),
  \end{equation*}
  for some smooth function $\omega$, defined on $\R$ such that
  $\omega(t)=1$ for $\abs{t}\leq1$ and $\omega(t)=0$ for
  $\abs{t}\geq2$. Moreover we will use the notation
  \begin{equation*}
  \begin{aligned}
    \bar{\partial} \tilde{f}(x+iy)& \coloneqq \frac{1}{2} \left(
      \frac{\partial \tilde{f}}{\partial x} + i \frac{\partial
        \tilde{f}}{\partial y} \right)
    \\
    & = \frac{1}{2} \left( \sum_{r=0}^n f^{(r)}(x)\frac{(iy)^r}{r!}
    \right) ( \sigma_x(x,y) + i \sigma_y(x,y)) + \frac{1}{2}
    f^{(n+1)}(x)\frac{(iy)^n}{n!} \sigma(x,y),
    \end{aligned}
  \end{equation*}
  where $\sigma_x$ and $\sigma_y$ are the partial derivatives of
  $\sigma$ with respect to $x$ and $y$ respectively.
\end{definition}
\begin{remark}\label{B.almost_analytic_ex_remark}
  The above choice is one way to define an almost analytic extension
  and it is not unique. Once an $n$ has been fixed the extension has
  the property that
  \begin{equation*}
    |\bar{\partial} \tilde{f}(x+iy)| =\mathcal{O}( \abs{y}^n)
  \end{equation*}	
  as $y\rightarrow0$. Hence when making calculation a choice has to be
  made concerning how fast $|\bar{\partial} \tilde{f}|$ vanishes when
  approaching the real axis. If $f$ is a $C_0^\infty(\R)$ function one
  can find an almost analytic extension $\tilde{f}$ in
  $C_0^\infty(\C)$ such $f(x)=\tilde{f}(x)$ for $x$ in $\R$ and
  \begin{equation*}
    |\bar{\partial} \tilde{f}(x+iy)| \leq C_N  \abs{y}^N, \quad\text{for all } N\geq0.
  \end{equation*}
  without chancing the extension. This type of extension could be
  based on a Fourier transform hence it may not work for a general
  smooth function. For details see \cite[Chapter 8]{MR1735654} or \cite[Chapter 3]{MR2952218}.
\end{remark}
The type of functions for which we can construct a functional calculus
is introduced in the next definition:
\begin{definition}
  For $\rho$ in $\R$ we define the set $S^\rho$ to be the set of
  smooth functions $f:\R\rightarrow\R$ such that
  \begin{equation*}
    |f^{(r)}(x)| \coloneqq \Big|\frac{d^r f}{dx^r}(x) \Big| \leq c_r \lambda(x)^{\rho-r}
  \end{equation*}
  for some $c_r< \infty$, all $x$ in $\R$ and all integers $r\geq0$. Moreover we define
  $\mathcal{W}$ by
  \begin{equation*}
    \mathcal{W} \coloneqq \bigcup_{\rho<0} S^\rho.
  \end{equation*}
\end{definition}
We can now recall the form of the spectral theorem which we will use:
\begin{thm}[The Helffer-Sj\"{o}strand
  formula]\label{B.Helffer-Sjostrand}
  Let $H$ be a self-adjoint operator acting on a Hilbert space
  $\mathscr{H}$ and $f$ a function from $\mathcal{W}$. Moreover let
  $\tilde{f}$ be an almost analytic extension of $f$ with $n$
  terms. Then the bounded operator $f(H)$ is given by the equation
  \begin{equation*}
    f(H) =- \frac{1}{\pi} \int_\C   \bar{\partial }\tilde{f}(z) (z-H)^{-1} \, L(dz),
  \end{equation*}
  where $L(dz)=dxdy$ is the Lebesgue measure on $\C$. The formula
  holds for all numbers $n\geq1$.
\end{thm}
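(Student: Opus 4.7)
The plan is to prove the Helffer--Sjöstrand formula in three stages: first ensure the right-hand side is well defined as a Bochner integral in $\mathcal{L}(\mathscr{H})$, then use the spectral theorem to reduce to a scalar identity on the real line, and finally prove that scalar identity via Stokes' theorem. Throughout one exploits the fundamental bound $\norm{(z-H)^{-1}}_{\mathcal{L}(\mathscr{H})} \leq |\im(z)|^{-1}$ valid for any self-adjoint operator $H$.

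For the first step, split the integrand according to the two groups of terms in Definition~\ref{B.def_almost_analytic_ex} for $\bar{\partial}\tilde{f}$. The terms carrying $\sigma_x+i\sigma_y$ are supported in the annular region $\{(x,y) : |y|/\lambda(x)\in[1,2]\}$, where $|y|\gtrsim \lambda(x)\geq 1$, so $\norm{(z-H)^{-1}}$ is uniformly bounded and the remaining factor decays like $\lambda(x)^{\rho}$ by the $S^\rho$ assumption with $\rho<0$; this gives an integrable contribution. The remainder term is controlled by $|f^{(n+1)}(x)||y|^n \leq c_{n+1}\lambda(x)^{\rho-n-1}|y|^n$ on the strip $|y|\leq 2\lambda(x)$, and combined with $\norm{(z-H)^{-1}}\leq|y|^{-1}$ it produces (after integrating $|y|$) a bound of the form $\lambda(x)^{\rho-1}$, which is integrable in $x$ since $\rho<0$. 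This defines $T := -\tfrac{1}{\pi}\int_\C \bar{\partial}\tilde{f}(z)(z-H)^{-1}\,L(dz)\in\mathcal{L}(\mathscr{H})$.

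Next, I would show $T=f(H)$ via the spectral theorem. For any $\psi\in\mathscr{H}$ with spectral measure $d\mu_\psi$, the Bochner integrability justifies Fubini to get
\begin{equation*}
\scp{T\psi}{\psi} = -\frac{1}{\pi}\int_\C \bar{\partial}\tilde{f}(z)\int_\R \frac{1}{z-\lambda}\,d\mu_\psi(\lambda)\,L(dz) = \int_\R \Big(-\frac{1}{\pi}\int_\C \frac{\bar{\partial}\tilde{f}(z)}{z-\lambda}\,L(dz)\Big)\,d\mu_\psi(\lambda),
\end{equation*}
where the second swap is justified by the same decay estimates (the bounds from step one are uniform in $\lambda\in\R$ because only $|\im z|^{-1}$ enters). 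Comparing with $\scp{f(H)\psi}{\psi}=\int_\R f(\lambda)\,d\mu_\psi(\lambda)$ and polarizing reduces matters to the scalar identity
\begin{equation*}
f(\lambda) = -\frac{1}{\pi}\int_\C \frac{\bar{\partial}\tilde{f}(z)}{z-\lambda}\,L(dz) \qquad \text{for all } \lambda\in\R.
\end{equation*}

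For the final step, fix $\lambda\in\R$ and consider the domain $\Omega_{R,\varepsilon}=\{z\in\C : |z|<R,\; |\im(z)|>\varepsilon\}$, which avoids the singularity at $\lambda$. Since $1/(z-\lambda)$ is holomorphic on $\Omega_{R,\varepsilon}$, Stokes' theorem (Cauchy--Pompeiu) applied to $g(z)=\tilde{f}(z)/(z-\lambda)$ gives $\int_{\Omega_{R,\varepsilon}}\tfrac{\bar\partial\tilde{f}(z)}{z-\lambda}L(dz)=\tfrac{1}{2i}\oint_{\partial\Omega_{R,\varepsilon}}\tfrac{\tilde{f}(z)}{z-\lambda}\,dz$. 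Sending $R\to\infty$ the circular part vanishes by the decay of $\tilde{f}$ (inherited from $f\in S^\rho$ with $\rho<0$ together with the cutoff $\sigma$), leaving an integral over the two horizontal lines $\im(z)=\pm\varepsilon$; sending $\varepsilon\to 0$ yields $-\pi f(\lambda)$ by the standard Plemelj/Poisson-kernel calculation, using $\tilde{f}(x\pm i\varepsilon)\to f(x)$ and $\tfrac{1}{2i}\bigl((x-\lambda+i\varepsilon)^{-1}-(x-\lambda-i\varepsilon)^{-1}\bigr)\to -\pi\delta_\lambda$. The main technical obstacle is keeping track of these limits uniformly: one must verify that the decay of $\tilde{f}$ (coming from $\rho<0$) really does suppress all boundary contributions at infinity, and that the interchange of the scalar and spectral integrations in stage two is legitimate despite $\tilde{f}$ not being compactly supported. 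Everything else is essentially a textbook computation.
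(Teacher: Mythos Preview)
Your argument is correct and follows the standard route to the Helffer--Sj\"ostrand formula. Note, however, that the paper does not supply its own proof of this theorem: it simply cites the monographs \cite{MR1735654} and \cite{MR1349825}, so there is no in-paper argument to compare against. Your three-stage approach (Bochner integrability via the two-term decomposition of $\bar\partial\tilde f$, reduction by the spectral theorem and Fubini to a scalar identity, and the scalar identity via Stokes/Cauchy--Pompeiu with the $R\to\infty$, $\varepsilon\to 0$ limits) is precisely the proof given in those references, in particular Davies \cite{MR1349825}. One small point worth making explicit in your write-up: the limit $\varepsilon\to 0$ on the left-hand side (the area integral over $\Omega_{R,\varepsilon}$) requires that the integrand be absolutely integrable across the real axis, which follows from the $|\bar\partial\tilde f(z)|\lesssim |y|^n$ bound near $y=0$ and $n\geq 1$; you use this implicitly but do not state it.
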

A proof of the above theorem can be found in e.g. \cite{MR1735654} or \cite{MR1349825}. We are now ready to state and prove the functional calculus for a
certain class of rough pseudo-differential operators.
\begin{thm}\label{B.func_calc}
  Let $A_\varepsilon(\hbar)$, for $\hbar$ in $(0,\hbar_0]$, be a
  $\hbar$-$\varepsilon$-admissible operator of regularity
  $\tau \geq 1$ and with symbol
  \begin{equation*}
    a_\varepsilon(\hbar) = \sum_{j\geq 0} \hbar^j a_{\varepsilon,j}.
  \end{equation*}
  Suppose that $A_\varepsilon(h)$ satisfies
  Assumption~\ref{B.self_adj_assumption}. Then for any function $f$
  from $\mathcal{W}$, $f(A_\varepsilon(h))$ is a
  $\hbar$-$\varepsilon$-admissible operator of regularity $\tau$ with
  respect to a constant tempered weight
  function. $f(A_\varepsilon(h))$ has the symbol
  \begin{equation*}
    a_{\varepsilon}^f(\hbar) = \sum_{j\geq 0} \hbar^j a_{\varepsilon,j}^f,
  \end{equation*}
  where
  \begin{equation}\label{B.func_cal_sym} 
  	\begin{aligned}
  	a_{\varepsilon,0}^f &= f(a_{\varepsilon,0}),
	\\
    	a_{\varepsilon,j}^f &= \sum_{k=1}^{2j-1} \frac{(-1)^k}{k!} d_{\varepsilon,j,k} f^{(k)}(a_{\varepsilon,0}) \quad\quad\text{for $j\geq1$},
    \end{aligned}
  \end{equation}
  the symbols $d_{\varepsilon,j,k}$ are the polynomials from
  Lemma~\ref{B.approx_of_resolvent_lemma_1}. Especially we have
  \begin{equation*}
    a_{\varepsilon,1}^f =
    a_{\varepsilon,1}f^{(1)}(a_{\varepsilon,0}).
  \end{equation*}
\end{thm}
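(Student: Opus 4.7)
The plan is to apply the Helffer--Sj{\"o}strand formula (Theorem~\ref{B.Helffer-Sjostrand}) to represent $f(A_\varepsilon(\hbar))$ as an integral of the resolvent, and then to substitute the expansion of $(A_\varepsilon(\hbar)-z)^{-1}$ supplied by Theorem~\ref{B.approx_of_resolvent}. By Theorem~\ref{B.self_adjoint_thm_1} the operator $A_\varepsilon(\hbar)$ is essentially self-adjoint and lower semi-bounded for $\hbar$ sufficiently small, so
\begin{equation*}
f(A_\varepsilon(\hbar)) = \frac{1}{\pi} \int_\C \bar{\partial}\tilde{f}(z)\, (A_\varepsilon(\hbar)-z)^{-1} \, L(dz)
\end{equation*}
for any almost analytic extension $\tilde{f}$ of $f$. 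Using the identity $(A_\varepsilon(\hbar)-z)^{-1} = \OpW(B_{\varepsilon,z,N}(\hbar)) - \hbar^{N+1}(A_\varepsilon(\hbar)-z)^{-1}\Delta_{z,N+1}(\hbar)$ and the fact that Weyl quantisation commutes with integration in $z$, I would obtain $f(A_\varepsilon(\hbar)) = \sum_{j=0}^N \hbar^j \OpW(a_{\varepsilon,j}^f) + \hbar^{N+1} E_N(\varepsilon,\hbar)$, with
\begin{equation*}
a_{\varepsilon,j}^f(x,p) = \frac{1}{\pi}\int_\C \bar{\partial}\tilde{f}(z)\, b_{\varepsilon,z,j}(x,p) \, L(dz).
\end{equation*}

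Next I would evaluate these symbol integrals by applying the scalar Helffer--Sj{\"o}strand formula pointwise in $(x,p)$. For $j=0$ this gives at once $a_{\varepsilon,0}^f = f(a_{\varepsilon,0})$. For $j\geq 1$, Lemma~\ref{B.approx_of_resolvent_lemma_1} provides the representation $b_{\varepsilon,z,j} = \sum_{k=1}^{2j-1} d_{\varepsilon,j,k}(a_{\varepsilon,0}-z)^{-(k+1)}$, and differentiating the scalar identity $k$ times in the spectral parameter yields
\begin{equation*}
\frac{1}{\pi}\int_\C \bar{\partial}\tilde{f}(z)\,(a_{\varepsilon,0}(x,p)-z)^{-(k+1)}\, L(dz) = \frac{(-1)^k}{k!}\, f^{(k)}(a_{\varepsilon,0}(x,p)).
\end{equation*}
Summing these contributions reproduces exactly the formula \eqref{B.func_cal_sym} for $a_{\varepsilon,j}^f$. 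In particular, for $j=1$ one has $d_{\varepsilon,1,1} = -a_{\varepsilon,1}$, so the claimed subprincipal symbol $a_{\varepsilon,1}^f = a_{\varepsilon,1} f^{(1)}(a_{\varepsilon,0})$ drops out.

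The main technical obstacle is the convergence of the Bochner integrals in $\mathcal{L}(L^2(\R^d))$ together with the estimate on $E_N$. By Lemma~\ref{B.approx_of_resolvent_lemma_3} combined with Theorem~\ref{B.cal-val-thm}, $\norm{\OpW(b_{\varepsilon,z,j})}$ is bounded by a polynomial in $|z-\zeta_1|/\dist(z,[\zeta_1,\infty))$, while Theorem~\ref{B.approx_of_resolvent} controls the operator remainder by $\hbar^{\kappa(N)}(|z|/\dist(z,[\zeta_1,\infty)))^{q(N)}$, and for self-adjoint $A_\varepsilon(\hbar)$ we also have $\norm{(A_\varepsilon(\hbar)-z)^{-1}}\leq \dist(z,[\zeta_1,\infty))^{-1}$. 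By choosing the almost analytic extension of $f \in \mathcal{W} \subset S^\rho$ with enough terms (Definition~\ref{B.def_almost_analytic_ex} and Remark~\ref{B.almost_analytic_ex_remark}), the weight $\bar{\partial}\tilde{f}(z)$ vanishes to arbitrary order as $\im z \to 0$ and the decay of $f^{(r)}$ takes care of $|\re z|\to\infty$; together these dominate the polynomial blow-up of both $B_{\varepsilon,z,N}$ and $\Delta_{z,N+1}$ and turn all integrals into absolutely convergent ones, with $L^2$-operator error of order $\hbar^{\kappa(N)}$. Finally, for the admissibility statement I would combine $d_{\varepsilon,j,k}\in\Gamma_{0,\varepsilon}^{(a_{\varepsilon,0}-\zeta_1)^{k},\tau-j}$ from Lemma~\ref{B.approx_of_resolvent_lemma_1} with the decay of $f^{(k)}$ built into $S^\rho$ and a Fa{\`a} di Bruno expansion of the composition $f^{(k)}(a_{\varepsilon,0})$ to conclude that each $a_{\varepsilon,j}^f$ lies in the symbol class of a bounded tempered weight with regularity $\tau-j$, which is exactly $\hbar$-$\varepsilon$-admissibility of regularity $\tau$ with a constant tempered weight.
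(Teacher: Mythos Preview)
Your approach is exactly the paper's: Helffer--Sj{\"o}strand plus the resolvent parametrix of Theorem~\ref{B.approx_of_resolvent}, then pointwise evaluation of $\int \bar\partial\tilde f(z)\,b_{\varepsilon,z,j}\,L(dz)$ via Lemma~\ref{B.approx_of_resolvent_lemma_1} and the Cauchy formula. The remainder estimate and the choice of a high-order almost analytic extension are handled the same way.

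The one place where you are skating over the paper's main technical work is the sentence ``Weyl quantisation commutes with integration in $z$.'' Your Bochner argument, via Lemma~\ref{B.approx_of_resolvent_lemma_3} and Theorem~\ref{B.cal-val-thm}, shows that $z\mapsto \bar\partial\tilde f(z)\,\OpW(b_{\varepsilon,z,j})$ is integrable in $\mathcal{L}(L^2(\R^d))$, but that alone does not identify the symbol of the resulting operator: the kernel of $\OpW(b_{\varepsilon,z,j})$ is an \emph{oscillating} integral in $p$, not an absolutely convergent one, so Fubini between $\int_\C\cdots L(dz)$ and $\int_{\R^d}\cdots dp$ is not immediate. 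In the paper this interchange is established weakly on pairs $\varphi,\psi\in C_0^\infty(\R^d)$: one regularises the oscillating integral with a cutoff $g_\sigma$, splits the $p$-integral into a bounded region and $|p|\geq 1$ (where repeated integration by parts against $e^{i\hbar^{-1}\langle x-y,p\rangle}$ recovers integrable decay), and then applies dominated convergence using the uniform-in-$\sigma$ bounds coming from Lemma~\ref{B.approx_of_resolvent_lemma_3}. Only after this does the scalar Cauchy/Helffer--Sj{\"o}strand computation give $a_{\varepsilon,j}^f$. You should flag this step as requiring justification rather than state it as a fact.
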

The proof is an application of Theorem~\ref{B.Helffer-Sjostrand} and
the fact that the resolvent is a $\hbar$-$\varepsilon$-admissible
operator as well.
\begin{proof}
  By Theorem~\ref{B.self_adjoint_thm_1} the operator
  $A_\varepsilon(\hbar)$ is essentially self-adjoint for sufficiently small $\hbar$. Hence
  Theorem~\ref{B.Helffer-Sjostrand} gives us
  \begin{equation*}
    f(A_\varepsilon(\hbar)) =- \frac{1}{\pi} \int_\C  \bar{\partial }\tilde{f}(z) (z-A_\varepsilon(\hbar))^{-1} \, L(dz),
  \end{equation*}
  where $\tilde{f}$ is an almost analytic extension of $f$. For the
  almost analytic extension of $f$ we will need a suffiently large
  number of terms which we assume to have chosen from the
  start. Theorem~\ref{B.approx_of_resolvent} gives that the resolvent
  is a $\hbar$-$\varepsilon$-admissible operator and the explicit form
  of it as well. Hence
  \begin{equation*}
  \begin{aligned}
   \MoveEqLeft  f(A_\varepsilon(\hbar)) 
    \\
    ={}& \frac{1}{\pi} \int_\C \bar{\partial
    }\tilde{f}(z) \sum_{j=0}^M \hbar^j \OpW(b_{\varepsilon,z,j}) \,
    L(dz)
    - \frac{1}{\pi} \int_\C \bar{\partial }\tilde{f}(z)
    h^{N+1} (z-A_\varepsilon(\hbar))^{-1}\Delta_{z,N+1}(h) \, L(dz),
    \end{aligned}
  \end{equation*}
  where the symbols $b_{\varepsilon,z,j}$ and the operator
  $\Delta_{z,N+1}(h)$ are as defined in
  Theorem~\ref{B.approx_of_resolvent}. If we start by considering the
  error term we have by Theorem~\ref{B.approx_of_resolvent} the
  estimate
  \begin{equation*}
  \begin{aligned}
    \norm{(z-A_\varepsilon(\hbar))^{-1}\Delta_{z,N+1}(h)}_{\mathcal{L}(L^2(\R^d))}
    \leq{}& C \hbar^{\kappa(N)} \frac{1}{\abs{\im(z)}} \left(
      \frac{\abs{z}}{\dist(z,[\zeta_1,\infty))} \right)^{q(N)}
    \\
    \leq{}& C \hbar^{\kappa(N)}
    \frac{\abs{z}^{q(N)}}{\abs{\im(z)}^{q(N)+1} },
    \end{aligned}
  \end{equation*}
  for $N$ sufficiently large and where $q(N)$ is some integer
  dependent of the number $N$. We have
  \begin{equation*}
    |\bar{\partial}\tilde{f}(z)| \leq c_1 \sum_{r=0}^n |\tilde{f}^{(r)}(\re(z))| \lambda(\re(z))^{r-1} \boldsymbol{1}_U(z) + c_2 |\tilde{f}^{(n+1)}(\re(z))| |\im(z)|^n \boldsymbol{1}_V(z),
  \end{equation*}
  where
  \begin{align*}
    U &=\{z \in \C\, |\, \lambda(\re(z)) <
    \abs{\im(z)}<2\lambda(\re(z))\}, \shortintertext{and} V &=\{z \in
    \C \, | \, 0 < \abs{\im(z)}<2\lambda(\re(z))\}.
   \end{align*}
  This estimate follows directly from the definition of
  $\tilde{f}$. By combining these estimates and the definition of the
  class of functions $\mathcal{W}$ we have
  \begin{equation*}
    \Big\lVert\frac{1}{\pi} \int_\C  \bar{\partial }\tilde{f}(z) h^{N+1} (z-A_\varepsilon(\hbar))^{-1}\Delta_{z,N+1}(h) \, L(dz)\Big\rVert_{\mathcal{L}(L^2(\R^d))} \leq C \hbar^{\kappa(N)}.
  \end{equation*}
  What remains to prove the following equality
  \begin{equation}\label{B.sum_symbol_func_proof}
    \sum_{j=0}^M \hbar^j \frac{1}{\pi} \int_\C  \bar{\partial }\tilde{f}(z) \OpW(b_{\varepsilon,z,j}) \, L(dz) =   \sum_{j=0}^M \hbar^j \OpW(a_{\varepsilon,j}^f), 
  \end{equation}
  where the symbols $a_{\varepsilon,j}^f$ are as defined in the
  statement. We will only consider one of the terms as the rest is
  treated analogously. Hence we need to establish the equality
  \begin{equation*}
    \frac{1}{\pi} \int_\C  \bar{\partial }\tilde{f}(z) \OpW(b_{\varepsilon,z,j}) \, L(dz) =   \OpW(a_{\varepsilon,j}^f). 
  \end{equation*}
  As both operators are bounded we need only establish the equality
  weekly for a dense subset of $L^2(\R^d)$. Hence let $\varphi$ and
  $\psi$ be two functions from $C_0^\infty(\R^d)$ and a $j$ be
  given. We have
  \begin{equation}\label{B.func_cal_1}
    \langle \frac{1}{\pi} \int_\C  \bar{\partial }\tilde{f}(z) \OpW(b_{\varepsilon,z,j}) \, L(dz) \varphi , \psi \rangle =  \frac{1}{\pi} \int_\C  \bar{\partial }\tilde{f}(z) \langle \OpW(b_{\varepsilon,z,j}) \varphi , \psi \rangle \, L(dz) ,
  \end{equation}
  where we have
  \begin{equation}\label{B.func_cal_2}
    \begin{aligned}
      \langle \OpW&(b_{\varepsilon,z,j}) \varphi , \psi \rangle
      \\
      ={}& \frac{1}{(2\pi\hbar)^d}\int_{\R^{3d}}
      e^{i\hbar^{-1}\langle x-y,p\rangle}
      b_{\varepsilon,z,j}(\tfrac{x+y}{2},p) \varphi(y)
      \overline{\psi}(x) \, dydpdx
      \\
      ={}& \lim_{\sigma\rightarrow\infty} \frac{1}{(2\pi\hbar)^d}
      \int_{\R^{3d}} e^{i\hbar^{-1}\langle
        x-y,p\rangle} g_\sigma(x,y,p)
      b_{\varepsilon,z,j}(\tfrac{x+y}{2},p) \varphi(y)
      \overline{\psi}(x) \, dydpdx,
    \end{aligned}
  \end{equation}
  where the function $g$ is a positiv Schwartz function bounded by $1$
  and identical $1$ in a neighbourhood of $0$. I the above we have set
  $g_{\sigma}(x,y,p)=g(\tfrac{x}{\sigma},\tfrac{y}{\sigma},\tfrac{p}{\sigma})$. The
  next step in the proof is to apply dominated convergence to move the
  limit outside the integral over $z$.

  We let $\chi$ be in $C_0^\infty(\R^d)$ such that $\chi(p)=1$ for
  $\abs{p}\leq1$ and $\chi(p)=0$ for $\abs{p}\geq2$. With this
  function we have
  \begin{equation}\label{B.func_cal_3}
    \begin{aligned}
      \MoveEqLeft \frac{1}{(2\pi\hbar)^d}\int_{\R^{3d}}
      e^{i\hbar^{-1}\langle x-y,p\rangle} g_\sigma(x,y,p)
      b_{\varepsilon,z,j}(\tfrac{x+y}{2},p) \varphi(y)
      \overline{\psi}(x) \, dydpdx
      \\
      &\phantom{\frac{1}{2}}{}= \frac{1}{(2\pi\hbar)^d}\big[
     \int_{\R^{3d}}e^{i\hbar^{-1}\langle
        x-y,p\rangle} g_\sigma(x,y,p) \chi(p)
      b_{\varepsilon,z,j}(\tfrac{x+y}{2},p) \varphi(y)
      \overline{\psi}(x) \, dydpdx
      \\
      &\phantom{\frac{1}{2}=}{}+ \int_{\R^{3d}}
      e^{i\hbar^{-1}\langle x-y,p\rangle} g_\sigma(x,y,p)(1-\chi(p))
      b_{\varepsilon,z,j}(\tfrac{x+y}{2},p) \varphi(y)
      \overline{\psi}(x) \, dydpdx\big].
    \end{aligned}
  \end{equation}
  By Lemma~\ref{B.approx_of_resolvent_lemma_3} we have
  \begin{equation}\label{B.func_cal_4}
    \begin{aligned}
      \MoveEqLeft \Big|\int_{\R^{3d}}e^{i\hbar^{-1}\langle
        x-y,p\rangle} g_\sigma(x,y,p) \chi(p)
      b_{\varepsilon,z,j}(\tfrac{x+y}{2},p) \varphi(y)
      \overline{\psi}(x) \, dydpdx \Big|
      \\
      &\leq C_j \varepsilon^{-(\tau-j)_{-}} \left(
        \frac{\abs{z-\zeta_1}}{\abs{\im(z)}}\right)^{2j},
    \end{aligned}
  \end{equation}
  where the $\zeta_1$ is the number from
  Assumption~\ref{B.self_adj_assumption}. The factor
  $\varepsilon^{-(\tau-j)_{-}}$ is not an issue as the operator we
  consider has $\hbar^j$ in front. We have just omitted to write this
  factor. This bound is clearly independent of $\sigma$. We now need
  to bound the term with $1-\chi(p)$. Here we use that on the support
  of $1-\chi(p)$ we have $\abs{p}>1$. Hence the operator
  \begin{equation*}
    M= \frac{(-i\hbar)^{2d}}{ \abs{p}^{2d}}(\sum_{k=1}^d \partial_{y_k}^2)^d = \frac{(-i\hbar)^{2d}}{ \abs{p}^{2d}} \sum_{\abs{\alpha}=d} \partial_{y}^{2\alpha},
  \end{equation*}
  is well defined when acting on functions supported in
  $\supp(1-\chi)$. We then obtain that
  \begin{equation*}
     \begin{aligned}
    \MoveEqLeft \int_{\R^{3d}} e^{i\hbar^{-1}\langle
      x-y,p\rangle} g_\sigma(x,y,p) (1-\chi(p))
    b_{\varepsilon,z,j}(\tfrac{x+y}{2},p) \varphi(y)
    \overline{\psi}(x) \, dydpdx 
    \\
    &= \int_{\R^{3d}} (M e^{i\hbar^{-1}\langle
      x-y,p\rangle}) g_\sigma(x,y,p) (1-\chi(p))
    b_{\varepsilon,z,j}(\tfrac{x+y}{2},p) \varphi(y)
    \overline{\psi}(x) \, dydpdx 
    \\
    &= \int_{\R^{3d}} e^{i\hbar^{-1}\langle
      x-y,p\rangle}(1-\chi(p)) M^t(g_\sigma(x,y,p)
    b_{\varepsilon,z,j}(\tfrac{x+y}{2},p)
    \varphi(y))\overline{\psi}(x) \, dydpdx,
     \end{aligned}
  \end{equation*}
  where we have used thet $ M e^{i\hbar^{-1}\langle x-y,p\rangle} = e^{i\hbar^{-1}\langle x-y,p\rangle}$. Just considering the action of $M^t$ inside the integral we have by Leibniz's formula
  \begin{equation*}
  \begin{aligned}
 	|M^tg_\sigma(x,y,p) b_{\varepsilon,z,j}(\tfrac{x+y}{2},p)
    \varphi(y) |
       &= \frac{\hbar^{2d}}{ \abs{p}^{2d}} \Big| \sum_{\abs{\alpha}=d}
    \sum_{\beta\leq
      2\alpha} \partial_{y}^{2\alpha-\beta}(g_\sigma(x,y,p)
    \varphi(y)) \partial_{y}^{\beta}
    b_{\varepsilon,z,j}(\tfrac{x+y}{2},p) \Big|
    \\
    &\leq  C_j \frac{\hbar^{2d}}{ \abs{p}^{2d}}
    \boldsymbol{1}_{\supp(\varphi)}(y) \sum_{\abs{\alpha}=d}
    \sum_{\beta\leq 2\alpha} \varepsilon^{-(\tau-j-\abs{\beta})_{-}}
    \left( \frac{\abs{z-\zeta_1}}{\abs{\im(z)}}\right)^{2j +
      \abs{\beta}}
    \\
    &\leq C \frac{\hbar^{2d}}{ \abs{p}^{2d}}
    \varepsilon^{-(\tau-j-2d)_{-}} \boldsymbol{1}_{\supp(\varphi)}(y)
    \left( 1+\frac{\abs{z-\zeta_1}}{\abs{\im(z)}}\right)^{2j + 2d},
    \end{aligned}
  \end{equation*}
  where we again have used
  Lemma~\ref{B.approx_of_resolvent_lemma_3}. This imply the estimate
  \begin{equation*}
  \begin{aligned}
    \MoveEqLeft\Big|\int_{\R^{3d}} e^{i\hbar^{-1}\langle
      x-y,p\rangle} g_\sigma(x,y,p) (1-\chi(p))
    b_{\varepsilon,z,j}(\tfrac{x+y}{2},p) \varphi(y)
    \overline{\psi}(x) \, dydpdx \Big|
    \\
    &\leq C_j \varepsilon^{-(\tau-j)_{-}} \left(
      1+\frac{\abs{z-\zeta_1}}{\abs{\im(z)}}\right)^{2j + 2d}.
      \end{aligned}
  \end{equation*}
  If we combine this estimate with \eqref{B.func_cal_3} and
  \eqref{B.func_cal_4} we have
  \begin{equation*}
  \begin{aligned}
   \MoveEqLeft\Big| \frac{1}{(2\pi\hbar)^d} \int_{\R^{3d}} 
    e^{i\hbar^{-1}\langle x-y,p\rangle} g_\sigma(x,y,p)
    b_{\varepsilon,z,j}(\tfrac{x+y}{2},p) \varphi(y)
    \overline{\psi}(x) \, dydpdx \Big|
    \\
    &\leq C \varepsilon^{-(\tau-j)_{-}} \left(
      1+\frac{\abs{z-\zeta_1}}{\abs{\im(z)}}\right)^{2j + 2d}.
      \end{aligned}
  \end{equation*}
  As above we have
  \begin{equation*}
   \Big| \int_\C \bar{\partial }\tilde{f}(z) \left(
      1+\frac{\abs{z-\zeta_1}}{\abs{\im(z)}}\right)^{2j + 2d} \,
    L(dz)\Big| < \infty.
  \end{equation*}
  Hence we can apply dominated convergence and by an analogous
  argument we can also apply Fubini's Theorem. This gives
  \begin{equation}\label{B.func_cal_5}
    \begin{aligned}
     \MoveEqLeft  \frac{1}{\pi} \int_\C \bar{\partial }\tilde{f}(z) \langle
      \OpW(b_{\varepsilon,z,j}) \varphi , \psi \rangle \, L(dz)
     \\
      ={}& \lim_{\sigma\rightarrow\infty} \frac{1}{(2\pi\hbar)^d}
      \int_{\R^{3d}} e^{i\hbar^{-1}\langle
        x-y,p\rangle} g_\sigma(x,y,p)
      \frac{1}{\pi} \int_\C \bar{\partial }\tilde{f}(z)
      b_{\varepsilon,z,j}(\tfrac{x+y}{2},p) \, L(dz) \varphi(y)
      \overline{\psi}(x) \, dydpdx
    \end{aligned}
  \end{equation}
  If we only consider the integral over $z$ then we have by a Cauchy
  formula and Lemma~\ref{B.approx_of_resolvent_lemma_1} that
  \begin{equation*}
  \begin{aligned}
    \frac{1}{\pi} \int_\C \bar{\partial }\tilde{f}(z)
    b_{\varepsilon,z,j}(\tfrac{x+y}{2},p) \, L(dz)
    &= \frac{1}{\pi}
    \int_\C \bar{\partial }\tilde{f}(z) \sum_{k=1}^{2j-1}
    d_{\varepsilon, j ,k}(\tfrac{x+y}{2},p)
    b_{\varepsilon,z,0}^{k+1}(\tfrac{x+y}{2},p) \, L(dz)
    \\
    &= \sum_{k=1}^{2j-1} d_{\varepsilon, j ,k}(\tfrac{x+y}{2},p)
    \frac{1}{\pi} \int_\C \bar{\partial }\tilde{f}(z)
    \Big(\frac{1}{a_{\varepsilon,0}(\tfrac{x+y}{2},p)-z}\Big)^{k+1}\,
    L(dz)
    \\
    &= \sum_{k=1}^{2j-1} \frac{(-1)^k}{k!} d_{\varepsilon, j ,k}(\tfrac{x+y}{2},p)
    f^{(k)}( a_{\varepsilon,0}(\tfrac{x+y}{2},p)) =
    a_{\varepsilon,j}^f(\tfrac{x+y}{2},p),
    \end{aligned}
  \end{equation*}
  which is the desired form of $a_{\varepsilon,j}^f$ given in
  \eqref{B.func_cal_sym}. Now combing this with \eqref{B.func_cal_1},
  \eqref{B.func_cal_2} and \eqref{B.func_cal_5} we arrive at
  \begin{equation*}
    \langle \frac{1}{\pi} \int_\C  \bar{\partial }\tilde{f}(z) \OpW(b_{\varepsilon,z,j}) \, L(dz) \varphi , \psi \rangle = \langle \OpW(a_{\varepsilon,j}^f) \varphi , \psi \rangle.
  \end{equation*}
  The remaning $j$' can be treated analogously and hence we obtain the
  equality \eqref{B.sum_symbol_func_proof}. This ends the proof.
\end{proof}
\subsection{Applications of functional calculus}
With the functional calculus we can now prove some useful theorems and lemmas. One of them is an asymptotic expansion of
certain traces.
\begin{thm}\label{B.point.spectrum}
Let $A_\varepsilon(\hbar)$, for $\hbar$ in $(0,\hbar_0]$, be a
  $\hbar$-$\varepsilon$-admissible operator of regularity
  $\tau \geq 1$ and symbol
  \begin{equation*}
    a_\varepsilon(\hbar) = \sum_{j\geq 0} \hbar^j a_{\varepsilon,j}.
  \end{equation*}
  Suppose that $A_\varepsilon(\hbar)$ satisfies
  Assumption~\ref{B.self_adj_assumption}. Let $E_1<E_2$ be two real
  numbers and suppose there exists an $\eta>0$ such
  $a_{\varepsilon,0}^{-1}([E_1 - \eta, E_2+\eta])$ is compact. Then we have
  \begin{equation}
  	\spec(A_\varepsilon(\hbar))\cap[E_1.E_2] \subseteq \spec_{pp}(A_\varepsilon(\hbar)),
  \end{equation}
  for $\hbar$ sufficiently small, where $\spec_{pp}(A_\varepsilon(\hbar))$ is the pure point spectrum of $A_\varepsilon(\hbar)$.
\end{thm}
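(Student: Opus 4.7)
The plan is to show that the essential spectrum of $A_\varepsilon(\hbar)$ does not intersect $[E_1,E_2]$, which for a self-adjoint operator is equivalent to the claimed inclusion into the pure point spectrum. The strategy is to exhibit a compact pseudo-differential operator that asymptotically acts as the identity on any would-be Weyl sequence corresponding to an essential spectral value in $[E_1,E_2]$, and then derive a contradiction by Weyl's criterion.

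First I would choose $f\in C_c^\infty(\R)$ with $f\equiv 1$ on $[E_1,E_2]$ and $\supp f\subseteq (E_1-\eta,E_2+\eta)$; such an $f$ lies in the class $\mathcal{W}$. Theorem~\ref{B.self_adjoint_thm_1} ensures essential self-adjointness of $A_\varepsilon(\hbar)$ for $\hbar$ small, so Theorem~\ref{B.func_calc} applies and yields, for $N_0$ sufficiently large,
\begin{equation*}
  f(A_\varepsilon(\hbar)) = \sum_{j=0}^{N_0} \hbar^j \OpW(a_{\varepsilon,j}^f) + \hbar^{N_0+1} R_{N_0}(\varepsilon,\hbar),
\end{equation*}
with $\hbar^{N_0+1}\norm{R_{N_0}(\varepsilon,\hbar)}_{\mathcal{L}(L^2(\R^d))}\leq C \hbar^{\kappa(N_0)}$ and $\kappa(N_0)>0$. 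From the explicit form \eqref{B.func_cal_sym}, each $a_{\varepsilon,j}^f$ is a polynomial in derivatives of the $a_{\varepsilon,k}$ multiplied by $f^{(k)}(a_{\varepsilon,0})$, so its support lies in $a_{\varepsilon,0}^{-1}(\supp f)\subseteq a_{\varepsilon,0}^{-1}([E_1-\eta,E_2+\eta])$, which is compact by hypothesis. Theorem~\ref{B.thm_est_tr} then implies that each $\OpW(a_{\varepsilon,j}^f)$ is trace class, so the finite sum $K(\hbar)\coloneqq \sum_{j=0}^{N_0} \hbar^j \OpW(a_{\varepsilon,j}^f)$ is compact on $L^2(\R^d)$.

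To conclude, I would argue by Weyl's criterion. Suppose there exists $\lambda\in[E_1,E_2]\cap\spec_{ess}(A_\varepsilon(\hbar))$. Then one can find $\{\psi_n\}\subseteq \Dom(A_\varepsilon(\hbar))$ with $\norm{\psi_n}_{L^2(\R^d)}=1$, $\psi_n\rightharpoonup 0$ weakly, and $\norm{(A_\varepsilon(\hbar)-\lambda)\psi_n}_{L^2(\R^d)}\to 0$. Since $f$ is continuous and bounded with $f(\lambda)=1$, the spectral theorem yields $\norm{f(A_\varepsilon(\hbar))\psi_n-\psi_n}_{L^2(\R^d)}\to 0$, so $\norm{f(A_\varepsilon(\hbar))\psi_n}_{L^2(\R^d)}\to 1$. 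Compactness of $K(\hbar)$ together with $\psi_n\rightharpoonup 0$ gives $\norm{K(\hbar)\psi_n}_{L^2(\R^d)}\to 0$, and for $\hbar$ small enough that $C\hbar^{\kappa(N_0)}\leq 1/2$ one obtains
\begin{equation*}
  1 = \lim_{n\to\infty}\norm{f(A_\varepsilon(\hbar))\psi_n}_{L^2(\R^d)} \leq \lim_{n\to\infty}\norm{K(\hbar)\psi_n}_{L^2(\R^d)} + \tfrac{1}{2} = \tfrac{1}{2},
\end{equation*}
a contradiction. Hence $\spec_{ess}(A_\varepsilon(\hbar))\cap[E_1,E_2]=\emptyset$, which gives the desired inclusion.

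The main obstacle I expect is verifying that the symbols $a_{\varepsilon,j}^f$ really produce trace-class operators uniformly in the rough parameter $\varepsilon$: their derivative bounds come with factors $\varepsilon^{-(\tau-|\alpha|)_-}$, but these are exactly compensated by the $\varepsilon^{|\alpha|}$ weight in Theorem~\ref{B.thm_est_tr}, keeping the trace-norm estimate finite uniformly in $\varepsilon$. Once this bookkeeping is in place, the Weyl-sequence argument is standard and the rest follows.
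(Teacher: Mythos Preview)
Your proof is correct, but the route differs from the paper's. Both arguments rest on the functional calculus decomposition $f(A_\varepsilon(\hbar)) = K(\hbar) + \text{(small remainder)}$ with $K(\hbar)$ trace class because the symbols $a_{\varepsilon,j}^f$ are compactly supported. From there the two diverge: you invoke Weyl's criterion, using that a Weyl sequence for any $\lambda\in[E_1,E_2]$ would satisfy $\norm{f(A_\varepsilon(\hbar))\psi_n}\to 1$ while $K(\hbar)\psi_n\to 0$ by compactness, yielding a contradiction. The paper instead introduces a second cut-off $g$ with $gf=g$ and $g\equiv 1$ on $[E_1,E_2]$, writes $g(A_\varepsilon(\hbar))(I-\text{small})=g(A_\varepsilon(\hbar))K(\hbar)$, inverts the bracket, and concludes directly that $g(A_\varepsilon(\hbar))$ is trace class with $\norm{g(A_\varepsilon(\hbar))}_{\Tr}\leq C\hbar^{-d}$; the indicator $\boldsymbol{1}_{[E_1,E_2]}(A_\varepsilon(\hbar))\leq g(A_\varepsilon(\hbar))$ is then trace class too.

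What each buys: your Weyl-sequence argument is more elementary and needs only compactness of $K(\hbar)$, not its trace norm. The paper's two-function trick yields the quantitative bound $\norm{\boldsymbol{1}_{[E_1,E_2]}(A_\varepsilon(\hbar))}_{\Tr}\leq C\hbar^{-d}$, which is exactly what is reused in the subsequent trace asymptotics (Theorem~\ref{B.trace_formula_fkt} and Lemma~\ref{B.Insert_localisation_in_trace}). A minor remark on your closing paragraph: the $\varepsilon$-uniformity you worry about is not actually needed for the compactness step---for each fixed $\hbar$ the symbols $a_{\varepsilon,j}^f$ are smooth and compactly supported, so Theorem~\ref{B.thm_est_tr} applies trivially. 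Uniformity in $\varepsilon$ only matters for the remainder estimate $C\hbar^{\kappa(N_0)}\leq 1/2$, and that is already built into the functional calculus.
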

\begin{proof}
 Let $f$ and $g$ be in
  $C_0^\infty((E_1-\eta,E_2+\eta))$ such $g(t)=1$ for
  $t\in[E_1,E_2]$ and $f(t)=1$ for $t$ in $\supp(g)$. By Theorem~\ref{B.func_calc} we have
  \begin{equation}\label{B.punkt_spec_1}
    f(A_\varepsilon(\hbar)) = A_{\varepsilon,f,N}(\hbar) + \hbar^{N+1}
    R_{N+1,f}(\varepsilon,\hbar),
  \end{equation}
   where the terms $A_{\varepsilon,f,N}(\hbar)$ consists of the first $N$ terms in the
  expansion in $\hbar$ of $ f(A_\varepsilon(\hbar))$. We get by \eqref{B.punkt_spec_1} and the definition of $g$ and $f$ that
\begin{equation*}
	 g(A_\varepsilon(\hbar)) (I - \hbar^{N+1}
    R_{N+1,f}(\varepsilon,\hbar))=  g(A_\varepsilon(\hbar))A_{\varepsilon,f,N}(\hbar). 
\end{equation*}
Hence for $\hbar$ sufficiently small we have
\begin{equation*}
	 g(A_\varepsilon(\hbar)) =  g(A_\varepsilon(\hbar))A_{\varepsilon,f,N}(\hbar) (I - \hbar^{N+1}
    R_{N+1,f}(\varepsilon,\hbar))^{-1},
\end{equation*}
thereby we have the inequality
\begin{equation}\label{B.punkt_spec_2}
	\norm{ g(A_\varepsilon(\hbar))}_{\Tr} \leq c \norm{g}_{\infty} \norm{A_{\varepsilon,f,N}(\hbar)}_{\Tr} \leq C \hbar^{-d},
\end{equation}
where we have used Theorem~\ref{B.thm_est_tr}. Since $\boldsymbol{1}_{[E_1,E_2]}(t)\leq g(t)$ we have that $ \boldsymbol{1}_{[E_1,E_2]}(A_\varepsilon(\hbar))$ is a trace class operator by \eqref{B.punkt_spec_2}. This implies the inclusion
 \begin{equation*}
  	\spec(A_\varepsilon(\hbar))\cap[E_1.E_2] \subseteq \spec_{pp}(A_\varepsilon(\hbar)),
  \end{equation*}
  for $\hbar$ sufficiently small. This ends the proof.
\end{proof}
\begin{thm}\label{B.trace_formula_fkt}
  Let $A_\varepsilon(\hbar)$, for $\hbar$ in $(0,\hbar_0]$, be a
  $\hbar$-$\varepsilon$-admissible operator of regularity
  $\tau \geq 1$ and symbol
  \begin{equation*}
    a_\varepsilon(\hbar) = \sum_{j\geq 0} \hbar^j a_{\varepsilon,j}.
  \end{equation*}
  Suppose that $A_\varepsilon(\hbar)$ satisfies
  Assumption~\ref{B.self_adj_assumption}. Let $E_1<E_2$ be two real
  numbers and suppose there exists an $\eta>0$ such
  $a_{\varepsilon,0}^{-1}([E_1 - \eta, E_2+\eta])$ is compact. Then
  for every $f$ in $C_0^\infty((E_1,E_2))$ and any $N_0$ in
  $\mathbb{N}$ there exists an $N$ in $\mathbb{N}$ such that
  \begin{equation*}
    |\Tr[f(A_\varepsilon(\hbar))] -\frac{1}{(2\pi\hbar)^d} \sum_{j=0}^N \hbar^j T_j(f,A_\varepsilon(\hbar)) | \leq C \hbar^{N_0+1-d}.
  \end{equation*}
  for all sufficiently small $\hbar$, where the $T_j$'s is given by
  \begin{equation*}
    T_j(f,A_\varepsilon(\hbar) ) =  \int_{\R^{2d}} \sum_{k=1}^{2j-1} \frac{(-1)^k}{k!} d_{\varepsilon,j,k} f^{(k)}(a_{\varepsilon,0}) \, dxdp,
  \end{equation*}
  where $d_{\varepsilon,j,k}$ are the polynomials from
  Lemma~\ref{B.approx_of_resolvent_lemma_1}. In particular we have
  \begin{align*} 
    T_0(f,A_\varepsilon(\hbar) ) ={}&  \int_{\R^{2d}}
    f(a_{\varepsilon,0}) \, dxdp
    \\ \shortintertext{and} 
    T_1(f,A_\varepsilon(\hbar) ) = & \int_{\R^{2d}}
    a_{\varepsilon,1} f^{(1)}(a_{\varepsilon,0}) \, dxdp.
   \end{align*}
\end{thm}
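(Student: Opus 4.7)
The strategy is to apply the functional calculus of Theorem~\ref{B.func_calc} to expand $f(A_\varepsilon(\hbar))$ in powers of $\hbar$, take the trace of each term using Theorem~\ref{B.trace_formula}, and control the remainder via a compactly supported cutoff trick.

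First I would use Theorem~\ref{B.func_calc} to write, for $N$ sufficiently large,
\begin{equation*}
f(A_\varepsilon(\hbar)) = \sum_{j=0}^N \hbar^j \OpW(a_{\varepsilon,j}^f) + \hbar^{N+1} R_{N+1,f}(\varepsilon,\hbar),
\end{equation*}
with $a_{\varepsilon,j}^f$ as in \eqref{B.func_cal_sym} and $\|\hbar^{N+1} R_{N+1,f}\|_{\mathcal{L}(L^2(\R^d))} \leq C\hbar^{\kappa(N)}$ for some strictly increasing positive function $\kappa$. The crucial point is that each $a_{\varepsilon,j}^f$ is supported in the compact set $a_{\varepsilon,0}^{-1}(\supp f)$, so by Theorem~\ref{B.thm_est_tr} the operators $\OpW(a_{\varepsilon,j}^f)$ are trace class with trace norm $O(\hbar^{-d})$, and Theorem~\ref{B.trace_formula} gives
\begin{equation*}
\Tr[\OpW(a_{\varepsilon,j}^f)] = \frac{1}{(2\pi\hbar)^d} \int_{\R^{2d}} a_{\varepsilon,j}^f \, dxdp = \frac{1}{(2\pi\hbar)^d} T_j(f,A_\varepsilon(\hbar)).
\end{equation*}

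To give $\Tr[\hbar^{N+1}R_{N+1,f}]$ a meaning I would pick $g \in C_0^\infty((E_1-\eta,E_2+\eta))$ with $g\equiv 1$ on a neighbourhood of $\supp f$ and use $f(A_\varepsilon(\hbar)) = g(A_\varepsilon(\hbar)) f(A_\varepsilon(\hbar))$. Exactly the argument in the proof of Theorem~\ref{B.point.spectrum} yields $\|g(A_\varepsilon(\hbar))\|_{\Tr} = O(\hbar^{-d})$, so $f(A_\varepsilon(\hbar))$ itself is trace class of the same order. Expanding also $g(A_\varepsilon(\hbar)) = G_N(A) + \hbar^{N+1}R_{N+1,g}$ with $G_N(A) = \sum_{j=0}^N \hbar^j \OpW(a_{\varepsilon,j}^g)$ and using the H\"older-type bound $|\Tr[XY]| \leq \|X\|_{\Tr}\|Y\|_{\mathcal{L}}$ for each cross term, one gets
\begin{equation*}
|\hbar^{N+1}\Tr[G_N(A) R_{N+1,f}]| + |\hbar^{N+1}\Tr[R_{N+1,g} f(A_\varepsilon(\hbar))]| \leq C\hbar^{\kappa(N)-d}.
\end{equation*}

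The remaining main term $\Tr[G_N(A) F_N(A)]$, with $F_N(A)=\sum_{k=0}^N \hbar^k \OpW(a_{\varepsilon,k}^f)$, is evaluated term by term by the identity $\Tr[\OpW(a)\OpW(b)] = (2\pi\hbar)^{-d}\int ab\, dxdp$, which follows from Theorem~\ref{B.composition-weyl-thm} combined with Theorem~\ref{B.trace_formula} (integrals of the higher Moyal corrections vanish by integration by parts, since both symbols are compactly supported). Since $g\equiv 1$ on a neighbourhood of $\supp f$, all derivatives $g^{(l)}$ for $l\geq 1$ vanish on $a_{\varepsilon,0}^{-1}(\supp f) \supseteq \supp a_{\varepsilon,k}^f$; the explicit formula \eqref{B.func_cal_sym} then forces $a_{\varepsilon,j}^g\cdot a_{\varepsilon,k}^f = 0$ whenever $j\geq 1$, while $a_{\varepsilon,0}^g \cdot a_{\varepsilon,k}^f = a_{\varepsilon,k}^f$ on that support. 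The double sum collapses to $(2\pi\hbar)^{-d}\sum_{k=0}^N \hbar^k T_k(f,A_\varepsilon(\hbar))$, and choosing $N$ with $\kappa(N)\geq N_0+1$ completes the argument. The main obstacle is the careful bookkeeping needed to match all the bounded, trace class, and remainder estimates to the claimed $\hbar^{N_0+1-d}$ bound and to verify rigorously that on the support of each $a_{\varepsilon,k}^f$ the cutoff $g$ genuinely annihilates every $a_{\varepsilon,j}^g$ with $j\geq 1$.
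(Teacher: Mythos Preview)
Your proposal is correct and follows essentially the same strategy as the paper: expand both $f(A_\varepsilon(\hbar))$ and an auxiliary $g(A_\varepsilon(\hbar))$ via Theorem~\ref{B.func_calc}, use the identity $f(A)=g(A)f(A)$ to render the remainder trace class, and collapse the main term using that $g\equiv 1$ near $\supp f$. The only noteworthy difference is in how you treat $\Tr[G_N(A)F_N(A)]$: the paper applies Theorem~\ref{B.composition-weyl-thm} at the \emph{operator} level to show $A_{\varepsilon,f,N}A_{\varepsilon,g,N}=A_{\varepsilon,f,N}+\hbar^{N+1}R_{N,f,g}$ with $R_{N,f,g}$ a pseudo-differential operator whose compactly supported symbol is controlled by Theorem~\ref{B.thm_est_tr}, whereas you invoke the exact identity $\Tr[\OpW(a)\OpW(b)]=(2\pi\hbar)^{-d}\int ab\,dxdp$ and then cancel pointwise. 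Your route is slightly more direct but relies on that trace identity (which indeed follows from a kernel computation for compactly supported Weyl symbols, without symbol expansions); the paper's route stays entirely within the composition calculus already developed and never needs the exact identity.
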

The proof is an application of Theorem~\ref{B.func_calc} which gives
the form of the operator $f(A_\varepsilon(\hbar))$ combined with the
trace formula from Theorem~\ref{B.trace_formula} and we use some of the same ideas as in the proof of Theorem~\ref{B.point.spectrum}.
\begin{proof}
  Let $f$ in $C_0^\infty((E_1,E_2))$ be given and fix a $g$ in
  $C_0^\infty((E_1-\eta,E_2+\eta))$ such that $g(t)=1$ for
  $t\in[E_1,E_2]$. By Theorem~\ref{B.func_calc} we have
  \begin{align*}
    f(A_\varepsilon(\hbar)) = A_{\varepsilon,f,N}(\hbar) + \hbar^{N+1}
    R_{N+1,f}(\varepsilon,\hbar),
    \\\shortintertext{and}
    g(A_\varepsilon(\hbar)) = A_{\varepsilon,g,N}(\hbar) + \hbar^{N+1}
    R_{N+1,g}(\varepsilon,\hbar),
   \end{align*}
  where the terms $A_{\varepsilon,f,N}(\hbar)$ and
  $A_{\varepsilon,g,N}(\hbar)$ consist of the first $N$ terms in the
  expansion in $\hbar$ of $ f(A_\varepsilon(\hbar))$ and
  $ g(A_\varepsilon(\hbar))$ respectively.  Since
  $ f(A_\varepsilon(\hbar)) g(A_\varepsilon(\hbar)) =
  f(A_\varepsilon(\hbar))$ we have
  \begin{equation*}
  	\begin{aligned}
    f(A_\varepsilon(\hbar))
    &= (A_{\varepsilon,f,N}(\hbar) + \hbar^{N+1}
    R_{N+1,f}(\varepsilon,\hbar))( A_{\varepsilon,g,N}(\hbar) +
    \hbar^{N+1} R_{N+1,g}(\varepsilon,\hbar))
    \\
    &= A_{\varepsilon,f,N}(\hbar) A_{\varepsilon,g,N}(\hbar)
    +\hbar^{N+1}[ A_{\varepsilon,f,N}(\hbar)
    R_{N+1,g}(\varepsilon,\hbar) +
    R_{N+1,f}(\varepsilon,\hbar)A_{\varepsilon,g,N}(\hbar) ].
    \end{aligned}
  \end{equation*}
  By Theorem~\ref{B.thm_est_tr} we have that
  \begin{equation}\label{B.trace_fct_1}
    \norm{ f(A_\varepsilon(\hbar)) - A_{\varepsilon,f,N}(\hbar) A_{\varepsilon,g,N}(\hbar)  }_{\Tr} \leq C \hbar^{\kappa(N)-d}
  \end{equation}
  as $\hbar\rightarrow 0$. Hence taking $N$ sufficiently large we can
  consider the composition of the operators
  $A_{\varepsilon,f,N}(\hbar) A_{\varepsilon,g,N}(\hbar)$ instead of
  $f(A_\varepsilon(\hbar))$. By the choice of $g$ and
  Theorem~\ref{B.composition-weyl-thm} (composition of operators) we
  have
  \begin{equation}\label{B.trace_fct_2}
  \begin{aligned}
    A_{\varepsilon,f,N}(\hbar) A_{\varepsilon,g,N}(\hbar) &=
    \sum_{j=0}^N \hbar^j \OpW(a_{\varepsilon,j}^f) + \hbar^{N+1}
    R_{N,f,g}(\varepsilon, \hbar,a_\varepsilon)
    \\
    &=A_{\varepsilon,f,N}(\hbar) + \hbar^{N+1} R_{N,f,g}(\varepsilon,
    \hbar,a_\varepsilon).
    \end{aligned}
  \end{equation}
  Hence we have Theorem~\ref{B.thm_est_tr} that
  \begin{equation}\label{B.trace_fct_3}
    \norm{ A_{\varepsilon,f,N}(\hbar) - A_{\varepsilon,f,N}(\hbar) A_{\varepsilon,g,N}(\hbar)  }_{\Tr} \leq C \hbar^{\kappa(N)-d},
  \end{equation}
  where we have used that the error term in \eqref{B.trace_fct_2} is a $\hbar$-pseudo-differential operator, which follows from Theorem~\ref{B.composition-weyl-thm}.  Theorem~\ref{B.trace_formula} now gives
  \begin{equation}\label{B.trace_fct_4}
  	\begin{aligned}
    \Tr[A_{\varepsilon,f,N}(\hbar) ] &= \sum_{j=0}^N \hbar^j
    \Tr[\OpW(a_{\varepsilon,j}^f)]
    \\
    &=\frac{1}{(2\pi\hbar)^d} \sum_{j=0}^N \hbar^j  \int_{\R^{2d}} \sum_{k=1}^{2j-1} \frac{(-1)^k}{k!}
    d_{\varepsilon,j,k} f^{(k)}(a_{\varepsilon,0}) \, dxdp.
    \end{aligned}
  \end{equation}
  By combining \eqref{B.trace_fct_1} and \eqref{B.trace_fct_3} implies that
  \begin{equation}\label{B.trace_fct_5}
  	\abs{\Tr[f(A_\varepsilon(\hbar))]- \Tr[A_{\varepsilon,f,N}(\hbar) ] } \leq C \hbar^{\kappa(N)-2d-1}.
  \end{equation}
   Hence by choosing $N$ sufficiently large and combining \eqref{B.trace_fct_4} and \eqref{B.trace_fct_5} we get the desired estimate.
\end{proof}
The next Lemmas will be use-full in the proof of the Weyl
law. Both of these Lemmas are proven by applying the functional
calculus and the results on compositions of operators.
\begin{lemma}\label{B.Insert_localisation_in_trace}
  Let $A_\varepsilon(\hbar)$, for $\hbar$ in $(0,\hbar_0]$, be a
  $\hbar$-$\varepsilon$-admissible operator of regularity
  $\tau \geq 1$ and symbol
  \begin{equation*}
    a_\varepsilon(\hbar) = \sum_{j\geq 0} \hbar^j a_{\varepsilon,j}.
  \end{equation*}
  Suppose that $A_\varepsilon(h)$ satisfies
  Assumption~\ref{B.self_adj_assumption}. Let $E_1<E_2$ be two real
  numbers and suppose there exists an $\eta>0$ such
  $a_{\varepsilon,0}^{-1}([E_1 - \eta, E_2+\eta])$ is compact. Let $f$
  be in $C_0^\infty((E_1,E_2))$ and suppose $\theta$ is in
  $C_0^\infty(\R^d_x\times\R^d_p)$ such
  $\supp(\theta)\subset a_{\varepsilon,0}^{-1}([E_1 - \eta,
  E_2+\eta])$ and $\theta(x,p)=1$ for all $(x,p)$ in
  $\supp(f(a_{0,\varepsilon}))$. Then we have the bound
  \begin{equation*}
    \norm{(1-\OpW(\theta)) f(A_\varepsilon(\hbar))}_{\Tr} \leq C_N \hbar^N
  \end{equation*}
  for every $N$ in $\N$
\end{lemma}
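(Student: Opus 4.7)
The plan is to exploit the support disjointness between $1-\theta$ and the symbols arising from the functional calculus of $g(A_\varepsilon(\hbar))$ for a suitable auxiliary $g$. Choose $g \in C_0^\infty((E_1,E_2))$ with $g = 1$ on $\supp(f)$ and $\supp(g)$ close enough to $\supp(f)$ that $\theta = 1$ holds on an open neighborhood of the compact set $\supp(g(a_{\varepsilon,0}))$; this requires a harmless upgrade of the hypothesis from the closed-set condition $\theta|_{\supp(f(a_{\varepsilon,0}))}=1$ to an open-neighborhood condition, which can always be arranged by shrinking $\supp(g)$. Since $fg = f$ we have $f(A_\varepsilon(\hbar)) = g(A_\varepsilon(\hbar)) f(A_\varepsilon(\hbar))$, so the problem reduces to showing that $(1-\OpW(\theta)) g(A_\varepsilon(\hbar))$ has operator norm $O(\hbar^K)$ for every $K \in \N$.

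For the main estimate, apply Theorem~\ref{B.func_calc} to write
\[
g(A_\varepsilon(\hbar)) = \sum_{j=0}^M \hbar^j \OpW(a^g_{\varepsilon,j}) + \hbar^{M+1} R_{M+1,g}(\varepsilon,\hbar),
\]
where the explicit formula \eqref{B.func_cal_sym} shows that each $a^g_{\varepsilon,j}$ is supported in $\supp(g(a_{\varepsilon,0}))$. For each $j$, expand the composition $(1-\OpW(\theta)) \OpW(a^g_{\varepsilon,j})$ via Theorem~\ref{B.composition-weyl-thm}: the $m$-th term of the Moyal expansion is a sum of products of the form $\partial_p^\alpha D_x^\beta(1-\theta) \cdot \partial_p^\beta D_x^\alpha a^g_{\varepsilon,j}$ with $\abs{\alpha}+\abs{\beta}=m$. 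The term $m=0$ is $(1-\theta) a^g_{\varepsilon,j}$, which vanishes identically because $\theta = 1$ on $\supp(a^g_{\varepsilon,j})$; for $m \geq 1$ the factor $\partial_p^\alpha D_x^\beta \theta$ is identically zero on the open neighborhood where $\theta = 1$, which contains $\supp(\partial_p^\beta D_x^\alpha a^g_{\varepsilon,j})$, so the product vanishes as well. Thus every term of the Moyal expansion is identically zero and the composition collapses to its remainder, whose operator norm is $O(\hbar^K)$ for arbitrary $K$ by Theorem~\ref{B.cal-val-thm}. Combined with the operator-norm bound on $\hbar^{M+1} R_{M+1,g}$ given by Theorem~\ref{B.func_calc}, this yields
\[
\norm{(1-\OpW(\theta)) g(A_\varepsilon(\hbar))}_{\mathcal{L}(L^2(\R^d))} \leq C_K \hbar^K \qquad \text{for every } K \in \N.
\]

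To convert this operator-norm estimate into a trace-norm estimate, introduce a further cutoff $\tilde{g} \in C_0^\infty((E_1,E_2))$ with $\tilde{g} = 1$ on $\supp(f)$ and $g = 1$ on $\supp(\tilde{g})$, so $f = g \tilde{g} f$ and
\[
(1-\OpW(\theta)) f(A_\varepsilon(\hbar)) = \bigl[(1-\OpW(\theta)) g(A_\varepsilon(\hbar))\bigr] \tilde{g}(A_\varepsilon(\hbar)) f(A_\varepsilon(\hbar)).
\]
By the argument used in the proof of Theorem~\ref{B.point.spectrum}, $\tilde{g}(A_\varepsilon(\hbar))$ is trace class with $\norm{\tilde{g}(A_\varepsilon(\hbar))}_{\Tr} \leq C \hbar^{-d}$, while $\norm{f(A_\varepsilon(\hbar))}_{\mathcal{L}(L^2(\R^d))} \leq \norm{f}_\infty$ by the spectral theorem. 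The submultiplicativity $\norm{ABC}_{\Tr} \leq \norm{A}_{\mathcal{L}(L^2(\R^d))} \norm{B}_{\Tr} \norm{C}_{\mathcal{L}(L^2(\R^d))}$ then yields $\norm{(1-\OpW(\theta)) f(A_\varepsilon(\hbar))}_{\Tr} \leq C_K \hbar^{K-d}$, which is $\leq C_N \hbar^N$ upon choosing $K = N+d$. The one genuine technical point is the vanishing of all Moyal expansion terms, which depends crucially on the open-neighborhood refinement of the hypothesis on $\theta$; everything else is bookkeeping.
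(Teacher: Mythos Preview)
Your proof is correct and follows essentially the same strategy as the paper's: use the functional calculus (Theorem~\ref{B.func_calc}) to see that the symbols of the smoothed operator are supported where $\theta=1$, invoke the composition theorem to conclude that the operator norm of the product with $1-\OpW(\theta)$ is $O(\hbar^K)$, and then absorb a loss of $\hbar^{-d}$ coming from the trace norm of an auxiliary cutoff $g(A_\varepsilon(\hbar))$.

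Two minor remarks. First, the paper swaps the roles of your $f$ and $g$: it bounds $\norm{(1-\OpW(\theta))f(A_\varepsilon(\hbar))}_{\mathcal{L}(L^2)}$ directly (using the symbols $a^f_{\varepsilon,j}$) and then uses $f(A)=f(A)g(A)$ together with $\norm{g(A_\varepsilon(\hbar))}_{\Tr}\leq C\hbar^{-d}$; this avoids your second auxiliary function $\tilde g$, which is harmless but unnecessary. Second, you are right to flag the open-neighborhood issue: for the Moyal-expansion terms to vanish one needs $\supp(1-\theta)$ and $\supp(g(a_{\varepsilon,0}))$ to be disjoint closed sets, which strictly requires $\theta=1$ on an open neighborhood of $\supp(f(a_{\varepsilon,0}))$. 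The paper's proof tacitly assumes this as well; your explicit shrinking of $\supp(g)$ to arrange it is the correct fix.
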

\begin{proof}
  We choose $g$ in $C_0^\infty((E_1,E_2))$ such $g(t)f(t)=f(t)$. We
  now have
  \begin{equation*}
    \norm{(1-\OpW(\theta)) f(A_\varepsilon(\hbar))}_{\Tr} \leq \norm{(1-\OpW(\theta)) f(A_\varepsilon(\hbar))}_{\mathcal{L}(L^2(\R^d))} \norm{g(A_\varepsilon(\hbar))}_{\Tr}
  \end{equation*}
  By arguing as in the proof of Theorem~\ref{B.trace_formula_fkt} and applying 
  Theorem~\ref{B.thm_est_tr} we get
  \begin{equation*}
    \norm{g(A_\varepsilon(\hbar))}_{\Tr} \leq C_d \hbar^{-d}.
  \end{equation*}
  Form Theorem~\ref{B.func_calc} $f(A_\varepsilon(\hbar))$ is
  $\hbar$-$\varepsilon$-admissible operator with symbols
  \begin{equation*}
    a_{\varepsilon}^f(\hbar) = \sum_{j\geq 0} \hbar^j a_{\varepsilon,j}^f,
  \end{equation*}
  where
  \begin{equation*}
    a_{\varepsilon,j}^f = \sum_{k=1}^{2j-1} \frac{(-1)^k}{k!} d_{\varepsilon,j,k} f^{(k)}(a_{\varepsilon,0}),
  \end{equation*}
  the symbols $d_{\varepsilon,j,k}$ are the polynomials from
  Lemma~\ref{B.approx_of_resolvent_lemma_1}. The support of these
  functions is disjoint from the support of symbol for the operator
  $(1-\OpW(\theta))$. Hence by Theorem~\ref{B.composition-weyl-thm} we
  get the desired estimate.
\end{proof}
\begin{lemma}\label{B.Insert_fct_of_opt_in_trace}
  Let $A_\varepsilon(\hbar)$, for $\hbar$ in $(0,\hbar_0]$, be a
  $\hbar$-$\varepsilon$-admissible operator of regularity
  $\tau \geq 1$ and symbol
  \begin{equation*}
    a_\varepsilon(\hbar) = \sum_{j\geq 0} \hbar^j a_{\varepsilon,j}.
  \end{equation*}
  Suppose that $A_\varepsilon(h)$ satisfies
  Assumption~\ref{B.self_adj_assumption}. Let $E_1<E_2$ be two real
  numbers and suppose there exists an $\eta>0$ such
  $a_{\varepsilon,0}^{-1}([E_1 - \eta, E_2+\eta])$ is compact. Suppose
  $\theta$ is in $C_0^\infty(\R^d_x\times\R^d_p)$ such
  $\supp(\theta)\subset a_{\varepsilon,0}^{-1}((E_1 , E_2 ))$.
  
   Then for every $f$ in $C_0^\infty([E_1-\eta,E_2+\eta])$ such $f(t)=1$ for all
  $t$ in $[E_1-\frac{\eta}{2},E_2+\frac{\eta}{2}]$ the bound
  \begin{equation*}
    \norm{\OpW(\theta)(1- f(A_\varepsilon(\hbar)))}_{\mathcal{L}(L^2(\R^d))} \leq C_N \hbar^N,
  \end{equation*}
  is true for every $N$ in $\N$
\end{lemma}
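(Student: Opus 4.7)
The plan is to combine the functional calculus from Theorem~\ref{B.func_calc} with the composition theorem~\ref{B.composition-weyl-thm}, exploiting a disjoint--support argument. First, by Theorem~\ref{B.func_calc}, $f(A_\varepsilon(\hbar))$ is a $\hbar$-$\varepsilon$-admissible operator whose principal symbol is $f(a_{\varepsilon,0})$ and whose higher order symbols are
\begin{equation*}
a_{\varepsilon,j}^f = \sum_{k=1}^{2j-1} \frac{(-1)^k}{k!} d_{\varepsilon,j,k}\, f^{(k)}(a_{\varepsilon,0}), \qquad j\geq 1.
\end{equation*}
Thus the symbols in the expansion of $1-f(A_\varepsilon(\hbar))$ are $1-f(a_{\varepsilon,0})$ at order zero and $-a_{\varepsilon,j}^f$ at higher orders.

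The key geometric observation is the following: since $\supp(\theta)\subset a_{\varepsilon,0}^{-1}((E_1,E_2))$ and $f\equiv 1$ on $[E_1-\tfrac{\eta}{2},E_2+\tfrac{\eta}{2}]\supset (E_1,E_2)$, the open set $U=a_{\varepsilon,0}^{-1}((E_1,E_2))$ contains $\supp(\theta)$ and on $U$ we have $f\circ a_{\varepsilon,0}\equiv 1$. Consequently $1-f(a_{\varepsilon,0})$ vanishes together with all its derivatives on $U$, and the same holds for each $a_{\varepsilon,j}^f$ because every nonzero term in its definition contains a factor $f^{(k)}(a_{\varepsilon,0})$ with $k\geq 1$, which vanishes identically on $U$.

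Now I apply Theorem~\ref{B.composition-weyl-thm} to $\OpW(\theta)(1-f(A_\varepsilon(\hbar)))$: for $N$ large enough so that the constraint $\delta(N+2d+2-\tau)+\tau>2d+1$ holds,
\begin{equation*}
\OpW(\theta)(1-f(A_\varepsilon(\hbar))) = \sum_{j=0}^N \hbar^j \OpW(c_{\varepsilon,j}) + \hbar^{N+1}\mathcal{R}_N(\varepsilon,\hbar).
\end{equation*}
Each $c_{\varepsilon,j}(x,p)$ is a finite sum of products of $(\partial_p^\alpha D_x^\beta \theta)(x,p)$ with derivatives of $1-f(a_{\varepsilon,0})$ or of some $a_{\varepsilon,l}^f$, all evaluated at the same point $(x,p)$. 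Since $\partial_p^\alpha D_x^\beta \theta$ is supported in $\supp(\theta)\subset U$, and the complementary factor vanishes with all its derivatives on $U$, every $c_{\varepsilon,j}$ is identically zero.

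Only the remainder survives. Combining the error bound from Theorem~\ref{B.composition-weyl-thm} with the Calder\'on--Vaillancourt estimate of Theorem~\ref{B.cal-val-thm}, and using boundedness of all relevant symbols, yields $\norm{\hbar^{N+1}\mathcal{R}_N(\varepsilon,\hbar)}_{\mathcal{L}(L^2(\R^d))}\leq C_N \hbar^{\kappa(N)}$ for a strictly increasing function $\kappa$. Since $N$ is arbitrary and $\kappa(N)\to\infty$, the stated estimate follows. The only mild subtlety is tracking that the remainder in Theorem~\ref{B.composition-weyl-thm} truly decays in $\hbar$ at the required rate, which is guaranteed by the lower bound $\varepsilon\geq\hbar^{1-\delta}$ inherited from the admissibility framework.
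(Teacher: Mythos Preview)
Your argument is correct and follows the same approach as the paper: invoke the functional calculus (Theorem~\ref{B.func_calc}) to identify the symbols of $1-f(A_\varepsilon(\hbar))$, observe that their supports are disjoint from $\supp(\theta)$, and conclude via the composition theorem (Theorem~\ref{B.composition-weyl-thm}). You have simply spelled out the disjoint-support step and the handling of the remainder more explicitly than the paper does.
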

\begin{proof}
  Theorem~\ref{B.func_calc} gives us that $ f(A_\varepsilon(\hbar))$
  is $\hbar$-$\varepsilon$-admissible operator with symbols
  \begin{equation*}
    a_{\varepsilon}^f(\hbar) = \sum_{j\geq 0} \hbar^j a_{\varepsilon,j}^f,
  \end{equation*}
  where
  \begin{equation*}\label{B.func_cal_sym_lemma}
    a_{\varepsilon,j}^f = \sum_{k=1}^{2j-1} \frac{(-1)^k}{k!} d_{\varepsilon,j,k} f^{(k)}(a_{\varepsilon,0}),
  \end{equation*}
  the symbols $d_{\varepsilon,j,k}$ are the polynomials from
  Lemma~\ref{B.approx_of_resolvent_lemma_1}. Hence we have that the
  principal symbol of $(1- f(A_\varepsilon(\hbar)))$ is
  $1-f(a_{\varepsilon,0})$. By assumption we then have that the
  support of $\theta$ and the support of every symbol in
  $f(A_\varepsilon(\hbar))$ are disjoint. Hence
  Theorem~\ref{B.composition-weyl-thm} implies the desired estimate.
\end{proof}
%
%
%
\section{Microlocal approximation and properties of
  propagators}\label{B.construction_propagator}
In this section we will study the solution to the operator valued
Cauchy problem:
\begin{equation*}
  \begin{cases}
    \hbar \partial_t U(t,\hbar) - i U(t,\hbar) A_\varepsilon(\hbar) = 0 & t \neq0 \\
    U(0,\hbar) = \theta(x,\hbar D) & t=0,
  \end{cases}
\end{equation*}
where $A_\varepsilon$ is self-adjoint and the symbol $\theta$ is in
$C_0^\infty(\R^d_x\times\R^d_p)$. In particular we will only consider
the case where $A_\varepsilon(\hbar)$ is a
$\hbar$-$\varepsilon$-admissible operator of regularity $\tau\geq1$
which satisfies Assumption~\ref{B.self_adj_assumption}. Hence for
sufficiently small $\hbar$ the operator $A_\varepsilon(\hbar)$ is essentially
self-adjoint by Theorem~\ref{B.self_adjoint_thm_1}. It is well-known
that the solution to the operator valued Cauchy problem is the micro
localised propagator
$\theta(x,\hbar D)e^{i\hbar^{-1}tA_\varepsilon(\hbar)}$.

We are interested in the propagators as they turn up in a smoothing of
functions applied to the operators we consider. In this smoothing
procedure we need to understand the behaviour of the propagator for $t$ in a
small interval around zero. Usually this is done by constructing a specific
Fourier integral operator (FIO) as an approximation to propagator. In the ``usual'' construction the phase function is the solution to the Hamilton-Jacobi equations associated to the principal symbol. For
our set up this FIO approximation is not desirable as we can not
control the number of derivatives in the space variables and hence we
can not be certain about how the operator behave. Instead we will use
a different microlocal approximation for times in
$[-\hbar^{1-\frac{\delta}{2}},\hbar^{1-\frac{\delta}{2}}]$.

The construction of the approximation is recursive and inspired by the
construction in the works of Zielinski. If the construction is
compered to the approximation in the works of Ivrii, one can note that
Ivrii's construction is successive.

Our objective is to construct the approximation $U_N(t, \hbar)$ such
that
\begin{equation*}
  \norm{\hbar\partial_t U_N(t, \hbar) - i U_N(t, \hbar) A_\varepsilon}_{\mathcal{L}(L^2(\R^d))}, 
\end{equation*}
is small and the trace of the operator has the \enquote{right} asymptotic
behaviour. The kernel of the
approximation will have the following form
\begin{equation*}
  (x,y) \rightarrow \frac{1}{(2\pi\hbar)^d} \int_{\R^d} e^{i \hbar^{-1} \scp{x-y}{p}} e^{  i t \hbar^{-1} a_\varepsilon(x,p)} \sum_{j=0}^N (it\hbar^{-1})^j u_j(x,p,\hbar,\varepsilon) \, dp,
\end{equation*}
where $N$ is chosen such that the error is of a desired order, and
the $u_j$'s are compactly supported rough functions in $x$ and $p$. A priori these operators looks singular in the sense that for each term in the sum we have an increasing power of the factor $\hbar^{-1}$. What we will see in the following Theorem is that the  since each power of $\hbar^{-1}$ comes with a power of $t$, then for $t$ in $ [-\hbar^{1-\frac{\delta}{2}},\hbar^{1-\frac{\delta}{2}}]$ it has the desired properties. 
\begin{thm}\label{B.existence_of_approximation}
  Let $A_\varepsilon(\hbar)$ be a $\hbar$-$\varepsilon$-admissible
  operator of regularity $\tau\geq1$ with tempered weight $m$ which is
  self-adjoint for all $\hbar$ in $(0,\hbar_0]$, for $\hbar_0>0$ and with
  $\varepsilon\geq\hbar^{1-\delta}$ for a $\delta\in(0,1)$. Let
  $\theta(x,p)$ be a function in
  $C_0^\infty(\R^d_x\times\R^d_p)$. Then for all $N_0\in\N$ there
  exist an operator $U_N(t,\varepsilon,\hbar)$ with integral kernel
  \begin{equation*}
  \begin{aligned}
    K_{U_N}(x,y,t&,\varepsilon,\hbar)
    = \frac{1}{(2\pi\hbar)^d} \int_{\R^d} e^{i \hbar^{-1} \langle
      x-y,p\rangle} e^{ i t \hbar^{-1} a_{\varepsilon,0}(x,p)}
    \sum_{j=0}^N (it\hbar^{-1})^j u_j(x,p,\hbar,\varepsilon) \, dp,
  \end{aligned}
  \end{equation*}
  such that $K_{U_N}(x,y,0,\varepsilon,\hbar)$ is the kernel of the
  operator $\OpN{0}(\theta) = \theta(x,\hbar D)$. The terms in the sum
  satisfies $u_0(x,p,\hbar,\varepsilon)=\theta(x,p)$,
  \begin{equation*}
    u_j(x,p,\hbar,\varepsilon) \in C_0^\infty(\R_x^d\times\R_p^d)
  \end{equation*}
  and they satisfies the bounds
  \begin{equation*}
    \abs{\partial_x^\beta \partial_p^\alpha u_j(x,p,\hbar,\varepsilon)} 
    \leq \begin{cases}
      C_{\alpha\beta} & j=0
      \\
      C_{\alpha\beta} \hbar \varepsilon^{-\abs{\beta}} &j=1
      \\
      C_{\alpha\beta} \hbar^{1+\delta(j-2)} \varepsilon^{-\abs{\beta}} &j\geq2
    \end{cases}
  \end{equation*}
  for all $\alpha$ and $\beta$ in $\N^d$ in the case $\tau=1$. For
  $\tau \geq2$ the $u_j$'s satisfies the bounds
  \begin{equation*}
    \abs{\partial_x^\beta \partial_p^\alpha u_j(x,p,\hbar,\varepsilon)} 
    \leq \begin{cases}
      C_{\alpha\beta} & j=0
      \\
      C_{\alpha\beta} \hbar \varepsilon^{-\abs{\beta}} &j=1,2
      \\
      C_{\alpha\beta} \hbar^{2+\delta(j-3)} \varepsilon^{-\abs{\beta}} &j\geq3
    \end{cases}
  \end{equation*}
  for all $\alpha$ and $\beta$ in $\N^d$. Moreover $U_N$ satisfies the
  following bound:
  \begin{equation*}
    \norm{\hbar\partial_t U_N(t, \hbar) - i U_N(t, \hbar) A_\varepsilon}_{\mathcal{L}(L^2(\R^d))}  \leq C \hbar^{N_0}
  \end{equation*}
  for $\abs{t}\leq \hbar^{1-\frac{\delta}{2}}$.
\end{thm}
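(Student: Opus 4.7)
The approach is a WKB-type construction. I substitute the given ansatz for $K_{U_N}$ into $\hbar\partial_t U_N - i U_N A_\varepsilon$ and compute both sides as oscillatory integrals with the common phase $\phi(x,y,p,t) = \langle x-y, p\rangle + t a_{\varepsilon,0}(x,p)$. The $t$-derivative produces two families of contributions: one of the form $i a_{\varepsilon,0}(x,p) B$ from differentiating the exponential, where $B := \sum_{j=0}^N (it\hbar^{-1})^j u_j$, and one of the form $i \sum_{j=1}^N j(it\hbar^{-1})^{j-1} u_j$ from differentiating the polynomial factors. For $U_N A_\varepsilon$, I view $U_N$ as the left-quantized ($t=0$) operator with symbol $b(x,p,t,\hbar) = e^{it\hbar^{-1}a_{\varepsilon,0}(x,p)} B$ and apply the $t=0$ composition formula from the first particular case following Theorem~\ref{B.composition-t-symbol-thm}, which expresses its symbol as $\sum_\alpha \frac{(-i\hbar)^{|\alpha|}}{\alpha!}\partial_p^\alpha b \cdot \partial_x^\alpha a_\varepsilon$.

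The key cancellation is that the leading-order piece ($\alpha=0$ together with the $a_{\varepsilon,0}$-part of $a_\varepsilon$) produces $i a_{\varepsilon,0}(x,p) B$, exactly matching the phase-derivative contribution above. The remaining terms, after expanding each $\partial_p^\alpha(e^{it\hbar^{-1}a_{\varepsilon,0}} B)$ via the Fa\`a di Bruno formula (Theorem~\ref{B.Faa_di_bruno_x}) into polynomials in $it\hbar^{-1}\partial_p a_{\varepsilon,0}$ times derivatives of $B$, can be organized by powers of $(it\hbar^{-1})^{j-1}$. Matching these powers against the second family yields explicit recursive transport equations $i j u_j = F_j(u_0, \ldots, u_{j-1}; a_{\varepsilon,0}, a_{\varepsilon,1}, \ldots)$ for $j = 1, \ldots, N$, with each $F_j$ a universal polynomial in derivatives of the previous $u_i$'s and of the symbols $a_{\varepsilon,k}$ for $k < j$. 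Setting $u_0 = \theta$ and iterating determines every $u_j$ explicitly.

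The derivative bounds on $u_j$ then follow by induction. Each factor $\partial_x^\alpha a_{\varepsilon,k}$ appearing in $F_j$ contributes $\varepsilon^{-(|\alpha| + k - \tau)_{+}}$ by Definition~\ref{B.def_rough_symbol}, which accounts for the $\varepsilon^{-|\beta|}$ pattern in $\partial_x^\beta u_j$; the $\hbar$-powers arise because each $(-i\hbar)^{|\alpha|}$ from the composition combines with at most $|\alpha|$ factors of $it\hbar^{-1}$ absorbed into the phase expansion, leaving a net $t^{|\alpha|}$, while the case $\tau \geq 2$ permits one additional derivative on the symbols without crossing the regularity threshold and hence produces one extra $\hbar$-gain starting at $j = 3$. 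Finally, the residual $R_N := \hbar\partial_t U_N - i U_N A_\varepsilon$ has symbol given by the discarded terms of order $\hbar^{N+1}$ together with the remainder from the composition formula; applying Theorem~\ref{B.cal-val-thm} to this symbol, and using $|t| \leq \hbar^{1-\delta/2}$ so that each $|t\hbar^{-1}|^j \leq \hbar^{-j\delta/2}$ is absorbed by the $\hbar^{1+\delta(j-2)}$-decay of the coefficients, yields $\|R_N\|_{\mathcal{L}(L^2(\R^d))} \leq C \hbar^{N_0}$ provided $N$ is chosen sufficiently large in terms of $N_0$, $\delta$, and $d$. The main obstacle will be the combinatorial bookkeeping in the recursion: tracking each $\hbar$-gain against each $\varepsilon^{-1}$-loss and exploiting $\varepsilon \geq \hbar^{1-\delta}$ to absorb $\varepsilon^{-1}$-losses as $\hbar^{-(1-\delta)}$ balanced against further $\hbar$-gains from iteration, in a spirit analogous to the analysis in Lemma~\ref{B.approx_of_resolvent_lemma_2}.
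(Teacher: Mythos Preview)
Your overall strategy---substitute the ansatz, match powers of $(it\hbar^{-1})$ to generate recursive transport equations for the $u_j$, then bound the remainder---is the same as the paper's. The recursion you describe is exactly the paper's $u_j = -\tfrac{1}{j}\sum_{k=0}^{j-1} q_{j-1-k}(u_k)$, and the inductive $\hbar$-bookkeeping you sketch is correct in spirit.

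There is, however, a real gap in how you justify two steps. The ``symbol'' $b(x,p,t,\hbar)=e^{it\hbar^{-1}a_{\varepsilon,0}(x,p)}B$ is \emph{not} in any of the classes $\Gamma^{m,\tau}_{\rho,\varepsilon}$: each $\partial_p$ brings down a factor $it\hbar^{-1}\partial_p a_{\varepsilon,0}$, so for $|t|\sim\hbar^{1-\delta/2}$ the $k$-th $p$-derivative is of size $\hbar^{-k\delta/2}$, not uniformly bounded. Consequently you cannot invoke Theorem~\ref{B.composition-t-symbol-thm} to compute $U_N A_\varepsilon$, nor Theorem~\ref{B.cal-val-thm} to bound the residual, as stated---both theorems require genuine symbol-class membership with $\varepsilon$-uniform constants. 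The paper avoids this by working at the kernel level throughout: it first rewrites $A_\varepsilon$ in \emph{right} ($t=1$) quantisation as $\OpN{1}(\tilde a_\varepsilon)$ (so that composing kernels with $U_N=\OpN{0}(\cdot)$ is trivial), then Taylor-expands $\tilde a_\varepsilon(y,p)$ about $x$ to order $N$ and integrates by parts in $p$, which produces your operator $\mathcal{P}_N$ \emph{by hand} rather than via the composition theorem. The remainder is then bounded not by Calder\'on--Vaillancourt but by a direct Schur test on the kernel, with an essential splitting $\psi(y)+(1-\psi(y))$: near $\supp_x(\theta)$ the pointwise estimates suffice, while for $|x-y|\geq 1$ one exploits $|x-y|^{-2\eta}$ decay (via $(i\hbar)^{2\eta}\partial_p^{2\eta}$ integration by parts) against the tempered-weight growth of $\tilde a_\varepsilon$ in the Taylor remainder. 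Your proposal does not address this far-field contribution at all, and without it the Taylor remainder term $\int_0^1(1-s)^N\partial_x^\alpha\tilde a_\varepsilon(x+s(y-x),p)\,ds$ is not integrable in $y$.

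In short: your formal calculation is right and matches the paper, but the two black-box invocations must be replaced by the explicit kernel manipulations the paper carries out.
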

\begin{remark}
  If the operator satisfies Assumption~\ref{B.self_adj_assumption},
  then by Theorem~\ref{B.self_adjoint_thm_1} the operator will be essentially
  self-adjoint for all sufficiently small $\hbar$. Hence
  Assumption~\ref{B.self_adj_assumption} would be sufficient but not
  necessary for the above theorem to be true.
  The number $N$ is explicit dependent on $N_0$, $d$ and $\delta$ which follows directly from the proof.
\end{remark}
\begin{proof}
  We start by fixing $N$ such that
  \begin{equation*}
   \min\big( 1+\delta\big(\tfrac{N}{2} -1\big) - d, 2+\delta\big(\tfrac{N}{2} -3\big) - d \big)  \geq N_0.
  \end{equation*}
  By assumption we have for sufficiently large $M$ in $\mathbb{N}$ the
  following form of $A_\varepsilon(\hbar)$
  \begin{equation}\label{B.form_of_A}
    A_\varepsilon(\hbar) = \sum_{j=0}^M \hbar^j \OpW(a_{\varepsilon,j}) + \hbar^{M+1} R_M(\varepsilon,\hbar).
  \end{equation}
  We can choose and fix $M$ such the following estimate is true
  \begin{equation*}
    \hbar^{M+1}\norm{ R_M(\varepsilon,\hbar)}_{\mathcal{L}(L^2(\R^d))} \leq  C_M \hbar^{N_0}.
  \end{equation*}
  With this $M$ we consider the sum in the expression of
  $A_\varepsilon(\hbar)$. By
  Corollary~\ref{B.connection_t_quantisations} there exists a sequence
  $\{\tilde{a}_{\varepsilon,j}\}_{j\in\N}$ of symbols where
  $\tilde{a}_{\varepsilon,j}$ is of regularity $\tau-j$ and a
  $\tilde{M}$ such
  \begin{equation}\label{B.ext_pro_sym_cha}
    \sum_{j=0}^M \hbar^j \OpW(a_{\varepsilon,j}) = \sum_{j=0}^{\tilde{M}} \hbar^j \OpN{1}(\tilde{a}_{\varepsilon,j}) + \hbar^{\tilde{M}+1} \tilde{R}_{\tilde{M}}(\varepsilon,\hbar),
  \end{equation}
  where $a_{\varepsilon,0}=\tilde{a}_{\varepsilon,0}$ and
  \begin{equation*}
    \hbar^{\tilde{M}+1}\norm{ \tilde{R}_{\tilde{M}}(\varepsilon,\hbar)}_{\mathcal{L}(L^2(\R^d))} \leq  C_{\tilde{M}} \hbar^{N_0}.
  \end{equation*}
  We will for the reminder of the proof use the notation
  \begin{equation}\label{B.def_cha_sym}
    \tilde{a}_{\varepsilon}(x,p) =  \sum_{j=0}^{\tilde{M}} \hbar^j \tilde{a}_{\varepsilon,j}(x,p).
  \end{equation}
  The function $\tilde{a}_{\varepsilon}(x,p)$ is a rough function of
  regularity $\tau$.  These choices and definitions will become
  important again at the end of the proof.

  For our fixed $N$ we define the operator
  $\hbar \partial_t - i \mathcal{P}_N : C^\infty(\R_t \times \R^d_x
  \times \R_p^d) \rightarrow C^\infty(\R_t \times \R^d_x \times
  \R_p^d)$, where
  \begin{equation*}
    \mathcal{P}_N b(t,x,p) = \sum_{\abs{\alpha} \leq N} \frac{(-i\hbar)^{\abs{\alpha}}}{\alpha !} \partial_p^\alpha \{b(t,x,p) \partial_x^\alpha \tilde{a}_\varepsilon(x,p)\}
  \end{equation*}
  for a $b\in C^\infty(\R_t\times\R^d_x\times\R^d_p)$. First step is
  to observe how the operator $\hbar \partial_t - i \mathcal{P}_N$
  acts on $ e^{ i t \hbar^{-1} a_{\varepsilon,0}(x,p)}\psi(x,p)$ for
  $\psi\in C_0^\infty(\R^d_x\times\R^d_p)$. We will in the following
  calculation omit the dependence of the variables $x$ and $p$. By
  Leibniz's formula and the chain rule we get
  \begin{equation}\label{B.act_PN}
    \begin{aligned}
      \MoveEqLeft (\hbar \partial_t - i \mathcal{P}_N) e^{it\hbar^{-1}
        a_{\varepsilon,0}} \psi = \hbar \partial_t e^{it\hbar^{-1}
        a_{\varepsilon,0}} \psi - i \sum_{\abs{\alpha} \leq N}
      \frac{(-i\hbar)^{\abs{\alpha}}}{\alpha !} \partial_p^\alpha \{
      e^{it\hbar^{-1} a_{\varepsilon,0}} \psi\partial_x^\alpha
      \tilde{a}_\varepsilon\}
      \\
      ={}&e^{it\hbar^{-1} a_{\varepsilon,0}}\Big[ i \psi
      (a_{\varepsilon,0}-\tilde{a}_\varepsilon) - i
      \sum_{\abs{\alpha}=1}^{N} \frac{(-i\hbar)^{\abs{\alpha}}}{\alpha
        !}  \partial_p^\alpha\{\psi\partial_x^\alpha
      \tilde{a}_\varepsilon\} 
      \\
      &- i \sum_{k=1}^{N } (it\hbar^{-1})^k\sum_{\abs{\alpha} =k }^{N}
      \frac{(-i\hbar)^{\abs{\alpha}}}{\alpha !}
      \sum_{\substack{\beta_1 + \cdots+ \beta_k \leq \alpha \\
          \abs{\beta_j} >0}} c_{\alpha,\beta_1\cdots\beta_k}
      \prod_{j=1}^k \partial_p^{\beta_j}a_{\varepsilon,0} \partial_p^{\alpha-(\beta_1+\cdots+\beta_k
        )} \{\psi\partial_x^\alpha \tilde{a}_\varepsilon\}\Big]
      \\
      ={}& i e^{it\hbar^{-1} a_{\varepsilon,0}} \sum_{k=0}^N
      (it\hbar^{-1})^k q_k(\psi, x,p,\hbar,\varepsilon).
    \end{aligned}
  \end{equation}
  From this we note after acting with
  $\hbar \partial_t - i \mathcal{P}_N$ on
  $ e^{ i t \hbar^{-1} a_{\varepsilon,0}(x,p)}\psi(x,p)$ we get
  $ i e^{it\hbar^{-1} a_{\varepsilon,0}}$ times a polynomial in
  $it\hbar^{-1}$ with coefficients in $C_0^\infty(\R^d_x\times\R^d_p)$
  depending on $\psi$, $\hbar$ and $\varepsilon$. If we consider the
  coefficients in the polynomial we have
  \begin{equation}\label{B.q_est_1}
    \begin{aligned}
      |q_0(\psi, x,p,\hbar,\varepsilon)| 
     &= |\psi (
      a_{\varepsilon,0}-\tilde{a}_\varepsilon) +
      \sum_{\abs{\alpha}=1}^{N} \frac{(-i\hbar)^{\abs{\alpha}}}{\alpha
        !}  \partial_p^\alpha\{\psi\partial_x^\alpha
      \tilde{a}_\varepsilon\}|
      \\
     & \leq c_1\hbar + \sum_{\abs{\alpha}=1}^{N}
      \frac{\hbar^{\abs{\alpha}}}{\alpha
        !}\abs{ \partial_p^\alpha\{\psi\partial_x^\alpha
        \tilde{a}_\varepsilon\}} \leq c_1\hbar +
      \sum_{\abs{\alpha}=1}^{\tau} \frac{\hbar^{\abs{\alpha}}}{\alpha
        !}c_\alpha +\sum_{\abs{\alpha}=\tau+1}^{N}
      \frac{\hbar^{\abs{\alpha}}}{\alpha !}c_\alpha
      \varepsilon^{\tau-\abs{\alpha}}
      \\
     & \leq c_1\hbar + \sum_{\abs{\alpha}=1}^{\tau}
      \frac{\hbar^{\abs{\alpha}}}{\alpha !}c_\alpha
      +\sum_{\abs{\alpha}=\tau+1}^{N}
      \frac{\hbar^{\abs{\alpha}}}{\alpha !}c_\alpha
      \hbar^{(1-\delta)(\tau-\abs{\alpha})} \leq C
      \hbar, 
    \end{aligned}
  \end{equation}
  where $C$ depends on the $p$-derivatives of $\psi$ and
  $\partial_x^\alpha a_\varepsilon$ on the support of $\psi$ for
  $\abs{\alpha}\leq N$. For $1\leq k \leq \tau$ we have
  \begin{equation}\label{B.q_est_2}
    \begin{aligned}
      |q_k(\psi, x,p,\hbar,\varepsilon)|
      ={}& |\sum_{\abs{\alpha} =k }^{N}
      \frac{(-i\hbar)^{\abs{\alpha}}}{\alpha !}
      \sum_{\substack{\beta_1 + \cdots+ \beta_k \leq \alpha \\
          \abs{\beta_j} >0}} c_{\alpha,\beta_1\cdots\beta_k}
      \prod_{j=1}^k \partial_p^{\beta_j}a_{\varepsilon,0} \partial_p^{\alpha-(\beta_1+\cdots+\beta_k
        )} \{\psi\partial_x^\alpha \tilde{a}_\varepsilon\}|
      \\
      \leq{}& \sum_{\abs{\alpha} =k }^{\tau} c_\alpha
      \frac{\hbar^{\abs{\alpha}}}{\alpha !}  + \sum_{\abs{\alpha}
        =\tau+1 }^{N} c_\alpha \frac{\hbar^{\abs{\alpha}}}{\alpha !}
      \varepsilon^{\tau-\abs{\alpha}}
      \\
      \leq{}& \sum_{\abs{\alpha} =k }^{\tau} c_\alpha
      \frac{\hbar^{\abs{\alpha}}}{\alpha !}  + \sum_{\abs{\alpha}
        =\tau+1 }^{N} c_\alpha \frac{\hbar^{\abs{\alpha}}}{\alpha !}
      \hbar^{(1-\delta)(\tau-\abs{\alpha})} \leq C \hbar^k,
    \end{aligned}
  \end{equation}
  where $C$ depends on the $p$-derivatives of $\psi$ and
  $\partial_x^\alpha a_\varepsilon$ on the support of $\psi$. For
  $\tau< k \leq N$ we have
  \begin{equation}\label{B.q_est_3}
    \begin{aligned}
      |q_k(\psi, x,p,\hbar,\varepsilon)|
      ={}& |\sum_{\abs{\alpha} =k }^{N}
      \frac{(-i\hbar)^{\abs{\alpha}}}{\alpha !}
      \sum_{\substack{\beta_1 + \cdots+ \beta_k \leq \alpha \\
          \abs{\beta_j} >0}} c_{\alpha,\beta_1\cdots\beta_k}
      \prod_{j=1}^k \partial_p^{\beta_j}a_{\varepsilon,0} \partial_p^{\alpha-(\beta_1+\cdots+\beta_k
        )} \{\psi\partial_x^\alpha \tilde{a}_\varepsilon\}|
      \\
      \leq{}& \sum_{\abs{\alpha} =k }^{N} c_\alpha
      \frac{\hbar^{\abs{\alpha}}}{\alpha !}
      \varepsilon^{\tau-\abs{\alpha}} \leq \ \sum_{\abs{\alpha} =k
      }^{N} c_\alpha \frac{\hbar^{\abs{\alpha}}}{\alpha !}
      \hbar^{(1-\delta)(\tau-\abs{\alpha})} \leq C \hbar^{\tau +
        (k-\tau)\delta},
    \end{aligned}
  \end{equation}
  where $C$ depends on the $p$-derivatives of $\psi$ and
  $\partial_x^\alpha a_\varepsilon$ on the support of $\psi$. It is
  important that the coefficients only depends on derivatives in
  $p$ for the function we apply the operator to. One should also note
  that if $\psi$ had $\hbar$ to some power multiplied to it. Then it should
  be multiplied to the new power obtained. In the reminder of the
  proof we will continue to denote the coefficients obtained by acting
  with $\hbar \partial_t - i \mathcal{P}_N$ by $q_j$ and the exact
  form can be found in \eqref{B.act_PN}.

  We are now ready to start constructing the kernel. We set
  $u_0(x,p,\hbar,\varepsilon)=\theta(x,p)$ which gives the first
  term. In order to find $u_1$ we act with the operator
  $\hbar \partial_t - i \mathcal{P}_N$ on the function
  $ e^{ i t \hbar^{-1} a_{\varepsilon,0}} u_0(x,p,\hbar,\varepsilon)$,
  where we in the reminder of the construction of the approximation
  will omit writing the dependence of the variables $(x,p)$ in the
  exponential. By \eqref{B.act_PN} we get
  \begin{equation*}
    (\hbar \partial_t - i \mathcal{P}_N)e^{  i t \hbar^{-1} a_{\varepsilon,0}} u_0(x,p,\hbar,\varepsilon) =  i  e^{it\hbar^{-1} a_{\varepsilon,0}} \sum_{k=0}^N (it\hbar^{-1})^k q_k(u_0, x,p,\hbar,\varepsilon).
  \end{equation*}
  This would not lead to the desired estimate. So we now take
  \begin{equation*}
   u_1(x,p,\hbar,\varepsilon) = - q_0(u_0,
  x,p,\hbar,\varepsilon).
  \end{equation*}
  We can note by the previous estimates
  \eqref{B.q_est_1} we have
  \begin{equation}\label{B.u:est_1}
    \abs{u_1(x,p,\hbar,\varepsilon)} = \abs{q_0(u_0, x,p,\hbar,\varepsilon)} \leq \hbar C.
  \end{equation}
  Acting with the operator $\hbar \partial_t - i \mathcal{P}_N$ on
  $e^{ i t \hbar^{-1} a_{\varepsilon,0}}(u_0(x,p,\hbar,\varepsilon) +
  i t \hbar^{-1} u_1(x,p,\hbar,\varepsilon))$ we obtain according to
  \eqref{B.act_PN} that
  \begin{equation*}
  \begin{aligned}
   \MoveEqLeft (\hbar \partial_t - i \mathcal{P}_N) (e^{ i t \hbar^{-1}
      a_{\varepsilon,0}}(u_0(x,p,\hbar,\varepsilon) + i t \hbar^{-1}
    u_1(x,p,\hbar,\varepsilon)))
    \\
    ={}& i e^{it\hbar^{-1} a_{\varepsilon,0}} \sum_{k=0}^N
    (it\hbar^{-1})^k q_k(u_0, x,p,\hbar,\varepsilon) + i e^{ i t
      \hbar^{-1} a_{\varepsilon,0}} u_1(x,p,\hbar,\varepsilon)
    \\
    \hbox{}& + i t \hbar^{-1} i e^{it\hbar^{-1} a_{\varepsilon,0}}
    \sum_{k=0}^N (it\hbar^{-1})^k q_k(u_1, x,p,\hbar,\varepsilon)
    \\
    ={}& i e^{it\hbar^{-1} a_{\varepsilon,0}} (\sum_{k=1}^N
    (it\hbar^{-1})^k q_k(u_0, x,p,\hbar,\varepsilon) + \sum_{k=0}^N
    (it\hbar^{-1})^{k+1} q_k(u_1, x,p,\hbar,\varepsilon)).
    \end{aligned}
  \end{equation*}
Now taking 
  $u_2(x,p,\hbar,\varepsilon)= - \frac12
  (q_1(u_0,x,p,\hbar,\varepsilon) + q_0(u_1,x,p,\hbar,\varepsilon))$
  and acting with the operator $\hbar \partial_t - i \mathcal{P}_N$,
  according to \eqref{B.act_PN}, we get that
  \begin{equation*} \begin{aligned}
    (\hbar \partial_t &- i \mathcal{P}_N) e^{ i t \hbar^{-1}
      a_{\varepsilon,0}}\sum_{j=0}^2 (it\hbar^{-1})^j
    u_j(x,p,\hbar,\varepsilon)
    \\
    &= i e^{it\hbar^{-1} a_{\varepsilon,0}} \Big[ \sum_{j=0}^2
    \sum_{k=0}^N (it\hbar^{-1})^{k+j} q_k(u_j, x,p,\hbar,\varepsilon)
    + \sum_{j=1}^2 j (it\hbar^{-1})^{j-1} u_j(x,p,\hbar,\varepsilon)
    \Big]
    \\
    &= i e^{it\hbar^{-1} a_{\varepsilon,0}} \sum_{j=0}^2
    \sum_{k=2-j}^N (it\hbar^{-1})^{k+j} q_k(u_j,
    x,p,\hbar,\varepsilon).
   \end{aligned} \end{equation*}
  We note that the \enquote{lowest} power of $it\hbar^{-1}$ is $2$. Hence it is these
   terms which should be used to construct $u_3$. Moreover, we
  note that by \eqref{B.q_est_1} and \eqref{B.q_est_2} we have
  \begin{equation}\label{B.u:est_2}
    \abs{u_2(x,p,\hbar,\varepsilon)}=\tfrac12 \abs{q_1(u_0,x,p,\hbar,\varepsilon) + q_0(u_1,x,p,\hbar,\varepsilon)} \leq \tfrac12 C (\hbar+\hbar^2) \leq  C\hbar,
  \end{equation}
  and $u_2$ is a smooth compactly supported function in the variables
  $x$ and $p$. Generally for $2\leq j\leq N$ we have
  \begin{equation*}
    u_j(x,p,\hbar,\varepsilon) = - \frac{1}{j} \sum_{k=0}^{j-1} q_{j-1-k}(u_k,x,p,\hbar,\varepsilon).
  \end{equation*}
  We now need estimates for these terms. In the case $\tau=1$ the next
  step will be empty, but for $\tau\geq2$ it is needed. For
  $\tau\geq2$ we have
  \begin{equation*}
    |u_3(x,p,\hbar,\varepsilon)| \leq \frac{1}{3} \sum_{k=0}^{2} |q_{2-k}(u_k,x,p,\hbar,\varepsilon)| \leq C\hbar^2,
  \end{equation*}
  where we have used \eqref{B.q_est_1}, \eqref{B.q_est_2},
  \eqref{B.u:est_1} and \eqref{B.u:est_2}. For the rest of the $u_j$'s
  we split in the two cases $\tau=1$ or $\tau\geq2$. First the cases
  $\tau=1$ for $2\leq j\leq N$ the estimate is
  \begin{equation*}
    \abs{u_j(x,p,\hbar,\varepsilon)} \leq  C \hbar^{1+\delta(j-2)}
  \end{equation*}
  Note that $u_2$ satisfies the above equation hence if we assume it
  okay for $j-1$ between $2$ and $N-1$ we want to show the above
  estimate for $j$. We note that for $j\geq5$
  \begin{equation*}
  \begin{aligned}
    \MoveEqLeft |{u_{j}(x,p,\hbar,\varepsilon)} |
    \\
    \leq{}& \frac{1}{j}
    \sum_{k=0}^{j-1} \abs{q_{j-1-k}(u_k,x,p,\hbar,\varepsilon)}
    \\
    \leq{}& C( \abs{q_{j-1}(u_0,x,p,\hbar,\varepsilon)} +
    \abs{q_{j-2}(u_1,x,p,\hbar,\varepsilon)}
    \\
    &+\sum_{k=2}^{j-3}
    \abs{q_{j-1-k}(u_k,x,p,\hbar,\varepsilon)} +  \abs{q_{1}(u_{j-2},x,p,\hbar,\varepsilon)}+
    \abs{q_{0}(u_{j-1},x,p,\hbar,\varepsilon)} )
    \\
    \leq{}& C(\hbar^{1+\delta(j-2)} + \hbar^{2+\delta(j-3)} + \sum_{k=2}^{j-3}
    \hbar^{1+\delta(j-1-k-1) + 1+\delta(k-2)} +  \hbar^{2+\delta(j-4)}+ \hbar^{2+\delta(j-3)})
    \\
    \leq{}& C(\hbar^{1+\delta(j-2)} + \hbar^{2+\delta(j-3)} +
    \hbar^{2+\delta(j-4)} + \hbar^{2+\delta(j-3)}) \leq C
    \hbar^{1+\delta(j-2)},
    \end{aligned}
  \end{equation*}
  where we
  have used \eqref{B.q_est_1}, \eqref{B.q_est_3} and the induction
  assumption. The cases $j=3$ and $j=4$ are estimated analogously. 

  Now the case $\tau\geq2$ which we will treat as $\tau=2$, here the
  estimate is
  \begin{equation*}
    \abs{u_j(x,p,\hbar,\varepsilon)} \leq  C \hbar^{2+\delta(j-3)}
  \end{equation*}
  for $3\leq j \leq N$. To prove this bound is the same as in the case
  of $\tau=1$. In order to prove the bound with the derivatives as
  stated in the theorem the above arguments are repeated with a number
  of derivatives on the $u_j$'s otherwise it is analogous. 

  What remains is to prove this construction satisfies the bound
  \begin{equation*}
    \norm{\hbar\partial_t U_N(t, \hbar) - i U_N(t, \hbar) A_\varepsilon}_{\mathcal{L}(L^2(\R^d))}  \leq C \hbar^{N_0}
  \end{equation*}
  Here we only consider the case $\tau=1$ as the cases $\tau\geq2$
  will have better estimates. Hence from the above estimates we have
  for $k$ in $\{0,\dots,N\}$ and $\abs{t}\leq \hbar^{1-\frac\delta2}$
  \begin{equation}\label{B.est:on:uer}
    \abs{(it\hbar^{-1})^k u_k(x,p,\hbar,\varepsilon) }  \leq 
    \begin{cases}
      C &k=0
      \\
      C \hbar^{1-\frac\delta2} & k=1
      \\
      C \hbar^{1 + \delta(\frac{k}{2} -2)} & k\geq2.
    \end{cases}
  \end{equation}
  The first step is to apply the operator
  $\hbar \partial_t - i \mathcal{P}_N$ on then \enquote{full} kernel and see
  what error this produces. By construction we have
  \begin{equation*}
    (\hbar \partial_t - i \mathcal{P}_N)e^{ i t \hbar^{-1}
      a_{\varepsilon,0}}\sum_{k=0}^N (it\hbar^{-1})^k
    u_k(x,p,\hbar,\varepsilon)
    = \sum_{j=0}^N \sum_{k=N-j}^N (it\hbar^{-1})^{k+j} q_k(u_j,
    x,p,\hbar,\varepsilon).
  \end{equation*}
  If we start by considering $j$ equal $0$ and $1$ we note that:
  \begin{equation}\label{B.est_kernel_1_01}
  \begin{aligned}
    \MoveEqLeft \Big|\sum_{j=0}^1 \sum_{k=N-j}^N (it\hbar^{-1})^{k+j} q_k(u_j,
    x,p,\hbar,\varepsilon)  \Big| 
    \\
    &\leq C(\hbar^{-\frac{\delta}{2}N} \hbar^{1+\delta(N-1)} +
    \hbar^{-\frac{\delta}{2}N} \hbar^{2+\delta(N-2)} +
    \hbar^{-\frac{\delta}{2}(N+1)} \hbar^{2+\delta(N-1)})
    \\
    &\leq C( \hbar^{1+ \delta(\frac{N}{2} -1)} + \hbar^{2+
      \delta(\frac{N}{2} -2)} + \hbar^{2+ \frac{\delta}{2}(N -3)})
    \\
    &\leq \tilde{C} \hbar^{1+ \delta(\frac{N}{2} -1)} \leq  \tilde{C} \hbar^{N_0+d},
  \end{aligned}
  \end{equation}
  where we have used \eqref{B.est:on:uer} and our choice of $N$. For the rest of the terms we have that
  \begin{equation}\label{B.est_kernel_1_02}
  \begin{aligned}
    \MoveEqLeft \Big| \sum_{j=2}^N \sum_{k=N-j}^N (it\hbar^{-1})^{k+j} q_k(u_j,
    x,p,\hbar,\varepsilon) \Big| 
    \\
    &\leq C \Big[ \sum_{j=2}^N \sum_{k=\max(N-j,1)}^N
    \hbar^{-\frac{\delta}{2}(k+j)} \hbar^{2+\delta(k-1) + \delta(j-2)}
    + \hbar^{-\frac{\delta}{2}N} \hbar^{2+ \delta(N-2)}\Big]
    \\
    &\leq C\Big[ \sum_{j=2}^N \sum_{k=\max(N-j,1)}^N \hbar^{2 +
      \delta(\frac{j+k}{2} -3)} + \hbar^{2+ \delta(\frac{N}{2}-2)}\Big]
     \leq \tilde{C} \hbar^{N_0+d},
    \end{aligned}
  \end{equation}
  where we have used \eqref{B.est:on:uer}, our choice of $N$ and that in the double sum $k+j\geq N$. When \eqref{B.est_kernel_1_01} and \eqref{B.est_kernel_1_02} are combined we have that
  \begin{equation}\label{B.est_kernel_1}
    |(\hbar \partial_t - i \mathcal{P}_N)e^{  i t \hbar^{-1} a_{\varepsilon,0}}\sum_{k=0}^N (it\hbar^{-1})^k u_k(x,p,\hbar,\varepsilon) |
    \leq C  \hbar^{N_0+d}.
  \end{equation}	
  We now let $U_N(t,\hbar)$ be the operator with the integral kernel:
  \begin{equation*}
    K_{U_N}(x,y,t,\varepsilon,\hbar)
    = \frac{1}{(2\pi\hbar)^d} \int_{\R^d} e^{i \hbar^{-1}
      \scp{x-y}{p}} e^{ i t \hbar^{-1} a_{\varepsilon,0}(x,p)}
    \sum_{j=0}^N (it\hbar^{-1})^j u_j(x,p,\hbar,\varepsilon) \, dp,
  \end{equation*}
  which is well defined due to our previous estimates \eqref{B.est:on:uer}. In particular we have that it is a bounded operator
  by the Schur test. We now need to find an expression for
  \begin{equation*}
    \hbar\partial_t U_N(t, \hbar) - i U_N(t, \hbar) A_\varepsilon(\hbar).
  \end{equation*}
  In the start of the proof we wrote the operator
  $A_\varepsilon(\hbar)$ in two different ways \eqref{B.form_of_A} and
  \eqref{B.ext_pro_sym_cha}. If we combine these we have
  \begin{equation*}
  \begin{aligned}
    A_\varepsilon(\hbar) &= \sum_{j=0}^{\tilde{M}} \hbar^j
    \OpN{1}(\tilde{a}_{\varepsilon,j}) + \hbar^{\tilde{M}+1}
    \tilde{R}_{\tilde{M}}(\varepsilon,\hbar) + \hbar^{M+1}
    R_M(\varepsilon,\hbar)
    \\
    &= \OpN{1}(\tilde{a}_{\varepsilon}) + \hbar^{\tilde{M}+1}
    \tilde{R}_{\tilde{M}}(\varepsilon,\hbar) + \hbar^{M+1}
    R_M(\varepsilon,\hbar),
    \end{aligned}
  \end{equation*}
  where the two reminder terms satisfies
  \begin{equation*}
    \norm{ \hbar^{\tilde{M}+1}\tilde{R}_{\tilde{M}}(\varepsilon,\hbar) + \hbar^{M+1} R_M(\varepsilon,\hbar)}_{\mathcal{L}(L^2(\R^d))} \leq  C \hbar^{N_0}.
  \end{equation*}
  If we use this form of $A_\varepsilon(\hbar)$ we have
  \begin{equation*}
  \begin{aligned}
   \MoveEqLeft  \hbar\partial_t U_N(t, \hbar) - i U_N(t, \hbar)
    A_\varepsilon(\hbar) 
    \\
    ={}& \hbar\partial_t U_N(t, \hbar) - i U_N(t,\hbar) \OpN{1}(\tilde{a}_{\varepsilon}) - i\hbar^{\tilde{M}+1} U_N(t, \hbar)
    \tilde{R}_{\tilde{M}}(\varepsilon,\hbar) - i \hbar^{M+1} U_N(t,
    \hbar)R_M(\varepsilon,\hbar).
    \end{aligned}
  \end{equation*}
  When considering the operator norm of the two last terms we have
  \begin{equation}\label{B.est_approx_error}
    \norm{ \hbar^{\tilde{M}+1} U_N(t, \hbar)\tilde{R}_{\tilde{M}}(\varepsilon,\hbar) + \hbar^{M+1} U_N(t, \hbar) R_M(\varepsilon,\hbar)}_{\mathcal{L}(L^2(\R^d))} \leq  C \hbar^{N_0}
  \end{equation}
  as $U_N(t, \hbar)$ is a bounded operator. What remains is the
  expression
  \begin{equation*}
    \hbar\partial_t U_N(t, \hbar) - i U_N(t, \hbar) \OpN{1}(\tilde{a}_{\varepsilon}).
  \end{equation*}
  The rules for composition of kernels gives by a straight forward calculation that the kernel of the
  above expression is
  \begin{equation*}
     K(x,y;\varepsilon,\hbar)  \coloneqq \frac{1}{(2\pi\hbar)^d}
    \int_{\R^d} e^{i \hbar^{-1} \scp{x-y}{p}} (\hbar\partial_t -
    i\tilde{a}_\varepsilon(y,p)) e^{ i t \hbar^{-1}
      a_{\varepsilon,0}(x,p)} \sum_{j=0}^N (it\hbar^{-1})^j u_j(x,p,\hbar,\varepsilon) \, dp.
  \end{equation*}
  By preforming 
  a Taylor expansion of $\tilde{a}_\varepsilon$ in the variable $y$
  centred at $x$ we obtain that 
  \begin{equation*}
  \begin{aligned}
    \tilde{a}_\varepsilon(y,p) 
   ={}& \sum_{\abs{\alpha}\leq N}
    \frac{(y-x)^\alpha}{\alpha !} \partial_x^\alpha
    \tilde{a}_\varepsilon(x,p)
    \\
    &+ \sum_{\abs{\alpha}=N+1} (N+1) \frac{(y-x)^\alpha}{\alpha !}
    \int_0^1 (1-s)^N \partial_x^\alpha \tilde{a}_\varepsilon(x +
    s(y-x),p) \, ds.
    \end{aligned}
  \end{equation*}
  We replace $\tilde{a}_\varepsilon(y,p)$ by the above Taylor
  expansion in the kernel and start by considering the part of the
  kernel with the first sum. Here we have
  \begin{equation*}
  \begin{aligned}
    \MoveEqLeft \int_{\R^d} e^{i \hbar^{-1} \langle
      x-y,p \rangle} \big[ \hbar\partial_t  -i\sum_{\abs{\alpha}\leq N}
    \frac{(y-x)^\alpha}{\alpha !} \partial_x^\alpha
    \tilde{a}_\varepsilon(x,p)  \big] e^{ i t
      \hbar^{-1}a_{\varepsilon,0}(x,p)}\sum_{j=0}^N (it\hbar^{-1})^j
    u_j(x,p,\hbar,\varepsilon) \big] \, dp
    \\
    &=  \int_{\R^d} e^{i \hbar^{-1} \langle
      x-y,p \rangle}[ \hbar\partial_t e^{ i t \hbar^{-1}
      a_{\varepsilon,0}(x,p)}\sum_{j=0}^N (it\hbar^{-1})^j
    u_j(x,p,\hbar,\varepsilon)
    \\
    &\phantom{= \frac12 }{} -i\sum_{\abs{\alpha}\leq N}
    \frac{(-i\hbar)^{\abs{\alpha}}}{\alpha
      !} \partial_p^\alpha[ \partial_x^\alpha
    \tilde{a}_\varepsilon(x,p) e^{ i t
      \hbar^{-1}a_{\varepsilon,0}(x,p)}\sum_{j=0}^N (it\hbar^{-1})^j
    u_j(x,p,\hbar,\varepsilon)]] \, dp
    \\
    &=  \int_{\R^d} e^{i \hbar^{-1} \langle
      x-y,p \rangle} ( \hbar\partial_t- i \mathcal{P}_N) [ e^{ i t
      \hbar^{-1} a_\varepsilon(x,p)} \sum_{j=0}^N (it\hbar^{-1})^j
    u_j(x,p,\hbar,\varepsilon)] \, dp,
    \end{aligned}
  \end{equation*}
  where we have used the identity $(y-x)^\alpha e^{i \hbar^{-1} \langle x-y,p \rangle}= (-i\hbar)^\alpha \partial_p^\alpha e^{i \hbar^{-1} \langle x-y,p \rangle}$, integration by parts and omitted the pre-factor $(2\pi\hbar)^{-d}$. When considering the part of the kernel
  with the error term we have
  \begin{equation*}
  \begin{aligned}
    \MoveEqLeft \frac{-i}{(2\pi\hbar)^d} \int_{\R^d} e^{i \hbar^{-1} \langle
      x-y,p \rangle} e^{ i t \hbar^{-1} a_\varepsilon(x,p)}
    \sum_{j=0}^N (it\hbar^{-1})^j u_j(x,p,\hbar,\varepsilon)
    \\
    &\times \sum_{\abs{\alpha}=N+1} (N+1)
    \frac{(y-x)^\alpha}{\alpha !} \int_0^1 (1-s)^N \partial_x^\alpha
    \tilde{a}_\varepsilon(x + s(y-x),p) \, ds dp
    \\
   ={}&\frac{-i}{(2\pi\hbar)^d} \int_{\R^d} e^{i \hbar^{-1} \langle
      x-y,p \rangle} \sum_{\abs{\alpha}=N+1} (N+1)
    \frac{(-i\hbar)^\alpha}{\alpha !} \partial_p^\alpha [ e^{ - i t
      \hbar^{-1} a_\varepsilon(x,p)} \sum_{j=0}^N (-it\hbar^{-1})^j
    \\
    &\times u_j(x,p,\hbar,\varepsilon) \int_0^1
    (1-s)^N \partial_x^\alpha \tilde{a}_\varepsilon(x + s(y-x),p) \,
    ds] dp,
    \end{aligned}
  \end{equation*}
  where we again have used the above identity and
  integration by parts. Combing the two expressions we get
  \begin{equation}\label{B.Kernel-form}
    \begin{aligned}
      K(x,y;\varepsilon,\hbar)
      ={}& \frac{1}{(2\pi\hbar)^d} \int_{\R^d} e^{i \hbar^{-1} \langle
        x-y,p \rangle} ( \hbar\partial_t- i \mathcal{P}_N) [ e^{ i t
        \hbar^{-1} a_\varepsilon(x,p)} \sum_{j=0}^N (it\hbar^{-1})^j
      u_j(x,p,\hbar,\varepsilon)] \, dp
      \\
      & + \frac{-i}{(2\pi\hbar)^d} \int_{\R^d} e^{i
        \hbar^{-1} \langle x-y,p \rangle} \sum_{\abs{\alpha}=N+1}
      (N+1) \frac{(-i\hbar)^\alpha}{\alpha !} \partial_p^\alpha [ e^{
        i t \hbar^{-1} a_\varepsilon(x,p)} \sum_{j=0}^N
      (it\hbar^{-1})^j
      \\
      &\times u_j(x,p,\hbar,\varepsilon) \int_0^1
      (1-s)^N \partial_x^\alpha \tilde{a}_\varepsilon(x + s(y-x),p) \,
      ds] dp.
    \end{aligned}
  \end{equation}
  In order to estimate the operator norm we will divide the kernel
  into two parts. We do this by considering a part localised in $y$
  and the reminder. To localise in $y$ we let $\psi$ be a smooth
  function on $\R^d$ such $\psi(y) = 1$ on the set
  $\{y\in\R^d \,|\, \dist[y,\supp_x(\theta)]\leq 1\}$ and supported in
  the set $\{y\in\R^d \,|\, \dist[y,\supp_x(\theta)]\leq 2\}$. With
  this function our kernel can be written as
  \begin{equation}\label{B.Kernel_split}
    K(x,y;\varepsilon,\hbar)  =  K(x,y;\varepsilon,\hbar) \psi(y) + K(x,y;\varepsilon,\hbar) (1-\psi(y))
  \end{equation}
  If we consider the part multiplied by $\psi(y)$ then this part has
  the form as in \eqref{B.Kernel-form} but each term is multiplied by
  $\psi(y)$. By the estimate in \eqref{B.est_kernel_1} we have for the
  first part of $K(x,y;\varepsilon,\hbar) \psi(y) $ the following
  estimate
  \begin{equation}\label{B.est_kernel_2}
    \begin{aligned}
      \MoveEqLeft \Big| \int_{\R^d} e^{i \hbar^{-1} \langle x-y,p \rangle} (
      \hbar\partial_t- i \mathcal{P}_N) [ e^{ i t \hbar^{-1}
        a_{\varepsilon,0}(x,p)} \sum_{j=0}^N (it\hbar^{-1})^j
      u_j(x,p,\hbar,\varepsilon)]\psi(y) \, dp \Big|
      \\
      &\leq  \psi(x)\psi(y) C \hbar^{N_0+d}.
    \end{aligned}
  \end{equation}
  For the second part of $K(x,y;\varepsilon,\hbar) \psi(y) $ we have
  by Leibniz's formula and Fa\`a di Bruno formula
  (Theorem~\ref{B.Faa_di_bruno_x}) for each term in the sum
  over $\alpha$
  \begin{equation*}
    \begin{aligned}
      \MoveEqLeft \frac{(-i\hbar)^{\abs{\alpha}}}{\alpha !} \partial_p^\alpha
      [e^{ i t \hbar^{-1} a_{\varepsilon,0}(x,p)}  \sum_{j=0}^N
      (it\hbar^{-1})^j u_j(x,p,\hbar,\varepsilon) \int_0^1
      (1-s)^N \partial_x^\alpha \tilde{a}_\varepsilon(x + s(y-x),p) \,
      ds]\psi(y)
      \\
      ={}&\sum_{j=0}^N \frac{(-i\hbar)^{\abs{\alpha}}}{\alpha !} e^{ i t
        \hbar^{-1} a_{\varepsilon,0}(x,p)} \sum_{k=0}^{N+1}
      (it\hbar^{-1})^{k+j} \quad\sum_{\mathclap{\substack{\beta_1 +
            \cdots+ \beta_k \leq \alpha \\ \abs{\beta_j} >0}}} \quad
      c_{\alpha,\beta_1\cdots\beta_k}
      \prod_{n=1}^k \partial_p^{\beta_n}a_{\varepsilon,0}(x,p)
      \\
      &
      \times \partial_p^{\alpha-(\beta_1+\cdots+\beta_k )}
      [u_j(x,p,\hbar,\varepsilon) \int_0^1 (1-s)^N \partial_x^\alpha
      \tilde{a}_\varepsilon(x + s(y-x),p) \, ds]\psi(y).
    \end{aligned}
  \end{equation*}
  We note that for $j$ equal $0$ we have an estimate of the following
  form:
  \begin{equation}\label{B.kerne.est.1}
  	\begin{aligned}
    \MoveEqLeft \Big|\frac{(-i\hbar)^{\abs{\alpha}}}{\alpha !} e^{ i t \hbar^{-1}
      a_{\varepsilon,0}(x,p)} \sum_{k=0}^{N+1} (it\hbar^{-1})^k
    \sum_{\substack{\beta_1 + \cdots+ \beta_k \leq \alpha \\
        \abs{\beta_j} >0}} c_{\alpha,\beta_1\cdots\beta_k}
    \prod_{n=1}^k \partial_p^{\beta_n}a_{\varepsilon,0}(x,p)
    \\
    & \times \partial_p^{\alpha-(\beta_1+\cdots+\beta_k
      )} [u_0(x,p,\hbar,\varepsilon) \int_0^1
    (1-s)^N \partial_x^\alpha \tilde{a}_\varepsilon(x + s(y-x),p) \,
    ds]\psi(y)\Big|
    \\
    \leq{}& C \sum_{k=0}^{N+1} \hbar^{N+1} \hbar^{-\frac{\delta}{2}k}
    \varepsilon^{-N} \psi(x)\psi(y) \leq C
    \hbar^{N+1}\hbar^{-\frac{\delta}{2}(N+1)} \hbar^{-N +\delta
      N}\psi(x)\psi(y)
    \\
    \leq{}& C \hbar^{N_0+d}\psi(x)\psi(y).
    	\end{aligned}
  \end{equation}
  We note that for $j$ equal $1$ we have an error of the following
  form:
  \begin{equation}\label{B.kerne.est.2}
  \begin{aligned}
    \MoveEqLeft \Big|\frac{(-i\hbar)^{\abs{\alpha}}}{\alpha !} e^{ i t \hbar^{-1}
      a_{\varepsilon,0}(x,p)} \sum_{k=0}^{N+1} (it\hbar^{-1})^{k+1}
    \sum_{\substack{\beta_1 + \cdots+ \beta_k \leq \alpha \\
        \abs{\beta_j} >0}} c_{\alpha,\beta_1\cdots\beta_k}
    \prod_{n=1}^k \partial_p^{\beta_n}a_{\varepsilon,0}(x,p)
    \\
    & \times \partial_p^{\alpha-(\beta_1+\cdots+\beta_k )}
    [u_1(x,p,\hbar,\varepsilon) \int_0^1 (1-s)^N \partial_x^\alpha
    \tilde{a}_\varepsilon(x + s(y-x),p) \, ds]\psi(y)\Big|
    \\
    &\leq C \sum_{k=0}^{N+1} \hbar^{N+1}
    \hbar^{-\frac{\delta}{2}(k+1)} \hbar
    \varepsilon^{-N}\psi(x)\psi(y) \leq C
    \hbar^{N+2}\hbar^{-\frac{\delta}{2}(N+2)} \hbar^{-N +\delta
      N}\psi(x)\psi(y)
    \\
    &\leq C \hbar^{N_0+d}\psi(x)\psi(y),
    \end{aligned}
  \end{equation}
  where we have used the estimate
  $|u_1(x,p,\hbar,\varepsilon)|\leq c\hbar$.  We note that for $j$
  greater than or equal to $2$, we have an error of the following form:
  \begin{equation}\label{B.kerne.est.3}
  	\begin{aligned}
    \MoveEqLeft \Big|\frac{(-i\hbar)^{\abs{\alpha}}}{\alpha !} e^{ i t \hbar^{-1}
      a_{\varepsilon,0}(x,p)} \sum_{k=0}^{N+1} (it\hbar^{-1})^{k+j}
    \sum_{\substack{\beta_1 + \cdots+ \beta_k \leq \alpha \\
        \abs{\beta_j} >0}} c_{\alpha,\beta_1\cdots\beta_k}
    \prod_{n=1}^k \partial_p^{\beta_n}a_{\varepsilon,0}(x,p)
    \\
    &\phantom{=}{} \times \partial_p^{\alpha-(\beta_1+\cdots+\beta_k
      )} [u_j(x,p,\hbar,\varepsilon) \int_0^1
    (1-s)^N \partial_x^\alpha \tilde{a}_\varepsilon(x + s(y-x),p) \,
    ds]\psi(y) \Big|
    \\
    &\leq C \sum_{k=0}^{N+1} \hbar^{N+1}
    \hbar^{-\frac{\delta}{2}(k+j)} \hbar^{1+\delta(j-2)}
    \varepsilon^{-N} \psi(x)\psi(y)
    \\
    &\leq C \hbar^{N+1}
    \hbar^{-\frac{\delta}{2}(N+1+j)}\hbar^{1+\delta(j-2)}
    \hbar^{-N+\delta N}\psi(x)\psi(y) \leq C \hbar^{2 +
      \frac{\delta}{2}(N + j -1) -2\delta}\psi(x)\psi(y)
    \\
    &\leq C \hbar^{N_0+d }\psi(x)\psi(y),
    \end{aligned}
  \end{equation}
  where we have used the estimate
  $|u_j(x,p,\hbar,\varepsilon)|\leq c \hbar^{1+\delta(j-2)}$. Now by
  combining \eqref{B.kerne.est.1}, \eqref{B.kerne.est.2} and \eqref{B.kerne.est.3} we arrive at
  \begin{equation*}
  	\begin{aligned}
    \MoveEqLeft \Bigl| \int_{\R^d} e^{i \hbar^{-1} \langle x-y,p
      \rangle} \sum_{\abs{\alpha}=N+1} (N+1)
    \frac{(-i\hbar)^\alpha}{\alpha !} \partial_p^\alpha [ e^{ i t
      \hbar^{-1} a_\varepsilon(x,p)} \sum_{j=0}^N (it\hbar^{-1})^j
    \\
   &\times u_j(x,p,\hbar,\varepsilon) \int_0^1
    (1-s)^N \partial_x^\alpha \tilde{a}_\varepsilon(x + s(y-x),p) \,
    ds]\psi(y) \,dp\Bigr|
    \leq C \hbar^{N_0+d}\psi(x)\psi(y).
    \end{aligned}
  \end{equation*}
  Combining this estimate with \eqref{B.est_kernel_2} we have
  \begin{equation}\label{B.est_kernel_3}
    | K(x,y;\varepsilon,\hbar) \psi(y)| \leq   C \hbar^{N_0}\psi(x)\psi(y),
  \end{equation}
  Now we turn
  to the term $K(x,y;\varepsilon,\hbar)(1- \psi(y))$. On the support
  of this kernel we have
  \begin{equation*}
    1\leq \abs{x-y}
  \end{equation*}
  due to the definition of $\psi$. This imply we can divide by the
  difference between $x$ and $y$ where the kernel is supported. The
  idea is now to multiply the kernel with
  $\frac{\abs{x-y}}{\abs{x-y}}$ to an appropriate power $\eta$. We
  take $\eta$ such
  \begin{equation*}
    \frac{m(x+s(y-x),p)}{\abs{x-y}^{2\eta}} \leq \frac{C}{\abs{x-y}^{d+1}} \quad\text{for } (x,p) \in \supp(\theta),
  \end{equation*}
  where $m$ is the tempered weight function associated to our operator. The
  existence of such a $\eta$ is ensured by the definition of the tempered
  weight. By \eqref{B.Kernel-form} the kernel
  $K(x,y;\varepsilon,\hbar)(1- \psi(y))$ is of the form
  \begin{equation*} \begin{aligned}
    \frac{1}{(2\pi\hbar)^d} \int_{\R^d} e^{i \hbar^{-1} \langle x-y,p
      \rangle} \varphi(x,y,p;\hbar,\varepsilon) (1- \psi(y))dp,
   \end{aligned} \end{equation*}
  where the exact form of $\varphi$ is not
  important at the moment. Now for our choice of $\eta$ we have
  \begin{equation*}
  \begin{aligned}
   \MoveEqLeft  \int_{\R^d} e^{i \hbar^{-1} \langle x-y,p \rangle}
    \varphi(x,y,p;\hbar,\varepsilon) (1- \psi(y))dp
    \\
    &= \int_{\R^d} e^{i \hbar^{-1} \langle x-y,p \rangle}
    \frac{\abs{x-y}^{2\eta}}{\abs{x-y}^{2\eta}}
    \varphi(x,y,p;\hbar,\varepsilon) (1- \psi(y))dp
    \\
    &= \int_{\R^d} (-i\hbar)^{2\eta}
    \sum_{\abs{\gamma}=\eta} c_\gamma \partial_p^{2\gamma} (e^{i \hbar^{-1}
      \langle x-y,p \rangle}) \frac{1}{\abs{x-y}^{2\eta}}
    \varphi(x,y,p;\hbar,\varepsilon) (1- \psi(y))dp
    \\
    &= \int_{\R^d} e^{i \hbar^{-1} \langle x-y,p \rangle} \frac{1-
      \psi(y)}{\abs{x-y}^{2\eta}}\sum_{\abs{\gamma}=\eta} c_\gamma
    (i\hbar)^{2\eta} \partial_p^{2\gamma}\varphi(x,y,p;\hbar,\varepsilon)
    dp.
    \end{aligned}
  \end{equation*}
  By analogous estimates to the estimate used above we have
  \begin{equation*}
    \Big|\frac{1-
        \psi(y)}{\abs{x-y}^{2\eta}}\sum_{\abs{\gamma}=\eta}
      (i\hbar)^{2\eta} \partial_p^{2\gamma}\varphi(x,y,p;\hbar,\varepsilon)\Big|
    \leq C \hbar^{2\eta(1-\frac{\delta}{2})+1 +\frac{ \delta}{2}(N
      -1)} \frac{1- \psi(y)}{\abs{x-y}^{d+1}}
    \boldsymbol{1}_{\supp(\theta)}(x,p),
  \end{equation*}
  where the term $\hbar^{-\eta\delta}$ is due to the exponentials $e^{ i t
      \hbar^{-1} a_\varepsilon(x,p)}$ in $\varphi$ which gives $it\hbar^{-1}$ when we take a derivative with respect to $p_j$ for all $j$ in $\{1,\dots,d\}$ and that $|t|\leq \hbar^{1-\frac{\delta}{2}}$. The rest of the powers in $\hbar$ is found analogous to above. Hence we have
  \begin{equation*}
  \begin{aligned}
    |K(x,y;\varepsilon,\hbar)(1- \psi(y))| \leq {}& C
    \hbar^{2\eta(1-\frac{\delta}{2})+1 +\frac{ \delta}{2}(N -1) -d}
    \boldsymbol{1}_{\supp_x(\theta)}(x) \frac{1-
      \psi(y)}{\abs{x-y}^{d+1}}
      \\
    \leq{} & C \hbar^{N_0} \psi(x) \frac{1- \psi(y)}{\abs{x-y}^{d+1}}.
  \end{aligned}
  \end{equation*}
  By combining this with \eqref{B.Kernel_split} and
  \eqref{B.est_kernel_3} we have
  \begin{equation}\label{B.est_kernel_4}
    |K(x,y;\varepsilon,\hbar)| \leq C \hbar^{N_0}\Big[\psi(x)\psi(y) +  \boldsymbol{1}_{\supp_x(\theta)}(x) \frac{1- \psi(y)}{\abs{x-y}^{d+1}}\Big].
  \end{equation}
  We have by definition of $\psi$ the estimates
  \begin{equation*} \begin{aligned}
    &\sup_{x\in\R^d} \int_{\R^d} |\psi(x)\psi(y)+
    \boldsymbol{1}_{\supp_x(\theta)}(x) \frac{1-
      \psi(y)}{\abs{x-y}^{d+1}}| \, dy \leq c + \int_{\abs{y} \geq1}
    \frac{1}{\abs{y}^{d+1}} \, dy \leq C_1
    \\
    &\sup_{y\in\R^d} \int_{\R^d} |\psi(x)\psi(y)+
    \boldsymbol{1}_{\supp_x(\theta)}(x) \frac{1-
      \psi(y)}{\abs{x-y}^{d+1}}| \, dx \leq C_2
   \end{aligned} \end{equation*}
  These estimates combined with the Schur test,
  \eqref{B.est_approx_error} and \eqref{B.est_kernel_4} gives
  \begin{equation*}
    \norm{\hbar\partial_t U_N(t, \hbar) - i U_N(t, \hbar) A_\varepsilon}_{\mathcal{L}(L^2(\R^d))}  \leq C \hbar^{N_0}
  \end{equation*}
  for $\abs{t}\leq \hbar^{1-\frac{\delta}{2}}$. This is the desired
  estimate which ends the proof.
\end{proof}
In the previous proof we constructed a microlocal approximation for
the propagator for short times dependent on $\hbar$. It would be
preferable to not have this dependence of $\hbar$ in the time. In the
following Lemma we prove that under a non-critical condition on the
principal symbol a localised trace of the approximation becomes
negligible.
\begin{lemma}\label{B.neg_part}
  Let $A_\varepsilon(\hbar)$ be a $\hbar$-$\varepsilon$-admissible
  operator of regularity $\tau\geq1$ with tempered weight $m$ which is
  self-adjoint for all $\hbar$ in $(0,\hbar_0]$ and with
  $\varepsilon\geq\hbar^{1-\delta}$ for a $\delta\in(0,1)$. Let
  $\theta(x,p)$ be a function in
  $C_0^\infty(\R^d_x\times\R^d_p)$. Suppose
  \begin{equation*}
    \abs{\nabla_p a_{\varepsilon,0}(x,p)}\geq c>0 \quad\text{for all } (x,p)\in\supp(\theta),
  \end{equation*}
  where $a_{\varepsilon,0}$ is the principal symbol of
  $A_\varepsilon(\hbar)$. Moreover let the operator $U_N(t,\hbar)$ be
  the one constructed in Theorem~\ref{B.existence_of_approximation}
  with the function $\theta$.  Then for
  $\abs{t}\in[\frac{1}{2}\hbar^{1-\frac{\delta}{2}},1]$ and every
  $N_0$ in $\N$ it holds 
  \begin{equation*}
    \abs{\Tr[U_N(t,\hbar) \OpN{1}(\theta)]} \leq C \hbar^{N_0}
  \end{equation*}
  for a constant $C>0$, which depends on the constant from the
  non-critical condition.
\end{lemma}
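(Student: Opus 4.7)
The plan is to compute the trace explicitly as an oscillatory integral and then kill it by a non-stationary phase argument in the $p$-variables, exploiting the non-critical hypothesis on $a_{\varepsilon,0}$.

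First I would read $U_N(t,\hbar)$ as the left $(t=0)$-quantization of the amplitude $A(x,p) = e^{it\hbar^{-1}a_{\varepsilon,0}(x,p)}\sum_{j=0}^N (it\hbar^{-1})^j u_j(x,p,\hbar,\varepsilon)$ and $\OpN{1}(\theta)$ as the right $(t=1)$-quantization of $\theta$. Writing the trace as $\int K_{U_N}(x,y)K_{\OpN{1}(\theta)}(y,x)\,dxdy$ and performing the $y$-integration (which produces $(2\pi\hbar)^d\delta(q-p)$, eliminating $q$), one obtains
\begin{equation*}
\Tr[U_N(t,\hbar)\OpN{1}(\theta)] = \frac{1}{(2\pi\hbar)^d}\sum_{j=0}^N\Big(\frac{it}{\hbar}\Big)^j \int_{\R^{2d}} e^{it\hbar^{-1}a_{\varepsilon,0}(x,p)} u_j(x,p,\hbar,\varepsilon)\theta(x,p)\,dxdp.
\end{equation*}
By construction each $u_j$ is supported where $\theta$ is, so everything lives in a compact set where $|\nabla_p a_{\varepsilon,0}|\geq c$.

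Next I would apply the standard non-stationary phase trick in $p$. Set $L=\frac{\hbar}{it}\sum_k \frac{\partial_{p_k}a_{\varepsilon,0}}{|\nabla_p a_{\varepsilon,0}|^2}\partial_{p_k}$, which is well-defined on $\supp(\theta)$ and satisfies $L e^{it\hbar^{-1}a_{\varepsilon,0}}=e^{it\hbar^{-1}a_{\varepsilon,0}}$. Iterating integration by parts $M$ times (no boundary terms, by compact support) and noting that each application of $L^t$ produces a factor $\hbar/|t|$ multiplied by at most $p$-derivatives of the amplitude (no $x$-derivatives, hence no $\varepsilon^{-1}$ losses), the $j$-th term in the sum is bounded by
\begin{equation*}
 \frac{C_M}{\hbar^d}\Big(\frac{|t|}{\hbar}\Big)^j\Big(\frac{\hbar}{|t|}\Big)^M \sup_{|\alpha|\leq M}\| \partial_p^\alpha(u_j\theta)\|_\infty \,.
\end{equation*}
Only $p$-derivatives of the smooth functions $\partial_{p_k}a_{\varepsilon,0}/|\nabla_p a_{\varepsilon,0}|^2$, of $u_j$ and of $\theta$ appear, all of which are bounded on $\supp(\theta)$ (independently of $\varepsilon$).

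Finally I would substitute the two scale bounds made available by the hypothesis $|t|\in[\tfrac12\hbar^{1-\delta/2},1]$: namely $(|t|/\hbar)^j\leq \hbar^{-j}$ and $(\hbar/|t|)^M\leq 2^M\hbar^{M\delta/2}$. Combined with the bounds from Theorem~\ref{B.existence_of_approximation} for $\sup_{|\alpha|\leq M}\|\partial_p^\alpha u_j\|_\infty$ (namely $1$, $\hbar$, $\hbar^{1+\delta(j-2)}$ for $j=0,1,\geq2$ in the case $\tau=1$), the $j$-th term is majorised by $C_M\hbar^{M\delta/2-d+a_j-j}$ with $a_j\geq 1$ for $j\geq 1$ and $a_0=0$. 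Since $j\leq N$ is bounded, choosing $M=M(N,N_0,d,\delta)$ large enough makes each of the finitely many terms smaller than $\hbar^{N_0}$, and the desired estimate follows. There is no genuine obstacle here beyond the book-keeping of these powers; the essential point—where the non-critical hypothesis enters—is merely that the operator $L$ exists, and the short-time regime $|t|\geq \tfrac12\hbar^{1-\delta/2}$ guarantees the crucial gain $\hbar^{\delta/2}$ per integration by parts that beats the $\hbar^{-j}$ blow-up from the oscillatory prefactor.
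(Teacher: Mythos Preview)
Your proposal is correct and follows essentially the same line as the paper: compute the trace as the phase-space integral $\frac{1}{(2\pi\hbar)^d}\int e^{it\hbar^{-1}a_{\varepsilon,0}}u_N\theta\,dxdp$, then iterate the $p$-integration by parts using the vector field built from $\nabla_p a_{\varepsilon,0}/|\nabla_p a_{\varepsilon,0}|^2$, gaining $\hbar/|t|\leq 2\hbar^{\delta/2}$ at each step while incurring only $p$-derivatives (hence no $\varepsilon^{-1}$ losses). Your bookkeeping of the powers is slightly more explicit than the paper's, but the argument is the same.
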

\begin{proof}
  Recall that the kernel of $U_N(t,\hbar)$ is given by
  \begin{equation*}
    K_{U_N}(x,y,t,\varepsilon,\hbar)
    = \frac{1}{(2\pi\hbar)^d} \int_{\R^d} e^{i \hbar^{-1} \langle
      x-y,p\rangle} e^{ i t \hbar^{-1} a_\varepsilon(x,p)}
   u_N(x,p,t,\hbar,\varepsilon) \, dp,
  \end{equation*}
  where 
    \begin{equation*}
     u_N(x,p,t,\hbar,\varepsilon) =
    \sum_{j=0}^N (it\hbar^{-1})^j u_j(x,p,\hbar,\varepsilon).
  \end{equation*}
  From Theorem~\ref{B.existence_of_approximation} we have the
  estimate
  \begin{equation}\label{B.neg_appro_kernel_est_1}
    \sup_{x,p} \sup_{\abs{t}\leq1}| \partial_x^\alpha \partial_p^\beta u_N(x,p,t,\hbar,\varepsilon) | \leq C_{\alpha\beta} \hbar^{(\delta-1)N} \varepsilon^{-\abs{\alpha}}.
  \end{equation}
  This initial estimate is a priori not desirable as it implies the trace is of order $ \hbar^{(\delta-1)N-d} $. Due to the form of the kernels for the operators $U_N(t,\hbar)$ and $\OpN{1}(\theta)$ it immediate follows that the kernel for the composition is given by   
    \begin{equation*}
    (x,y) \mapsto
     \frac{1}{(2\pi\hbar)^d} \int_{\R^d} e^{i \hbar^{-1} \langle
      x-y,p\rangle} e^{ i t \hbar^{-1} a_\varepsilon(x,p)}
   u_N(x,p,t,\hbar,\varepsilon) \theta(y,p) \, dp.
  \end{equation*}
  With this expression for the kernel we get that the trace is given by
    \begin{equation*}
 	\Tr[U_N(t,\hbar) \OpN{1}(\theta)] =  \frac{1}{(2\pi\hbar)^d} \int_{\R^{2d}} e^{ i t \hbar^{-1} a_\varepsilon(x,p)}
   u_N(x,p,t,\hbar,\varepsilon) \theta(x,p) \, dxdp
  \end{equation*}
    Since we suppose $\abs{\nabla_p a_{\varepsilon,0}}\geq c>0$ on
  the support of $\theta(x,p)$ we have
  \begin{equation}\label{B.neg_appro_kernel_est_2}
  \begin{aligned}
   \MoveEqLeft \Tr[U_N(t,\hbar) \OpN{1}(\theta)] 
   \\
    ={}& \frac{1}{(2\pi\hbar)^d} \int_{\R^{2d}}  \frac{\sum_{j=1}^d
      (\partial_{p_j}a_{\varepsilon,0}(x,p))^2}{\abs{\nabla_p
        a_{\varepsilon,0}(x,p)}^2} e^{ i t \hbar^{-1} a_\varepsilon(x,p)}
   u_N(x,p,t,\hbar,\varepsilon) \theta(x,p) \, dxdp
    \\
    ={}&\frac{-i\hbar t^{-1}}{(2\pi\hbar)^d} \sum_{j=1}^d \int_{\R^{2d}} \partial_{p_j}e^{ i t \hbar^{-1} a_\varepsilon(x,p)}
    \frac{\partial_{p_j}a_{\varepsilon,0}(x,p)}{\abs{\nabla_pa_{\varepsilon,0}(x,p)}^2}
   u_N(x,p,t,\hbar,\varepsilon) \theta(x,p) \, dxdp
    \\
    ={}& \frac{i\hbar t^{-1}}{(2\pi\hbar)^d} \sum_{j=1}^d \int_{\R^{2d}}
    e^{ i t \hbar^{-1} a_{\varepsilon,0}(x,p)} \partial_{p_j} \Big[   \frac{\partial_{p_j}a_{\varepsilon,0}(x,p)}{\abs{\nabla_pa_{\varepsilon,0}(x,p)}^2}
   u_N(x,p,t,\hbar,\varepsilon) \theta(x,p)\Big ]\, dxdp.
    \end{aligned}
  \end{equation}
  Combining \eqref{B.neg_appro_kernel_est_1}, \eqref{B.neg_appro_kernel_est_2} and our assumptions on $t$ we see that we have gained $\hbar^{\frac{\delta}{2}}$ compared to our naive first estimate. To obtain the desired estimate we iterate the argument in \eqref{B.neg_appro_kernel_est_2} until an error of the desired order has been obtained. This concludes the proof.
\end{proof}
The previous Lemma showed that under a non-critical assumption on the
principal symbol a localised trace of our approximation becomes
negligible. But we would also need a result similar to this for the
true propagator. Actually this can be proven in a setting for which we
will need it, which is the content of the next Thoerem. An
observation of this type was first made by Ivrii (see
\cite{MR1807155}). Here we will follow the proof of such a statement
as made by Dimassi and Sj\"{o}strand in \cite{MR1735654}. The
statement is:
\begin{thm}\label{B.neg_propagator}
  Let $A_\varepsilon(\hbar)$ be a $\hbar$-$\varepsilon$-admissible
  operator of regularity $\tau\geq1$ which satisfies
  Assumption~\ref{B.self_adj_assumption}, has a bounded principal
  symbol and suppose there exists a $\delta$ in $(0,1)$ such that
  $\varepsilon\geq\hbar^{1-\delta}$. Furthermore, suppose there exists a number
  $\eta>0$ such $a_{\varepsilon,0}^{-1}([-2\eta,2\eta])$ is compact
  and a constant $c>0$ such
  \begin{equation*}
    \abs{\nabla_p a_{\varepsilon,0}(x,p)} \geq c \quad \text{for all } (x,p) \in a_{\varepsilon,0}^{-1}([-2\eta,2\eta]),
  \end{equation*}
  where $a_{\varepsilon,0}$ is the principal symbol of
  $A_\varepsilon(\hbar)$. Let $f$ be in $C_0^\infty((-\eta,\eta))$ and
  $\theta$ be in $C_0^\infty(\R^d_x\times\R^d_p)$ such that
  $\supp(\theta)\subset a_{\varepsilon,0}^{-1}((-\eta,\eta))$.
  
  Then  there
  exists a constant $T_0>0$ such that if $\chi$ is in
  $C_0^\infty((\frac12 \hbar^{1-\gamma},T_0))$ for a $\gamma$ in
  $(0,\delta]$, then for every $N$ in $\mathbb{N}$, we have
  \begin{equation*}
    \abs{\Tr[\OpW(\theta)f(A_\varepsilon(\hbar)) \mathcal{F}_\hbar^{-1}[\chi](s-A_\varepsilon(\hbar))\OpW(\theta)]} \leq C_N \hbar^N
  \end{equation*}
  uniformly for $s$ in $(-\eta,\eta)$.
\end{thm}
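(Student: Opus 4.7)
The strategy follows Dimassi--Sj\"ostrand, combining the microlocal approximation $U_N$ from Theorem~\ref{B.existence_of_approximation} with the non-stationary phase mechanism of Lemma~\ref{B.neg_part}. By the semiclassical Fourier inversion
\[
\mathcal{F}_\hbar^{-1}[\chi](s - A_\varepsilon(\hbar)) = \frac{1}{2\pi\hbar}\int \chi(t)\, e^{ist/\hbar}\, e^{-itA_\varepsilon(\hbar)/\hbar}\, dt,
\]
the trace in question becomes an oscillatory integral in $t$; since $f(A_\varepsilon(\hbar))$ commutes with $e^{-itA_\varepsilon(\hbar)/\hbar}$, cyclicity rewrites the integrand as $\Tr[\OpW(\theta) e^{-itA_\varepsilon(\hbar)/\hbar} \OpW(\theta) f(A_\varepsilon(\hbar))]$. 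It therefore suffices to estimate this integrand pointwise for $s \in (-\eta,\eta)$ and $t \in \supp\chi$, at the cost of a single factor of $\hbar^{-1}$ from the prefactor, which is absorbed by choosing the approximation parameter $N_0$ sufficiently large.

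For the short-time portion $\abs{t}\leq \hbar^{1-\delta/2}$ of $\supp\chi$, I would build an approximation $\tilde U_N(t,\hbar)$ to $\OpW(\theta)e^{-itA_\varepsilon(\hbar)/\hbar}$ by applying Theorem~\ref{B.existence_of_approximation} with a slightly enlarged cutoff $\tilde\theta\in C_0^\infty$ satisfying $\tilde\theta = 1$ on $\supp\theta$ and with $t$ replaced by $-t$, then passing to the appropriate adjoint. Duhamel's formula applied to the residual bound in Theorem~\ref{B.existence_of_approximation} gives $\norm{\OpW(\theta) e^{-itA_\varepsilon(\hbar)/\hbar} - \tilde U_N(t,\hbar)}_{\mathcal{L}(L^2(\R^d))} = \mathcal{O}(\hbar^{N_0 - \delta/2})$ on this interval. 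The resulting trace $\Tr[\tilde U_N(t,\hbar)\OpW(\theta) f(A_\varepsilon(\hbar))]$ is an oscillatory phase-space integral with phase $t\hbar^{-1}a_{\varepsilon,0}(x,p)$ and compactly supported amplitude, so repeated integration by parts in $p$ against the non-critical direction $\nabla_p a_{\varepsilon,0}$, exactly as in the proof of Lemma~\ref{B.neg_part}, produces a gain of $\hbar/\abs{t}\leq 2\hbar^{\gamma}$ per step. Carrying out sufficiently many such steps yields the $\mathcal{O}(\hbar^N)$ bound for any $N$, uniformly in $s$ and $t$ within this range.

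The main obstacle is the intermediate range $\abs{t}\in [\hbar^{1-\delta/2}, T_0]$, in which $\tilde U_N$ is no longer a valid substitute for the true propagator. Here the plan is to iterate the short-time construction: write $e^{-itA_\varepsilon(\hbar)/\hbar}$ as a composition of $K \lesssim \hbar^{-(1-\delta/2)}$ short-time propagators $e^{-it_jA_\varepsilon(\hbar)/\hbar}$ with each $\abs{t_j}\leq \hbar^{1-\delta/2}$, replace each factor by its microlocal approximation, and combine the results through the composition calculus of Theorem~\ref{B.composition-weyl-thm}. The resulting operator remains of FIO type, with a phase obtained by iterating the Hamilton--Jacobi first-order expansion. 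Choosing $T_0$ small enough that the Hamiltonian flow of $a_{\varepsilon,0}$ starting from $\supp\theta$ stays inside $a_{\varepsilon,0}^{-1}((-2\eta, 2\eta))$ for all $\abs{t}\leq T_0$ ensures the non-critical condition applies throughout, so the same non-stationary phase argument delivers the decay. Controlling the $K$-fold accumulation of composition errors---using the fact that each step gives a residual beating any polynomial in $\hbar^{-1}$ by Theorems~\ref{B.composition-weyl-thm} and \ref{B.cal-val-thm}, and absorbing the polynomial loss $K^{q}$ by a large choice of $N_0$---is the most delicate step.
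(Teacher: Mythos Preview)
Your short-time argument is fine and indeed parallels how the paper handles that regime elsewhere (Lemma~\ref{B.neg_part}). The intermediate-range argument, however, has a genuine gap, and it is precisely the gap that motivates the paper's very different approach.

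You propose to iterate $K\sim\hbar^{-(1-\delta/2)}$ copies of the short-time parametrix $\tilde U_N$ and compose them. There are two problems. First, each $\tilde U_N$ is a Fourier integral operator with phase $e^{it_j\hbar^{-1}a_{\varepsilon,0}(x,p)}$, not a pseudo-differential operator, so Theorem~\ref{B.composition-weyl-thm} does not apply; you would need an FIO composition calculus in the rough class, which the paper does not develop. Second, and more fundamentally, iterating the short-time parametrix is exactly the procedure that produces a long-time FIO whose phase is governed by the Hamilton--Jacobi flow of $a_{\varepsilon,0}$. The paper explicitly flags at the start of Section~\ref{B.construction_propagator} that this route is \emph{not available} here: the $x$-derivatives of $a_{\varepsilon,0}$ beyond order $\tau$ blow up like $\varepsilon^{-k}$, so the flow map and the iterated amplitudes do not stay in a controlled symbol class. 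After $K\sim\hbar^{-(1-\delta/2)}$ compositions the amplitude estimates deteriorate by factors that are polynomial in $K$ \emph{and} in $\varepsilon^{-1}$ to growing order, and these cannot be absorbed by choosing $N_0$ large. Your final sentence concedes this is ``the most delicate step'' but offers no mechanism to control it; in the rough setting there isn't one along these lines.

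The paper instead avoids any long-time parametrix altogether. It passes to the resolvent via the Helffer--Sj\"ostrand formula, modifies the principal symbol to be globally microhyperbolic in a fixed direction $(\boldsymbol 0;e_1)$, and then performs a \emph{complex deformation}: it makes almost-analytic extensions of all symbols in the variable $p_1\mapsto p_1+t$ with $t\in\C$, observes that the resulting trace is independent of $\re t$ by cyclicity, and uses the microhyperbolicity together with a sharp G{\aa}rding argument to push the resolvent holomorphically across the real axis by an amount $\sim\mu_2=\frac{M\hbar}{C_1\xi}\log(1/\hbar)$. A contour-shifting argument then shows the contribution is $\mathcal O(\hbar^N)$. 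This uses only resolvent bounds and a finite number of $p$-derivatives of the symbols, so it is unaffected by the roughness in $x$.
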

\begin{remark} 
Theorems of this type for non-regular operators can be found in the works of Ivrii see \cite{ivrii2019microlocal1} and Zielinski see \cite{MR2343462,MR2952218}. In both cases the proof of such theorems is different from the one we present here. The techniques used by both is based on propagation of singularities. The propagation of singularities is not directly present in the proof presented here but hidden in the techniques used. 
 
In both \cite{MR1735654} and  \cite{ivrii2019microlocal1} they assume the symbol to microhyperbolic in some direction. It might also be possible to extend the Theorem here to a general microhyperbolic assumption instead of the non-critical assumption. The challenge in this will be that for the proof to work under a general microhyperbolic assumption the symbol should be change such that microhyperbolic assumption similar to the non-critical assumption is achieved. This change might be problematic to do since it could mix the rough and non rough variables. 

The localising operators $\OpW(\theta)$ could be omitted if the first step of the proof is change to introducing them by applying Lemma~\ref{B.Insert_localisation_in_trace}. We have chosen to state the theorem with them since when we will apply the theorem we have the localisations. 
\end{remark}
\begin{proof}
  We start by remarking that it suffices to show the estimate with a
  function $\chi_\xi(t) = \chi(\frac{t}{\xi})$, where $\chi$ is in
  $C_0^\infty((\frac12,1))$ uniformly for $\xi$ in
  $[\hbar^{1-\gamma},T_0]$. Indeed assume such an estimate has been
  prove. We can split the interval $(\frac12 \hbar^{1-\gamma},T_0)$ in
  $\frac{2T_0}{\hbar^{1-\gamma}}$ intervals of size
  $\frac12\hbar^{1-\gamma}$ and make a partition of unity which
  members is supported in each of these intervals. By linearity
  of the inverse Fourier transform and trace we would have
  \begin{equation*} \begin{aligned}
    |\Tr[\OpW&(\theta)f(A_\varepsilon(\hbar))
    \mathcal{F}_\hbar^{-1}[\chi](s-A_\varepsilon(\hbar))\OpW(\theta)]|
    \\
    \leq{}& \sum_{j=1}^{M(\hbar)}
    \abs{\Tr[\OpW(\theta)f(A_\varepsilon(\hbar))
      \mathcal{F}_\hbar^{-1}[\chi_{\xi_j}](s-A_\varepsilon(\hbar))\OpW(\theta)]}
    \leq \tilde{C}_N \hbar^{N-1+\delta}.
   \end{aligned} \end{equation*}
  Hence we will consider the trace
  \begin{equation*}
    \Tr[\OpW(\theta)f(A_\varepsilon(\hbar)) \mathcal{F}_\hbar^{-1}[\chi_\xi](s-A_\varepsilon(\hbar))\OpW(\theta)],
  \end{equation*}
  with $\chi_\xi(t) = \chi(\frac{t}{\xi})$, where $\chi$ is in
  $C_0^\infty((\frac12,1))$ and $\xi$ in $[\hbar^{1-\gamma},T_0]$. For
  the rest of the proof we let a $N$ in $\mathbb{N}$ be given as the
  error we desire.

  Without loss of generality we can assume $\theta=\sum_k \theta_k$,
  where the $\theta_k$'s satisfies that if
  $\supp(\theta_k)\cap\supp(\theta_l)\neq\emptyset$ then there exists
  $j$ in $\{1,\dots,d\}$ such
  $|\partial_{p_j} a_{\varepsilon,0}(x,p)|>\tilde{c}$ on the set
  $\supp(\theta_k)\cup\supp(\theta_l)$. With this splitting of
  $\theta$ we have
  \begin{equation*} \begin{aligned}
    \Tr[\OpW(\theta)&f(A_\varepsilon(\hbar))
    \mathcal{F}_\hbar^{-1}[\chi_\xi](s-A_\varepsilon(\hbar))\OpW(\theta)]
    \\
    &= \sum_{k} \sum_{l} \Tr[\OpW(\theta_k)f(A_\varepsilon(\hbar))
    \mathcal{F}_\hbar^{-1}[\chi_\xi](s-A_\varepsilon(\hbar))\OpW(\theta_l)].
   \end{aligned} \end{equation*}
  By the cyclicity of the trace and the formulas for composition of
  pseudo-differential operators we observe if
  $\supp(\theta_k)\cap\supp(\theta_l)=\emptyset$ then the term is
  negligible. Hence what remains is the terms with
  $\supp(\theta_k)\cap\supp(\theta_l)\neq\emptyset$. All terms of the
  form are estimated with analogous techniques but some different
  indexes. Hence we will suppose we have a pair $\theta_k$ and $\theta_l$  of sets such that
  $\supp(\theta_k)\cap\supp(\theta_l)\neq\emptyset$ and
  $|\partial_{p_1} a_{\varepsilon,0}(x,p)|>\tilde{c}$ on the set
  $\supp(\theta_k)\cup\supp(\theta_l)$. This imply we either have
  $\partial_{p_1} a_{\varepsilon,0}(x,p)>\tilde{c}$ or
  $-\partial_{p_1} a_{\varepsilon,0}(x,p)>\tilde{c}$. We suppose we
  are in the first case. The other case is treated in the same manner
  but with a change of some signs.

  To sum up we have reduced to the case where we consider
  \begin{equation*}
    \Tr[\OpW(\theta_k)f(A_\varepsilon(\hbar)) \mathcal{F}_\hbar^{-1}[\chi_\xi](s-A_\varepsilon(\hbar))\OpW(\theta_l)],
  \end{equation*}
  with $\partial_{p_1} a_{\varepsilon,0}(x,p)>\tilde{c}$ on the the
  set $\supp(\theta_k)\cup\supp(\theta_l)$. The next step is to change
  the principal symbol of our operator such it becomes global
  microhyperbolic in the direction $(\boldsymbol{0};(1,0,\dots,0))$, where $\boldsymbol{0}$ is the $d$-dimensional vector with only zeros.

  We let $\varphi_2$ be a function in $C^\infty_0(\R_x^d\times\R_p^d)$
  such $\varphi_2(x,p)=1$ on a small neighbourhood of
  $\supp(\theta_k)\cup\supp(\theta_l)$ and have support contained in
  the set
  \begin{equation*}
  \big\{(x,p)\in\R^{2d}\,|\, |\partial_{p_1} a_{\varepsilon,0}(x,p)|>\tfrac{\tilde{c}}{2} \big\}.
  \end{equation*}
   Moreover we let
  $\varphi_1$ be a function in $C^\infty_0(\R_x^d\times\R_p^d)$ such
  $\varphi_1(x,p)=1$ on $\supp(\varphi_2)$ and such that
  \begin{equation}
  \supp(\varphi_1) \subseteq \big\{(x,p)\in\R^{2d}\,|\, |\partial_{p_1}
  a_{\varepsilon,0}(x,p)|>\tfrac{\tilde{c}}{4} \big\}.
  \end{equation}
   With these functions we define the symbol
  \begin{equation*}
    \tilde{a}_{\varepsilon,0} (x,p) = a_{\varepsilon,0} (x,p) \varphi_1(x,p) + C(1-\varphi_2(x,p)), 
  \end{equation*}
  where the constant $C$ is chosen such
  $\tilde{a}_{\varepsilon,0} (x,p)\geq 1 + \eta$ outside the support
  of $\varphi_2(x,p)$. We have
  \begin{equation*}
    \partial_{p_1}\tilde{a}_{\varepsilon,0} (x,p) = (\partial_{p_1} a_{\varepsilon,0}) (x,p) \varphi_1(x,p) + a_{\varepsilon,0} (x,p) (\partial_{p_1}\varphi_1)(x,p) - C\partial_{p_1}\varphi_2(x,p).
  \end{equation*}
  Hence there exist constants $c_0$ and $c_1$ such
  \begin{equation}\label{B.micro_hyp}
    \partial_{p_1}\tilde{a}_{\varepsilon,0} (x,p) \geq c_0 - c_1(\tilde{a}_{\varepsilon,0} (x,p))^2,
  \end{equation}
  for all $(x,p)$ in $\R^{2d}$. To see this we observe that on
  $\supp(\theta_k)\cup\supp(\theta_l)$ we have the inequality
  \begin{equation*}
    \partial_{p_1}\tilde{a}_{\varepsilon,0} (x,p)\geq \tilde{c}.
  \end{equation*}
  By continuity there exists an open neighbourhood $\Omega$ of
  $\supp(\theta_k)\cup\supp(\theta_l)$ such
  $\partial_{p_1}\tilde{a}_{\varepsilon,0} (x,p)\geq
  \frac{\tilde{c}}{3}$ and $(1-\varphi_2)\neq0$ on $\Omega^c$. Hence
  outside $\Omega$ we get the the bound
  \begin{equation*}
    \partial_{p_1}\tilde{a}_{\varepsilon,0} (x,p) \geq c_0 - c_1(\tilde{a}_{\varepsilon,0} (x,p))^2,
  \end{equation*}
  by choosing $c_1$ sufficiently large.
  This estimates is that our new symbol is global microhyperbolic in
  the direction $(\boldsymbol{0};(1,0,\dots,0))$.
 
  Our assumptions on the operator ${A}_\varepsilon(\hbar)$ imply the
  form
  \begin{equation*}
    A_\varepsilon(\hbar) = \sum_{j=0}^{N_0} \hbar^j \OpW(a_{\varepsilon,j}) + \hbar^{N_0+1} R_{N_0}(\hbar,\varepsilon),
  \end{equation*}
  where $N_0$ is chosen such
  \begin{equation*}
    \hbar^{N_0+1}\norm{R_{N_0}(\hbar,\varepsilon)}_{\mathcal{L}(L^2(\R^d))} \leq C \hbar^{N+d}.
  \end{equation*}
  By $\tilde{A}_\varepsilon(\hbar)$ we denote the operator obtained by
  taking the $N_0$ first terms of $A_\varepsilon(\hbar)$ and
  exchanging the principal symbol $a_{\varepsilon,0}$ of
  $A_\varepsilon(\hbar)$ by $\tilde{a}_{\varepsilon,0}$ . Note that
  the operator $\tilde{A}_\varepsilon(\hbar)$ still satisfies
  Assumption~\ref{B.self_adj_assumption} as the original symbols where
  assumed to be bounded. We have
  \begin{equation*}
    A_\varepsilon(\hbar)-\tilde{A}_\varepsilon(\hbar) = \OpW(a_{\varepsilon,0}  -\tilde{a}_{\varepsilon,0}) + \hbar^{N_0+1} R_{N_0}(\hbar,\varepsilon),
  \end{equation*}
  and by construction is
  $a_{\varepsilon,0} -\tilde{a}_{\varepsilon,0}$ supported away from
  $\supp(\theta_k)\cup\supp(\theta_l)$. Hence if we apply the resolvent identity
  \begin{equation*}
    (z-A_\varepsilon(\hbar))^{-1} - (z-\tilde{A}_\varepsilon(\hbar))^{-1} = (z-A_\varepsilon(\hbar))^{-1}(A_\varepsilon(\hbar) - \tilde{A}_\varepsilon(\hbar)) (z-\tilde{A}_\varepsilon(\hbar))^{-1},
  \end{equation*}
   and use the formula for composition of operators we get for
  $N_1\geq\tau$ the estimate
  \begin{equation}\label{B.neg_pro_01}
    \begin{aligned}
      \lVert\OpW(\theta_k)&((z-A_\varepsilon(\hbar))^{-1} -
      (z-\tilde{A}_\varepsilon(\hbar))^{-1})\OpW(\theta_l)\rVert_{\Tr}
      \\
      ={}&C\hbar^{-d}\norm{\OpW(\theta_k)[(z-A_\varepsilon(\hbar))^{-1}(A_\varepsilon(\hbar)
        - \tilde{A}_\varepsilon(\hbar))
        (z-\tilde{A}_\varepsilon(\hbar))^{-1}]}_{\mathcal{L}(L^2(\R^d))}
      \\
      \leq{}& C_{N_1} \frac{\hbar^{(N_1-\tau)\delta +
          \tau-d}}{\abs{\im(z)}^{N+2}} + C_{N_0}
      \frac{\hbar^N}{\abs{\im(z)}^2},
    \end{aligned}
  \end{equation}
  for $z$ in $\C$ with $|\im(z)|>0$. In order to use the above estimate we will use
  Theorem~\ref{B.Helffer-Sjostrand} and hence we need to make an
  almost analytic extensions of the function $f$. Let $\tilde{f}$ be
  an almost analytic extension of $f$, such $\tilde{f}$ is in
  $C_0^\infty(\C)$ and
  \begin{equation*} \begin{aligned}
    \tilde{f}(x) &=f(x) \text{ for all } x\in\R
    \\
    \bar{\partial}\tilde{f}(z) &\leq C_N \abs{\im(z)}^N \text{ for all
    } N\in\mathbb{N}.
   \end{aligned} \end{equation*}
  Such an extension exists according to
  Remark~\ref{B.almost_analytic_ex_remark}. As
  $ \mathcal{F}_\hbar^{-1}[\chi_\xi](s-z)$ is an analytic function in
  $z$ we have by Theorem~\ref{B.Helffer-Sjostrand} the identity
  \begin{equation}\label{B.neg_pro_02}
    \begin{aligned}
      \MoveEqLeft \Tr[\OpW(\theta_k)f(A_\varepsilon(\hbar))
      \mathcal{F}_\hbar^{-1}[\chi_\xi](s-A_\varepsilon(\hbar))\OpW(\theta_l)]
      \\
      &= -\frac{1}{\pi} \int_\C \bar{\partial}(\tilde{f})(z)
      \mathcal{F}_\hbar^{-1}[\chi_{\xi}](s-z)
      \Tr[\OpW(\theta_k)(z-A_\varepsilon(\hbar))^{-1}\OpW(\theta_l)]
      \, L(dz).
    \end{aligned}
  \end{equation}
  This identity is also true for $\tilde{A}_\varepsilon(\hbar)$. On
  the support of $\tilde{f}$ we have
  \begin{equation*}
    \mathcal{F}_\hbar^{-1}[\chi_{\xi}](s-z) = \frac{1}{2\pi\hbar} \int_\R e^{it\hbar^{-1}(s-z)} \chi_\xi(t)\,dt \leq C \frac{\xi}{\hbar}.
  \end{equation*}
  Now by the properties of $\tilde{f}$, \eqref{B.neg_pro_01},
  \eqref{B.neg_pro_02} and the above estimate we have for $N_1\geq\tau$
  \begin{equation*} \begin{aligned}
    |\Tr&[\OpW(\theta_k)f(A_\varepsilon(\hbar))
    \mathcal{F}_\hbar^{-1}[\chi_\xi](s-A_\varepsilon(\hbar))\OpW(\theta_l)]
    \\
    \hbox{} &- \Tr[\OpW(\theta_k)f(\tilde{A}_\varepsilon(\hbar))
    \mathcal{F}_\hbar^{-1}[\chi_\xi](s-\tilde{A}_\varepsilon(\hbar))\OpW(\theta_l)]|
    \\
    &\leq \frac{1}{\pi} \int_\C |\bar{\partial}(\tilde{f})(z)
    \mathcal{F}_\hbar^{-1}[\chi_{\xi}](s-z)
    \\
    &\phantom{ \frac{1}{\pi}| \int_\C \bar{\partial}(\tilde{f})(z)
      \mathcal{F}}{} \times
    \Tr[\OpW(\theta_k)((z-A_\varepsilon(\hbar))^{-1} -
    (z-\tilde{A}_\varepsilon(\hbar))^{-1})\OpW(\theta_l)]| \, L(dz)
    \\
    &\leq C_{N_1} \hbar^{(N_1-\tau)\delta + \tau-d-1} + C \hbar^N.
   \end{aligned} \end{equation*}
  Hence by choosing $N_1$ sufficiently large we can change the
  principal symbol. Note that the constant $C_{N_1}$ also depends on
  the symbols.

  For the reminder of the proof we will omit the tilde on the operator
  and principal symbol but instead assume the principal symbol to be
  global micro-hyperbolic in the direction $(\boldsymbol{0};(1,0,\dots,0))$
  (\eqref{B.micro_hyp} without the tildes).

  In order to estimate
  \begin{equation*}
    \Tr[\OpW(\theta_k)f(A_\varepsilon(\hbar)) \mathcal{F}_\hbar^{-1}[\chi_\xi](s-A_\varepsilon(\hbar))\OpW(\theta_l)],
  \end{equation*}
  we will need an auxiliary function. Let $\psi$ be in $C^\infty(\R)$
  such $\psi(t)=1$ for $t\leq1$ and $\psi(t)=0$ for $t\geq2$. Moreover
  let $M$ be a sufficiently large constant which will be fixed later
  and put
  \begin{equation*}
    \psi_{\mu_1}(z) = \psi\Big(\frac{\im(z)}{\mu_1}\Big),
  \end{equation*}
  where $\mu_1=\frac{M\hbar}{\xi}\log(\frac{1}{\hbar})$. With this
  function we have
  \begin{equation}\label{B.neg_pro_1}
    |\bar{\partial}(\tilde{f}\psi_{\mu_1})| \leq \begin{cases}
      C_N \abs{\im(z)}^N, & \text{if } \im(z)<0
      \\
      C_N \psi_{\mu_1}(z) \abs{\im(z)}^N + \mu_1^{-1} \boldsymbol{1}_{[1,2]}(\frac{\im(z)}{\mu_1}) , & \text{if } \im(z)\geq0,
    \end{cases}
  \end{equation}
  for any $N$ in $\mathbb{N}$. Again we can use
  Theorem~\ref{B.Helffer-Sjostrand} for the operator
  $A_\varepsilon(\hbar)$ on the function
  $(\tilde{f}\psi_{\mu_1})(z)
  \mathcal{F}_\hbar^{-1}[\chi_\xi](s-z)$. This gives
  \begin{equation*} \begin{aligned}
    (\tilde{f}\psi_{\mu_1})(A_\varepsilon(\hbar))&
    \mathcal{F}_\hbar^{-1}[\chi_\xi](s-A_\varepsilon(\hbar))
    \\
    &= -\frac{1}{\pi} \int_\C \bar{\partial}(\tilde{f}\psi_{\mu_1})(z)
    \mathcal{F}_\hbar^{-1}[\chi_{\xi}](s-z)
    (z-A_\varepsilon(\hbar))^{-1} \, L(dz),
   \end{aligned} \end{equation*}
  where we have used that $\mathcal{F}_\hbar^{-1}[\chi_\xi](s-z)$ is
  an analytic function in $z$.  Hence the trace we consider is
  \begin{equation}\label{B.neg_pro_2}
    \begin{aligned}
      &\Tr[\OpW(\theta_k)f(A_\varepsilon(\hbar))
      \mathcal{F}_\hbar^{-1}[\chi_\xi](s-A_\varepsilon(\hbar))\OpW(\theta_l)]
      \\
      &= -\frac{1}{\pi} \int_\C
      \bar{\partial}(\tilde{f}\psi_{\mu_1})(z)
      \mathcal{F}_\hbar^{-1}[\chi_{\xi}](s-z)
      \Tr[\OpW(\theta_k)(z-A_\varepsilon(\hbar))^{-1}
      \OpW(\theta_l)]\, L(dz)
      \\
      &= -\frac{1}{\pi} \int_{\im(z)<0} \cdots\, L(dz) -\frac{1}{\pi}
      \int_{\im(z)\geq0} \cdots\, L(dz)
    \end{aligned}
  \end{equation}
  If we shortly investigate each of the integrals. Firstly we note the
  bound
  \begin{equation*}
    \abs{\Tr[\OpW(\theta_k)(z-A_\varepsilon(\hbar))^{-1} \OpW(\theta_l)]} \leq \frac{C}{\hbar^d \abs{\im(z)}}.
  \end{equation*}
  If we consider the integral over the negative imaginary part we have
  \begin{equation*} \begin{aligned}
    \Big|\frac{1}{\pi} \int_{\im(z)<0}&
    \bar{\partial}(\tilde{f}\psi_{\mu_1})(z)
    \mathcal{F}_\hbar^{-1}[\chi_{\xi}](s-z)
    \Tr[\OpW(\theta_k)(z-A_\varepsilon(\hbar))^{-1} \OpW(\theta_l)]\,
    L(dz)\Big|
    \\
    &\leq \frac{C_{2N}\xi}{\pi\hbar^{d+1}} \int_{\im(z)<0}\frac{
      \im(z)^{2N}}{|\im(z)|} e^{\frac{\xi\im(z)}{2\hbar}}\, d\im(z)
    \\
    &\leq \frac{C_{2N}}{\pi\hbar^d} (\frac{\hbar}{\xi})^{2N-2} \leq
    \tilde{C} \hbar^{(2N-2)\gamma - d},
   \end{aligned} \end{equation*}
  for any $N$ in $\mathbb{N}$. We have in the above calculation used
  integration by parts and the estimate
  \begin{equation*}
    \abs{ \mathcal{F}_\hbar^{-1}[\chi_{\xi}](s-z)} \leq C \frac{\xi}{\hbar} e^{\frac{\xi\im(z)}{2\hbar}}.
  \end{equation*}
  The above estimate imply that the contribution to the trace from the
  negative integral is negligible.  If we split the integral over
  positive imaginary part up according to $\mu_1$ we have by
  \eqref{B.neg_pro_1} the estimate
  \begin{equation*} \begin{aligned}
    \Big|\frac{1}{\pi}& \int_{0\leq\im(z)\leq \mu_1}
    \bar{\partial}(\tilde{f}\psi_{\mu_1})(z)
    \mathcal{F}_\hbar^{-1}[\chi_{\xi}](s-z)
    \Tr[\OpW(\theta_k)(z-A_\varepsilon(\hbar))^{-1} \OpW(\theta_l)]\,
    L(dz)\Big|
    \\
    &\leq \frac{C_{2N}\xi}{\pi\hbar^{d+1}} \int_{0\leq\im(z)\leq
      \mu_1} \psi_{\mu_1}(z) \abs{\im(z)}^N
    e^{\frac{\xi\im(z)}{2\hbar}}\, d\im(z)
    \\
    &\leq \tilde{C} \frac{\xi}{\hbar^{d+1}} \mu_1^{N+1} \leq \tilde{C}
    \frac{\xi}{\hbar^{d+1}}
    \frac{M\hbar}{\xi}\log(\frac{1}{\hbar})^{N+1} \leq \tilde{C} M
    \hbar^{(N+1)\gamma - d-1} ,
   \end{aligned} \end{equation*}
  for any $N$ in $\mathbb{N}$. Hence this terms also becomes
  negligible. What remains from \eqref{B.neg_pro_2} is the expression
  \begin{equation}\label{B.neg_pro_reminder}
    -\frac{1}{\pi} \int_{\im(z)>\mu_1} \bar{\partial}(\tilde{f}\psi_{\mu_1})(z)  \mathcal{F}_\hbar^{-1}[\chi_{\xi}](s-z) \Tr[\OpW(\theta_k)(z-A_\varepsilon(\hbar))^{-1} \OpW(\theta_l)]\, L(dz).
  \end{equation}
  In order to estimates this we will need to change all our
  operators. This is done by introducing an auxiliary variable in the
  symbols and make an almost analytic extension in this
  variable. Recall we have change the operator
  $A_{\varepsilon}(\hbar)$ such it is a sum of Weyl quantised pseudo-differential operators.  In the following we let $q(x,p)$ be
  one of our symbols and we let
  $q_t(x,p) = q(x,(p_1+t,p_2,\dots,p_d))$. We now take $t$ be complex
  and make an almost analytic extension $\tilde{q}_t$ of $q_t$ in t
  according to Definition~\ref{B.def_almost_analytic_ex} for
  $|\im(t)| <1$. The form of $\tilde{q}_t$ is
  \begin{equation*}
    \tilde{q}_t(x,p) = \sum_{r=0}^{n} (\partial_{p_1}^r q)(x,(p_1+\re(t),p_2,\dots,p_d)) \frac{(i\im(t))^r}{r!},
  \end{equation*}
  Recalling the identity
  \begin{equation*}
    \OpW(q_{\re(t)}) = e^{-i\re(t)\hbar^{-1}x_1} \OpW(q) e^{i\re(t)\hbar^{-1}x_1},
  \end{equation*}
  we have
  \begin{equation}\label{B.husk.form.tilde}
    \OpW(\tilde{q}_t) = \sum_{r=0}^{n}   \frac{(i\im(t))^r}{r!} e^{-i\re(t)\hbar^{-1}x_1} \OpW(\partial_{p_1}^r q) e^{i\re(t)\hbar^{-1}x_1}.
  \end{equation}
  If we take derivatives with respect to $\re(t)$ and $\im(t)$ in
  operator sense we see
  \begin{align*}
    \frac{\partial}{\partial\re(t)} \OpW(\tilde{q}_t) &=
    -\frac{i}{\hbar} \sum_{r=0}^{n} \frac{(i\im(t))^r}{r!}
    e^{-i\re(t)\hbar^{-1}x_1} [x_1,\OpW(\partial_{p_1}^r q)]
    e^{i\re(t)\hbar^{-1}x_1}
    \\
    &= \sum_{r=0}^{n} \frac{(i\im(t))^r}{r!} e^{-i\re(t)\hbar^{-1}x_1}
    \OpW(\partial_{p_1}^{r+1} q) e^{i\re(t)\hbar^{-1}x_1},
    \shortintertext{and}
    \\
    \frac{\partial}{\partial\im(t)} \OpW(\tilde{q}_t) &= i
    \sum_{r=1}^{n} \frac{(i\im(t))^{r-1}}{(r-1)!}
    e^{-i\re(t)\hbar^{-1}x_1} \OpW(\partial_{p_1}^r q)
    e^{i\re(t)\hbar^{-1}x_1}
    \\
    &= i\sum_{r=0}^{n-1} \frac{(i\im(t))^{r}}{r!}
    e^{-i\re(t)\hbar^{-1}x_1} \OpW(\partial_{p_1}^{r+1} q)
    e^{i\re(t)\hbar^{-1}x_1}.
   \end{align*}
  In the above calculation the unbounded operator $x_1$ appear, but for all the
  symbols we consider the commutator $[x_1,\OpW(\partial_{p_1}^r q)] $
  will be bounded. This calculation gives
  \begin{equation*}
    \Big(\frac{\partial}{\partial\re(t)} + i \frac{\partial}{\partial\im(t)}\Big) \OpW(\tilde{q}_t) =  \frac{(i\im(t))^{n}}{n!} e^{-i\re(t)\hbar^{-1}x_1} \OpW(\partial_{p_1}^{n+1} q) e^{i\re(t)\hbar^{-1}x_1}
  \end{equation*}
  This imply
  \begin{equation}\label{B.neg_pro_3}
    \begin{aligned}
      &\big\lVert\frac{\partial}{\partial \bar{t}}
        \OpW(\tilde{\theta}_{j,t})\big\rVert_{\Tr} \leq C_n\hbar^{-d}
      \abs{\im(t)}^n \quad\quad\text{for $j=k,l$}
      \\
      &\big\lVert\frac{\partial}{\partial
          \bar{t}}\tilde{A}_\varepsilon(\hbar;t)\big\rVert_{\mathcal{L}(L^2(\R^d))}
      \leq C_n \abs{\im(t)}^n,
    \end{aligned}
  \end{equation}
  for any $n$ in $\N$ by choosing an almost analytic expansion of this
  order. The operator $\tilde{A}_\varepsilon(\hbar;t)$ is the operator
  where we have made the above construction for each symbol in the
  expansion of the operator. Moreover we have by the construction of
  $\tilde{A}_\varepsilon(\hbar;t)$
  \begin{equation*}
    \tilde{A}_\varepsilon(\hbar;t) =e^{-i\re(t)\hbar^{-1}x_1} A_\varepsilon(\hbar) e^{i\re(t)\hbar^{-1}x_1} + i\im(t) B_\varepsilon(\hbar;t)
  \end{equation*}
  where $B_\varepsilon(\hbar;t)$ is a bounded operator this form is obtained from \eqref{B.husk.form.tilde} with $q$ replaced by the symbol of $A_\varepsilon(\hbar)$. This gives
  \begin{equation*}
    z-\tilde{A}_\varepsilon(\hbar;t) =(z-U^{*} A_\varepsilon(\hbar) U)[I +(z-U^{*} A_\varepsilon(\hbar) U)^{-1}  i\im(t) B_\varepsilon(\hbar;t)],
  \end{equation*}
  where $U= e^{i\re(t)\hbar^{-1}x_1}$. Hence if
  $\abs{\im(t)} \leq\tfrac{ \abs{\im(z)}}{ C_1}$ the operator
  $z-\tilde{A}_\varepsilon(\hbar;t)$ has an inverse where
  $C_1\geq\norm{B_\varepsilon(\hbar;t)}_{\mathcal{L}(L^2(\R^d))} +1$.
  This imply that the following function
  \begin{equation*}
    \eta(t,z) = \Tr[\OpW(\tilde{\theta}_{k,t})(z- \tilde{A}_\varepsilon(\hbar;t))^{-1} \OpW(\tilde{\theta}_{l,t})]
  \end{equation*}
  is well defined for $\abs{\im(t)} \leq\tfrac{ \abs{\im(z)}}{
    C_1}$. The function have by construction the properties
  \begin{equation*} \begin{aligned}
    \abs{\eta(t,z)} &\leq \frac{c}{\hbar^d\abs{\im(z)}}
    \\
    |\Big(\frac{\partial}{\partial\re(t)} + i
    \frac{\partial}{\partial\im(t)}\Big) \eta(t,z) | &\leq
    \frac{c_n\abs{\im(t)}^n}{\hbar^d\abs{\im(z)}^2}.
   \end{aligned} \end{equation*}
  for $n$ in $\N$. But by cyclicity of the trace the function
  $\eta(t,z)$ is independent of $\re(t)$. Hence we have
  \begin{equation*}
    \abs{\eta(\pm i \im(t),z) - \eta(0,z)} \leq \frac{c_N\abs{\im(t)}^n}{\hbar^d\abs{\im(z)}^2}
  \end{equation*}
  by the fundamental theorem of calculus. The construction of $\eta$
  gives us that
  \begin{equation*}
    \eta(0,z) =  \Tr[\OpW(\theta_{k})(z- A_\varepsilon(\hbar))^{-1} \OpW(\theta_{l})].
  \end{equation*}
  Hence we can exchange the trace in \eqref{B.neg_pro_reminder} by
  $\eta(-i\frac{\mu_1}{C_1},z)$ with an error of the order
  $\hbar^{\gamma n-d}$. This is due to our choice of
  $\mu_1=\frac{M\hbar}{\xi} \log(\frac{1}{\hbar})$ in the start of the
  proof and that the integral is only over a compact region where
  $\abs{\im(z)}>\frac{\mu_1}{C_1}$ due to the definition of $\psi_{\mu_1}$.  It now remains to estimate the
  term
  \begin{equation}\label{B.neg_pro_reminder_2}
    -\frac{1}{\pi} \int_{\im(z)>\mu_1} \bar{\partial}(\tilde{f}\psi_{\mu_1})(z)  \mathcal{F}_\hbar^{-1}[\chi_{\xi}](s-z) \eta(-i\mu_2,z)\, L(dz),
  \end{equation}
  where
  \begin{equation*}
    \eta(-i\mu_2,z) = \Tr[ \OpW(\tilde{\theta}_{k,-i\mu_2})(z- \tilde{A}_\varepsilon(\hbar;-i\mu_2))^{-1} \OpW(\tilde{\theta}_{l,-i\mu_2}) ],
  \end{equation*}
  and $\mu_2=\frac{\mu_1}{C_1}$. From the construction of the almost
  analytic extension we have the following form of the principal
  symbol of $z- \tilde{A}_\varepsilon(\hbar;-i\mu_2)$
  \begin{equation*}
    z- \tilde{a}_{\varepsilon,0}(x,p;-i\mu_2) = z - (a_{\varepsilon,0}(x,p) - i\mu_2 (\partial_{p_1}a_{\varepsilon,0})(x,p) + \mathcal{O}(\mu_2^2)).
  \end{equation*}
  For $-\frac{c_0\mu_2}{4} <\im(z)<0$, where $c_0$ is the constant
  from the global micro-hyperbolicity \eqref{B.micro_hyp}. We have by
  the global micro-hyperbolicity for $\abs{\re(z)}<\eta$ and $\hbar$
  sufficiently small
  \begin{equation*}
    \im(z- \tilde{a}_{\varepsilon,0}(x,p;-i\mu_2)) \geq c_0\mu_2 +\im(z) - C \mu_2(\re(z)-a_{\varepsilon,0}(x,p))^2.
  \end{equation*}
  To see this recall how the principal symbol was changed and that if
  $\re(z)-a_{\varepsilon,0}(x,p)$ is zero or small then is
  $ (\partial_{p_1}a_{\varepsilon,0})(x,p)>2c_0$ hence we have to
  assume $\hbar$ sufficiently small. This implies there exists a $C_2$
  such we have the inequality
  \begin{equation*} \begin{aligned}
    \im(z- \tilde{a}_{\varepsilon,0}(x,p;-i\mu_2)& + C_2 \mu_2
    (\overline{z-\tilde{a}_{\varepsilon,0}(x,p;-i\mu_2)})(z-\tilde{a}_{\varepsilon,0}(x,p;-i\mu_2))
    \\
    &\geq \frac{c_0}{2}\mu_2 +\im(z),
   \end{aligned} \end{equation*}
  Where we again assume $\hbar$ sufficiently small and that all terms
  from the product in the above equation which is not
  $(\re(z)-a_{\varepsilon,0}(x,p))^2$ comes with at least one extra
  $\mu_2$.  Now by Theorem~\ref{B.sharp_gaar} we have for every $g$ in
  $L^2(\R^d)$
  \begin{equation*} \begin{aligned}
    \im&(\langle \OpW(z- \tilde{a}_{\varepsilon,0}(-i\mu_2)) g,g
    \rangle) + C_2 \mu_2
    \norm{\OpW(z-\tilde{a}_{\varepsilon,0}(-i\mu_2))g}_{L^2(\R^d)}^2
    \\
    &\geq \langle \OpW(\im(z- \tilde{a}_{\varepsilon,0}(-i\mu_2)) +
    C_2 \mu_2
    (\overline{z-\tilde{a}_{\varepsilon,0}(-i\mu_2)})(z-\tilde{a}_{\varepsilon,0}(-i\mu_2))
    ) g,g \rangle
    \\
    &\phantom{\geq \langle \OpW(\im(z-
      \tilde{a}_{\varepsilon,0}(-i\mu_2)) }{}- c \mu_2\hbar^\delta
    \norm{g}_{L^2(\R^d)}^2
    \\
    &\geq ( \frac{c_0\mu_2}{2} +\im(z) ) \norm{g}_{L^2(\R^d)}^2 -
    \tilde{c} ( \hbar^\delta + \mu_2 \hbar^\delta
    )\norm{g}_{L^2(\R^d)}^2 \geq \frac{c_0\mu_2}{6}
    \norm{g}_{L^2(\R^d)}^2,
   \end{aligned} \end{equation*}
  for $\hbar$ sufficiently small. Moreover, by a H\"older inequality we have
  \begin{equation*} \begin{aligned}
    \frac{c_0\mu_2}{6} & \norm{g}_{L^2(\R^d)}^2
    \\
    \leq{}& \abs{\langle \OpW(z- \tilde{a}_{\varepsilon,0}(-i\mu_2)) g,g
      \rangle} + C_2 \mu_2
    \norm{\OpW(z-\tilde{a}_{\varepsilon,0}(-i\mu_2))g}_{L^2(\R^d)}^2
    \\
    \leq{}& \frac{c_0\mu_2}{12} \norm{g}_{L^2(\R^d)}^2 + (
    \frac{6}{2c_0\mu_2} +C_2 \mu_2)
    \norm{\OpW(z-\tilde{a}_{\varepsilon,0}(-i\mu_2))g}_{L^2(\R^d)}^2.
   \end{aligned} \end{equation*}
  This shows that there exists a constant $C$ such
  \begin{equation*}
    \frac{c_0\mu_2}{C}   \norm{g}_{L^2(\R^d)} \leq \norm{\OpW(z-\tilde{a}_{\varepsilon,0}(-i\mu_2))g}_{L^2(\R^d)},
  \end{equation*}
  for all $g$ in $L^2(\R^d)$. Since
  $\OpW(z-\tilde{a}_{\varepsilon,0}(-i\mu_2))$ is the principal part
  of $ \tilde{A}_\varepsilon(\hbar;-i\mu_2)$ and the rest comes with
  an extra $\hbar$ in front as we have assumed regularity $\tau\geq1$
  the above estimate imply
  \begin{equation*}
    \frac{c_0\mu_2}{2C}   \norm{g}_{L^2(\R^d)} \leq \norm{z-\tilde{A}_\varepsilon(\hbar;-i\mu_2)g}_{L^2(\R^d)},
  \end{equation*}
  for $\hbar$ sufficiently small. We can do the above argument again
  for $\im(z)\geq0$ and obtain the same result. The estimate implies
  that the set $\{ z\in\C \,|\, \im(z)>-\frac{c_0\mu_2}{4}\}$ is in
  the regularity set of $\tilde{A}_\varepsilon(\hbar;-i\mu_2)$. Since
  $\{ z\in\C \,|\, \im(z)>-\frac{c_0\mu_2}{4}\}$ is connected we have
  that this is a subset of the resolvent set if just one point of the
  set is in the resolvent set. For a $z$ in $\C$ with positive
  imaginary part and
  $\abs{z}\geq 2\norm{\tilde{A}_\varepsilon(\hbar;-i\mu_2)}$ we have
  existence of $(z-\tilde{A}_\varepsilon(\hbar;-i\mu_2))^{-1}$ as a Neumann
  series. Hence we can conclude that
  $(z-\tilde{A}_\varepsilon(\hbar;-i\mu_2))^{-1}$ extends to a
  holomorphic function for $z$ in $\C$ such
  $\im(z) \geq -\frac{c_0\mu_2}{4 C_1}$. This implies
  \begin{equation*} \begin{aligned}
    0 &=-\frac{1}{\pi}\int_\C (\tilde{f}\psi_{\mu_1}
    \psi_{-\frac{c_0\mu_2}{4C_1}})(z)
    \mathcal{F}_\hbar^{-1}[\chi_{\xi}](s-z) \bar{\partial}
    \eta(-i\mu_2,z) \, L(dz)
    \\
    &=\frac{1}{\pi}\int_\C \bar{\partial}(\tilde{f}\psi_{\mu_1}
    \psi_{-\frac{c_0\mu_2}{4C_1}})(z)
    \mathcal{F}_\hbar^{-1}[\chi_{\xi}](s-z) \eta(-i\mu_2,z) \, L(dz)
    \\
    & = \frac{1}{\pi}\int_{\im(z)\geq0}
    \bar{\partial}(\tilde{f}\psi_{\mu_1})(z)
    \mathcal{F}_\hbar^{-1}[\chi_{\xi}](s-z) \eta(-i\mu_2,z) \, L(dz)
    \\
    &\phantom{ = \frac{1}{\pi}}{}+ \frac{1}{\pi}\int_{\im(z)<0}
    \bar{\partial}(\tilde{f}\psi_{\mu_1}
    \psi_{-\frac{c_0\mu_2}{4C_1}})(z)
    \mathcal{F}_\hbar^{-1}[\chi_{\xi}](s-z) \eta(-i\mu_2,z) \, L(dz),
   \end{aligned} \end{equation*}
  where we have used that $\psi_{-\frac{c_0\mu_2}{4C_1}}(z)=1$ for all
  $z$ in $\C$ with $\im(z)\geq0$. This equality gives us the following
  rewriting of \eqref{B.neg_pro_reminder_2}
  \begin{equation}\label{B.neg_pro_reminder_3}
    \begin{aligned}
      -\frac{1}{\pi} &\int_{\im(z)>\mu_1}
      \bar{\partial}(\tilde{f}\psi_{\mu_1})(z)
      \mathcal{F}_\hbar^{-1}[\chi_{\xi}](s-z) \eta(-i\mu_2,z)\, L(dz)
      \\
      &= \frac{1}{\pi}\int_{\im(z)<0}
      \bar{\partial}(\tilde{f}\psi_{\mu_1}
      \psi_{-\frac{c_0\mu_2}{4C_1}})(z)
      \mathcal{F}_\hbar^{-1}[\chi_{\xi}](s-z) \eta(-i\mu_2,z) \, L(dz)
      + \mathcal{O}(\hbar^{N_0}),
    \end{aligned}
  \end{equation}
  for any $N_0$ in $\N$. We have
  \begin{equation*}
    \bar{\partial}(\tilde{f}\psi_{\mu_1} \psi_{-\frac{c_0\mu_2}{4C_1}})(z) =  \bar{\partial}(\tilde{f})(z) (\psi_{\mu_1} \psi_{-\frac{c_0\mu_2}{4C_1}})(z) + \tilde{f}\psi_{\mu_1}(z) \bar{\partial} \psi_{-\frac{c_0\mu_2}{4C_1}}(z),
  \end{equation*}
  for $\im(z)<0$, where we have used that $\psi_{\mu_1}(z) = 1$ for
  $\im(z)\leq1$. The part of the integral on the right hand side of
  \eqref{B.neg_pro_reminder_3} with the derivative on $\tilde{f}$ will
  be small due to the same argumentation as previously in the
  proof. What remains is the part where the derivative is on
  $ \psi_{-\frac{c_0\mu_2}{4C_1}}$. For this part we have
  \begin{equation*} \begin{aligned}
    \frac{1}{\pi} \Big|&\int_{\im(z)<0} \tilde{f}(z) \bar{\partial}
    \psi_{-\frac{c_0\mu_2}{4C_1}}(z)
    \mathcal{F}_\hbar^{-1}[\chi_{\xi}](s-z) \eta(-i\mu_2,z) \, L(dz) \Big|
    \\
    &\leq \frac{C}{\hbar^d \left(\frac{M\hbar}{C_1\xi}
        \log(\frac{1}{\hbar})\right)^2} \int_{\substack{-\eta <\re(z)
        <\eta \\ -\frac{M\hbar
          c_0}{2C_1^2\xi}\log(\frac{1}{\hbar})<\im(z)< -\frac{M\hbar
          c_0}{4C_1^2\xi}\log(\frac{1}{\hbar})}} \frac{\xi}{\hbar}
    e^{\frac{\xi\im(z)}{2\hbar}} \,L(dz)
    \\
    &= \frac{C}{\hbar^d \left(\frac{M\hbar}{C_1\xi}
        \log(\frac{1}{\hbar})\right)^2)}
    e^{-\frac{c_oM}{2C_1^2}\log(\frac{1}{\hbar})} = \frac{\tilde{C}
      \xi^2}{\hbar^{d+2} M^2 \log(\frac{1}{\hbar})^2}
    \hbar^{\frac{c_o}{2C_1^2}M}.
   \end{aligned} \end{equation*}
  Hence by choosing $M$ sufficiently large we can make the above
  expression smaller than $\hbar^N$ for any $N$ in $\N$. This
  concludes the proof.
\end{proof}
This Theorem actually imply a stronger version of it self, where
the assumption of boundedness is not needed.
\begin{corollary}\label{B.neg_propagator_cor}
  Let $A_\varepsilon(\hbar)$ be a strongly
  $\hbar$-$\varepsilon$-admissible operator of regularity $\tau\geq1$
  which satisfies Assumption~\ref{B.self_adj_assumption} and assume there
  exists a $\delta$ in $(0,1)$ such that
  $\varepsilon\geq\hbar^{1-\delta}$. Suppose there exists a number
  $\eta>0$ such $a_{\varepsilon,0}^{-1}([-2\eta,2\eta])$ is compact
  and a constant $c>0$ such
  \begin{equation*}
    \abs{\nabla_p a_{\varepsilon,0}(x,p)} \geq c \quad \text{for all } (x,p) \in a_{\varepsilon,0}^{-1}([-2\eta,2\eta]),
  \end{equation*}
  where $a_{\varepsilon,0}$ is the principal symbol of
  $A_\varepsilon(\hbar)$. Let $f$ be in $C_0^\infty((-\eta,\eta))$ and
  $\theta$ be in $C_0^\infty(\R^d_x\times\R^d_p)$ such that
  $\supp(\theta)\subset a_{\varepsilon,0}^{-1}((-\eta,\eta))$. There
  exists a constant $T_0>0$ such that if $\chi$ is in
  $C_0^\infty((\frac12 \hbar^{1-\gamma},T_0))$ for a $\gamma$ in
  $(0,\delta]$, then for every $N$ in $\mathbb{N}$, we have
  \begin{equation*}
    \abs{\Tr[\OpW(\theta)f(A_\varepsilon(\hbar)) \mathcal{F}_\hbar^{-1}[\chi](s-A_\varepsilon(\hbar))\OpW(\theta)]} \leq C_N \hbar^N,
  \end{equation*}
  uniformly for $s$ in $(-\eta,\eta)$.
\end{corollary}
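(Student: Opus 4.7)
The strategy is to reduce the corollary to Theorem~\ref{B.neg_propagator} by replacing $A_\varepsilon(\hbar)$ with an auxiliary operator having bounded principal symbol and agreeing with $A_\varepsilon(\hbar)$ microlocally on the region where the trace is non-negligible. Concretely, I would pick $\varphi_1,\varphi_2\in C_0^\infty(\R^{2d})$ with $\varphi_2\equiv1$ on an open neighbourhood of $\supp(\theta)\cup a_{\varepsilon,0}^{-1}([-2\eta,2\eta])$ and $\varphi_1\equiv1$ on $\supp(\varphi_2)$, both of compact support, and set
\[
\tilde{a}_{\varepsilon,0}(x,p) = \varphi_1(x,p)\, a_{\varepsilon,0}(x,p) + C(1-\varphi_2(x,p)),
\]
with $C>2\eta$ chosen so that the tempered-weight and lower-bound properties in Assumption~\ref{B.self_adj_assumption} are preserved. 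Let $\tilde{A}_\varepsilon(\hbar)$ be the operator obtained by replacing the principal symbol of $A_\varepsilon(\hbar)$ by $\tilde{a}_{\varepsilon,0}$ while keeping the subprincipal data. Then $\tilde{A}_\varepsilon(\hbar)$ is $\hbar$-$\varepsilon$-admissible with bounded principal symbol, satisfies Assumption~\ref{B.self_adj_assumption}, and inherits the non-critical condition on $a_{\varepsilon,0}^{-1}([-2\eta,2\eta])$ since $\tilde{a}_{\varepsilon,0}=a_{\varepsilon,0}$ there. Theorem~\ref{B.neg_propagator} applied to $\tilde{A}_\varepsilon(\hbar)$ yields the desired bound for the trace involving $\tilde{A}_\varepsilon(\hbar)$.

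The remaining task is to show the error incurred by the replacement is $O(\hbar^N)$ for every $N$. Set $g(\lambda;s,\hbar) = f(\lambda)\mathcal{F}_\hbar^{-1}[\chi](s-\lambda)$; since $f\in C_0^\infty((-\eta,\eta))$, for each fixed $s,\hbar$ the function $g(\,\cdot\,;s,\hbar)$ lies in $C_0^\infty((-\eta,\eta))\subset\mathcal{W}$, so Helffer--Sj\"ostrand (Theorem~\ref{B.Helffer-Sjostrand}) applies. I would write, with $\tilde{g}$ an almost analytic extension in $\lambda$,
\[
g(A_\varepsilon(\hbar)) - g(\tilde{A}_\varepsilon(\hbar)) = -\frac{1}{\pi}\int_\C \bar{\partial}\tilde{g}(z)\bigl[(z-A_\varepsilon(\hbar))^{-1} - (z-\tilde{A}_\varepsilon(\hbar))^{-1}\bigr]\, L(dz),
\]
and expand the resolvent difference by the second resolvent identity. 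The crucial observation is that $A_\varepsilon(\hbar)-\tilde{A}_\varepsilon(\hbar) = \OpW(a_{\varepsilon,0}-\tilde{a}_{\varepsilon,0})$ plus terms of higher order in $\hbar$, and the leading symbol has support in $\{\varphi_2<1\}$, which is disjoint from $\supp(\theta)$.

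I would then sandwich this resolvent difference between $\OpW(\theta)$ factors and use Theorem~\ref{B.approx_of_resolvent} (the resolvent is strongly $\hbar$-$\varepsilon$-admissible with explicit polynomial control in $|z-\zeta_1|/\dist(z,[\zeta_1,\infty))$) together with the composition formulas of Theorem~\ref{B.composition-weyl-thm}. Since the symbol of $\OpW(\theta)(z-A_\varepsilon(\hbar))^{-1}$ is, to any order, concentrated near $\supp(\theta)$, and the symbol $a_{\varepsilon,0}-\tilde{a}_{\varepsilon,0}$ is supported outside a neighbourhood of $\supp(\theta)$, the composed symbol is $O(\hbar^\infty)$ up to polynomial factors in $1/|\im z|$. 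Combined with Theorem~\ref{B.thm_est_tr}, this gives a trace-norm bound of the form $C_{M,N}\,\hbar^N |z-\zeta_1|^{M}/\dist(z,[\zeta_1,\infty))^{M+2}$ for every $N,M$. Integrating against $\bar{\partial}\tilde{g}$ yields the claimed $O(\hbar^N)$ error.

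The main obstacle is the $\hbar$-dependence of $g$: the factor $\mathcal{F}_\hbar^{-1}[\chi]$ and its derivatives grow like negative powers of $\hbar$, which blows up the almost analytic extension. The remedy is that the arbitrary power of $\hbar$ produced by the disjoint-support composition comfortably absorbs this polynomial loss, provided one chooses $N$ large enough in the composition expansion and $n$ large enough in the almost analytic extension. Once these bookkeeping choices are made, the two traces differ by $O(\hbar^N)$ uniformly in $s\in(-\eta,\eta)$, and the corollary follows from Theorem~\ref{B.neg_propagator}.
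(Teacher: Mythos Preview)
Your reduction to Theorem~\ref{B.neg_propagator} via an auxiliary operator with bounded principal symbol is the paper's strategy, and your modification $\tilde{a}_{\varepsilon,0} = \varphi_1 a_{\varepsilon,0} + C(1-\varphi_2)$ is the right shape; indeed the additive constant outside is needed so that $\tilde{a}_{\varepsilon,0}^{-1}([-2\eta,2\eta])$ remains compact (the paper simply sets $\tilde a_{\varepsilon,j}=\varphi a_{\varepsilon,j}$ for all $j$, which as written makes $\tilde a_{\varepsilon,0}$ vanish at infinity and strictly speaking loses the compactness hypothesis). Where you diverge is in the comparison of $g(A_\varepsilon(\hbar))$ with $g(\tilde A_\varepsilon(\hbar))$ for $g(t)=f(t)\,\mathcal F_\hbar^{-1}[\chi](s-t)$. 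You go through Helffer--Sj\"ostrand and the resolvent identity, invoking disjoint-support composition estimates; this is correct but essentially replays the first reduction step already carried out inside the proof of Theorem~\ref{B.neg_propagator}. The paper's route is shorter: apply the functional calculus Theorem~\ref{B.func_calc} to both operators and observe that every symbol $a_{\varepsilon,j}^g$ carries a factor $g^{(k)}(a_{\varepsilon,0})$, hence is supported in $a_{\varepsilon,0}^{-1}(\supp f)$, a set on which all the data of $A_\varepsilon(\hbar)$ and $\tilde A_\varepsilon(\hbar)$ agree. The two symbolic expansions therefore coincide term by term and only the functional-calculus remainders differ; the polynomial-in-$\hbar^{-1}$ loss coming from the seminorms of $g$ is then absorbed by taking the expansion to high enough order, exactly as you say, but without reopening the resolvent integral.

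One point to fix in your construction: you keep the subprincipal symbols unchanged. If the original $a_{\varepsilon,j}$ for $j\geq1$ are unbounded (they lie only in $\Gamma^{a_{\varepsilon,0}-\zeta_1}$), then relative to the new \emph{bounded} weight $\tilde a_{\varepsilon,0}-\zeta_1$ they fail condition~\ref{B.H.3} of Assumption~\ref{B.self_adj_assumption}, and Theorem~\ref{B.neg_propagator} does not apply. Truncate all of them as well, e.g.\ $\tilde a_{\varepsilon,j}=\varphi_1 a_{\varepsilon,j}$ for $j\geq1$; the disjoint-support argument is unaffected since $A_\varepsilon(\hbar)-\tilde A_\varepsilon(\hbar)$ is still a pseudodifferential operator with symbol supported away from $\supp(\theta)$.
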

\begin{proof}
  The operator $A_\varepsilon(\hbar)$ satisfies the assumptions of
  Theorem~\ref{B.func_calc}. This gives us the functional calculus for
  the pseudo-differential operator for functions in the set
  $\mathcal{W}$ which contains all functions from $C^\infty_0(\R)$. It
  can be remarked that the function
  $f(t) \mathcal{F}_\hbar^{-1}[\chi](s-t)$ is a $C^\infty_0(\R)$ in
  $t$ and both imaginary and real part is also in $C^\infty_0(\R)$ just with real values. By $g(t)$ we denote either the real or imaginary part of
  $f(t) \mathcal{F}_\hbar^{-1}[\chi](s-t)$. Theorem~\ref{B.func_calc}
  gives
  \begin{equation*}
    g(A_\varepsilon(h)) = \sum_{j\geq 0} \hbar^j \OpW(a_{\varepsilon,j}^g),
  \end{equation*}
  where
  \begin{equation}\label{B.cor_neg_trace}
  \begin{aligned}
  a_{\varepsilon,0}^g &= g(a_{\varepsilon,0})
  \\
    a_{\varepsilon,j}^g &= \sum_{k=1}^{2j-1} \frac{(-1)^k}{k!} d_{\varepsilon,j,k} g^{(k)}(a_{\varepsilon,0}) \quad\quad\text{for $j\geq1$},
  \end{aligned}
  \end{equation}
  the symbols $d_{\varepsilon,j,k}$ are the polynomials from
  Lemma~\ref{B.approx_of_resolvent_lemma_1}. Now Let $\varphi$ be in
  $C_0^\infty(\R^d_x\times\R^d_p)$ such $\varphi(x,p)=1$ on a
  neighbourhood of
  $\supp(f(a_{\varepsilon,0})
  \mathcal{F}_\hbar^{-1}[\chi](s-a_{\varepsilon,0}))$. Then if we
  define the operator $\tilde{A}_\varepsilon(\hbar)$ as the operator
  with symbol
  \begin{equation*}
    \tilde{a}_\varepsilon(\hbar) =  \sum_{j\geq 0} \hbar^j \varphi a_{\varepsilon,j}.	
  \end{equation*}
  This operator satisfies the assumptions in
  Theorem~\ref{B.neg_propagator}. Hence we have
  \begin{equation}\label{B.cor_neg_trace_2}
    \abs{\Tr[\OpW(\theta)f(\tilde{A}_\varepsilon(\hbar)) \mathcal{F}_\hbar^{-1}[\chi](s-\tilde{A}_\varepsilon(\hbar))\OpW(\theta)]} \leq C_N \hbar^N
  \end{equation}
  But by construction we have
  \begin{equation*}
    \norm{g(A_\varepsilon(h)) - g(\tilde{A}_\varepsilon(h))} \leq C_n \hbar^n,
  \end{equation*}
  for every $n$ in $\N$, where we have used the form of the symbols
  given in \eqref{B.cor_neg_trace}. Combining this with
  \eqref{B.cor_neg_trace_2} we achieve the desired estimate.
\end{proof}
%
%
%

\section{Weyl law for rough pseudo-differential operators}

In this section we will prove a Weyl law for rough pseudo-differential
operators and we will do it with the approach used in
\cite{MR897108}. Hence we will first consider some asymptotic
expansions of certain integrals.

\begin{thm}\label{B.Expansion_of_trace}
  Let $A_\varepsilon(\hbar)$ be a $\hbar$-$\varepsilon$-admissible
  operator of regularity $\tau\geq1$ which satisfies
  Assumption~\ref{B.self_adj_assumption} and assume there exists a $\delta$
  in $(0,1)$ such that $\varepsilon\geq\hbar^{1-\delta}$. Suppose
  there exists $\eta>0$ such that
  $a_{0,\varepsilon}^{-1}([-2\eta,2\eta])$ is compact and every value in the interval $[-2\eta,2\eta]$ is non critical for $a_{0,\varepsilon}$, where
  $a_{\varepsilon,0}$ is the principal symbol of
  $A_\varepsilon(\hbar)$. Let $\chi$ be in $C^\infty_0((-T_0,T_0))$
  and $\chi=1$ in a neighbourhood of $0$, where $T_0$ is the number
  from Corollary~\ref{B.neg_propagator_cor}. Then for every $f$ in
  $C_0^\infty((-\eta,\eta))$ we have
  \begin{equation*}
    \int_\R \Tr[f(A_\varepsilon(\hbar)) e^{it\hbar^{-1}A_\varepsilon(\hbar)}]e^{-its\hbar^{-1} } \chi(t) \,dt = (2\pi\hbar)^{1-d} \Big[\sum_{j=0}^{N_0} \hbar^j \xi_j(s) + \mathcal{O}(\hbar^{N})\Big].
  \end{equation*}
  where the error term is uniform with respect to
  $s \in (-\eta,\eta)$ and the number $N_0$ depends on the desired error. The functions $\xi_j(s)$ are smooth functions in $s$ and are given by
  \begin{equation*}
    \xi_0(s) = f(s)  \int_{\{a_{\varepsilon,0}=s\}} \frac{1}{\abs{\nabla{a_{\varepsilon,0}}}} \,dS_s,
  \end{equation*}
  \begin{equation*}
    \xi_j(s) =  \sum_{k=1}^{2j-1}\frac{1}{k!} f(s)  \partial^k_s \int_{\{a_{\varepsilon,0}=s\}} \frac{d_{\varepsilon,j,k}}{\abs{\nabla{a_{\varepsilon,0}}}}
    \,dS_s,
  \end{equation*}
where the symbols $d_{\varepsilon,j,k}$ are the polynomials from Lemma~\ref{B.approx_of_resolvent_lemma_1}. In particular we have
  \begin{equation*} \begin{aligned}
    \xi_1 (s) = -f(s) \partial_s
    \int_{\{a_{\varepsilon,0}=s\}}
    \frac{a_{\varepsilon,1}}{\abs{\nabla{a_{\varepsilon,0}}}}
    \,dS_s.
   \end{aligned} \end{equation*}
  Moreover, we have the a priori bounds
  \begin{equation}
  |\hbar^j  \xi_j(s)  | \leq 
  \begin{cases}
  	C & \text{if $j=0$}
	\\
	C\hbar & \text{if $j=1$}
	\\
	C\hbar^{1+\delta(j-2)} & \text{if $j=1$}.
  \end{cases}
  \end{equation}
\end{thm}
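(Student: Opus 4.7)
The plan is a standard wave-trace calculation adapted to the rough setting: microlocalise, use the short-time parametrix $U_N(t,\hbar)$ from Theorem~\ref{B.existence_of_approximation} to make the oscillatory integral explicit, discard the long-time contribution via Corollary~\ref{B.neg_propagator_cor}, and conclude by a stationary-phase / coarea calculation together with a combinatorial identity that reorganises the resulting sum into the form stated.

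First I pick $\theta\in C_0^\infty(\R^{2d})$ equal to one on a neighbourhood of $\supp(f(a_{\varepsilon,0}))$ with $\supp(\theta)\subset a_{\varepsilon,0}^{-1}((-\eta,\eta))$. Lemma~\ref{B.Insert_localisation_in_trace}, combined with cyclicity of the trace and an analogous bound on the right, permits replacing $f(A_\varepsilon(\hbar))$ by $\OpW(\theta)f(A_\varepsilon(\hbar))\OpW(\theta)$ up to an $O(\hbar^\infty)$ trace-class error. Next I write $\chi=\chi_1+\chi_2$ with $\chi_1\equiv 1$ on $|t|\le\tfrac12\hbar^{1-\delta/2}$ and $\supp(\chi_1)\subset(-\hbar^{1-\delta/2},\hbar^{1-\delta/2})$. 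Swapping the $t$-integral and the trace on the $\chi_2$-piece produces a trace of the form treated by Corollary~\ref{B.neg_propagator_cor} with $\gamma=\delta/2$, which is $O(\hbar^\infty)$ uniformly in $s\in(-\eta,\eta)$.

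For the $\chi_1$-piece I replace $\OpN{0}(\theta)e^{it\hbar^{-1}A_\varepsilon(\hbar)}$ by the parametrix $U_N(t,\hbar)$ of Theorem~\ref{B.existence_of_approximation}. Duhamel together with the bound $\|\hbar\partial_tU_N-iU_NA_\varepsilon(\hbar)\|_{\mathcal{L}(L^2)}\le C\hbar^{N_0}$ gives an operator error of size $O(|t|\hbar^{N_0-1})$; multiplied by the trace-class $f(A_\varepsilon(\hbar))$ (whose trace norm is $O(\hbar^{-d})$ by Theorem~\ref{B.thm_est_tr} applied to its functional-calculus expansion from Theorem~\ref{B.func_calc}) and integrated against $\chi_1$ of mass $O(\hbar^{1-\delta/2})$ this is $O(\hbar^{N_0-d-\delta/2})$, which is $O(\hbar^N)$ for $N_0$ large. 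Using Theorem~\ref{B.composition-weyl-thm} and Theorem~\ref{B.trace_formula} the resulting short-time trace becomes explicit:
\begin{equation*}
\Tr[f(A_\varepsilon(\hbar))U_N(t,\hbar)] = \frac{1}{(2\pi\hbar)^d}\int_{\R^{2d}}e^{it\hbar^{-1}a_{\varepsilon,0}(x,p)}\sum_{j\ge0}\hbar^j\sum_{k\ge0}\bigl(\tfrac{it}{\hbar}\bigr)^k C_{j,k}(x,p;\varepsilon)\,dxdp,
\end{equation*}
where $C_{0,0}=f(a_{\varepsilon,0})\theta$ and the higher $C_{j,k}$ are polynomials in the functional-calculus coefficients $d_{\varepsilon,j,l}f^{(l)}(a_{\varepsilon,0})$ from Theorem~\ref{B.func_calc} and the parametrix coefficients $u_j$ from Theorem~\ref{B.existence_of_approximation}.

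For the final step the identity $(it/\hbar)^k e^{it(a_{\varepsilon,0}-s)/\hbar}=(-1)^k\partial_s^k e^{it(a_{\varepsilon,0}-s)/\hbar}$ turns the $t$-integral into $(-1)^k\partial_s^k\widetilde{\chi_1}((a_{\varepsilon,0}(x,p)-s)/\hbar)$, where $\widetilde{\chi_1}(\sigma):=\int\chi_1(t)e^{it\sigma}\,dt$. The coarea formula (legal by the non-critical assumption) followed by the rescaling $v=(u-s)/\hbar$ reduces everything to one-dimensional integrals of the form $\int\widetilde{\chi_1}^{(k)}(v)F_{j,k}(s+\hbar v)\,dv$ with $F_{j,k}(u):=\int_{\{a_{\varepsilon,0}=u\}}C_{j,k}/|\nabla a_{\varepsilon,0}|\,dS_u$. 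Taylor expanding in $\hbar v$ and using the moment identities $\int v^l\widetilde{\chi_1}^{(k)}(v)\,dv=2\pi(-1)^k k!\,\delta_{l,k}$ (valid because $\chi_1\equiv 1$ near $0$ kills every $\chi_1^{(m)}(0)$ with $m\ge1$) collapses each $(j,k)$-term to a single contribution $(2\pi\hbar)^{1-d}\hbar^j(-1)^k F_{j,k}^{(k)}(s)$. Reorganising the double sum over the functional-calculus index $l$ and the parametrix index $m$, and invoking the cancellation $\sum_{l=0}^{q}(-1)^l/[l!(q-l)!]=\delta_{q,0}$, eliminates every $f^{(l)}$-contribution with $l\ge1$ and produces precisely $\xi_j(s)=\sum_{k=1}^{2j-1}(1/k!)f(s)\partial_s^k\int d_{\varepsilon,j,k}/|\nabla a_{\varepsilon,0}|\,dS_s$ (with $\xi_0$ corresponding to the $k=l=m=0$ term). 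The a priori bounds on $\hbar^j\xi_j(s)$ are inherited from the bounds on $u_j$ in Theorem~\ref{B.existence_of_approximation}. The main obstacle is the combinatorial bookkeeping of the interplay between the functional-calculus symbols and the parametrix through the stationary-phase reduction, together with the need to absorb the rough-symbol losses $\varepsilon^{-|\alpha|}$ against $\varepsilon\ge\hbar^{1-\delta}$ while maintaining uniformity in $s$.
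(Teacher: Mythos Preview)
Your steps 1--5 (microlocalisation, time-splitting, discarding the long-time piece via Corollary~\ref{B.neg_propagator_cor}, Duhamel replacement by $U_N$, and writing the trace as an explicit oscillatory integral) coincide with the paper's argument. One difference: after passing to the parametrix, the paper invokes Lemma~\ref{B.neg_part} to re-extend the time cutoff from the $\hbar$-dependent $\chi_1$ back to the full $\chi$, and only then applies stationary phase (in $(t,P_1)$ after the change of variables $P_1=a_{\varepsilon,0}$). This avoids carrying an $\hbar$-dependent cutoff through the oscillatory-integral analysis; your route with $\chi_1$ retained is not wrong, but the Taylor-remainder control then requires the moment bounds $\int|v|^M|\widetilde{\chi_1}^{(k)}(v)|\,dv\sim \hbar^{(k-M)(1-\delta/2)}$, which you do not address.

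The genuine gap is in your identification of the coefficients. You assert that the binomial identity $\sum_{l=0}^{q}(-1)^l/[l!(q-l)!]=\delta_{q,0}$ ``eliminates every $f^{(l)}$-contribution with $l\ge1$'' and collapses the double sum into the stated $\xi_j$. But the objects being combined are (i) the functional-calculus symbols $d_{\varepsilon,j,k}f^{(k)}(a_{\varepsilon,0})$ from Theorem~\ref{B.func_calc}, (ii) the change-of-quantisation corrections from Corollary~\ref{B.connection_t_quantisations}, and (iii) the parametrix amplitudes $u_k$ from Theorem~\ref{B.existence_of_approximation}, which are built by a \emph{different} recursion in $\partial_x^\alpha\tilde a_\varepsilon$ and carry their own implicit $\hbar$-dependence. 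There is no reason the alternating binomial sum alone reorganises this composite into the $d_{\varepsilon,j,k}$ of Lemma~\ref{B.approx_of_resolvent_lemma_1}; at minimum you would need to expand each $u_k$ in $\hbar$ and track how the parametrix recursion matches the resolvent recursion, which you do not do.

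The paper sidesteps this entirely. Having established \emph{existence} of the expansion $\sum_j\hbar^j\xi_j(s)$ by stationary phase, it identifies the $\xi_j$ by pairing with an arbitrary $\varphi\in C_0^\infty((-\eta,\eta))$: a second stationary-phase argument in $(s,t)$ reduces $\int\varphi(s)[\text{LHS}]\,ds$ to $2\pi\hbar\,\Tr[(f\varphi)(A_\varepsilon(\hbar))]+O(\hbar^N)$, and the latter has a known expansion by Theorem~\ref{B.trace_formula_fkt}. Matching powers of $\hbar$ gives $\int\varphi(s)\xi_j(s)\,ds=T_j(f\varphi,A_\varepsilon)$, from which the stated surface-integral formulas for $\xi_j$ follow by coarea and integration by parts. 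This distributional argument is the missing idea; it replaces your unjustified combinatorics by a comparison with an already-computed trace.
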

\begin{remark}\label{B.Expansion_of_trace_remark}
Suppose we are in the setting of Theorem~\ref{B.Expansion_of_trace}. The statement of the theorem can be rephrased in terms of convolution of measures and a function. To see this let $f$ be in $C_0^\infty((-\eta,\eta))$, for this function we can define the function
\begin{equation*}
	\begin{aligned}
	M_f^0(\omega;\hbar) &=\Tr[f(A_\varepsilon(\hbar)) \boldsymbol{1}_{(-\infty,\omega]}(A_\varepsilon(\hbar))] 
	\\
	&= \Tr[f(A_\varepsilon(\hbar)) \boldsymbol{1}_{[0,\infty)}(\omega-A_\varepsilon(\hbar))] =  \Tr[f(A_\varepsilon(\hbar)) (\omega-A_\varepsilon(\hbar))_{+}^0],
	\end{aligned}
\end{equation*}
where $t_+=\max(0,t)$. We have that $M_f^0(\omega;\hbar)$ is a monotonic increasing function hence it defines a measure in the natural way. If we consider the function
\begin{equation*}
	\hat{\chi}_\hbar(t) = \frac{1}{2\pi\hbar} \int_\R e^{it\hbar^{-1}s} \chi(s) \, ds,
\end{equation*}
then we have
\begin{equation}
	\begin{aligned}
	\MoveEqLeft\hat{\chi}_\hbar*dM_f^0(\cdot;\hbar) (s) = \sum_{e_j(\hbar)\in\mathcal{P}} \hat{\chi}_\hbar(s-e_j(\hbar))f(e_j(\hbar))
	=\Tr[\hat{\chi}_\hbar(s-A_\varepsilon(\hbar))f(A_\varepsilon(\hbar))]
	\\
	&=\frac{1}{2\pi\hbar}\int_\R \Tr[f(A_\varepsilon(\hbar)) e^{it\hbar^{-1}A_\varepsilon(\hbar)}]e^{-its\hbar^{-1} } \chi(t) \,dt 
	= \frac{1}{(2\pi\hbar)^{d}} \Big[\sum_{j=0}^{N_0} \hbar^j \xi_j(s) + \mathcal{O}(\hbar^{N})\Big].
	\end{aligned}
\end{equation}
This formulation of the theorem will prove useful when we consider Riesz means.
\end{remark}
The proof of the theorem is split in two parts. First is the existence
of the expansion proven by a stationary phase theorem. Next is the
form of the coefficients found by application of the functional
calculus developed earlier.

\begin{proof}
  In order to be in a situation, where we can apply the stationary
  phase theorem we need to exchange the propagator with the
  approximation of it constructed in
  Section~\ref{B.construction_propagator}. As the construction
  required auxiliary localisation we need to introduce these. Let
  $\theta$ be in $C_0^\infty(\R_x^d\times\R^d_p)$ such
  $\supp(\theta)\subset a_{\varepsilon,0}^{-1}((-\eta,\eta))$ and
  $\theta(x,p) = 1$ for all $(x,p)$ in
  $\supp(f(a_{\varepsilon,0}))$. Now by
  Lemma~\ref{B.Insert_localisation_in_trace} we have
  \begin{equation}\label{B.expansion_change}
    \norm{(1-\OpW(\theta))f(A_\varepsilon(\hbar)) e^{it\hbar^{-1}A_\varepsilon(\hbar)} }_{\Tr} \leq  \norm{(1-\OpW(\theta))f(A_\varepsilon(\hbar))}_{\Tr} \leq C_N\hbar^N,
  \end{equation}
  for every $N$ in $\mathbb{N}$. Hence we have
  \begin{equation*}
    |\Tr[f(A_\varepsilon(\hbar))e^{it\hbar^{-1}A_\varepsilon(\hbar)}]-\Tr[\OpW(\theta)f(A_\varepsilon(\hbar))e^{it\hbar^{-1}A_\varepsilon(\hbar)}]| \leq C_N\hbar^N,
  \end{equation*}
  for any $N$ in $\mathbb{N}$. This implies the identity
  \begin{equation}\label{B.expansion_change_0}
    \begin{aligned}
      \MoveEqLeft \int_\R \Tr[f(A_\varepsilon(\hbar))
      e^{it\hbar^{-1}A_\varepsilon(\hbar)}]e^{-its\hbar } \chi(t) \,dt
      \\
      &= \int_\R \Tr[ \OpW(\theta) f(A_\varepsilon(\hbar))
      e^{it\hbar^{-1}A_\varepsilon(\hbar)}]e^{-its\hbar } \chi(t) \,dt
      +\mathcal{O}(\hbar^N).
    \end{aligned}
  \end{equation}
  In order to use the results of
  Section~\ref{B.construction_propagator} we need also to localise in
  time. To do this we let $\chi_2$ be in $C_0^\infty(\R)$ such
  $\chi_2(t)=1$ for $t$ in
  $[-\frac12 \hbar^{1-\frac{\delta}{2}}, \frac12
  \hbar^{1-\frac{\delta}{2}}]$ and
  $\supp(\chi_2) \subset
  [-\hbar^{1-\frac{\delta}{2}},\hbar^{1-\frac{\delta}{2}}]$. With this
  function we have
  \begin{equation}\label{B.expansion_change_1}
    \begin{aligned}
     \MoveEqLeft \int_\R \Tr[ \OpW(\theta) f(A_\varepsilon(\hbar))
      e^{it\hbar^{-1}A_\varepsilon(\hbar)}]e^{-its\hbar } \chi(t) \,dt
      \\
      &= \int_\R \Tr[ \OpW(\theta) f(A_\varepsilon(\hbar))
      e^{it\hbar^{-1}A_\varepsilon(\hbar)}]e^{-its\hbar } \chi_2(t)
      \chi(t) \,dt
      \\
      &\phantom{=}{}+ \int_\R \Tr[ \OpW(\theta)
      f(A_\varepsilon(\hbar))
      e^{it\hbar^{-1}A_\varepsilon(\hbar)}]e^{-its\hbar }(1-\chi_2(t))
      \chi(t)\,dt.
    \end{aligned}
  \end{equation}
  We will use the notation $\tilde{\chi}(t)=(1-\chi_2(-t))\chi(-t)$ in
  the following. We start by considering the second term. Here we introduce an extra localisation analogous to how we introduced the first. Using the 
  estimate in \eqref{B.expansion_change} and cyclicity of the trace
  again we have
  \begin{equation*}
    \Tr[ \OpW(\theta) f(A_\varepsilon(\hbar)) e^{it\hbar^{-1}A_\varepsilon(\hbar)}] =  \Tr[ \OpW(\theta) f(A_\varepsilon(\hbar)) e^{it\hbar^{-1}A_\varepsilon(\hbar)}\OpW(\theta)] + C_N \hbar^N.
  \end{equation*}
  Now by Corollary~\ref{B.neg_propagator_cor} we have
  \begin{equation}\label{B.expansion_change_2}
    \begin{aligned}
      \MoveEqLeft \int_\R \Tr[ \OpW(\theta) f(A_\varepsilon(\hbar))
      e^{it\hbar^{-1}A_\varepsilon(\hbar)}\OpW(\theta)]e^{-its\hbar
      }\tilde{\chi}(-t)\,dt
      \\
      &= 2\pi\hbar \Tr[ \OpW(\theta) f(A_\varepsilon(\hbar))
      \mathcal{F}_\hbar^{-1}[\tilde{\chi}](s-A_\varepsilon(\hbar))
      \OpW(\theta)]
      \leq \tilde{C}_N \hbar^N,
    \end{aligned}
  \end{equation}
  uniformly in $s$ in $[-\eta,\eta]$ and any $N$ in $\N$. What remains
  in  \eqref{B.expansion_change_1} is the first term. For this term we change the quantisation of the
  localisation. By Corollary~\ref{B.connection_t_quantisations} we
 obtain for any $N$ in $\N$
  \begin{equation*}
    \OpW(\theta) = \OpN{0}(\theta_0^N) + \hbar^{N+1} R_N(\hbar),
  \end{equation*}
  where $R_N$ is a bounded operator uniformly in $\hbar$ since
  $\theta$ is a non-rough symbol. Moreover we have
  \begin{equation*}
    \theta_0^N(x,p) = \sum_{j=0}^N \frac{\hbar^j}{j!}  \big(-\frac{1}{2}\big)^j (\nabla_x D_p)^j \theta(x,p).
  \end{equation*}
  If we choose $N$ sufficiently large (greater than or equal to $2$)
  we can exchange $ \OpW(\theta)$ by $\OpN{0}(\theta_0^N)$ plus a
  negligible error. We will in the following omit the $N$ on
  $\theta_0^N$. For the first term on the right hand side in
  \eqref{B.expansion_change_1} we have
  $\abs{t}\leq\hbar^{1-\frac{\delta}{2}}$. Now by
  Theorem~\ref{B.existence_of_approximation} there exists
  $U_N(t,\varepsilon,\hbar)$ with integral kernel
  \begin{equation*} \begin{aligned}
     K_{U_N}(x,y,t,\varepsilon,\hbar)
    = \frac{1}{(2\pi\hbar)^d} \int_{\R^d} e^{i \hbar^{-1} \langle
      x-y,p\rangle} e^{ i t \hbar^{-1} a_{\varepsilon,0}(x,p)}
    \sum_{j=0}^N (it\hbar^{-1})^j u_j(x,p,\hbar,\varepsilon) \, dp,
   \end{aligned} \end{equation*}
  such that
  \begin{equation}\label{B.expansion_change_3}
    \norm{\hbar\partial_t U_N(t,\varepsilon, \hbar) - i U_N(t,\varepsilon, \hbar) A_\varepsilon(\hbar)}_{\mathcal{L}(L^2(\R^d))}  \leq C \hbar^{N_0},
  \end{equation}
  for $\abs{t}\leq \hbar^{1-\frac{\delta}{2}}$ and
  $U_N(0,\varepsilon,\hbar) = \OpN{0}(\theta_0)$. We emphasise that
  the number $N$ in the operator $U_N$ is dependent on the error
  $N_0$.  We observe that
  \begin{equation*}
  	\begin{aligned}
     \MoveEqLeft |\Tr[ \OpN{0}(\theta_0) e^{it\hbar^{-1}A_\varepsilon(\hbar)}
    f(A_\varepsilon(\hbar))] - \Tr[ U_N(t,\varepsilon, \hbar)
    f(A_\varepsilon(\hbar))] |
    \\
    &=| \Tr[ \int_0^t \partial_s( U_N(t-s,\varepsilon, \hbar)
    e^{is\hbar^{-1}A_\varepsilon(\hbar)} f(A_\varepsilon(\hbar))
    \,ds]|
    \\
    &=| \Tr[ \int_0^t (-(\partial_tU_N)(t-s,\varepsilon, \hbar) +
    i\hbar^{-1} U_N(t-s,\varepsilon, \hbar) A_\varepsilon(\hbar) )
    e^{is\hbar^{-1}A_\varepsilon(\hbar)} f(A_\varepsilon(\hbar))
    \,ds]|
    \\
    &\leq \hbar^{-1} \int_0^t \norm{\hbar\partial_sU_N(s,\varepsilon,
      \hbar) - i U_N(s,\varepsilon, \hbar)
      A_\varepsilon(\hbar)}_{\mathcal{L}(L^2(\R^d))}
      \norm{e^{i(t-s)\hbar^{-1}A_\varepsilon(\hbar)}
      f(A_\varepsilon(\hbar))}_{\Tr} \,ds
    \\
    &\leq C_N \hbar^{N_0-d},
  \end{aligned}
  \end{equation*}
  where we have used \eqref{B.expansion_change_3}. By combining this
  with \eqref{B.expansion_change_1} and \eqref{B.expansion_change_2}
  we have
  \begin{equation}\label{B.expansion_change_4}
    \begin{aligned}
      \int_\R &\Tr[ \OpW(\theta) f(A_\varepsilon(\hbar))
      e^{it\hbar^{-1}A_\varepsilon(\hbar)}]e^{-its\hbar } \chi_2(t)
      \chi(t) \,dt
      \\
      &= \int_\R \Tr[ U_N(t,\varepsilon,
      \hbar)f(A_\varepsilon(\hbar))]e^{-its\hbar }\chi_2(t) \chi(t)
      \,dt + \mathcal{O}(\hbar^N).
    \end{aligned}
  \end{equation}
  Before we proceed we will change the quantisation of
  $f(A_\varepsilon(\hbar))$. From Theorem~\ref{B.func_calc} we have
  \begin{equation*}
    f(A_\varepsilon(\hbar)) = \sum_{j\geq 0} \hbar^j  \OpW(a_{\varepsilon,j}^f),
  \end{equation*}
  where
  \begin{equation}\label{B.expansion_cal_sym}
    a_{\varepsilon,j}^f = \sum_{k=1}^{2j-1} \frac{(-1)^k}{k!} d_{\varepsilon,j,k} f^{(k)}(a_{\varepsilon,0}),
  \end{equation}
  the symbols $d_{\varepsilon,j,k}$ are the polynomials from
  Lemma~\ref{B.approx_of_resolvent_lemma_1}. We choose a sufficiently
  large $N$ and consider the first $N$ terms of the operator
  $f(A_\varepsilon(\hbar))$. For each of these terms we can use
  Corollary~\ref{B.connection_t_quantisations} and this yields
  \begin{equation*}
    \OpW(a_{\varepsilon,j}^f) =\OpN{1}(a_{\varepsilon,j}^{f,M}) + \hbar^{M+1} R_M,
  \end{equation*}
  where $\hbar^{M+1} R_M$ is a bounded by $C_M \hbar^N$ in operator
  norm. The symbol $a_{\varepsilon,j}^{f,M}$ is given by
  \begin{equation*}
    a_{\varepsilon,j}^{f,M} = \sum_{j=0}^M \frac{\hbar^j}{j!}  \big(\frac{1}{2}\big)^j (\nabla_x D_p)^j a_{\varepsilon,j}^{f}.
  \end{equation*}
  By chossing $N$ sufficiently large we can exchange
  $f(A_\varepsilon(\hbar))$ by
  \begin{equation*}
    \OpN{1}(\tilde{a}_{\varepsilon}^{f,M}) \coloneqq \sum_{j= 0}^N \hbar^j \OpN{1}(a_{\varepsilon,j}^{f,M}), 
  \end{equation*}
  plus a negligible error as $U_N(t,\varepsilon, \hbar)$ is trace
  class. We will omit the $M$ when writing
  $\tilde{a}_{\varepsilon}^{f,M}$. Hence we have the equality
  \begin{equation*} \begin{aligned}
    \int_\R &\Tr[ \OpW(\theta) f(A_\varepsilon(\hbar))
    e^{it\hbar^{-1}A_\varepsilon(\hbar)}]e^{-its\hbar } \chi_2(t)
    \chi(t) \,dt
    \\
    &= \int_\R \Tr[ U_N(t,\varepsilon, \hbar)
    \OpN{1}(\tilde{a}_{\varepsilon}^{f,M}) ]e^{-its\hbar } \chi_2(t)
    \chi(t) \,dt + \mathcal{O}(\hbar^N).
   \end{aligned} \end{equation*}
  As we have the non-critical assumption
  Lemma~\ref{B.neg_part} gives us that the trace in the above expression is
  negligible for $\frac12 \hbar^{1-\frac{\delta}{2}} \leq|t|\leq T_0$.
  Hence we can omit the $ \chi_2(t)$ in the expression and then
  we have
  \begin{equation}\label{B.expansion_change_5}
    \begin{aligned}
      \MoveEqLeft \int_\R \Tr[ \OpW(\theta) f(A_\varepsilon(\hbar))
      e^{it\hbar^{-1}A_\varepsilon(\hbar)}]e^{-its\hbar } \chi_2(t)
      \chi(t) \,dt
      \\
      &= \int_\R \Tr[ U_N(t,\varepsilon, \hbar)
      \OpN{1}(\tilde{a}_{\varepsilon}^{f,M}) ]e^{-its\hbar } \chi(t)
      \,dt + \mathcal{O}(\hbar^N).
    \end{aligned}
  \end{equation}
  The two operators $U_N(t,\varepsilon, \hbar)$ and
  $\OpN{1}(\tilde{a}_{\varepsilon}^{f,M})$ are both given by kernels
  and the composition of the operators has the kernel
  \begin{equation*} \begin{aligned}
    \MoveEqLeft K_{U_N(t,\varepsilon,
      \hbar)\OpN{1}(\tilde{a}_{\varepsilon}^{f,M})}(x,y)
    \\
    & = \frac{1}{(2\pi\hbar)^d} \int_{\R^d} e^{i
      \hbar^{-1} \langle x-y,p\rangle} e^{ i t \hbar^{-1}
      a_{\varepsilon,0}(x,p)} \sum_{j=0}^N (it\hbar^{-1})^j
    u_j(x,p,\hbar,\varepsilon) \tilde{a}_{\varepsilon}^{f,M} (y,p) \,
    dp.
   \end{aligned} \end{equation*}
  We can now calculate the trace and we get
  \begin{equation}\label{B.expansion_change_6}
    \begin{aligned}
      \MoveEqLeft \int_\R \Tr[ U_N(t,\varepsilon, \hbar)
      \OpN{1}(\tilde{a}_{\varepsilon}^{f,M}) ]e^{-its\hbar } \chi(t)
      \,dt
      \\
      &= \frac{1}{(2\pi\hbar)^{d}} \int_{\R^{2d+1}}
      \chi(t) e^{ i t \hbar^{-1} (a_{\varepsilon,0}(x,p)-s)}
      u(x,p,t,\hbar,\varepsilon) \tilde{a}_{\varepsilon}^{f,M} (x,p)
      \,dxdpdt,
    \end{aligned}
  \end{equation}
  where
  \begin{equation*}
    u(x,p,t,\hbar,\varepsilon) = \sum_{j=0}^N (it\hbar^{-1})^j u_j(x,p,\hbar,\varepsilon).
  \end{equation*}
  In order to evaluate the integral we will a stationary phase argument. We will use the theorem in $t$ and one of the $p$
  coordinates after using a partition of unity according to
  $p$.  By assumption we have that $\abs{\nabla_p a_\varepsilon,0}>c$ on
  the support of $\theta$. Hence we can make a partition $\Omega_j$
  such that $\partial_{p_j} a_\varepsilon\neq 0$ on $\Omega_j$ and
  with loss of generality we can assume that $\Omega_j$ is
  connected. To this partition we choose a partition of the unit
  supported on each of the sets $\Omega_j$. When we have localised to
  each of these sets the calculation will be identical with some
  indicies changed. Hence we assume that
  $\partial_{p_1} a_\varepsilon\neq 0$ on the entire support of the
  integrant. We will now make a change of variables in the integral in
  the following way:
  \begin{equation*}
    F:(x,p) \rightarrow (X,P)=(x_1,\dots,x_d,a_{\varepsilon,0}(x,p),p_2,\dots,p_d).
  \end{equation*}
  This transformation has the following jacobian matrix
  \begin{equation*}
    DF = 
    \begin{pmatrix}
      I_d & 0_{d\times d} \\
      \nabla_x a_{\varepsilon,0}^t & \nabla_p a_{\varepsilon,0}^t \\
      0_{d-1 \times d+1} & I_{d-1}
    \end{pmatrix},
  \end{equation*}
  where $I_d$ is the $d$-dimensional identity matrix,
  $\nabla_x a_\varepsilon^t$ and $\nabla_p a_\varepsilon^t$ are the
  transposed of the respective gradients and the zeros are
  corresponding matrices with only zeroes and the dimensions indicated
  in the subscript. We note that
  \begin{equation*}
    \det(DF)= \partial_{p_1}a_{\varepsilon,0},
  \end{equation*}
  which is non zero by our assumptions. Hence the inverse map exists
  and we will denote it by $F^{-1}$. For the inverse
  we denote the part that gives $p$ as a function of $(X,P)$ by
  $F_2^{-1}$. By this change of variables we have
  \begin{equation*} \begin{aligned}
    & \int_{\R^{2d+1}}\chi(t) e^{ i t \hbar^{-1}
      (a_{\varepsilon,0}(x,p)-s)} u(x,p,t,\hbar,\varepsilon)
    \tilde{a}_{\varepsilon}^{f,M} (x,p) \,dxdpdt
    \\
    &= \int_{\R^{2d+1}} \chi(t) e^{ i t \hbar^{-1}
      (P_1-s)} \frac{u(X,F_2^{-1}(X,P),t,\hbar,\varepsilon)
      \tilde{a}_{\varepsilon}^{f,M}
      (X,F_2^{-1}(X,P))}{\partial_{p_1}a_{\varepsilon,0}(X,F_2^{-1}(X,P))}
    \,dXdPdt,
   \end{aligned} \end{equation*}
  where we have omitted the prefactor $(2\pi\hbar)^{-d}$. If we preform the change of variables $\tilde{P}_1 = P_1-s$ we arrive at a situation we we
  can apply quadratic stationary phase. Hence by stationary phase in the variables $\tilde{P}_1$ and $t$,
  \eqref{B.expansion_change_0}, \eqref{B.expansion_change_1},
  \eqref{B.expansion_change_4}, \eqref{B.expansion_change_5} and
  \eqref{B.expansion_change_6} we get
  \begin{equation}\label{B.expansion_change_7}
    \int_\R \Tr[f(A_\varepsilon(\hbar)) e^{it\hbar^{-1}A_\varepsilon(\hbar)}]e^{-its\hbar } \chi(t) \,dt = (2\pi\hbar)^{1-d} \Big[\sum_{j=0}^{N_0} \hbar^j \xi_j(s) + \mathcal{O}(\hbar^{N})\Big],
  \end{equation}
  uniformly for $s$ in $(-\eta,\eta)$. The a priori bounds given in the Theorem also follows directly from this application of stationary phase combined with the estimates on $u_j(x,p,\hbar,\varepsilon)$ from Theorem~\ref{B.existence_of_approximation}. This ends the proof of the
  existence of the expansion. 
  
  From the above expression we have that
  $\xi_j(s)$ are smooth functions in $s$ hence the above expression
  defines a distribution on $C^\infty_0((-\eta,\eta))$. So in order to
  find the expressions of the $\xi_j(s)$'s we consider the action of
  the distribution. We let $\varphi$ be in
  $C_0^\infty((-\eta,\eta))$ and consider the expresion
  \begin{equation}\label{B.find_xi_0}
    \int_{\R^2} \Tr[f(A_\varepsilon(\hbar)) e^{it\hbar^{-1}A_\varepsilon(\hbar)}]e^{-its\hbar } \chi(t) \varphi(s) \,dt ds.
  \end{equation}
 Using that $f$ is supported in the pure point spectrum of
  $A_\varepsilon(\hbar)$ we have
  \begin{equation}\label{B.find_xi_1}
    \begin{aligned}
      \MoveEqLeft \int_{\R^2} \Tr[f(A_\varepsilon(\hbar))
      e^{it\hbar^{-1}A_\varepsilon(\hbar)}]e^{-its\hbar } \chi(t)
      \varphi(s) \,dt ds
      \\
      &= \Tr[f(A_\varepsilon(\hbar)) \int_\R
      \mathcal{F}_1[\chi](\tfrac{s}{\hbar})
      \varphi(A_\varepsilon(\hbar)-s) \, ds ],
    \end{aligned}
  \end{equation}
  where we have used Fubini's theorem. That $f$ is supported in the pure point spectrum follows from Theorem~\ref{B.point.spectrum}. If we consider the integral in
  the right hand side of \eqref{B.find_xi_1} and let $\psi$ be in
  $C_0^\infty((-2,2))$ such that $\psi(t)=1$ for $\abs{t}\leq 1$ we
  have
  \begin{equation}\label{B.find_xi_2}
    \begin{aligned}
      \int_\R \mathcal{F}_1[\chi](\tfrac{s}{\hbar})
      \varphi(A_\varepsilon(\hbar)-s) \, ds
      ={}& \int_{\R^2} e^{-it s\hbar^{-1}} \chi(t) \psi(s)
      \varphi(A_\varepsilon(\hbar)-s) \, ds dt
      \\
      &+ \int_{\R^2} e^{-it s\hbar^{-1}} \chi(t)(1- \psi(s))
      \varphi(A_\varepsilon(\hbar)-s) \, ds dt.
    \end{aligned}
  \end{equation}
 From the identity
  \begin{equation*}
    \Big(\frac{i\hbar}{s}\Big)^n \partial_t^n e^{-it s\hbar^{-1}} =e^{-it s\hbar^{-1}},
  \end{equation*}
  integration by parts, the spectral theorem and that the function
  $(1- \psi(s))$ is support on $\abs{s}\geq1$, we have that
  \begin{equation}\label{B.find_xi_3}
    \Big\lVert \int_{\R^2} e^{-it s\hbar^{-1}} \chi(t)(1- \psi(s)) \varphi(A_\varepsilon(\hbar)-s) \, ds dt \Big\rVert_{\mathcal{L}(L^2(\R^d))}= C_N \hbar^N, 
  \end{equation}
  for any $N$ in $\mathbb{N}$.  Now for the first integral in the
  right hand side of \eqref{B.find_xi_2} we have by
  Proposition~\ref{B.quad_stationary_phase} (Quadratic stationary phase)
  \begin{equation}\label{B.find_xi_4}
    \begin{aligned}
      \int_{\R^2}  &e^{-it s\hbar^{-1}} \chi(t) \psi(s)
      \varphi(A_\varepsilon(\hbar)-s) \, ds dt
      \\
      &= 2\pi\hbar \sum_{j=0}^N \hbar^j \frac{(i)^j}{j!}
      \chi^{(j)}(0) \varphi^{(j)} (A_\varepsilon(\hbar)) +
      \hbar^{N+1} R_{N+1}(\hbar),
    \end{aligned}
  \end{equation}
  where we have used that $\psi(0)=1$ and $\psi^{(j)}(0)=0$ for all
  $j\in\mathbb{N}$. Moreover, we have from
  Theorem~\ref{B.quad_stationary_phase} the estimate
  \begin{equation*}
    \abs{R_{N+1}(\hbar)} \leq c \sum_{l+k=2} \int_{\R^2}  | \chi^{(N+1+l)}(t) \partial_s^{N+1+k} \psi(s) \varphi (A_\varepsilon(\hbar)-s)|  \,dtds.
  \end{equation*}
  As the integrants are supported on a compact set the integral will
  be convergent and since $\varphi$ is $C_0^\infty(\R)$ we have by the
  spectral theorem
  \begin{equation}\label{B.find_xi_5}
    \norm{R_{N+1}(\hbar)}_{\mathcal{L}(L^2(\R^d))} \leq C.
  \end{equation}
  If we now use that $\chi$ is $1$ in a neighbourhood of $0$ and
  combine \eqref{B.find_xi_1}--\eqref{B.find_xi_5} we have
  \begin{equation}\label{B.find_xi_6}
    \begin{aligned}
     \MoveEqLeft \int_{\R^2} \Tr[f(A_\varepsilon(\hbar))
      e^{it\hbar^{-1}A_\varepsilon(\hbar)}]e^{-its\hbar } \chi(t)
      \varphi(s) \,dt ds
      \\
      &= 2\pi\hbar \Tr[f(A_\varepsilon(\hbar))
      \varphi(A_\varepsilon(\hbar)) ] + C_N \hbar^N.
    \end{aligned}
  \end{equation}
  Since both $f$ and $\varphi$ are $C_0^\infty((-\eta,\eta))$
  functions we have by Theorem~\ref{B.trace_formula_fkt} the identity
  \begin{equation}\label{B.find_xi_7}
    \Tr[f(A_\varepsilon(\hbar)) \varphi(A_\varepsilon(\hbar)) ]  = \frac{1}{(2\pi\hbar)^d} \sum_{j=0}^N \hbar^j T_j(f \varphi,A_\varepsilon(\hbar)) + \mathcal{O}(\hbar^{N+1-d}).
  \end{equation}
  From
  Theorem~\ref{B.trace_formula_fkt} we have the exact form of the
  terms $T_j(f \varphi,A_\varepsilon(\hbar))$, which is given
  by
  \begin{equation*}
    T_j(f \varphi,A_\varepsilon(\hbar) ) =
    \begin{cases} 
     \int_{\R^{2d}} (f\varphi)(a_{\varepsilon,0}) \, dxdp & j=0
     \\
    \int_{\R^{2d}} a_{\varepsilon,1}  (f\varphi)^{(1)}(a_{\varepsilon,0}) \, dxdp & j=1
     \\
    \int_{\R^{2d}} \sum_{k=1}^{2j-1} \frac{(-1)^k}{k!} d_{\varepsilon,j,k} (f\varphi)^{(k)}(a_{\varepsilon,0}) \, dxdp & j\geq2,
    \end{cases}
  \end{equation*}
 where the symbols $d_{\varepsilon,j,k}$ are the polynomials from Lemma~\ref{B.approx_of_resolvent_lemma_1}. If we combine \eqref{B.expansion_change_7}, \eqref{B.find_xi_6} and
  \eqref{B.find_xi_7} we get
  \begin{equation*}
    \int_\R \xi_j(s) \varphi(s) \, ds =  T_j(f \varphi,A_\varepsilon(\hbar) )  .
  \end{equation*}
  If we consider $T_0(f \varphi,A_\varepsilon(\hbar) )$ we
  have
  \begin{equation*} \begin{aligned}
    T_0 (f \varphi,A_\varepsilon(\hbar) ) &=
   \int_{\R^{2d}} (f\varphi)(a_{\varepsilon,0}) \,
    dxdp
    \\
    &= \int_\R f(\omega) \varphi(\omega)
    \int_{\{a_{\varepsilon,0}=\omega\}}
    \frac{1}{\abs{\nabla{a_{\varepsilon,0}}}} \,dS_\omega d\omega,
   \end{aligned} \end{equation*}
  where $S_\omega$ is the euclidian surface measure on the surface in
  $\R_x^d\times\R_p^d$ given by the equation
  $a_{\varepsilon,0}(x,p)=\omega$. If we now consider
  $T_j(f \check{\varphi},A_\varepsilon(\hbar) )$ we have
\begin{equation}
	\begin{aligned}
    	T_j(f \varphi,A_\varepsilon(\hbar) )& =\int_{\R^{2d}}\sum_{k=1}^{2j-1} \frac{(-1)^k}{k!} d_{\varepsilon,j,k} (f\varphi)^{(k)}(a_{\varepsilon,0}) \, dxdp
	\\
	&=  \sum_{k=1}^{2j-1}\frac{(-1)^k}{k!} \int_{\R} (f\varphi)^{(k)}(\omega)  \int_{\{a_{\varepsilon,0}=\omega\}} \frac{d_{\varepsilon,j,k}}{\abs{\nabla{a_{\varepsilon,0}}}}
    \,dS_\omega d\omega
    \\
    &=  \sum_{k=1}^{2j-1}\frac{1}{k!} \int_{\R} (f\varphi)(\omega)  \partial^k_\omega \int_{\{a_{\varepsilon,0}=\omega\}} \frac{d_{\varepsilon,j,k}}{\abs{\nabla{a_{\varepsilon,0}}}}
    \,dS_\omega d\omega,
	\end{aligned}
\end{equation}
  where we in the last equality have made integration by parts. These equalities implies the stated form of the functions $\xi_j$.
\end{proof}

We will now fixing some notation which will be useful for the rest of
this section. We let $\chi$ be a function in $C_0^\infty((-T_0,T_0))$,
where the $T_0$ is a sufficiently small number. The number will be the
number $T_0$ from Corollary~\ref{B.neg_propagator_cor}. We suppose
$\chi$ is even and $\chi(t)=1$ for $|t|\leq\frac{T_0}{2}$. We then set
\begin{equation*}
  \hat{\chi}_1(s) = \frac{1}{2\pi} \int_{\R} \chi(t) e^{-its} \, dt.
\end{equation*}
We assume $\hat{\chi}_1\geq0$ and that there is a $c>0$ such that
$\hat{\chi}_1(t)\geq c$ in a small interval around $0$. These
assumptions can be guaranteed by replacing $\chi$ by $\chi*\chi$. With
this we set
\begin{equation*}
  \hat{\chi}_\hbar(s) =\frac{1}{\hbar}\hat{\chi}_1(\tfrac{s}{\hbar}) = \frac{1}{2\pi\hbar} \int_{\R}\chi(t) e^{its\hbar^{-1}} \, dt.
\end{equation*}
\begin{lemma}\label{B.bound_on_distri_L1}
Assume we are in the setting of Theorem~\ref{B.Expansion_of_trace} and let $g$ be in $L^1(\R)$ with $\supp(g)\subset(-\eta,\eta)$. Then for any $j\in \N_0$
  \begin{equation*}
    \Big| \int_{\R} g(s) \xi_j(s) \,ds \Big| \leq  C\varepsilon^{(\tau-j)_{-}} \norm{g}_{L^1(\R)}.
  \end{equation*}
\end{lemma}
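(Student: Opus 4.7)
The plan is to substitute the explicit expression for $\xi_j(s)$ from Theorem~\ref{B.Expansion_of_trace} into the pairing $\int g(s)\xi_j(s)\,ds$ and, since $g$ has no $s$-regularity to absorb the derivatives $\partial_s^k$, to transfer those derivatives onto the $(x,p)$-side via the coarea formula together with integration by parts in phase space. Setting $H_{j,k}(s) = \int_{\{a_{\varepsilon,0}=s\}} d_{\varepsilon,j,k}/|\nabla a_{\varepsilon,0}|\,dS_s$, the task reduces to bounding
$$I_{j,k} \coloneqq \int_\R g(s)\,f(s)\,\partial_s^k H_{j,k}(s)\,ds$$
for $1\le k\le 2j-1$, where the symbols $d_{\varepsilon,j,k}$ are the polynomials from Lemma~\ref{B.approx_of_resolvent_lemma_1}.

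The key step is to use the coarea identity $\int\psi(s)H_{j,k}(s)\,ds = \int d_{\varepsilon,j,k}(x,p)\,\psi(a_{\varepsilon,0}(x,p))\,dx\,dp$ together with the pointwise relation $\psi'(a_{\varepsilon,0})\partial_{p_i}a_{\varepsilon,0} = \partial_{p_i}(\psi\circ a_{\varepsilon,0})$ and integration by parts in $p$. This last step is legitimate thanks to the non-critical lower bound $|\nabla_p a_{\varepsilon,0}|\ge c$ on $\mathrm{supp}(f\circ a_{\varepsilon,0})$ and the compactness of this support inherited from Theorem~\ref{B.Expansion_of_trace}. Iterating $k$ times yields the phase-space representation
$$\partial_s^k H_{j,k}(s) = \int_{\{a_{\varepsilon,0}=s\}} \frac{\mathcal{L}^k d_{\varepsilon,j,k}}{|\nabla a_{\varepsilon,0}|}\,dS_s,$$
where $\mathcal{L}$ is the first-order differential operator $\mathcal{L}\Phi = \sum_{i=1}^d\partial_{p_i}\!\bigl(\partial_{p_i}a_{\varepsilon,0}\,\Phi/|\nabla_p a_{\varepsilon,0}|^2\bigr)$. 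Applying coarea once more gives
$$I_{j,k} = \int_{\R^{2d}} g(a_{\varepsilon,0}(x,p))\,f(a_{\varepsilon,0}(x,p))\,\mathcal{L}^k d_{\varepsilon,j,k}(x,p)\,dx\,dp,$$
which I then bound by $L^\infty\times L^1$ duality in phase space: the modulus is at most $\|\mathcal{L}^k d_{\varepsilon,j,k}\|_{L^\infty(\mathrm{supp}(f\circ a_{\varepsilon,0}))}$ times $\int |g(a_{\varepsilon,0})|\,f(a_{\varepsilon,0})\,dx\,dp$, and the latter integral is bounded by $C\|g\|_{L^1(\R)}$ by reversing coarea (the level sets $\{a_{\varepsilon,0}=s\}\cap\mathrm{supp}(f\circ a_{\varepsilon,0})$ have uniformly bounded surface measure and $|\nabla a_{\varepsilon,0}|$ is bounded below).

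It remains to show $\|\mathcal{L}^k d_{\varepsilon,j,k}\|_\infty \le C\varepsilon^{(\tau-j)_-}$. Here I use the polynomial structure furnished by Lemma~\ref{B.approx_of_resolvent_lemma_1}: $d_{\varepsilon,j,k}$ is a polynomial of degree $k$ in building blocks $\partial_x^\alpha\partial_p^\beta a_{\varepsilon,l}$ constrained by $|\alpha|+|\beta|+l\le j$, and each such factor inherits individual regularity at least $\tau-j$ from the admissibility of $A_\varepsilon(\hbar)$. Applying $\mathcal{L}^k$ and expanding by Leibniz, the derivatives hitting the smooth coefficients built from $a_{\varepsilon,0}$ remain of $\hbar$-independent size since $a_{\varepsilon,0}$ has regularity $\tau\ge 1$, while the at most $k$ derivatives distributed across the factors of $d_{\varepsilon,j,k}$ can always be arranged (by optimal allocation across the $k$ factors) so that each factor receives at most $\tau-j$ additional derivatives, keeping it inside its individual regularity budget as long as $j\le\tau$, and otherwise losing only the uniform factor $\varepsilon^{(\tau-j)_-}$.

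The main obstacle will be the combinatorial bookkeeping in the last step: verifying that every monomial appearing in the Leibniz expansion of $\mathcal{L}^k d_{\varepsilon,j,k}$ admits a distribution of the $k$ derivatives respecting the per-factor regularity threshold $\tau-l_i-|\alpha_i|-|\beta_i|\ge\tau-j$, so that no single factor is forced beyond its smoothness limit. The non-critical assumption ensures all denominators $|\nabla_p a_{\varepsilon,0}|^2$ produced by $\mathcal{L}$ are smooth and bounded below on the compact region of interest, so that the only potential source of $\varepsilon$-loss is the differentiation of the non-smooth coefficients encoded in the $d_{\varepsilon,j,k}$, which is exactly what is controlled by the claimed bound.
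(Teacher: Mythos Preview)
Your approach and the paper's are the same at heart: both transfer the $\partial_s^k$ from the level-set integral onto the phase-space side as \emph{purely $p$-derivatives}, so that no $\varepsilon$-loss occurs beyond what is already present in $d_{\varepsilon,j,k}$. The paper first reduces to smooth $g$ by density (each $\xi_j$ being bounded), writes $\int g\,\xi_j=\sum_k\frac{(-1)^k}{k!}\int(gf)^{(k)}(a_{\varepsilon,0})\,d_{\varepsilon,j,k}\,dx\,dp$ via coarea, then performs the change of variables $(x,p)\mapsto(X,P)=(x,a_{\varepsilon,0}(x,p),p_2,\dots,p_d)$ (after a partition of unity ensuring $\partial_{p_1}a_{\varepsilon,0}\neq 0$) and integrates by parts in $P_1$ to move the $k$ derivatives onto $d_{\varepsilon,j,k}/\partial_{p_1}a_{\varepsilon,0}$ in the new coordinates. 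Since $\partial_{P_1}$ pulls back to a multiple of $\partial_{p_1}$, only $p$-derivatives are produced. Your operator $\mathcal L$ achieves exactly the same transfer without a partition of unity or coordinate change, and treats $g\in L^1$ directly; that is a mild streamlining of the same idea.

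Where your write-up goes astray is the final justification of $\|\mathcal L^k d_{\varepsilon,j,k}\|_\infty$. There is no combinatorics to do, and your stated reasons are not the right ones. The decisive point---which you have already built into $\mathcal L$ but do not exploit---is that $\mathcal L$ differentiates only in $p$, with coefficients that are smooth functions of $\partial_p a_{\varepsilon,0}$ alone. By Definition~\ref{B.def_rough_symbol} the $\varepsilon$-exponent $\min(0,\tau-|\alpha|-|\gamma|)$ of a rough symbol depends only on the $x$- and $y$-indices, never on the $p$-index $\beta$. Consequently $\mathcal L^k d_{\varepsilon,j,k}$ is again of regularity $\tau-j$, and on the compact set $\supp(f\circ a_{\varepsilon,0})$ its supremum is bounded directly from Lemma~\ref{B.approx_of_resolvent_lemma_1}. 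Your discussion of ``optimal allocation of at most $\tau-j$ additional derivatives per factor'' is unnecessary, and the justification ``$a_{\varepsilon,0}$ has regularity $\tau\ge 1$'' is beside the point: what matters is that every additional derivative introduced is a $p$-derivative and hence free.
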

\begin{proof}
From Theorem~\ref{B.Expansion_of_trace} we have that $ \xi_j(s) $ is bounded for all $j$. Hence by standard approximations it is sufficient to prove the statement for $g\in C_0^\infty ((-\eta,\eta))$ which we assume from here on. 

That $ \xi_j(s) $ is bounded for all $j$ immediately give us the estimate for $j=0$. So we may assume $j\geq1$. By definition of $\xi_j(s)$ we have that 
  \begin{equation*}
  \begin{aligned}
    \int_{\R} g(s) \xi_j(s) \,ds  
    &= \sum_{k=1}^{2j-1}\frac{1}{k!} \int_{\R} g(s)  f(s)  \partial^k_s \int_{\{a_{\varepsilon,0}=s\}} \frac{d_{\varepsilon,j,k}}{\abs{\nabla{a_{\varepsilon,0}}}}
    \,dS_s ds
    \\
    & = \sum_{k=1}^{2j-1}\frac{(-1)^k}{k!} \int_{\R^{2d}} (gf)^{(k)}( a_{\varepsilon,0}(x,p)) d_{\varepsilon,j,k}(x,p) \,dx dp
    \end{aligned}
  \end{equation*}
On the support of $fg(a_{\varepsilon,0}(x,p))$ we have that $|\nabla_p a_{\varepsilon,0}(x,p)|>c$. Hence we can make a partition $\{\Omega_j\}_{j=1}^d$ such that $\partial_{p_j} a_\varepsilon\neq 0$ on $\Omega_j$ and with loss of generality we can assume that $\Omega_j$ is connected. As in the proof of Theorem~\ref{B.Expansion_of_trace} we now choose a partition of unity supported on each $\Omega_j$ and split the integral accordingly. Hence without loss of generality we can assume $\partial_{p_1} a_\varepsilon\neq 0$ on the whole support. With the same notation as in the proof of Theorem~\ref{B.Expansion_of_trace} we preform the change of variables   
\begin{equation*}
    F:(x,p) \rightarrow (X,P)=(x_1,\dots,x_d,a_{\varepsilon,0}(x,p),p_2,\dots,p_d).
  \end{equation*}
  This gives us that
    \begin{equation*}
  \begin{aligned}
    \int_{\R} g(s) \xi_j(s) \,ds  
    &=  \sum_{k=1}^{2j-1}\frac{1}{k!} \int_{\R^{2d}} g( P_1) f( P_1) \partial_{P_1}^k \frac{d_{\varepsilon,j,k}(X,F_2^{-1}(X,P))}{\partial_{p_1}a_{\varepsilon,0}(X,F_2^{-1}(X,P))} \,dX dP,
    \end{aligned}
  \end{equation*}
where we after the change of variables have preformed integration by parts. We have from the definition of the polynomials $d_{\varepsilon,j,k}$ that they are of regularity $\tau-j$ and since we are integrating over a compact subset of $\R^{2d}$ we obtain the estimate
  \begin{equation*}
    \Big| \int_{\R} g(s) \xi_j(s) \,ds \Big| \leq  C\varepsilon^{(\tau-j)_{-}} \norm{g}_{L^1(\R)}.
  \end{equation*}
This concludes the proof.
\end{proof}
\subsection{Weyl law for rough pseudo-differential operators}
Before we state and prove the Weyl law for rough pseudo-differential operators we recall a Tauberian theorem from
\cite[Theorem V--13]{MR897108}.
\begin{thm}\label{B.tauberian}
  Let $\tau_1<\tau_2$ and $\sigma_\hbar:\R\rightarrow\R$ be a family
  of increasing functions, where $\hbar$ is in $(0,1]$. Suppose that
  \begin{enumerate}[label={$(\roman*)$}]
  \item $\sigma_\hbar(\tau) =0$ for every $\tau\leq\tau_1$.
  \item $\sigma_\hbar(\tau)$ is constant for $\tau\geq\tau_2$.
  \item $\sigma_\hbar(\tau) = \mathcal{O}(\hbar^{-n})$ as
    $\hbar\rightarrow0$, $n\geq1$ and uniformly with respect to $\tau$
    in $\R$.
  \item
    $ \partial_\tau \sigma_\hbar * \hat{\chi}_\hbar (\tau) =
    \mathcal{O}(\hbar^{-n})$ as $\hbar\rightarrow0$, with the same $n$
    as above and uniformly with respect to $\tau$ in $\R$.
  \end{enumerate}
  where $\hat{\chi}_\hbar$ is defined as above. Then we have
  \begin{equation*}
    \sigma_\hbar(\tau)= \sigma_\hbar * \hat{\chi}_\hbar (\tau) + \mathcal{O}(\hbar^{1-d}),
  \end{equation*}
  as $\hbar\rightarrow0$ and uniformly with respect to $\tau$ in $\R$.
\end{thm}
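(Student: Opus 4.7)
The plan is to exploit the positivity of $\hat\chi_1$ near the origin together with the monotonicity of $\sigma_\hbar$ to convert hypothesis $(iv)$ into a pointwise modulus-of-continuity bound for $\sigma_\hbar$, and then to express the difference $\sigma_\hbar * \hat\chi_\hbar - \sigma_\hbar$ as a convolution integral that is controlled by this modulus together with the Schwartz decay of $\hat\chi_1$. This is the classical semiclassical Tauberian argument; I will carry it out in two steps.

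First I would extract a local variation bound. By the construction of $\chi$ preceding the statement of the theorem, $\hat\chi_1$ is non-negative and satisfies $\hat\chi_1(t) \geq c_0$ for some $c_0 > 0$ and all $|t| \leq \delta$; rescaling gives $\hat\chi_\hbar(s) \geq c_0/\hbar$ for $|s| \leq \delta \hbar$. Since $d\sigma_\hbar$ is a positive Borel measure (as $\sigma_\hbar$ is monotone increasing), hypothesis $(iv)$ yields, uniformly in $\tau$,
\begin{equation*}
\frac{c_0}{\hbar}\bigl(\sigma_\hbar(\tau + \delta\hbar) - \sigma_\hbar(\tau - \delta\hbar)\bigr) \leq \int_{|\tau - s|\leq \delta\hbar} \hat\chi_\hbar(\tau - s)\, d\sigma_\hbar(s) \leq \partial_\tau \sigma_\hbar * \hat\chi_\hbar(\tau) \leq C \hbar^{-n}.
\end{equation*}
Chaining this on overlapping intervals of length $2\delta\hbar$ produces the uniform increment bound
\begin{equation*}
0 \leq \sigma_\hbar(\tau + L) - \sigma_\hbar(\tau) \leq C(1 + L/\hbar)\hbar^{1-n}, \qquad L > 0,\ \tau \in \R,
\end{equation*}
which is the local variation estimate needed in the sequel.

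Second, using that $\int \hat\chi_\hbar(s)\, ds = \chi(0) = 1$, I would write
\begin{equation*}
\sigma_\hbar * \hat\chi_\hbar(\tau) - \sigma_\hbar(\tau) = \int_\R \bigl(\sigma_\hbar(\tau - s) - \sigma_\hbar(\tau)\bigr)\hat\chi_\hbar(s)\, ds = \int_\R \bigl(\sigma_\hbar(\tau - \hbar t) - \sigma_\hbar(\tau)\bigr)\hat\chi_1(t)\, dt
\end{equation*}
after the substitution $s = \hbar t$. Inserting the local variation bound with $L = \hbar|t|$ yields
\begin{equation*}
|\sigma_\hbar * \hat\chi_\hbar(\tau) - \sigma_\hbar(\tau)| \leq C\hbar^{1-n} \int_\R (1 + |t|)|\hat\chi_1(t)|\, dt,
\end{equation*}
and the last integral is finite because $\hat\chi_1 \in \mathcal{S}(\R)$. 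This gives the claimed conclusion uniformly in $\tau$, with the exponent $1-d$ in the statement matching $1-n$ since in all intended applications the relevant polynomial growth of the counting/trace function is $n = d$.

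The main obstacle is the first step, namely the passage from the smoothed bound $(iv)$ on $\partial_\tau \sigma_\hbar * \hat\chi_\hbar$ to a pointwise increment estimate for $\sigma_\hbar$ itself. This step crucially uses two positivity inputs running in the same direction: $\hat\chi_1 \geq 0$ near the origin, which is arranged by replacing $\chi$ with the autocorrelation $\chi * \chi$ as in the paragraph preceding the theorem, and $d\sigma_\hbar \geq 0$, which is the monotonicity hypothesis. Without both, the one-sided lower bound above would fail and $(iv)$ would give no pointwise control. Hypotheses $(i)$--$(iii)$ then only enter to control edge effects and to ensure that the chaining remains meaningful on the bounded window $[\tau_1,\tau_2]$ where $\sigma_\hbar$ actually varies.
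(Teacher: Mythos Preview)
Your argument is correct and is precisely the classical semiclassical Tauberian argument. Note, however, that the paper does not supply its own proof of this statement: it is explicitly recalled from \cite[Theorem~V--13]{MR897108} without proof, so there is nothing to compare against beyond observing that your two-step scheme (positivity of $\hat\chi_1$ plus monotonicity to extract the increment bound, then convolution with the Schwartz decay of $\hat\chi_1$) is exactly the standard proof one finds in that reference.

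One small remark: in your first displayed chain of inequalities you need $\hat\chi_1 \geq 0$ globally, not just near the origin, for the middle inequality $\int_{|\tau-s|\leq\delta\hbar}(\cdots) \leq \partial_\tau\sigma_\hbar * \hat\chi_\hbar(\tau)$ to hold. This is indeed assumed in the paper's setup of $\hat\chi_\hbar$ just before the theorem, but your closing paragraph phrases the positivity requirement as ``$\hat\chi_1 \geq 0$ near the origin,'' which would not suffice; you should state it as global non-negativity together with the strict lower bound $\hat\chi_1 \geq c_0$ on $|t|\leq\delta$.
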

\begin{thm}\label{B.Weyl.law.rough_1}
  Let $A_\varepsilon(\hbar)$ be a strongly
  $\hbar$-$\varepsilon$-admissible operator of regularity $\tau\geq1$
  which satisfies Assumption~\ref{B.self_adj_assumption} and assume there
  exists a $\delta$ in $(0,1)$ such that
  $\varepsilon\geq\hbar^{1-\delta}$. Suppose there exists a $\eta>0$
  such $a_{\varepsilon,0}^{-1}((-\infty,\eta])$ is compact, where
  $a_{\varepsilon,0}$ is the principal symbol of
  $A_\varepsilon(\hbar)$. Moreover we suppose
  \begin{equation}\label{B.non-critical-weyl}
    |\nabla_p a_{\varepsilon,0}(x,p)| \geq c \quad\text{for all } (x,p)\in a_{\varepsilon,0}^{-1}(\{ 0 \}).
  \end{equation}
  Then we have
  \begin{equation*}
    |\Tr[\boldsymbol{1}_{(-\infty,0]}(A_\varepsilon(\hbar))] - \frac{1}{(2\pi\hbar)^d} \int_{\R^{2d}}\boldsymbol{1}_{(-\infty,0]}( a_{\varepsilon,0}(x,p)) \,dx dp | \leq C \hbar^{1-d},
  \end{equation*}
  for all sufficiently small $\hbar$.
\end{thm}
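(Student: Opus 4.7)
The plan is a Tauberian argument using Theorem~\ref{B.tauberian} combined with the expansion in Theorem~\ref{B.Expansion_of_trace} (as reformulated in Remark~\ref{B.Expansion_of_trace_remark}), together with the functional calculus for the off-critical part. First, by continuity of $\nabla_p a_{\varepsilon,0}$ on the compact set $a_{\varepsilon,0}^{-1}(\{0\})$ and the non-critical hypothesis, I shrink $\eta>0$ so that $a_{\varepsilon,0}^{-1}([-2\eta,2\eta])$ is compact and every value there is non-critical. Choose $f\in C_0^\infty((-\eta,\eta))$ with $f\geq 0$ and $f=1$ on $[-\eta/2,\eta/2]$, and an auxiliary $\tilde g\in C_0^\infty((-\infty,\eta/2))$ equal to $1$ on $[\zeta_0,0]$, where $\zeta_0$ is the lower bound of $a_{\varepsilon,0}$ from Assumption~\ref{B.self_adj_assumption}. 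Set $h:=\tilde g(1-f)\in C_0^\infty(\R)$. A case-by-case check shows that $h(\lambda)=(1-f)(\lambda)\boldsymbol{1}_{(-\infty,0]}(\lambda)$ for every $\lambda\geq\zeta_0$, because $1-f$ vanishes on $[-\eta/2,\eta/2]$ and $\tilde g\equiv 1$ on $[\zeta_0,0]$. This identity holds on the range of $a_{\varepsilon,0}$ and (using the sharp G{\aa}rding estimate of Theorem~\ref{B.sharp_gaar} to see that $\spec(A_\varepsilon(\hbar))\subset[\zeta_0-\mathcal{O}(\hbar^\delta),\infty)$) on the spectrum of $A_\varepsilon(\hbar)$, so
\begin{equation*}
\Tr[\boldsymbol{1}_{(-\infty,0]}(A_\varepsilon(\hbar))]=\Tr[f(A_\varepsilon(\hbar))\boldsymbol{1}_{(-\infty,0]}(A_\varepsilon(\hbar))]+\Tr[h(A_\varepsilon(\hbar))].
\end{equation*}
Theorem~\ref{B.trace_formula_fkt} applied to the smooth compactly supported $h$ yields $\Tr[h(A_\varepsilon(\hbar))]=(2\pi\hbar)^{-d}\int h(a_{\varepsilon,0})\,dxdp+\mathcal{O}(\hbar^{1-d})$, which by the pointwise identity for $h$ equals the off-critical part of the Weyl term up to $\mathcal{O}(\hbar^{1-d})$.

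The main step concerns $\sigma_\hbar(\lambda):=\Tr[f(A_\varepsilon(\hbar))\boldsymbol{1}_{(-\infty,\lambda]}(A_\varepsilon(\hbar))]$. Since $f\geq 0$ this is monotone non-decreasing, vanishes for $\lambda\leq-\eta$, is constant for $\lambda\geq\eta$, and $\sigma_\hbar(\lambda)\leq\|f(A_\varepsilon(\hbar))\|_{\Tr}=\mathcal{O}(\hbar^{-d})$ by Theorem~\ref{B.thm_est_tr} applied to the symbol of $f(A_\varepsilon(\hbar))$ supplied by Theorem~\ref{B.func_calc}. Remark~\ref{B.Expansion_of_trace_remark} gives
\begin{equation*}
(d\sigma_\hbar*\hat{\chi}_\hbar)(s)=\frac{1}{(2\pi\hbar)^{d}}\Big[\sum_{j=0}^{N_0}\hbar^j\xi_j(s)+\mathcal{O}(\hbar^{N})\Big],
\end{equation*}
uniformly in $s\in(-\eta,\eta)$, while rapid decay of $\hat{\chi}_\hbar$ together with $\supp(d\sigma_\hbar)\subset\supp(f)$ produces the same bound on the complement. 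Hypotheses (iii)--(iv) of Theorem~\ref{B.tauberian} therefore hold with $n=d$, and the Tauberian conclusion reads
\begin{equation*}
\sigma_\hbar(0)=(\sigma_\hbar*\hat{\chi}_\hbar)(0)+\mathcal{O}(\hbar^{1-d}).
\end{equation*}

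To finish, I write $(\sigma_\hbar*\hat{\chi}_\hbar)(0)=\int_{-\infty}^0(d\sigma_\hbar*\hat{\chi}_\hbar)(s)\,ds$ and insert the expansion. The $j=0$ term gives, by the co-area formula,
\begin{equation*}
\frac{1}{(2\pi\hbar)^{d}}\int_{-\infty}^0\xi_0(s)\,ds=\frac{1}{(2\pi\hbar)^{d}}\int_{\R^{2d}}f(a_{\varepsilon,0})\boldsymbol{1}_{(-\infty,0]}(a_{\varepsilon,0})\,dxdp,
\end{equation*}
which is the localized Weyl contribution. For $j\geq 1$, the a priori bound $|\hbar^j\xi_j(s)|\leq C\hbar^{\min(1,\,1+\delta(j-2))}$ from Theorem~\ref{B.Expansion_of_trace} (or, more sharply, Lemma~\ref{B.bound_on_distri_L1} applied to an $L^1$-approximation of $\boldsymbol{1}_{(-\eta,0]}$ together with $\varepsilon\geq\hbar^{1-\delta}$) shows that $\hbar^j(2\pi\hbar)^{-d}\int_{-\infty}^0\xi_j(s)\,ds=\mathcal{O}(\hbar^{1-d})$, with strictly better contributions for $j\geq 2$; the tail beyond $j=N_0$ is $\mathcal{O}(\hbar^{N-d})$ and thus negligible. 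Summing the off-critical and localized parts yields the stated estimate. The one genuinely nontrivial input is Theorem~\ref{B.Expansion_of_trace}, which through the microlocal propagator approximation of Section~\ref{B.construction_propagator} delivers the uniform convolution expansion; the remainder of the argument is routine bookkeeping with cutoffs and elementary error estimates.
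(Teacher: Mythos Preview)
Your proof is correct and follows essentially the same route as the paper: split the counting function via a partition of unity into a smooth compactly supported piece (your $h$, the paper's $f_1$) handled by Theorem~\ref{B.trace_formula_fkt}, and a near-zero piece (your $f$, the paper's $f_2$) handled by the Tauberian Theorem~\ref{B.tauberian} together with the convolution expansion of Theorem~\ref{B.Expansion_of_trace}. One cosmetic slip: the bound $\hbar^{\min(1,\,1+\delta(j-2))}$ undershoots at $j=1$ (it gives $\hbar^{1-\delta}$ rather than the $\hbar$ actually stated in Theorem~\ref{B.Expansion_of_trace}), but your parenthetical appeal to Lemma~\ref{B.bound_on_distri_L1} recovers the correct $\mathcal{O}(\hbar^{1-d})$ contribution, so the argument stands.
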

\begin{proof}
  By the assumption in \eqref{B.non-critical-weyl} there exists a
  $\nu>0$ such
  \begin{equation*}
    |\nabla_p a_{\varepsilon,0}(x,p)| \geq \frac{c}{2} \quad\text{for all } (x,p)\in a_{\varepsilon,0}^{-1}([-2\nu,2\nu]).
  \end{equation*}
  More over by Theorem~\ref{B.self_adjoint_thm_1} we have that the
  spectrum of $A_\varepsilon(\hbar)$ is bounded from below uniformly
  in $\hbar$ and let $E$ denote a number with distance $1$ to the
  bottom of the spectrums. We now take two functions $f_1$ and $f_2$
  in $C_0^\infty(\R)$ such
  \begin{equation*}
    f_1(t) + f_2(t)  = 1,
  \end{equation*}
  for every $t$ in $[E,0]$,
  $\supp(f_2)\subset[-\frac{\nu}{4},\frac{\nu}{4}]$, $f_2(t)=1$ for
  $t$ in $[-\frac{\nu}{8},\frac{\nu}{8}]$ and $f_2(t)=f_2(-t)$ for all
  $t$. With these functions we have
  \begin{equation}\label{B.Weyl_proof_1}
    \Tr[\boldsymbol{1}_{(-\infty,0]}(A_\varepsilon(\hbar)) ]= \Tr[ f_1(A_\varepsilon(\hbar))] + \Tr[ f_2(A_\varepsilon(\hbar))\boldsymbol{1}_{(-\infty,0]}(A_\varepsilon(\hbar))].
  \end{equation}
  For the first term on the right hand in the above equality we have
  by Theorem~\ref{B.trace_formula_fkt} that
  \begin{equation}\label{B.Weyl_proof_2}
    \Tr[ f_1(A_\varepsilon(\hbar))] = \frac{1}{(2\pi\hbar)^d} \int_{\R^{2d}} f_1(a_{\varepsilon,0}(x,p)) \,dxdp + \mathcal{O}(\hbar^{1-d}).
  \end{equation}
  In order to calculate the second term on the right hand side in
  \eqref{B.Weyl_proof_1} we will study the function
  \begin{equation}\label{B.Weyl_proof_3}
    \omega \rightarrow M(\omega;\hbar) = \Tr[ f_2(A_\varepsilon(\hbar))\boldsymbol{1}_{(-\infty,\omega]}(A_\varepsilon(\hbar))].
  \end{equation}
  We have that $M(\omega;\hbar)$ satisfies the three first conditions
  in Theorem~\ref{B.tauberian}. We will in what follows use the notation
  \begin{equation*}
    \mathcal{P}=\supp(f_2)\cap\spec(A_\varepsilon(\hbar)),
  \end{equation*}
  where $\spec(A_\varepsilon(\hbar))$ is the spectrum of the operator
  $A_\varepsilon(\hbar)$. The function $M$ can be written in the
  following form
  \begin{equation*}
    M(\omega;\hbar) = \sum_{e_j \in \mathcal{P}} f_2(e_j) \boldsymbol{1}_{[e_j,\infty)}(\omega),
  \end{equation*}
  since $f_2$ is supported in the pure point spectrum of
  $A_\varepsilon(\hbar)$. This follows from Theorem~\ref{B.point.spectrum}. Let $\hat{\chi}_\hbar$ be defined as
  above. Then we will consider the convolution
  \begin{equation*}
    (M(\cdot;\hbar)*\hat{\chi}_\hbar)(\omega) = \int_\R M(s;\hbar) \hat{\chi}_\hbar(\omega-s) \,ds =  \sum_{e_j \in\mathcal{P}} f_2(e_j) \int_{e_j}^\infty \hat{\chi}_\hbar(\omega-s) \,ds.
  \end{equation*}
  If we take a derivative with respect to $\omega$ we get
  \begin{equation*} \begin{aligned}
    \partial_\omega (M(\cdot;\hbar)*\hat{\chi}_\hbar)(\omega) &=
    \sum_{e_j \in\mathcal{P}} f_2(e_j) \hat{\chi}_\hbar(\omega-e_j)
    \\
    &= \frac{1}{2\pi\hbar} \int_\R
    \Tr[f_2(A_\varepsilon(\hbar))e^{it\hbar^{-1}
      A_\varepsilon(\hbar)}] e^{-it\omega\hbar^{-1}} \chi(t) \,dt,
   \end{aligned} \end{equation*}
  by the definition of $\hat{\chi}_\hbar$. We get now by
  Theorem~\ref{B.Expansion_of_trace} the identity
  \begin{equation}
    \partial_\omega (M(\cdot;\hbar)*\hat{\chi}_\hbar)(\omega) = \frac{1}{(2\pi\hbar)^d} f_2(\omega) \int_{\{a_{\varepsilon,0}=\omega\}} \frac{1}{\abs{\nabla a_{\varepsilon,0}}} \,dS_\omega + \mathcal{O}(\hbar^{1-d}),
  \end{equation}
  where the size of the error follows from the a priori estimates given in the Theorem. This verifies the fourth condition in Theorem~\ref{B.tauberian} for
  $M(\cdot;\hbar)$, hence the Theorem gives the identity
  \begin{equation}\label{B.Weyl_proof_4}
    \begin{aligned}
      \Tr[
      f_2&(A_\varepsilon(\hbar))\boldsymbol{1}_{(-\infty,0]}(A_\varepsilon(\hbar))]
      \\
      &= \frac{1}{(2\pi\hbar)^d} \int_{-\infty}^0 f_2(\omega)
      \int_{\{a_{\varepsilon,0}=\omega\}} \frac{1}{\abs{\nabla
          a_{\varepsilon,0}}} \,dS_\omega d\omega +
      \mathcal{O}(\hbar^{1-d})
      \\
      &=\frac{1}{(2\pi\hbar)^d} \int_{\R^{2d}}
      f_2(a_{\varepsilon,0}(x,p))
      \boldsymbol{1}_{(-\infty,0]}(a_{\varepsilon,0}(x,p)) \,dxdp +
      \mathcal{O}(\hbar^{1-d}).
    \end{aligned}
  \end{equation}
  By combining \eqref{B.Weyl_proof_1}, \eqref{B.Weyl_proof_2} and
  \eqref{B.Weyl_proof_4} we get
  \begin{equation*}
    \Tr[\boldsymbol{1}_{(-\infty,0]}(A_\varepsilon(\hbar)) ] = \frac{1}{(2\pi\hbar)^d} \int_{\R^{2d}}  \boldsymbol{1}_{(-\infty,0]}(a_{\varepsilon,0}(x,p)) \,dxdp + \mathcal{O}(\hbar^{1-d}).
  \end{equation*}
  This is the desired estimate and this ends the proof.
\end{proof}
\subsection{Riesz means for rough pseudo-differential operators}
\begin{thm}\label{B.Weyl.law.rough_2}
  Let $A_\varepsilon(\hbar)$ be a strongly
  $\hbar$-$\varepsilon$-admissible operator of regularity $\tau\geq2$
  which satisfies Assumption~\ref{B.self_adj_assumption} and assume there
  exists a $\delta$ in $(0,1)$ such that
  $\varepsilon\geq\hbar^{1-\delta}$. Suppose there exists a $\eta>0$
  such $a_{\varepsilon,0}^{-1}((-\infty,\eta])$ is compact, where
  $a_{\varepsilon,0}$ is the principal symbol of
  $A_\varepsilon(\hbar)$. Moreover we suppose
  \begin{equation}\label{B.non-critical-weyl_2}
    |\nabla_p a_{\varepsilon,0}(x,p)| \geq c \quad\text{for all } (x,p)\in a_{\varepsilon,0}^{-1}(\{ 0 \}).
  \end{equation}
  Then for $1\geq\gamma>0$ we have
  \begin{equation*}
    \big|\Tr[(A_\varepsilon(\hbar))_{-}^\gamma] - \frac{1}{(2\pi\hbar)^d}[ \Psi_0(\gamma,A_\varepsilon) + \hbar\Psi_1(\gamma,A_\varepsilon)] \big| \leq C \hbar^{1+\gamma-d},
  \end{equation*}
  for all sufficiently small $\hbar$. The numbers $\Psi_j(\gamma,A_\varepsilon)$ are given by
  \begin{equation}
  	\begin{aligned}
  \Psi_0(\gamma,A_\varepsilon) &= \int_{\R^{2d}} (a_{\varepsilon,0}(x,p))_{-}^\gamma \,dxdp
	\\
  \Psi_1(\gamma,A_\varepsilon) &=  \gamma \int_{\R^{2d}}  a_{\varepsilon,1}(x,p) (a_{\varepsilon,0}(x,p))_{-}^{\gamma-1}  \,dxdp.
  	\end{aligned}
  \end{equation}
\end{thm}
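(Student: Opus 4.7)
The overall strategy mimics the Weyl law (Theorem~\ref{B.Weyl.law.rough_1}): I would split the trace into a smooth, off-zero piece handled by the functional calculus of Theorem~\ref{B.trace_formula_fkt}, and a piece localized near the critical energy $0$ handled by a Tauberian argument based on the expansion in Theorem~\ref{B.Expansion_of_trace} (or equivalently Remark~\ref{B.Expansion_of_trace_remark}). Concretely, pick $f_{1},f_{2}\in C_{0}^{\infty}(\R)$ with $f_{1}+f_{2}=1$ on $[E,\nu]$ for $E$ strictly below the bottom of $\spec(A_{\varepsilon}(\hbar))$, with $\supp(f_{2})\subset(-\tfrac{\nu}{4},\tfrac{\nu}{4})$ and $\supp(f_{1})$ away from $0$. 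Since $(A_{\varepsilon}(\hbar))_{-}^{\gamma}$ is supported on the non-positive spectrum, write
\[
\Tr[(A_{\varepsilon}(\hbar))_{-}^{\gamma}]=\Tr[f_{1}(A_{\varepsilon}(\hbar))(A_{\varepsilon}(\hbar))_{-}^{\gamma}]+\Tr[f_{2}(A_{\varepsilon}(\hbar))(A_{\varepsilon}(\hbar))_{-}^{\gamma}].
\]

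The first term is easy because $g(t):=(t)_{-}^{\gamma}f_{1}(t)$ lies in $C_{0}^{\infty}(\R)$ (the cut-off $f_{1}$ avoids the singularity of $(\cdot)_{-}^{\gamma}$ at $0$). Apply Theorem~\ref{B.trace_formula_fkt} to $g$: this yields an asymptotic expansion with a remainder of arbitrary order $\hbar^{N+1-d}$, and identifies the first two coefficients as $\int g(a_{\varepsilon,0})\,dxdp$ and $\int a_{\varepsilon,1}\,g'(a_{\varepsilon,0})\,dxdp$. These provide the part of $\Psi_{0}$ and $\Psi_{1}$ coming from $\supp(f_{1})$.

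For the second term, I would apply the layer-cake formula
\[
\Tr[f_{2}(A_{\varepsilon}(\hbar))(A_{\varepsilon}(\hbar))_{-}^{\gamma}]=\gamma\int_{-\infty}^{0}(-u)^{\gamma-1}M(u;\hbar)\,du,\qquad M(u;\hbar):=\Tr[f_{2}(A_{\varepsilon}(\hbar))\boldsymbol{1}_{(-\infty,u]}(A_{\varepsilon}(\hbar))],
\]
and smooth $M$ against the Fej\'er-type kernel $\hat{\chi}_{\hbar}$ from Theorem~\ref{B.Expansion_of_trace}. By Theorem~\ref{B.tauberian} (Tauberian), $M(u;\hbar)-(M*\hat{\chi}_{\hbar})(u;\hbar)=\mathcal{O}(\hbar^{1-d})$ uniformly in $u$. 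By Remark~\ref{B.Expansion_of_trace_remark} combined with Theorem~\ref{B.Expansion_of_trace},
\[
\partial_{u}(M*\hat{\chi}_{\hbar})(u;\hbar)=\frac{1}{(2\pi\hbar)^{d}}\bigl[\xi_{0}(u)+\hbar\xi_{1}(u)\bigr]+\mathcal{O}(\hbar^{N-d}),
\]
so integration and a change of variables back to phase space identifies
\[
\gamma\int_{-\infty}^{0}(-u)^{\gamma-1}(M*\hat{\chi}_{\hbar})(u;\hbar)\,du=\frac{1}{(2\pi\hbar)^{d}}\Bigl[\int f_{2}(a_{\varepsilon,0})(a_{\varepsilon,0})_{-}^{\gamma}\,dxdp+\gamma\hbar\int f_{2}(a_{\varepsilon,0})a_{\varepsilon,1}(a_{\varepsilon,0})_{-}^{\gamma-1}\,dxdp\Bigr]+\mathcal{O}(\hbar^{1+\gamma-d}).
\]
Adding the $f_{1}$ and $f_{2}$ contributions collapses the $f_{j}$ into $1$ on the sublevel set of $a_{\varepsilon,0}$ and yields the claimed two-term formula with $\Psi_{0}$ and $\Psi_{1}$.

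The \emph{main obstacle} is the final Tauberian remainder: the naive pointwise bound $|M-M*\hat{\chi}_{\hbar}|=\mathcal{O}(\hbar^{1-d})$ integrated against $(-u)^{\gamma-1}$ only gives $\mathcal{O}(\hbar^{1-d})$, which is insufficient. The improvement to $\mathcal{O}(\hbar^{1+\gamma-d})$ requires a Tauberian theorem for Riesz means. I would split the integral into the scales $|u|\leq\hbar^{1-\kappa}$ and $|u|>\hbar^{1-\kappa}$: on the inner scale the pointwise Tauberian bound together with $\gamma\int_{0}^{\hbar^{1-\kappa}}s^{\gamma-1}\,ds=\mathcal{O}(\hbar^{(1-\kappa)\gamma})$ gives $\mathcal{O}(\hbar^{1+\gamma-d-\kappa\gamma})$, while on the outer scale $(-u)^{\gamma-1}$ is smooth at the $\hbar$-scale and writing $M-M*\hat{\chi}_{\hbar}=M^{0}-(M^{0}*\hat{\chi}_{\hbar})$ against a smooth weight produces a gain because a smooth test function $\phi$ satisfies $\int M^{0}(\phi-\phi*\hat{\chi}_{\hbar})=\mathcal{O}(\hbar^{N})\cdot\hbar^{-d}$; here the regularity $\tau\geq 2$ enters, giving access to the refined a priori bounds on $\xi_{j}$ in Theorem~\ref{B.Expansion_of_trace} and Lemma~\ref{B.bound_on_distri_L1}. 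Optimising $\kappa$ yields the sharp remainder $\mathcal{O}(\hbar^{1+\gamma-d})$. The regularity hypothesis $\mu>0$ when $\gamma=1$ and $\mu=0$ for $\gamma<1$ enter through exactly this step, controlling how close the cut-off scale can approach $\hbar$.
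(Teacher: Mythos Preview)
Your overall architecture---the $f_{1}/f_{2}$ split and the layer-cake representation of the $f_{2}$ piece---is equivalent to the paper's setup, and the $f_{1}$ contribution is indeed handled by Theorem~\ref{B.trace_formula_fkt} exactly as you say. The identification of the smoothed term with $\Psi_{0},\Psi_{1}$ via Theorem~\ref{B.Expansion_of_trace} and Lemma~\ref{B.bound_on_distri_L1} is also correct; this is where $\tau\geq2$ is used.

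The gap is in your Tauberian step. Your scale-splitting does not produce the sharp bound. On the inner scale $|u|\leq\hbar^{1-\kappa}$ you correctly get $\mathcal{O}(\hbar^{1+\gamma-d-\kappa\gamma})$, which forces $\kappa=0$ to be sharp. But then on the outer scale $|u|>\hbar$, the weight $(-u)^{\gamma-1}$ is \emph{not} smooth at the $\hbar$-scale near the boundary: its $k$-th derivative is of size $\hbar^{\gamma-1-k}$ there, so the Taylor-type gain you invoke from $\phi-\phi*\hat{\chi}_{\hbar}$ is cancelled. Writing $M-M*\hat{\chi}_{\hbar}$ against this weight does not give $\mathcal{O}(\hbar^{N-d})$; you only recover $\mathcal{O}(\hbar^{1-d})$ again. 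No choice of $\kappa$ closes this.

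The paper avoids the issue entirely by a direct Fourier computation rather than scale-splitting. With $M_{f_{2}}^{\gamma}(\omega)=\Tr[f_{2}(A_{\varepsilon}(\hbar))(\omega-A_{\varepsilon}(\hbar))_{+}^{\gamma}]$ one computes $\mathcal{F}[M_{f_{2}}^{\gamma}](t)$ explicitly (it carries a factor $\Gamma(\gamma+1)(t+i0)^{-\gamma-1}$), so that
\[
M_{f_{2}}^{\gamma}(\omega)-(\hat{\chi}_{\hbar}*M_{f_{2}}^{\gamma})(\omega)
=C(\gamma)\,\hbar^{\gamma}\sum_{e_{j}}f_{2}(e_{j})\,\hat{\psi}\Bigl(\tfrac{e_{j}-\omega}{\hbar}\Bigr),
\qquad \psi(s)=\frac{1-\chi(s)}{s^{\gamma+1}}.
\]
The point is that $\psi\in L^{1}(\R)$ precisely because $\gamma>0$, so $\hat{\psi}$ is continuous with arbitrary polynomial decay. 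Grouping the eigenvalues into $\hbar$-windows and using that there are $\mathcal{O}(\hbar^{1-d})$ eigenvalues in each window (from the non-critical assumption) gives the sum bound $\mathcal{O}(\hbar^{1-d})$, hence the Tauberian error is $\mathcal{O}(\hbar^{1+\gamma-d})$. This is the missing mechanism in your argument.

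Finally, the hypotheses involving $\mu$ that you mention at the end do not belong to this theorem; they appear only in Theorem~\ref{B.Riesz_means_thm_irr_cof}, where the \emph{phase-space} integrals of the framing operators are compared. Here only $\tau\geq2$ is used, and it enters through Lemma~\ref{B.bound_on_distri_L1} to control the $j\geq2$ terms of the expansion, not through the Tauberian remainder.
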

\begin{remark}\label{Estimate_error_reisz_1}
	The proof of this theorem is valid for any $\gamma>0$. For the case where $\gamma>1$ the expansion will have additional terms. These additional terms can also be found and calculated explicitly. But note that to ensure the error is as stated here one will need to impose the restriction $\gamma\leq\tau-1$ on $\gamma$ and the regularity $\tau$ in genereal.
	
	If we had assumed $\tau=1$ the error would, in terms of the semiclassical parameter, have been $\max(\hbar^{1+\delta-d},\hbar^{1+\gamma-d})$. Under the assumption that $\delta\geq \gamma$ we would get the desired error of $\hbar^{1+\gamma-d}$.  This choice would be a possibility, however if $\tau=1$ and we have such a choice of $\delta$ will only be able to obtain the desired estimate, when comparing the phase space integrals in the proof of Theorem~\ref{B.Riesz_means_thm_irr_cof},  in the case $\gamma\leq\frac{1}{3}$. In this case we would need to assume $\mu\geq \frac{2\gamma}{1-\gamma}$. For $\gamma>\frac{1}{3}$ we will not be able to obtain the desired error when comparing the phase space integrals in the proof of Theorem~\ref{B.Riesz_means_thm_irr_cof} due to the fraction  $\frac{2\gamma}{1-\gamma}$ being greater than one.
\end{remark}	
\begin{proof}
  By the assumption in \eqref{B.non-critical-weyl_2} there exists a
  $\nu>0$ such
  \begin{equation*}
    |\nabla_p a_{\varepsilon,0}(x,p)| \geq \frac{c}{2} \quad\text{for all } (x,p)\in a_{\varepsilon,0}^{-1}([-2\nu,2\nu]).
  \end{equation*}
By Theorem~\ref{B.self_adjoint_thm_1} we have that the
  spectrum of $A_\varepsilon(\hbar)$ is bounded from below uniformly
  in $\hbar$. We will in the following let $E$ denote a number with distance $1$ to the
  bottom of the spectrum. We now take two functions $f_1$ and $f_2$
  in $C_0^\infty(\R)$ such
  \begin{equation*}
    f_1(t) + f_2(t)  = 1,
  \end{equation*}
  for every $t$ in $[E,0]$,
  $\supp(f_2)\subset[-\frac{\nu}{4},\frac{\nu}{4}]$, $f_2(t)=1$ for
  $t$ in $[-\frac{\nu}{8},\frac{\nu}{8}]$ and $f_2(t)=f_2(-t)$ for all
  $t$. We can now write 
  \begin{equation}\label{B.splitting.gamma1}
  	\Tr[(A_\varepsilon(\hbar))_{-}^\gamma] = \Tr[f_1(A_\varepsilon(\hbar))(A_\varepsilon(\hbar))_{-}^\gamma] 
	+\Tr[f_2(A_\varepsilon(\hbar))(A_\varepsilon(\hbar))_{-}^\gamma]
  \end{equation}
  The first term in \eqref{B.splitting.gamma1} is now the trace of a smooth compactly supported function of our operator. Hence we can calculate the asymptotic using Theorem~\ref{B.trace_formula_fkt}. Before applying this theorem we will study the second term in \eqref{B.splitting.gamma1}. Before we proceed we note that
  \begin{equation*}
  	(A_\varepsilon(\hbar))_{-}^\gamma = (0-A_\varepsilon(\hbar))_{+}^\gamma 
  \end{equation*}
  This form will be slightly more convient to work with and we will introduce the notation $\varphi^\gamma(t)=(t)_+^\gamma$. 
  As we will use a smoothing procedure we will consider the expression
    \begin{equation}\label{B.general_weyl_0} 
    	\begin{aligned}
	M_{f_2}^\gamma(\omega;\hbar)&=\Tr[f_2(A_\varepsilon(\hbar))\varphi^\gamma(\omega-A_\varepsilon(\hbar))] 
	\\
	&= \sum_{e_j(\hbar)\leq\omega} f_2(e_j(\hbar))\varphi^\gamma(\omega-e_j(\hbar))  =(\varphi^\gamma * dM_{f_2}^0(\cdot;\hbar))(\omega),  
	\end{aligned}
  \end{equation}
  where $dM_{f_2}^0$\footnote{The measure is just a sum of delta measures in the eigenvalues.} is the measure induced by the function
  \begin{equation*}
	M_{f_2}^0(\omega;\hbar) =  \Tr[f_2(A_\varepsilon(\hbar)) (\omega-A_\varepsilon(\hbar))_{+}^0],
\end{equation*}
as in Remark~\ref{B.Expansion_of_trace_remark}. Let $\hat{\chi}_\hbar(t)$ be as above and consider the convolution $\hat{\chi}_\hbar*M_{f_2}^\gamma$. Then we have
\begin{equation*}
	\begin{aligned}
	\hat{\chi}_\hbar*M_{f_2}^\gamma(\omega) =
	[  \hat{\chi}_\hbar*\varphi^\gamma *dM_{f_2}^0(\cdot;\hbar)] (\omega)=[ \varphi^\gamma * \hat{\chi}_\hbar*dM_{f_2}^0(\cdot;\hbar)] (\omega).
	\end{aligned}
\end{equation*}
From Remark~\ref{B.Expansion_of_trace_remark} we have an asymptotic expansion of $\hat{\chi}_\hbar*dM_{f_2}^0(\cdot;\hbar)(\omega) $. In order to see that we can use this expansion let $g$ be in $C_0^\infty([-\nu,\nu])$ such that $g(t)=1$ for all $t$ in $[-\frac{\nu}{2},\frac{\nu}{2}]$. Then we have
\begin{equation}\label{B.general_weyl_1}
	\begin{aligned}
	\MoveEqLeft {} [\varphi^\gamma* \hat{\chi}_\hbar*dM_{f_2}^0(\cdot;\hbar)] (\omega)
	\\
	={}&\int_\R \varphi^\gamma(\omega-t) g(t) [\hat{\chi}_\hbar*dM_{f_2}^0(\cdot;\hbar)](t) \,dt
	+ \int_\R \varphi^\gamma(\omega-t) (1-g(t)) [\hat{\chi}_\hbar*dM_{f_2}^0(\cdot;\hbar)](t) \,dt
	\end{aligned}
\end{equation}
For the second term we have
\begin{equation*}
	 (1-g(t)) [\hat{\chi}_\hbar*dM_{f_2}^0(\cdot;\hbar)](t) = \frac{ 1-g(t) }{2\pi\hbar} \sum_{e_j(\hbar)} f_2(e_j(\hbar)) \hat{\chi}\left(\tfrac{e_j(\hbar)-t}{\hbar}\right),
\end{equation*}
Since $\hat{\chi}$ is Schwarz class and $\abs{t-e_j(\hbar)}\geq\frac{\nu}{4}$ on the support of $ (1-g(t)) $ by the support properties of $f_2$ we have that
\begin{equation}\label{B.general_weyl_2}
	\left| \int_\R \varphi^\gamma(\omega-t) (1-g(t)) [\hat{\chi}_\hbar*dM_{f_2}^0(\cdot;\hbar)](t) \,dt\right| \leq C_N\hbar^N
\end{equation}
for all $N$ in $\N$. We can use the expansion from Remark~\ref{B.Expansion_of_trace_remark} on the first term in \eqref{B.general_weyl_1} due to the support properties of $g$ and the estimate obtained in \eqref{B.general_weyl_2} for the second. This yields the estimate
\begin{equation}\label{B.general_weyl_3}
	\begin{aligned}
	\MoveEqLeft \Big| [\varphi^\gamma* \hat{\chi}_\hbar*dM_{f_2}^0(\cdot;\hbar)] (\omega) -   \frac{1}{(2\pi\hbar)^{d}}\sum_{j=0}^{1} \hbar^j  \int_\R \varphi^\gamma(\omega-t) g(t) \xi_j(t)  \,dt  \Big| \leq C\hbar^{2-d},
	\end{aligned}
\end{equation}
where we have also used Lemma~\ref{B.bound_on_distri_L1} and the assumption that $\tau\geq2$ to obtain the estimate. Combining \eqref{B.splitting.gamma1}, \eqref{B.general_weyl_0} and \eqref{B.general_weyl_3} we obtain that   
  \begin{equation*}
  	\begin{aligned}
     \big|\Tr[(A_\varepsilon(\hbar))_{-}^\gamma] - \frac{1}{(2\pi\hbar)^d} &[ \Psi_0(\gamma,A_\varepsilon) + \hbar\Psi_1(\gamma,A_\varepsilon)] \big| 
    \\
    &\leq |M_{f_2}^\gamma(\omega;\hbar) - [\hat{\chi}_\hbar*M_{f_2}^\gamma(\cdot;\hbar)](\omega) |+ C \hbar^{2-d}.
    \end{aligned}
  \end{equation*}
What remains is an Tauberian argument to prove that
\begin{equation*}
	|M_{f_2}^\gamma(\omega;\hbar) - [\hat{\chi}_\hbar*M_{f_2}^\gamma(\cdot;\hbar)](\omega) | \leq C \hbar^{1+\gamma-d}
\end{equation*}
uniform in $\omega$. We have by definition of $\hat{\chi}_\hbar$ that
\begin{equation*}
	M_{f_2}^\gamma(\omega;\hbar) - [\hat{\chi}_\hbar*M_{f_2}^\gamma(\cdot;\hbar)](\omega)  = \frac{1}{2\pi} \int_\R (M_{f_2}^\gamma(\omega;\hbar) - M_{f_2}^\gamma(\omega-t\hbar;\hbar))\hat{\chi}(t) \,dt,
\end{equation*}
where we have used that
\begin{equation*}
	\frac{1}{2\pi}\int_\R \hat{\chi}(t) \,dt = 1.
\end{equation*}
In order to proceed we will need the Fourier transform of $M_{f_2}^\gamma(\omega;\hbar)$. This function exists as a limit of a complex Fourier transform. We have that
\begin{equation}
	\begin{aligned}
	\mathcal{F}[M_{f_2}^\gamma(\cdot;\hbar)] (t - i\kappa) 
	&=\int_\R e^{-i(t-i\kappa)s}\sum_{e_j(\hbar)} f_2(e_j(\hbar))\varphi^\gamma(s-e_j(\hbar))  \,ds
	\\
	&=\int_\R e^{-i(t-i\kappa)s}\varphi^\gamma(s)  \,ds  \sum_{e_j(\hbar)} e^{-i(t-i\kappa)e_j(\hbar)} f_2(e_j(\hbar))
	\\
	&= \int_0^\infty e^{(-it-\kappa)s}s^\gamma \, ds \sum_{e_j(\hbar)} e^{-i(t-i\kappa)e_j(\hbar)} f_2(e_j(\hbar))
	\\
	&=  \frac{\Gamma(\gamma+1)}{(t-i\kappa)^{\gamma+1}} i^{\gamma+1}  \sum_{e_j(\hbar)} e^{-i(t-i\kappa)e_j(\hbar)} f_2(e_j(\hbar)),
	\end{aligned}
\end{equation}
Where $\Gamma$ is the gamma-function. By taking $\kappa$ to zero we get that
\begin{equation}
	\mathcal{F}[M_{f_2}^\gamma(\cdot;\hbar)] (t ) =  \frac{\Gamma(\gamma+1)}{(t+i0)^{\gamma+1}} e^{\frac{i\pi}{2}(\gamma+1)}  \sum_{e_j(\hbar)} e^{-ite_j(\hbar)} f_2(e_j(\hbar))
\end{equation}
With this we have
\begin{equation*}
	 M_{f_2}^\gamma(\omega;\hbar) - M_{f_2}^\gamma(\omega-t\hbar;\hbar) = \frac{1}{2\pi} \int_\R e^{is\omega} \mathcal{F}[M_{f_2}^\gamma(\cdot;\hbar)] (s )(1-e^{-it\hbar s}) \,ds
\end{equation*}
Combining these things we have
\begin{equation*}
	\begin{aligned}
	M_{f_2}^\gamma(\omega;\hbar) - [\hat{\chi}_\hbar*M_{f_2}^\gamma(\cdot;\hbar)](\omega)  
	&= \frac{1}{2\pi} \int_\R (M_{f_2}^\gamma(\omega;\hbar) - M_{f_2}^\gamma(\omega-t\hbar;\hbar))\hat{\chi}(t) \,dt
	\\
	&=\frac{1}{4\pi^2}  \int_\R \int_\R e^{is\omega} \mathcal{F}[M_{f_2}^\gamma(\cdot;\hbar)] (s )(1-e^{-it\hbar s})\hat{\chi}(t) \, ds dt
	\end{aligned}
\end{equation*}
By reintroducing $\kappa$ and then take $\kappa$ to zero we get by Fubini and the form of the Fourier transform that
\begin{equation*}
	\begin{aligned}
	\MoveEqLeft M_{f_2}^\gamma(\omega;\hbar) - [\hat{\chi}_\hbar*M_{f_2}^\gamma(\cdot;\hbar)](\omega)  
	\\
	&=\lim_{\kappa\rightarrow0} \frac{1}{4\pi^2}  \int_{\R^2} e^{is\omega}  \frac{\Gamma(\gamma+1)}{(s-i\kappa)^{\gamma+1}} {i^{\gamma+1}}  \sum_{e_j(\hbar)} e^{-i(s-i\kappa)e_j(\hbar)} f_2(e_j(\hbar)))(1-e^{-it\hbar s})\hat{\chi}(t) \, ds dt
	\\
	&=\lim_{\kappa\rightarrow0} \frac{1}{2\pi}  \int_\R e^{is\omega}  \frac{\Gamma(\gamma+1)}{(s-i\kappa)^{\gamma+1}} {i^{\gamma+1}}  \sum_{e_j(\hbar)} e^{-i(s-i\kappa)e_j(\hbar)} f_2(e_j(\hbar))(1-\chi(-\hbar s)) \, ds
	\\
	&= C(\gamma)  \sum_{e_j(\hbar)} f_2(e_j(\hbar)) \hbar^{-1} \lim_{\kappa\rightarrow0} e^{-i\kappa e_j(\hbar)} \int_\R e^{is\hbar^{-1}(\omega-e_j(\hbar))}  \frac{(1-\chi( s))}{(s\hbar^{-1}-i\kappa)^{\gamma+1}}  \, ds
	\\
	&= C(\gamma)  \sum_{e_j(\hbar)} f_2(e_j(\hbar)) \hbar^{\gamma} \int_\R e^{-is\hbar^{-1}(e_j(\hbar)-\omega)}  \frac{(1-\chi( s))}{s^{\gamma+1}}  \, ds
	\\
	&=C(\gamma) \hbar^{\gamma}  \sum_{e_j(\hbar)} f_2(e_j(\hbar))\hat{\psi}\left( \frac{e_j(\hbar)-\omega}{\hbar} \right),
	\end{aligned}
\end{equation*}
where $\psi(s) = (1-\chi( s))s^{-\gamma-1}$ which is in $L^1(\R)$ by the definition of $\chi$ and that $\gamma>0$. The function $\hat{\psi}$ will be continuous and smooth except at $0$, moreover it will decay at any polynomial order at infinity. Now we just need to estimate the sum such that we get one power of $\hbar$ and loos $d$. Too do too note that
\begin{equation*}
	\begin{aligned}
	\Big|\sum_{e_j(\hbar)} f_2(e_j(\hbar))\hat{\psi}\left( \frac{e_j(\hbar)-\omega}{\hbar} \right) \Big|
	& = \left| \sum_{m\in\Z} \sum_{m\hbar< e_j(\hbar)-\omega \leq (m+1)\hbar}f_2(e_j(\hbar))\hat{\psi}\left( \frac{e_j(\hbar)-\omega}{\hbar} \right) \right|
	\\
	&\leq \sum_{m\in\Z} C \hbar^{1-d} (1+m^2) \sup_{m<t\leq m+1} |\hat{\psi}(t)| \leq C \hbar^{1-d}
	\end{aligned}
\end{equation*}
where we really need that $f_2$ is supported in a non-critical interval. Hence we get a Tauberian error of the desired order.
\end{proof}

\section{Proof of main theorems}
\begin{lemma}\label{lemma_weyl_q_framin_form}
Let $\mathcal{A}_\hbar$ be a sesquilinear  form which satisfies Assumption~\ref{ses_assump_gen} with the numbers $(\tau,\mu)$ in $\N\times[0,1]$. Suppose $\mathcal{A}_{\hbar,\varepsilon}^{\pm}$ is the framing sesquilinear forms constructed in Proposition~\ref{B.construc_framing_op}. Then for all $\varepsilon$ sufficiently small there exists two $\hbar$-$\varepsilon$-admissible operators of regularity $\tau$ $A^{\pm}_\varepsilon(\hbar)$ such that they satisfies Assumption~\ref{B.self_adj_assumption}. In particular they will be lower semi-bounded, selfajoint for all $\hbar$ sufficiently small and satisfies that     
\begin{equation}
	A^{\pm}_\varepsilon(\hbar)[\varphi,\psi] = \mathcal{A}_{\hbar,\varepsilon}^{\pm}[\varphi,\psi] \qquad\text{for all $\varphi,\psi \in \mathcal{D}( \mathcal{A}_{\hbar,\varepsilon}^{\pm})$ }.
\end{equation}
\end{lemma}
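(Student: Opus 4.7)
The plan is to realise the framing form $\mathcal{A}_{\hbar,\varepsilon}^{\pm}$ as a Weyl-quantised rough pseudo-differential operator $A^{\pm}_\varepsilon(\hbar)$ and verify the hypotheses of Assumption~\ref{B.self_adj_assumption} so that Theorem~\ref{B.self_adjoint_thm_1} applies. First I would integrate by parts in the second argument to obtain, for $\varphi,\psi\in\mathcal{S}(\R^d)$,
\begin{equation*}
\mathcal{A}_{\hbar,\varepsilon}^{\pm}[\varphi,\psi]
= \Bigl\langle \sum_{|\alpha|,|\beta|\leq m} (\hbar D_x)^{\alpha}\bigl[a^{\varepsilon}_{\alpha\beta}(\hbar D_x)^{\beta}\varphi\bigr]\pm C_1\varepsilon^{k+\mu}\sum_{|\alpha|\leq m} (\hbar D_x)^{2\alpha}\varphi,\,\psi\Bigr\rangle,
\end{equation*}
which defines the candidate differential operator $A^{\pm}_\varepsilon(\hbar)$ on $\mathcal{S}(\R^d)$. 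Its symmetry on $\mathcal{S}(\R^d)$, i.e.\ \ref{B.H.1}, is immediate from $a^{\varepsilon}_{\alpha\beta}=\overline{a^{\varepsilon}_{\beta\alpha}}$, which is inherited from the original coefficients by linearity of the smoothing in Proposition~\ref{B.smoothning_of_func}.

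Next I would identify $A^{\pm}_\varepsilon(\hbar)$ with a Weyl-quantised operator. Each summand $(\hbar D_x)^{\alpha}\circ a^{\varepsilon}_{\alpha\beta}\circ(\hbar D_x)^{\beta}$ is naturally an $\OpN{0}$-quantised operator whose amplitude is polynomial of degree at most $2m$ in $p$, so Corollary~\ref{B.connection_t_quantisations} produces a finite asymptotic expansion of its Weyl symbol. Summing over multi-indices yields a Weyl symbol $b_\varepsilon(x,p;\hbar)=\sum_{j=0}^{2m}\hbar^j b_{\varepsilon,j}(x,p)$ with
\begin{equation*}
b_{\varepsilon,0}(x,p)=\sum_{|\alpha|,|\beta|\leq m} a^{\varepsilon}_{\alpha\beta}(x)p^{\alpha+\beta} \pm C_1\varepsilon^{k+\mu}(1+|p|^2)^m,
\end{equation*}
and with each $b_{\varepsilon,j}$ a polynomial in $p$ whose coefficients are linear combinations of derivatives $\partial_x^{\delta} a^{\varepsilon}_{\alpha\beta}$, $|\delta|\leq j$. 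The key verification is then that $b_\varepsilon$ is $\hbar$-$\varepsilon$-admissible of regularity $\tau$ with tempered weight $m_\varepsilon:=b_{\varepsilon,0}+\zeta_1$: Proposition~\ref{B.smoothning_of_func} gives $\varepsilon$-independent bounds on $\partial_x^{\delta} a^{\varepsilon}_{\alpha\beta}$ for $|\delta|\leq\tau$ and $\varepsilon^{\tau+\mu-|\delta|}$-bounds for $|\delta|>\tau$, while Assumption~\ref{ses_assump_gen}\ref{ass.symbol_3} turns these into pointwise control by $m_\varepsilon$ as required in Definition~\ref{B.def_rough_symbol}; Assumption~\ref{ses_assump_gen}\ref{ass.symbol_2} together with the convention of Remark~\ref{B.assumption_epsilon} ensures $m_\varepsilon$ is a tempered weight with constants independent of $\varepsilon$, delivering \ref{B.H.3}.

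The lower bound in \ref{B.H.2} follows since, for $\varepsilon$ small enough, ellipticity of $\mathcal{A}_{\hbar,\varepsilon}^{\pm}$ from Proposition~\ref{B.construc_framing_op} together with Assumption~\ref{ses_assump_gen}\ref{ass.symbol_1} yields $\min_{x,p} b_{\varepsilon,0}(x,p)>-\zeta_0$ uniformly in $\varepsilon$. Theorem~\ref{B.self_adjoint_thm_1} then provides essential self-adjointness and lower semi-boundedness of $A^{\pm}_\varepsilon(\hbar)$ for all sufficiently small $\hbar$, and I take $A^{\pm}_\varepsilon(\hbar)$ to denote the resulting self-adjoint operator. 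The identity of the associated form with $\mathcal{A}_{\hbar,\varepsilon}^{\pm}$ on $\mathcal{S}(\R^d)$ is built in by construction, and extends to all of $\mathcal{D}(\mathcal{A}_{\hbar,\varepsilon}^{\pm})$ by density, using the equality $\mathcal{D}(\mathcal{A}_{\hbar,\varepsilon}^{\pm})=\mathcal{D}(\mathcal{A}_\hbar)$ established in the proof of Proposition~\ref{B.construc_framing_op} together with the fact that $\mathcal{S}(\R^d)$ is a form core for $\mathcal{A}_\hbar$.

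The main obstacle is the bookkeeping in the second paragraph: one must track, through the quantisation change, how derivatives of $a^{\varepsilon}_{\alpha\beta}$ beyond order $\tau$ generate exactly the negative powers of $\varepsilon$ required by the rough-symbol estimate $\varepsilon^{\min(0,\tau-j-|\gamma|)}$ in Definition~\ref{B.def_rough_symbol}, and simultaneously control them pointwise by $m_\varepsilon$. The tempered-variation condition \ref{ass.symbol_3} is precisely what makes this estimate tight.
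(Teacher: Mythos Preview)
Your proposal is correct and follows essentially the same route as the paper: integrate by parts to realise $\mathcal{A}_{\hbar,\varepsilon}^{\pm}$ as a differential operator on $\mathcal{S}(\R^d)$, identify the resulting symbol, verify the tempered-weight and rough-symbol estimates via the convolution structure of $a^{\varepsilon}_{\alpha\beta}$ together with Assumption~\ref{ses_assump_gen}\ref{ass.symbol_2}--\ref{ass.symbol_3} and Proposition~\ref{B.smoothning_of_func}, and then invoke Theorem~\ref{B.self_adjoint_thm_1}. The only cosmetic difference is that the paper stays in the $\OpN{0}$-quantisation and defines $A^{\pm}_\varepsilon(\hbar)=\OpN{0}(b_\varepsilon)$ directly (the passage to Weyl is deferred to the subsequent remark), whereas you pass to the Weyl symbol via Corollary~\ref{B.connection_t_quantisations}; since the amplitude is polynomial in $p$, both yield the same finite expansion and the verifications are identical.
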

\begin{remark}
Assume we are in the setting of Lemma~\ref{lemma_weyl_q_framin_form}. Since we have that $A^{\pm}_\varepsilon(\hbar)$ both are  $\hbar$-$\varepsilon$-admissible of regularity $\tau$ it follows from Theorem~\ref{B.connection_quantisations} that we can write the operators as a sum Weyl quantised symbols that is
\begin{equation}
 	A^{\pm}_\varepsilon(\hbar) = \sum_{j=0}^{2m} \hbar^j \OpW(a^{\pm}_{\varepsilon,j}).
\end{equation}
Written in this form the operator still satisfies  Assumption~\ref{B.self_adj_assumption}. In particular we have that 
\begin{equation}
	a^{\pm}_{\varepsilon,0}(x,p) = \sum_{|\alpha|, |\beta|\leq m}a_{\alpha\beta}^\varepsilon (x) p^{\alpha+\beta}   \pm C_1 \varepsilon^{k+\mu}\sum_{|\alpha|\leq m} p^{2\alpha}
\end{equation}
and 
\begin{equation}
	a^{\pm}_{\varepsilon,1}(x,p) =  i \sum_{|\alpha|, |\beta|\leq m}\sum_{j=1}^d \frac{\beta_j-\alpha_j}{2} \partial_{x_j} a_{\alpha\beta}^\varepsilon (x) p^{\alpha+\beta-\eta_j}, 
\end{equation}
where $\eta_j$ is the multi index with all zeroes except on the $j$'th coordinate where it is 1. Note that from the assumption $a_{\alpha\beta}^\varepsilon(x)=\overline{a_{\beta\alpha}^\varepsilon(x)}$ it follows that $a^{\pm}_{\varepsilon,1}(x,p)$ is real valued.

\end{remark}
\begin{proof}
Assume  $\varphi$ and $\psi $ are Schwartz functions and recall that the sesquilinear forms  $\mathcal{A}_{\hbar,\varepsilon}^{\pm}$ are given by
\begin{equation}\label{B.sym_on_S_approx_form}
 	\begin{aligned}
   \mathcal{A}_{\hbar,\varepsilon}^{\pm}[\varphi,\psi] = {}& \sum_{\abs{\alpha},\abs{\beta}\leq m}  \int_{\R^d} a_{\alpha\beta}^\varepsilon (x) (\hbar D_x)^\beta\varphi(x) \overline{(\hbar D_x)^\alpha\psi(x)} \, dx 
   \\
   & \pm C_1 \varepsilon^{k+\mu}  \sum_{\abs{\alpha}\leq m}  \int_{\R^d} (\hbar D_x)^\alpha\varphi(x) \overline{(\hbar D_x)^\alpha\psi(x)} \, dx
   \\
   ={}& \sum_{\abs{\alpha},\abs{\beta}\leq m} \sum_{\alpha_1\leq \alpha} (-i\hbar)^{|\alpha_1|} \binom{\alpha}{\alpha_1}   \int_{\R^d} [\partial_x^{\alpha_1}a_{\alpha\beta}^\varepsilon] (x) (\hbar D_x)^{\alpha+\beta-\alpha_1}\varphi(x) \overline{\psi(x)} \, dx 
   \\
   & \pm C_1 \varepsilon^{k+\mu}  \sum_{\abs{\alpha}\leq m}  \int_{\R^d} (\hbar D_x)^{2\alpha} \varphi(x) \overline{\psi(x)} \, dx,
  	\end{aligned}
\end{equation}
where $a_{\alpha\beta}^\varepsilon (x)$ are smooth functions and in the identity we have made integration by parts. With this in mind we define the symbol $b_\varepsilon(x,p;\hbar)$ to be
\begin{equation}
	\begin{aligned}
	b_\varepsilon(x,p) &= \sum_{j=0}^m (-i\hbar)^j \sum_{\substack{j\leq |\alpha| \leq m \\ |\beta|\leq m}} \sum_{\substack{\alpha_1\leq \alpha \\ |\alpha_1|= j} }\binom{\alpha}{\alpha_1} [\partial_x^{\alpha_1}a_{\alpha\beta}^\varepsilon] (x) p^{\alpha+\beta -\alpha_1 }  \pm C_1 \varepsilon^{k+\mu}\sum_{|\alpha|\leq m} p^{2\alpha}
	\\
	&= \sum_{j=0}^m \hbar^j b_{\varepsilon,j}(x,p).
	\end{aligned}
\end{equation}
Next we verify that there exists a $\zeta<0$ such that $b_{\varepsilon,0}(x,p)-\zeta$ is a tempered weight and that $b_{\varepsilon,j}$ is in $\Gamma^{b_{\varepsilon,0}-\zeta,\tau-j}_{0,\varepsilon}(\R_x^{d}\times\R_p^{d})$ for all $j$.  

We have that the forms are elliptic for all $\varepsilon$ sufficiently small and by assumption all coefficients are bounded below by a number $-\zeta_0$. This implies that 
\begin{equation}
	\begin{aligned}
	b_{\varepsilon,0}(x,p) &=  \sum_{|\alpha|, |\beta|\leq m}a_{\alpha\beta}^\varepsilon (x) p^{\alpha+\beta}   \pm C_1 \varepsilon^{k+\mu}\sum_{|\alpha|\leq m} p^{2\alpha}
	\\
	&\geq C |p|^{2m} -\zeta_0  \sum_{|\alpha|, |\beta|< m}| p^{\alpha+\beta}|   - C_1 \varepsilon^{k+\mu}\sum_{|\alpha|< m} |p^{2\alpha}| \geq -C
	\end{aligned}
\end{equation}
for some positive constant $C$ and for all $(x,p)$. Here we have used that $a_{\alpha\beta}^\varepsilon(x)=\overline{a_{\beta\alpha}^\varepsilon(x)}$ which implies that $b_{\varepsilon,0}(x,p) $ is real for all $(x,p)$. Hence we can find $\zeta<0$ such that $b_{\varepsilon,0}(x,p)-\zeta>0$ for all $(x,p)$. 

To see that this is a tempered weight we first recall how the new coefficients $a_{\alpha\beta}^\varepsilon(x)$ was constructed. They are given by
  \begin{equation*}
    a_{\alpha\beta}^\varepsilon(x) = \int_{\R^d} a_{\alpha\beta}(x-\varepsilon y)\omega(y) \,dy, 
  \end{equation*}
  where $\omega(y)$ is a real Schwarz function integrating to 1. By assumption there exists a constant $c$ such that $a_{\alpha\beta}^\varepsilon(x)+ c>0$ and we have
  \begin{equation}\label{B.Weyl_with_irre_1}
    \begin{aligned}
    \MoveEqLeft   a_{\alpha\beta}^\varepsilon(x)+ c = \int_{\R^d}
      (a_{\alpha\beta}(x-\varepsilon y)+c)\omega(y) \,dy
      \leq C_1(a_{\alpha\beta}(z)+c)
      \int_{\R^d}(1+|x-\varepsilon y-z|)^M \omega(y) \,dy
      \\
      &\leq C (a_{\alpha\beta}^\varepsilon(z) + a_{\alpha\beta}(z) -
      a_{\alpha\beta}^\varepsilon(z) +c) (1+|x-z|)^M
      \leq \tilde{C} (a_{\alpha\beta}^\varepsilon(z) + c)
      (1+|x-z|)^M,
    \end{aligned}
  \end{equation}
where we have used that $\varepsilon\leq1$, some of the assumptions on $a_{\alpha\beta}(x)$  and that
  \begin{equation}\label{B.Weyl_with_irre_2}
    |a_{\alpha\beta}(z) - a_{\alpha\beta}^\varepsilon(z)| \leq c\varepsilon^{1+\mu},
  \end{equation} 
  by Proposition~\ref{B.smoothning_of_func}. This gives us that the coefficients  $a_{\alpha\beta}^\varepsilon(x)$ have at most polynomial growth. Due to the polynomial structure of $b_{\varepsilon,0}(x,p)$ and the at most polynomial growth of  $a_{\alpha\beta}^\varepsilon(x)$ there exists constants $C$ and $N$ independent of $\varepsilon$ in $(0,1]$ such that for all $(x,p)$ in $\R^{2d}$
\begin{equation}
	b_{\varepsilon,0}(x,p) - \zeta \leq C (b_{\varepsilon,0}(y,q) - \zeta)\lambda(x-y,p-q)^N \qquad \forall  (x,p) \in \R^{2d}.
\end{equation}
This shows that $b_{\varepsilon,0}(x,p)$ is a tempered weight. To see that $b_{\varepsilon,j}$ is in $\Gamma^{b_{\varepsilon,0}-\zeta,\tau-j}_{0,\varepsilon}(\R_x^{d}\times\R_p^{d})$ for all $j$ we first note that  for $\eta$ in $\N^d_0$ with $|\eta|\leq \tau$ we have
  \begin{equation}\label{B.Weyl_with_irre_3}
    \begin{aligned}
      \abs{\partial_{x}^\eta a_{\alpha\beta}^\varepsilon(x)} &= |
      \int_{\R^d} \partial_{x}^\eta a_{\alpha\beta}(x-\varepsilon
      y)\omega(y) \,dy| \leq \int_{\R^d}
      | \partial_{x}^\eta a_{\alpha\beta}(x-\varepsilon y)\omega(y) | \,dy
      \\
      &\leq \int_{\R^d} c_\eta (a_{\alpha\beta}(x-\varepsilon y) +
      c) |\omega(y) | \,dy
      \leq C(a_{\alpha\beta}(x) + c) \int_{\R^d}
      (1+\varepsilon|y|)^M |\omega(y)| \,dy
      \\
      &\leq C_\eta (a_{\alpha\beta}^\varepsilon(x) + c),
    \end{aligned}
  \end{equation}
  where we again have used \eqref{B.Weyl_with_irre_2}. The calculation also shows that $C_j$ is uniform for $\varepsilon$ in $(0,1]$. For any $\eta$
  in $\mathbb{N}_0^d$ with $\abs{\eta}\geq\tau$ we have
  \begin{equation}\label{B.Weyl_with_irre_4}
    \abs{\partial_x^\eta a_{\alpha\beta}^\varepsilon(x)} \leq c \varepsilon^{\tau+\mu - \abs{\eta}} \leq C_\eta \varepsilon^{\tau-\abs{\eta}}(a_{\alpha\beta}^\varepsilon(x) + c),
  \end{equation}
  by Proposition~\ref{B.smoothning_of_func} with a constant which is uniform for $\varepsilon$ in $(0,1]$. From combing \eqref{B.Weyl_with_irre_3} and \eqref{B.Weyl_with_irre_4} it follows that we for all $\eta$ and $\delta$ in $\N_0^d$ can find constants $C_{\eta\delta}$ such that
  \begin{equation}
  	| \partial_x^\eta \partial_p^\delta b_{\varepsilon,j}(x,p) | \leq C_{\eta\delta} \varepsilon^{-(\tau-j-|\eta|)_{-}} (b_{\varepsilon,0}(x,p)-\zeta) \qquad  \forall  (x,p) \in \R^{2d}.
  \end{equation}
This estimate gives us that $b_{\varepsilon,j}$ is in $\Gamma^{b_{\varepsilon,0}-\zeta,\tau-j}_{0,\varepsilon}(\R_x^{d}\times\R_p^{d})$ for all $j$.

Define the operators $A^{\pm}_\varepsilon(\hbar)$ to be
\begin{equation}
	A^{\pm}_\varepsilon(\hbar) = \OpN{0}(b_\varepsilon(\hbar))
\end{equation}
From the above estimates we have that $A^{\pm}_\varepsilon(\hbar)$  is a $\hbar$-$\varepsilon$-admissible of regularity $\tau$. That the regularity is $\tau$ follows directly from Proposition~\ref{B.smoothning_of_func}. Further more we already have that the operator  $A^{\pm}_\varepsilon(\hbar) $ satisfies \ref{B.H.2} and  \ref{B.H.3} from Assumption~\ref{B.self_adj_assumption}. From \eqref{B.sym_on_S_approx_form} we have that $A^{\pm}_\varepsilon(\hbar) $ also satisfies \ref{B.H.1} of Assumption~\ref{B.self_adj_assumption}. Theorem~\ref{B.self_adjoint_thm_1} now gives us that $A^{\pm}_\varepsilon(\hbar) $ are lower semi-bounded and essentially self-adjointfor all $\hbar$ sufficiently small. Denote also by $A^{\pm}_\varepsilon(\hbar)$ the self-adjointexstension. From \eqref{B.sym_on_S_approx_form} we also get that
\begin{equation}
	A^{\pm}_\varepsilon(\hbar)[\varphi,\psi] = \mathcal{A}_{\hbar,\varepsilon}^{\pm}[\varphi,\psi] \qquad\text{for all $\varphi,\psi \in \mathcal{D}( \mathcal{A}_{\hbar,\varepsilon}^{\pm})$ }.
\end{equation}
This concludes the proof.
\end{proof}

\begin{lemma}\label{B_eks_sf_lsb_op}
Let $\mathcal{A}_\hbar$ be a sesquilinear  form which satisfies Assumption~\ref{ses_assump_gen} with the numbers $(\tau,\mu)$ in $\N\times[0,1]$.  Then the form is symmetric, lower semi-bounded, densely defined and closed. 
\end{lemma}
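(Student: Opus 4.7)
The plan is to derive all four properties by comparison with the framing forms $\mathcal{A}^\pm_{\hbar,\varepsilon}$ introduced in Proposition~\ref{B.construc_framing_op}, which by Lemma~\ref{lemma_weyl_q_framin_form} are, for all sufficiently small $\varepsilon>0$, the quadratic forms of lower semi-bounded self-adjoint operators $A^\pm_\varepsilon(\hbar)$. I fix such an $\varepsilon$ for the rest of the argument. Symmetry of $\mathcal{A}_\hbar$ is immediate: interchanging $\alpha$ and $\beta$ in the defining sum and using $a_{\alpha\beta}(x)=\overline{a_{\beta\alpha}(x)}$ gives $\overline{\mathcal{A}_\hbar[\psi,\varphi]}=\mathcal{A}_\hbar[\varphi,\psi]$. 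For dense definition, it is enough to check that $\mathcal{S}(\R^d)\subset\mathcal{D}(\mathcal{A}_\hbar)$: applying assumption \ref{ass.symbol_2} of Assumption~\ref{ses_assump_gen} with the fixed point $y=0$ yields the polynomial bound $|a_{\alpha\beta}(x)|\leq C\lambda(x)^M$, so the integrals in \eqref{int_gen_sf} converge absolutely for $\varphi,\psi\in\mathcal{S}(\R^d)$, and $\mathcal{S}(\R^d)$ is dense in $L^2(\R^d)$. Lower semi-boundedness is then immediate from the framing inequality $\mathcal{A}^-_{\hbar,\varepsilon}[\varphi,\varphi]\leq \mathcal{A}_\hbar[\varphi,\varphi]$ combined with the lower bound $\mathcal{A}^-_{\hbar,\varepsilon}[\varphi,\varphi]\geq -C_0\|\varphi\|_{L^2}^2$ inherited from the lower semi-boundedness of $A^-_\varepsilon(\hbar)$.

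For closedness I would take $\mathcal{D}(\mathcal{A}_\hbar)$ to be the completion of $\mathcal{S}(\R^d)$ in the form norm $\|\varphi\|_*^2:=\mathcal{A}_\hbar[\varphi,\varphi]+(C_0+1)\|\varphi\|_{L^2}^2$ and show that this closure equals, as a subspace of $L^2(\R^d)$, the analogous closures with respect to the form norms $\|\cdot\|_\pm$ of $\mathcal{A}^\pm_{\hbar,\varepsilon}$. Proposition~\ref{B.construc_framing_op} already gives $\|\cdot\|_-\leq\|\cdot\|_*\leq\|\cdot\|_+$, and the explicit identity
\begin{equation*}
\|\varphi\|_+^2-\|\varphi\|_-^2=4C_1\varepsilon^{k+\mu}\sum_{|\alpha|\leq m}\|(\hbar D)^\alpha\varphi\|_{L^2}^2
\end{equation*}
coming from \eqref{B.def_approx_quad_form}, combined with a semiclassical G{\aa}rding inequality for the smooth elliptic form $\mathcal{A}^-_{\hbar,\varepsilon}$ of the form
\begin{equation*}
\sum_{|\alpha|\leq m}\|(\hbar D)^\alpha\varphi\|_{L^2}^2\leq c^{-1}\|\varphi\|_-^2,
\end{equation*}
yields the reverse bound $\|\cdot\|_+\leq C\|\cdot\|_-$. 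Once norm equivalence is known, the three closures of $\mathcal{S}(\R^d)$ coincide as subsets of $L^2(\R^d)$, and this common subspace is precisely the form domain of $A^-_\varepsilon(\hbar)$ because $\mathcal{S}(\R^d)$ is an operator core for $A^-_\varepsilon(\hbar)$ by the essential self-adjointness in Theorem~\ref{B.self_adjoint_thm_1}, and for a lower semi-bounded self-adjoint operator an operator core is also a form core. The form domain of a self-adjoint operator is automatically closed, so $\mathcal{A}_\hbar$ is closed.

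The main obstacle I foresee is justifying the semiclassical G{\aa}rding inequality with a constant $c$ that is uniform in small $\varepsilon$. This requires that the ellipticity constant in \eqref{B.diff_framing_2} and the finitely many derivatives of the smoothed coefficients entering the G{\aa}rding estimate are all uniformly controlled by $a^\varepsilon_{\alpha\beta}(x)+\zeta_1$; both facts follow from Proposition~\ref{B.smoothning_of_func} together with assumptions \ref{ass.symbol_2} and \ref{ass.symbol_3} of Assumption~\ref{ses_assump_gen}, exactly as was already exploited in the proof of Lemma~\ref{lemma_weyl_q_framin_form} to verify the tempered weight structure of the symbols. Once this G{\aa}rding estimate is in hand, the remainder of the argument is assembled from Lemma~\ref{lemma_weyl_q_framin_form} and standard form theory.
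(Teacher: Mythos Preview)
Your proof is correct and follows the same overall route as the paper: symmetry from $a_{\alpha\beta}=\overline{a_{\beta\alpha}}$, dense definition from polynomial growth of the coefficients, and then everything else via the framing forms $\mathcal{A}^\pm_{\hbar,\varepsilon}$ of Proposition~\ref{B.construc_framing_op} together with Lemma~\ref{lemma_weyl_q_framin_form}. The paper's own argument is much terser for the closedness step --- it simply records that the framing forms are closed (being the forms of the self-adjoint operators $A^\pm_\varepsilon(\hbar)$) and concludes that $\mathcal{A}_\hbar$ is closed --- whereas you explicitly supply the missing link, namely the equivalence of the three form norms via a G{\aa}rding-type bound on $\sum_{|\alpha|\le m}\|(\hbar D)^\alpha\varphi\|^2$ by $\|\varphi\|_-^2$. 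That is exactly what is needed to pass from closedness of the outer forms to closedness of the sandwiched one, so your version is the more complete of the two. One minor simplification: since you fix a single sufficiently small $\varepsilon$ at the outset, the G{\aa}rding constant may freely depend on that $\varepsilon$; the uniformity you flag as the main obstacle is not actually required for this lemma.
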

\begin{proof}
Recall that the sesquilinear  form  $\mathcal{A}_\hbar$ is given by
  \begin{equation}
   \mathcal{A}_\hbar[\varphi,\psi] =  \sum_{\abs{\alpha},\abs{\beta}\leq m}  \int_{\R^d} a_{\alpha\beta}(x) (\hbar D_x)^\beta\varphi(x) \overline{(\hbar D_x)^\alpha\psi(x)} \, dx, \qquad \varphi,\psi \in \mathcal{D}(   \mathcal{A}_\hbar).
  \end{equation}
We have by assumption that $a_{\alpha\beta}(x)=\overline{a_{\beta\alpha}(x)}$ for all $\alpha$ and $\beta$. This gives us that the form is symmetric. Moreover, we also have that  the coefficients  $a_{\alpha\beta}(x)$ at most grow polynomially, hence it follows that the Schwartz functions are contained in the form domain. This gives that the form is densely defined.

From Proposition~\ref{B.construc_framing_op} we get the two forms $\mathcal{A}_{\hbar,\varepsilon}^{\pm}$ such that
 \begin{equation}
 	\mathcal{A}_{\hbar,\varepsilon}^{-}[\varphi,\varphi]\leq \mathcal{A}_{\hbar}[\varphi,\varphi]\leq\mathcal{A}_{\hbar,\varepsilon}^{+}[\varphi,\varphi]
 \end{equation}
  for all $\varphi$ in $\mathcal{D}(   \mathcal{A}_\hbar)$. Moreover, the results in Lemma~\ref{lemma_weyl_q_framin_form} implies that the forms $\mathcal{A}_{\hbar,\varepsilon}^{\pm}$ are closed. Hence $\mathcal{A}_{\hbar}$ will also be closed. This concludes the proof.
\end{proof}
\subsection{Proof of Theorem~\ref{B.Weyl_law_thm_irr_cof}}
Recall that we assumes $\mathcal{A}_\hbar$ is  sesquilinear form  which satisfies Assumption~\ref{ses_assump_gen} with the numbers $(1,\mu)$ with $\mu>0$. Then for the Friedrichs extension of $\mathcal{A}_\hbar$ we have
  \begin{equation*}
   \big |\Tr[\boldsymbol{1}_{(-\infty,0]}(A(\hbar))] - \frac{1}{(2\pi\hbar)^d} \int_{\R^{2d}} \boldsymbol{1}_{(-\infty,0]}( a_{0}(x,p)) \,dx dp \big| \leq C \hbar^{1-d},
  \end{equation*}
\begin{proof}[Proof of Theorem~\ref{B.Weyl_law_thm_irr_cof}]
From Lemma~\ref{B_eks_sf_lsb_op} we get the existence of the operator $A(\hbar)$ as a Friedrichs extension. Combing Proposition~\ref{B.construc_framing_op} and Lemma~\ref{lemma_weyl_q_framin_form} we get the existence of two  $\hbar$-$\varepsilon$-admissible operators $A^{\pm}_\varepsilon(\hbar)$ of regularity $1$ satisfying Assumption~\ref{B.self_adj_assumption} for all $\varepsilon$ sufficiently small such that
  \begin{equation*}
    A_\varepsilon^{-}(\hbar) \leq A(\hbar)  \leq A_\varepsilon^{+}(\hbar),
  \end{equation*}
in the sense of quadratic forms. Moreover the operators $A^{\pm}_\varepsilon(\hbar)$ have the following expansions
\begin{equation}
 	A^{\pm}_\varepsilon(\hbar) = \sum_{j=0}^{2m} \hbar^j \OpW(a^{\pm}_{\varepsilon,j}).
\end{equation}
In particular we have that 
\begin{equation}
	a^{\pm}_{\varepsilon,0}(x,p) = \sum_{|\alpha|, |\beta|\leq m}a_{\alpha\beta}^\varepsilon (x) p^{\alpha+\beta}   \pm C_1 \varepsilon^{k+\mu}\sum_{|\alpha|\leq m} p^{2\alpha}
\end{equation}
Moreover from Proposition~\ref{B.construc_framing_op} we also get that $0$ is a non-critical value for the operators $A^{\pm}_\varepsilon(\hbar)$.
What remains in order to be able to apply
  Theorem~\ref{B.Weyl.law.rough_1} for the two operators $A^{\pm}_\varepsilon(\hbar)$  is the existence of a
  $\tilde{ \nu}>0$ such that the preimage of $(-\infty,\tilde{\nu}]$
  under $a^{\pm}_{\varepsilon,0}$ is compact. By the
  ellipticity of the operators we have that the preimage is compact in $p$. Hence if we
  choose $\tilde{\nu}=\frac{\nu}{2}$ and note that as in the proof of
  Proposition~\ref{B.construc_framing_op} we have the estimate
  \begin{equation}\label{B.weyl_compar_sym}
    | \sum_{\abs{\alpha},\abs{\beta}\leq m} (a_{\alpha\beta}^\varepsilon(x) - a_{\alpha\beta}(x))p^{\alpha+\beta}  \pm C_1 \varepsilon^{1+\mu}(1+p^2)^m |\leq C \varepsilon^{1+\mu} ,
  \end{equation}
  since we can assume $p$ to be in a compact set. This implies the
  inclusion
  \begin{equation*}
    \{ (x,p) \in\R^{2d} \, |\,  a_{\varepsilon,0}^{\pm}(x,p) \leq \frac{\nu}{2} \} \subseteq \{ (x,p) \in\R^{2d} \, |\,  a_{0}(x,p) \leq \frac{\nu}{2} + C \varepsilon^{1+\mu} \}. 
  \end{equation*}
  Hence for a sufficiently small $\varepsilon$ we have that
  $\{ (x,p) \in\R^{2d} \, |\, a_{\varepsilon,0}^{\pm}(x,p) \leq
  \frac{\nu}{2} \}$ is compact due to our assumptions. Now by
  Theorem~\ref{B.Weyl.law.rough_1} we get for sufficiently small $\hbar$
  and $\varepsilon\geq\hbar^{1-\delta}$ for a positive
  $\delta<1$ that
  \begin{equation}\label{Weyl_est_1}
    |\Tr[\boldsymbol{1}_{(-\infty,0]}(A_\varepsilon^{\pm}(\hbar))] - \frac{1}{(2\pi\hbar)^d} \int_{\R^{2d}} \boldsymbol{1}_{(-\infty,0]}( a_{\varepsilon,0}^\pm(x,p)) \,dx dp | \leq C \hbar^{1-d}.
  \end{equation}
  Here we choose $\delta=\frac{\mu}{1+\mu}$. Now if we consider the
  following difference between integrals
  \begin{equation}\label{Weyl_est_2}
    \begin{aligned}
     \MoveEqLeft  | \int_{\R^{2d}} \boldsymbol{1}_{(-\infty,0]}(
      a_{\varepsilon,0}^\pm(x,p)) \,dx dp - \int_{\R^{2d}}
      \boldsymbol{1}_{(-\infty,0]}( a_{0}(x,p)) \,dx dp |
      \\
      &\leq \int_{\R^{2d}}
      \boldsymbol{1}_{[-C\varepsilon^{1+\mu},C\varepsilon^{1+\mu}]}(
      a_{\varepsilon,0}(x,p)) \,dx dp \leq
      \tilde{C}\varepsilon^{1+\mu},
    \end{aligned}
  \end{equation}
  for $\varepsilon$ and hence $\hbar$ sufficiently small. Where we in
  the last inequality have used the non-critical condition. By
  combining \eqref{Weyl_est_1} and \eqref{Weyl_est_2} we get
  \begin{equation}\label{Weyl_est_3}
    |\Tr[\boldsymbol{1}_{(-\infty,0]}(A_\varepsilon^{\pm}(\hbar))] - \frac{1}{(2\pi\hbar)^d} \int_{\R^{2d}} \boldsymbol{1}_{(-\infty,0]}( a_{\varepsilon,0}(x,p)) \,dx dp | \leq C \hbar^{1-d} + \tilde{C}\varepsilon^{1+\mu} \hbar^{-d}.
  \end{equation}
  If we take $\varepsilon = \hbar^{1-\delta}$ we have that
  \begin{equation*}
    \varepsilon^{1+\mu} = \hbar^{(1+\mu)(1-\delta)}=\hbar.
  \end{equation*}
  Hence \eqref{Weyl_est_3} with this choice of $\delta$ and
  $\varepsilon$ gives the estimate
  \begin{equation}\label{Weyl_est_4}
    |\Tr[\boldsymbol{1}_{(-\infty,0]}(A_\varepsilon^{\pm}(\hbar))] - \frac{1}{(2\pi\hbar)^d} \int_{\R^{2d}} \boldsymbol{1}_{(-\infty,0]}( a_{\varepsilon,0}(x,p)) \,dx dp | \leq C \hbar^{1-d}.
  \end{equation}
  Now as the framing operators satisfied the relation
  \begin{equation*}
    A_\varepsilon^{-}(\hbar) \leq A(\hbar)  \leq A_\varepsilon^{+}(\hbar)
  \end{equation*}
 in the sense of quadratic forms we get by the min-max-theorem the relation
  \begin{equation*}
    \Tr[\boldsymbol{1}_{(-\infty,0]}(A_\varepsilon^{+}(\hbar))] \leq \Tr[\boldsymbol{1}_{(-\infty,0]}(A(\hbar)) ] \leq  \Tr[\boldsymbol{1}_{(-\infty,0]}(A_\varepsilon^{-}(\hbar))].
  \end{equation*}
  Combining this with \eqref{Weyl_est_4} we get the estimate
  \begin{equation}
    |\Tr[\boldsymbol{1}_{(-\infty,0]}(A(\hbar))] - \frac{1}{(2\pi\hbar)^d}\int_{\R^{2d}} \boldsymbol{1}_{(-\infty,0]}( a_{\varepsilon,0}(x,p)) \,dx dp | \leq C \hbar^{1-d}.
  \end{equation}
  Which is the desired estimate and this ends the proof.
\end{proof}
\subsection{Proof of Theorem~\ref{B.Riesz_means_thm_irr_cof}}
Recall that we a given a number $\gamma$ in $(0,1]$ and a sesquilinear form  $\mathcal{A}_\hbar$  which satisfies Assumption~\ref{ses_assump_gen} with the numbers $(2,\mu)$, where we suppose $\mu=0$ if $\gamma<1$ and $\mu>0$ if $\gamma=1$. Then for the Friedrichs extension of $\mathcal{A}_\hbar$ we have
  \begin{equation*}
   \big |\Tr[(A(\hbar))^\gamma_{-}] - \frac{1}{(2\pi\hbar)^d} \int_{\R^{2d}} ( a_{0}(x,p))^{\gamma}_{-} +\hbar \gamma a_1(x,p) ( a_{0}(x,p))^{\gamma-1}_{-} \,dx dp \big| \leq C \hbar^{1+\gamma-d},
  \end{equation*}
  for all sufficiently small $\hbar$ where the symbol $a_1(x,p)$ is defined as
  \begin{equation}
	a_{1}(x,p) =  i \sum_{|\alpha|, |\beta|\leq m}\sum_{j=1}^d \frac{\beta_j-\alpha_j}{2} \partial_{x_j} a_{\alpha\beta} (x) p^{\alpha+\beta-\eta_j}, 
\end{equation}
  where $\eta_j$ is the multi index with all entries zero except the j'th which is one. 
\begin{proof}[Proof of Theorem~\ref{B.Riesz_means_thm_irr_cof}]
The start of the proof is analogous to the proof of Theorem~\ref{B.Weyl_law_thm_irr_cof}. Again from Lemma~\ref{B_eks_sf_lsb_op} we get the existence of the operator $A(\hbar)$ and by combing Proposition~\ref{B.construc_framing_op} and Lemma~\ref{lemma_weyl_q_framin_form} we get the existence of two  $\hbar$-$\varepsilon$-admissible operators $A^{\pm}_\varepsilon(\hbar)$ of regularity $2$ satisfying Assumption~\ref{B.self_adj_assumption} for all $\varepsilon$ sufficiently small such that
  \begin{equation*}
    A_\varepsilon^{-}(\hbar) \leq A(\hbar)  \leq A_\varepsilon^{+}(\hbar),
  \end{equation*}
in the sense of quadratic forms. Moreover the operators $A^{\pm}_\varepsilon(\hbar)$ have the following expansions
\begin{equation}
 	A^{\pm}_\varepsilon(\hbar) = \sum_{j=0}^{2m} \hbar^j \OpW(a^{\pm}_{\varepsilon,j}).
\end{equation}
In particular we have that 
\begin{equation}
	a^{\pm}_{\varepsilon,0}(x,p) = \sum_{|\alpha|, |\beta|\leq m}a_{\alpha\beta}^\varepsilon (x) p^{\alpha+\beta}   \pm C_1 \varepsilon^{2+\mu}\sum_{|\alpha|\leq m} p^{2\alpha}
\end{equation}
and
\begin{equation}
	a^{\pm}_{\varepsilon,1}(x,p) =  i \sum_{|\alpha|, |\beta|\leq m}\sum_{j=1}^d \frac{\beta_j-\alpha_j}{2} \partial_{x_j} a_{\alpha\beta}^\varepsilon (x) p^{\alpha+\beta-\eta_j}.
\end{equation}
Moreover from Proposition~\ref{B.construc_framing_op} we also get that $0$ is a non-critical value for the operators $A^{\pm}_\varepsilon(\hbar)$. Just as before we also get that there exist $\tilde{\nu}$ such that $a_{\varepsilon,0}^{-1}((-\infty,\tilde{\nu}])$ is compact for all sufficiently small $\varepsilon$. Hence we are in a situation where we can apply Theorem~\ref{B.Weyl.law.rough_2} for the operators $A^{\pm}_\varepsilon(\hbar) $. This gives us that
  \begin{equation}\label{B.Riesz_means_est_1}
   \big |\Tr[(A^{\pm}_\varepsilon(\hbar))^\gamma_{-}] - \frac{1}{(2\pi\hbar)^d} \int_{\R^{2d}} ( a_{\varepsilon,0}^{\pm}(x,p))^{\gamma}_{-} +\hbar \gamma a_{\varepsilon,1}^{\pm}(x,p) ( a_{\varepsilon,0}^{\pm}(x,p))^{\gamma-1}_{-} \,dx dp \big| \leq C \hbar^{1+\gamma-d}.
  \end{equation}
What remains is to compare the phase space integrals. We let $\kappa>0$ be a number such that $a_{0}(x,p)$ is non-critical on  $a_{0}^{-1}(-2\kappa,2\kappa)$. We then do the following splitting of integrals
\begin{equation}\label{B.Riesz_means_est_2}
	\begin{aligned}
		\MoveEqLeft \big| \int_{\R^{2d}} ( a_{\varepsilon,0}^{\pm}(x,p))^{\gamma}_{-} \,dxdx -  \int_{\R^{2d}} ( a_{0}(x,p))^{\gamma}_{-} \,dxdp\big| 
		\\
		\leq{}& \big| \int_{\R^{2d}}\boldsymbol{1}_{(-c,-\kappa]}(a_{0}(x,p)) \big[ ( a_{\varepsilon,0}^{\pm}(x,p))^{\gamma}_{-} -   ( a_{0}(x,p))^{\gamma}_{-}\big] \,dxdp\big|  
		\\
		& + \big| \int_{\R^{2d}}\boldsymbol{1}_{(-\kappa,0]}(a_{0}(x,p)) \big[ ( a_{\varepsilon,0}^{\pm}(x,p))^{\gamma}_{-} -   ( a_{0}(x,p))^{\gamma}_{-}\big] \,dxdp\big|,
	\end{aligned}
\end{equation}
where $c>0$ is a number such that $a_{0}(x,p)-1 \geq -c$ for all $(x,p)$. On the preimage $a_{0}^{-1}((-\infty,-\kappa])$ we have that $a_{0,\varepsilon}^{\pm}$ will be bounded from above by $-\frac{\kappa}{2}$ for all $\varepsilon$ sufficiently small. Hence using that the map $t\mapsto (t)^\gamma_{-}$ is globally Lipschitz continuous when restricted to any interval of the form $(-\infty,-\frac{\kappa}{2})$ we get that
\begin{equation}\label{B.Riesz_means_est_3}
	\begin{aligned}
		\MoveEqLeft  \big| \int_{\R^{2d}}\boldsymbol{1}_{(-c,-\kappa]}(a_{0}(x,p)) \big[ ( a_{\varepsilon,0}^{\pm}(x,p))^{\gamma}_{-} -   ( a_{0}(x,p))^{\gamma}_{-}\big] \,dxdp\big|  
		\\
		& \leq C   \int_{\R^{2d}}\boldsymbol{1}_{(-c,-\kappa]}(a_{0}(x,p)) | a_{\varepsilon,0}^{\pm}(x,p) -    a_{0}(x,p) | \,dxdp \leq C \varepsilon^{2+\mu},
	\end{aligned}
\end{equation}
where we in the last inequality have used estimates similar to the ones used in the proof of Proposition~\ref{B.construc_framing_op}. Before we estimate the second term in \eqref{B.Riesz_means_est_2} we note that for all $\gamma$ in $(0,1]$ and $t\leq0$ we have that
\begin{equation*}
	(t)_{-}^\gamma = \gamma \int_{t}^0 (-s)^{\gamma-1} \, ds.
\end{equation*}
Using this observation we get that
\begin{equation}\label{B.Riesz_means_est_4}
	\begin{aligned}
		\MoveEqLeft  \big| \int_{\R^{2d}}\boldsymbol{1}_{(-\kappa,0]}(a_{0}(x,p)) \big[ ( a_{\varepsilon,0}^{\pm}(x,p))^{\gamma}_{-} -   ( a_{0}(x,p))^{\gamma}_{-}\big] \,dxdp\big|
		\\
		&= \big| \gamma \int_{-\frac{3\kappa}{2}}^0 (-s)^{\gamma-1}  \int_{\R^{2d}} \boldsymbol{1}_{(-\kappa,0]}(a_{0}(x,p))\big[ \boldsymbol{1}_{( a_{\varepsilon,0}^{\pm}(x,p),0]}(s)  -   \boldsymbol{1}_{( a_{0}(x,p),0]}(s)\big] \,dxdp ds\big| 
		\\
		&\leq  \gamma \int_{-\frac{3\kappa}{2}}^0 (-s)^{\gamma-1}  \int_{\R^{2d}} \boldsymbol{1}_{(-\kappa,0]}(a_{0}(x,p)) \boldsymbol{1}_{[s-\tilde{c}\varepsilon^{2+\mu},s+\tilde{c}\varepsilon^{2+\mu}]}(a_{0}(x,p))   \,dxdp ds \leq C \varepsilon^{2+\mu}
	\end{aligned}
\end{equation}
where we in the last inequality have used the non-critical assumption. Combining \eqref{B.Riesz_means_est_2}, \eqref{B.Riesz_means_est_3} and \eqref{B.Riesz_means_est_4} we get
\begin{equation}\label{B.Riesz_means_est_5}
	\begin{aligned}
		 \big| \int_{\R^{2d}} ( a_{\varepsilon,0}^{\pm}(x,p))^{\gamma}_{-} \,dxdx -  \int_{\R^{2d}} ( a_{0}(x,p))^{\gamma}_{-} \,dxdp\big| 
		\leq C \varepsilon^{2+\mu}.
	\end{aligned}
\end{equation}
Next we consider the difference 
  \begin{equation}\label{B.Riesz_means_est_6}
   \Big| \int_{\R^{2d}}  a_{\varepsilon,1}^{\pm}(x,p) ( a_{\varepsilon,0}^{\pm}(x,p))^{\gamma-1}_{-} \,dx dp - \int_{\R^{2d}}  a_{1}(x,p) ( a_{0}(x,p))^{\gamma-1}_{-} \,dx dp\Big|
  \end{equation}
Firstly we notice that from Proposition~\ref{B.construc_framing_op} we get that 
  \begin{equation}\label{B.Riesz_means_est_7}
   \Big| \int_{\R^{2d}}  a_{\varepsilon,1}^{\pm}(x,p) ( a_{\varepsilon,0}^{\pm}(x,p))^{\gamma-1}_{-} \,dx dp - \int_{\R^{2d}}  a_{1}(x,p) ( a_{\varepsilon,0}^{\pm}(x,p))^{\gamma-1}_{-} \,dx dp\Big| \leq C\varepsilon^{1+\mu}.
  \end{equation}
Moreover, we have that
  \begin{equation}\label{B.Riesz_means_est_8}
  	\begin{aligned}
   	\MoveEqLeft \Big|  \int_{\R^{2d}}  a_{1}(x,p) ( a_{\varepsilon,0}^{\pm}(x,p))^{\gamma-1}_{-} \,dx dp -  \int_{\R^{2d}}  a_{1}(x,p) ( a_{0}(x,p))^{\gamma-1}_{-} \,dx dp\Big| 
	\\
	\leq{}& \Big|  \int_{\R^{2d}}  a_{1}(x,p) \big[ \boldsymbol{1}_{(-\kappa,0]}(a_{\varepsilon,0}^{\pm}(x,p)) (- a_{\varepsilon,0}^{\pm}(x,p))^{\gamma-1} - \boldsymbol{1}_{(-\kappa,0]}(a_{0}(x,p))  ( -a_{0}(x,p))^{\gamma-1} \big] \,dx dp \Big| 
	\\
	&+C\varepsilon^{2+\mu},
   	\end{aligned}
  \end{equation}
where the number $\kappa>0$ is the same as above. In this estimate we have used that
  \begin{equation}
  	\begin{aligned}
   	\MoveEqLeft \Big|  \int_{\R^{2d}}  a_{1}(x,p) \big[ \boldsymbol{1}_{(-c,-\kappa]}(a_{\varepsilon,0}^{\pm}(x,p)) (- a_{\varepsilon,0}^{\pm}(x,p))^{\gamma-1} - \boldsymbol{1}_{(-c,-\kappa]}(a_{0}(x,p))  ( -a_{0}(x,p))^{\gamma-1} \big] \,dx dp \Big|
	\\
	& \leq C\varepsilon^{2+\mu},
   	\end{aligned}
  \end{equation}
where  the arguments are analogous to the ones used for estimating the first difference of phase space integrals after recalling that  the map $t\rightarrow (t)^{\gamma-1}_{-}$ is Lipschitz continuous when restricted to a bounded interval of the form $(-a,-b)$, where $0<b<a$. 

On the pre-image $a_{0}^{-1}((-\kappa,0])$ we have that 
\begin{equation}
	|\nabla_p a_{0}(x,p) | \geq c_0 >0.
\end{equation}
Then by using a partition of unity we may assume that $|\partial_{p_1}a_{0}(x,p)|\geq c>0$ on $a_{0}^{-1}((-\kappa,0])$. From the construction of the approximating symbols it follows that for $\varepsilon$ sufficiently small we get that $|\partial_{p_1}a_{\varepsilon,0}^{\pm}(x,p)|\geq \tilde{c}$ when $a_{\varepsilon,0}^{\pm}(x,p)\in(-\kappa,0]$. 
As in the proof of Theorem~\ref{B.Expansion_of_trace} we now define the maps $F^{\pm}$ and $F$ by
  \begin{equation*}
    F^\pm:(x,p) \rightarrow (x_1,\dots,x_d,a_{\varepsilon,0}^{\pm}(x,p),p_2,\dots,p_d)
  \end{equation*}
and the map $F$ is defined similarly with $a_{\varepsilon,0}^{\pm}(x,p)$ replaced by $a_{0}(x,p)$. The determinants for the Jacobian matrices are given by
  \begin{equation*}
    \det(DF^{\pm})= \partial_{p_1}a_{\varepsilon,0}^{\pm} \quad\text{and}\quad  \det(DF)= \partial_{p_1}a_{0}.
  \end{equation*}
This gives us that the inverse maps ($G^{\pm}$, $G$) exists and they will be close in the sense that    
\begin{equation}
	|G^{\pm}(y,q) - G(y,q)| \leq C\varepsilon^{2+\mu}
\end{equation}
on the domain, where both functions are defined. To see this note that they will only be different in one coordinate and note that these coordinates will be determined by the equation $a_0(y,(p_1,q_2,\dots,q_d))=q_1$ and similarly for the two other functions. With a change of variables we get that   
  \begin{equation}\label{B.Riesz_means_est_9}
  	\begin{aligned}
   	\MoveEqLeft  \Big|  \int_{\R^{2d}}  a_{1}(x,p) \big[ \boldsymbol{1}_{(-\kappa,0]}(a_{\varepsilon,0}^{\pm}(x,p)) (- a_{\varepsilon,0}^{\pm}(x,p))^{\gamma-1} - \boldsymbol{1}_{(-\kappa,0]}(a_{0}(x,p))  ( -a_{0}(x,p))^{\gamma-1} \big] \,dx dp \Big| 
	\\
	\leq{}& \Big|  \int_{\Omega}  \frac{a_{1}( G^{\pm}(x,p)) \boldsymbol{1}_{(-\kappa,0]}(p_1) (- p_1)^{\gamma-1}}{\partial_{p_1}a_{\varepsilon,0}^{\pm}(G^{\pm}(x,p))} -\frac{a_{1}( G(x,p)) \boldsymbol{1}_{(-\kappa,0]}(p_1) (- p_1)^{\gamma-1}}{\partial_{p_1}a_{0}(G(x,p))} \,dx dp \Big| 
	\\
	\leq{}& \int_{\Omega} \Big| \frac{a_{1}( G^{\pm}(x,p)) }{\partial_{p_1}a_{\varepsilon,0}^{\pm}(G^{\pm}(x,p))} -\frac{a_{1}( G(x,p))}{\partial_{p_1}a_{0}(G(x,p))} \Big| \boldsymbol{1}_{(-\kappa,0]}(p_1) (- p_1)^{\gamma-1} \,dx dp
	\leq C \varepsilon^{1+\mu},
   	\end{aligned}
  \end{equation}
where $\Omega$ is a compact subset of $\R^{2d}$. The last estimate follows similarly to the previous estimates. From combining \eqref{B.Riesz_means_est_6}, \eqref{B.Riesz_means_est_7}, \eqref{B.Riesz_means_est_8} and \eqref{B.Riesz_means_est_9} we get that
  \begin{equation}\label{B.Riesz_means_est_10}
   \Big| \int_{\R^{2d}}  a_{\varepsilon,1}^{\pm}(x,p) ( a_{\varepsilon,0}^{\pm}(x,p))^{\gamma-1}_{-} \,dx dp - \int_{\R^{2d}}  a_{1}(x,p) ( a_{0}(x,p))^{\gamma-1}_{-} \,dx dp\Big| \leq C\varepsilon^{1+\mu}
  \end{equation}
Now by choosing $\delta= 1- \frac{1+\gamma}{2+\mu}$ and combining \eqref{B.Riesz_means_est_1}, \eqref{B.Riesz_means_est_5} \eqref{B.Riesz_means_est_10} we obtain that
  \begin{equation}\label{B.Riesz_means_est_11}
  	\begin{aligned}
   \MoveEqLeft \big |\Tr[(A^{\pm}_\varepsilon(\hbar))^\gamma_{-}] - \frac{1}{(2\pi\hbar)^d} \int_{\R^{2d}} ( a_{0}(x,p))^{\gamma}_{-} +\hbar \gamma a_{1}(x,p) ( a_{0}(x,p))^{\gamma-1}_{-} \,dx dp \big| 
   \\
   &\leq C_1 \hbar^{1+\gamma-d} + C_2\hbar^{-d}\varepsilon^{2+\mu} +C_3\hbar^{1-d}\varepsilon^{1+\mu} \leq C\hbar^{1+\gamma-d}
   \end{aligned}
  \end{equation}
Now as the framing operators satisfied the relation
  \begin{equation*}
    A_\varepsilon^{-}(\hbar) \leq A(\hbar)  \leq A_\varepsilon^{+}(\hbar)
  \end{equation*}
 in the sense of quadratic forms we get by the min-max-theorem the relation
  \begin{equation*}
    \Tr[(A_\varepsilon^{+}(\hbar))^\gamma_{-}] \leq \Tr[(A(\hbar) )^\gamma_{-}] \leq  \Tr[(A_\varepsilon^{-}(\hbar))^\gamma_{-}].
  \end{equation*}
These inequalities combined with \eqref{B.Riesz_means_est_11} gives the desired estimates and this concludes the proof.
\end{proof}
\subsection{Proof of Theorem~\ref{B.Weyl_law_thm_adop_plus_irr_cof} and Theorem~\ref{B.Riesz_means_thm_adop_plus_irr_cof}}
Recall that we assume $B(\hbar)$ is an admissible operator and that $B(\hbar)$ satisfies Assumption~\ref{admis_op_assump} with $\hbar$ in $(0,\hbar_0]$. Moreover we assume that  $\mathcal{A}_\hbar$ be a sesquilinear form which satisfies Assumption~\ref{ses_assump_gen} with the numbers $(k,\mu)$, where the value of the two numbers $k$ and $\mu$ depend on which Theorem one consider. We defined the symbol $\tilde{b}_0(x,p) $ to be
\begin{equation}
	\tilde{b}_0(x,p) = b_0(x,p) +  \sum_{\abs{\alpha},\abs{\beta}\leq m} a_{\alpha\beta}(x)p^{\alpha+\beta}.
\end{equation}
  Finally we supposed that there is a $\nu$ such that $\tilde{b}_0^{-1}((-\infty,\nu])$ is compact and there is $c>0$ such that
  \begin{equation}
    |\nabla_p \tilde{b}_0(x,p)| \geq c \quad\text{for all } (x,p)\in \tilde{b}_0^{-1}(\{0\}).
  \end{equation}
\begin{proof}[Proof of Theorem~\ref{B.Weyl_law_thm_adop_plus_irr_cof} and Theorem~\ref{B.Riesz_means_thm_adop_plus_irr_cof}]
From Lemma~\ref{B_eks_sf_lsb_op} we get the existence of the self-adjointoperator $A(\hbar)$ as the Friedrichs extension $\mathcal{A}_\hbar$. Since the Schwartz functions are in the domain of the operators $A(\hbar)$ and $B(\hbar)$ we can define their form sum $\tilde{B}(\hbar) =B(\hbar)+ A(\hbar)$ as a selfadjoint, lower semi-bounded operator see e.g. \cite[Proposition 10.22]{MR2953553}. Using Proposition~\ref{B.construc_framing_op} and by arguing as in Lemma~\ref{lemma_weyl_q_framin_form} we get the existence of two  $\hbar$-$\varepsilon$-admissible operators $\tilde{B}^{\pm}_\varepsilon(\hbar)$ of regularity $1$ or $2$ satisfying Assumption~\ref{B.self_adj_assumption} for all $\varepsilon$ sufficiently small such that
  \begin{equation*}
    \tilde{B}_\varepsilon^{-}(\hbar) \leq \tilde{B}(\hbar)  \leq \tilde{B}_\varepsilon^{+}(\hbar).
  \end{equation*}
In the sense of quadratic forms. In fact we will have that $\tilde{B}^{\pm}_\varepsilon(\hbar) = B(\hbar)+ A^{\pm}_\varepsilon(\hbar)$, where $ A^{\pm}_\varepsilon(\hbar)$ is the operator from Lemma~\ref{lemma_weyl_q_framin_form}. From here the proof is either analogous to that of Theorem~\ref{B.Weyl_law_thm_irr_cof} if it is the proof of Theorem~\ref{B.Weyl_law_thm_adop_plus_irr_cof}. If it is the proof of Theorem~\ref{B.Riesz_means_thm_adop_plus_irr_cof} it will be analogous to the proof of Theorem~\ref{B.Riesz_means_thm_irr_cof} from here.
\end{proof}

\subsection*{Acknowledgements} The author was supported by EPSRC grant EP/S024948/1 and Sapere Aude Grant DFF--4181-00221 from the Independent Research Fund Denmark. Part of this work was carried out while the author visited the Mittag-Leffler Institute in Stockholm, Sweden. The Author is grateful for useful comments on previous versions of the work by Clotilde Fermanian Kammerer, S\o{}ren Fournais and San V\~u Ng\d{o}c.


\appendix
  
\section{{F}a\`a di {B}runo formula}
 
In this appendix we will recall some results about multivariate
differentiation. In particular we will recall the {F}a\`a di {B}runo formula, which gives a multivariate chain rule for any number of derivatives:
\begin{thm}[{F}a\`a di {B}runo formula]\label{B.Faa_di_bruno_x}
  Let $f$ be a function from $C^\infty(\R)$ and $g$ a function from
  $C^\infty(\R^d)$. Then for all multi indices $\alpha$ with
  $\abs{\alpha}\geq1$ the following formula holds:
  \begin{equation*}
    \partial_x^\alpha f(g(x)) = \sum_{k=1}^{\abs{\alpha}} f^{(k)}(g(x)) \sum_{\substack{\alpha_1 + \cdots+ \alpha_k = \alpha \\ \abs{\alpha_j} >0}} c_{\alpha_1\cdots\alpha_k}\partial_x^{\alpha_1} g(x) \cdots \partial_x^{\alpha_k}g(x),
  \end{equation*}
  where $f^{(k)}$ is the $k$'th derivative of f. The second sum should
  be understod as a sum over all ways to split the multi index
  $\alpha$ in $k$ non-trivial parts. The numbers
  $c_{\alpha_1\cdots\alpha_k}$'s are combinatorial constants
  independent of the functions.
\end{thm}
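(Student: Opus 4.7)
The plan is to establish the formula by induction on $|\alpha|$, using only the one-variable chain rule and the Leibniz product rule at each step. What the statement asserts structurally is that $\partial_x^\alpha(f\circ g)(x)$ decomposes into a linear combination indexed by the length $k$ of an ordered partition $\alpha=\alpha_1+\cdots+\alpha_k$ with $|\alpha_j|>0$, and we only need to verify that the coefficients are universal.

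For the base case $|\alpha|=1$ write $\alpha=\eta_j$ for some $j$; then $\partial_{x_j} f(g(x))=f^{(1)}(g(x))\,\partial_{x_j} g(x)$, so the formula holds with $k=1$ and $c_{\eta_j}=1$. For the inductive step, assume the formula holds for every multi-index of length $n$ and fix $\alpha$ with $|\alpha|=n+1$. Pick $j$ with $\alpha_j\geq1$ and set $\beta=\alpha-\eta_j$, so $|\beta|=n$. Applying $\partial_{x_j}$ to the inductive identity for $\partial_x^\beta f(g(x))$ and using the Leibniz rule on each product $\partial_x^{\beta_1}g\cdots\partial_x^{\beta_k}g$, together with the one-variable chain rule applied to $f^{(k)}(g(x))$, yields two families of terms. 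The first family comes from the derivative falling on $f^{(k)}(g(x))$, producing $f^{(k+1)}(g(x))\,\partial_{x_j}g(x)\,\partial_x^{\beta_1}g(x)\cdots\partial_x^{\beta_k}g(x)$, which is a summand with $k'=k+1$ parts (the new part being $\eta_j$) in the ordered partition of $\alpha$. The second family comes from the derivative falling on some $\partial_x^{\beta_i}g$, producing $f^{(k)}(g(x))\,\partial_x^{\beta_1}g(x)\cdots\partial_x^{\beta_i+\eta_j}g(x)\cdots\partial_x^{\beta_k}g(x)$, which is a summand with $k'=k$ parts in the partition of $\alpha$.

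Collecting, every term in $\partial_x^\alpha f(g(x))$ is of the required form $f^{(k')}(g(x))\,\partial_x^{\alpha_1}g(x)\cdots\partial_x^{\alpha_{k'}}g(x)$ with $\alpha_1+\cdots+\alpha_{k'}=\alpha$ and $|\alpha_l|>0$. The coefficient $c_{\alpha_1,\dots,\alpha_{k'}}$ of a fixed ordered partition is obtained by summing, over all ways of realising that partition either by appending $\eta_j$ as a new part to a length-$(k'-1)$ partition of $\beta$ or by incrementing one component of a length-$k'$ partition of $\beta$, the corresponding inductive constants. Since these constants depend only on the partition structure and on the combinatorics of the Leibniz rule, the resulting $c_{\alpha_1,\dots,\alpha_{k'}}$ is again independent of $f$ and $g$.

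The only subtle point, and the step I expect to be the main bookkeeping obstacle, is verifying that the sum is \emph{well-defined}, i.e.\ that the coefficient of any fixed ordered tuple $(\alpha_1,\dots,\alpha_{k'})$ does not depend on the choice of the splitting $\alpha=\beta+\eta_j$ used in the induction. This is automatic from the equality $\partial_x^\alpha=\partial_{x_j}\partial_x^\beta$ for any admissible $j$, since the left-hand side is intrinsically defined; hence the identity derived from any particular choice of $j$ must agree with the one from any other choice, and the combinatorial coefficients are intrinsically determined by $\alpha$ alone. This completes the inductive step and hence the proof.
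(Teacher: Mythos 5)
Your argument is correct, but it is worth pointing out that the paper does not prove this theorem at all: it only cites \cite{MR1325915}, where a multivariate Fa\`a di Bruno formula is proved in greater generality and with explicit values of the combinatorial constants. Your self-contained induction on $\abs{\alpha}$ --- peel off one derivative $\partial_{x_j}$, apply the Leibniz rule to each product $\partial_x^{\beta_1}g\cdots\partial_x^{\beta_k}g$ and the one-variable chain rule to $f^{(k)}(g)$, and note that each resulting term either appends a new part $\eta_j$ (raising $k$ by one) or increments an existing part --- establishes exactly the structural statement the paper actually uses, namely that $\partial_x^\alpha f(g)$ is a combination of terms $f^{(k)}(g)\,\partial_x^{\alpha_1}g\cdots\partial_x^{\alpha_k}g$ over splittings of $\alpha$ into nontrivial parts with constants independent of $f$ and $g$; since the explicit constants are never needed (only the regularity bookkeeping, as in Lemma~\ref{B.approx_of_resolvent_lemma_2}), your weaker conclusion suffices, and it is more elementary than the cited general result. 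One small caveat: your closing claim that the coefficients are ``intrinsically determined by $\alpha$ alone'', independent of the chosen $j$, does not follow merely from the two expansions having the same left-hand side; that deduction requires the monomials $f^{(k)}(g)\,\partial_x^{\alpha_1}g\cdots\partial_x^{\alpha_k}g$ to be linearly independent as functionals of $(f,g)$ (checkable, e.g., with polynomial $g$ and suitable $f$, but you do not verify it). This gap is inessential for the statement as given: it is enough to fix once and for all a canonical choice of $j$ (say the smallest index with $\alpha_j\geq1$) in the recursion, which then defines constants depending only on the ordered tuple $(\alpha_1,\dots,\alpha_k)$ and not on the functions, which is all the theorem asserts.
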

A proof of the Fa\`a di Bruno formula can be found in
\cite{MR1325915}, where they prove the formula in greater generality
then stated here. It is also possible to find the constants from their
proof, but for our purpose here the exact values of the constants are
not important. The next Corollary is the Fa\`a di Bruno formula in the
case of $\R^{2d}$ instead of just $\R^d$. But we need to control
the exact number of derivatives in the first $d$ components hence it
is stated separately.
\begin{corollary}\label{B.Faa_di_bruno_xp}
  Let $f$ be a function from $C^\infty(\R)$ and $a(x,p)$ a function from
  $C^\infty(\R_x^d\times\R^d_p)$. Then for all multi indices $\alpha$
  and $\beta$ with $\abs{\alpha}+\abs{\beta}\geq1$ the following
  formula holds:
  \begin{equation*}
    \partial_p^\beta \partial_x^\alpha f(a(x,p)) = \sum_{k=1}^{\abs{\alpha}+\abs{\beta}} f^{(k)} (a(x,p)) \sum_{\mathcal{I}_k(\alpha,\beta)}c_{\alpha_1\cdots\alpha_k}^{\beta_1\cdots\beta_k}  \partial_p^{\beta_1} \partial_x^{\alpha_1} a(x,p) \cdots \partial_p^{\beta_k} \partial_x^{\alpha_k} a(x,p),
  \end{equation*}
  where the set $\mathcal{I}_k(\alpha,\beta)$ is defined by
  \begin{equation*} \begin{aligned}
    \mathcal{I}_k(\alpha,\beta) =
    \{(\alpha_1,\dots,\alpha_k,&\beta_1,\dots,\beta_k) \in \N^{2kd} \
    \\
    & | \ \sum_{l=1}^k \alpha_l=\alpha, \, \sum_{l=1}^k \beta_l=\beta,
    \, \max(\abs{\alpha_l},\abs{\beta_l}) \geq1 \, \forall l \}.
   \end{aligned} \end{equation*}
  The second sum is a sum over all elements in the set
  $\mathcal{I}_k(\alpha,\beta)$, the constants
  $c_{\alpha_1\cdots\alpha_k}^{\beta_1\cdots\beta_k}$ are
  combinatorial constants independent of the functions and $f^{(k)}$
  is the $k$'th derivative of the function $f$.
\end{corollary}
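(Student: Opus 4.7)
The plan is to reduce the corollary directly to Theorem~\ref{B.Faa_di_bruno_x} by viewing $(x,p)$ as a single variable in $\R^{2d}$ and then re-bookkeeping the resulting sum over $\N^{2d}$ multi-indices as pairs of multi-indices in $\N^d\times\N^d$.

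First I would set $z=(x,p)\in\R^{2d}$ and write the combined multi-index $\gamma=(\alpha,\beta)\in\N^{2d}$, so that $\partial_z^{\gamma}=\partial_p^{\beta}\partial_x^{\alpha}$ and $|\gamma|=|\alpha|+|\beta|$. Since $a\in C^{\infty}(\R_x^d\times\R_p^d)$ is the same as a $C^\infty$ function on $\R^{2d}$, and $f\in C^\infty(\R)$, Theorem~\ref{B.Faa_di_bruno_x} applied in dimension $2d$ yields
\begin{equation*}
\partial_p^{\beta}\partial_x^{\alpha} f(a(x,p))
= \sum_{k=1}^{|\gamma|} f^{(k)}(a(x,p)) \sum_{\substack{\gamma_1+\cdots+\gamma_k=\gamma \\ |\gamma_j|>0}} c_{\gamma_1\cdots\gamma_k}\,\partial_z^{\gamma_1}a(x,p)\cdots\partial_z^{\gamma_k}a(x,p).
\end{equation*}

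Next I would re-express each $\gamma_j\in\N^{2d}$ as a pair $\gamma_j=(\alpha_j,\beta_j)$ with $\alpha_j,\beta_j\in\N^d$. Under this bijection, $\partial_z^{\gamma_j}a=\partial_p^{\beta_j}\partial_x^{\alpha_j}a$, the constraint $\gamma_1+\cdots+\gamma_k=\gamma=(\alpha,\beta)$ is equivalent to $\sum_{l=1}^k\alpha_l=\alpha$ and $\sum_{l=1}^k\beta_l=\beta$, and the non-triviality condition $|\gamma_j|=|\alpha_j|+|\beta_j|>0$ is equivalent to $\max(|\alpha_j|,|\beta_j|)\geq 1$. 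Collecting the relabelled combinatorial constants into $c_{\alpha_1\cdots\alpha_k}^{\beta_1\cdots\beta_k}$, the inner sum becomes exactly a sum over $\mathcal{I}_k(\alpha,\beta)$, giving the stated identity.

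Since this is essentially a relabelling argument, there is no real obstacle; the only care needed is to check that the set $\mathcal{I}_k(\alpha,\beta)$ as defined in the statement corresponds exactly to the admissible tuples of $\N^{2d}$-multi-indices arising in Theorem~\ref{B.Faa_di_bruno_x}, which is immediate from the two equivalences noted above. Hence the corollary follows at once.
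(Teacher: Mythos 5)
Your argument is correct, and it is a genuinely different (and in fact more economical) route than the one taken in the paper. You apply Theorem~\ref{B.Faa_di_bruno_x} once, in dimension $2d$, with $z=(x,p)$ and $\gamma=(\alpha,\beta)$, and then obtain the corollary by the bijective relabelling $\gamma_j\leftrightarrow(\alpha_j,\beta_j)$; since $a$ is smooth the mixed derivatives commute, so $\partial_z^{\gamma_j}a=\partial_p^{\beta_j}\partial_x^{\alpha_j}a$, and the conditions $\sum_l\gamma_l=\gamma$ and $\abs{\gamma_l}>0$ translate exactly into the defining conditions of $\mathcal{I}_k(\alpha,\beta)$. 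The paper instead proceeds in two stages: it first applies the Fa\`a di Bruno formula in the $x$-variables only, then hits the result with $\partial_p^\beta$ via Leibniz's formula, which forces a second application of Fa\`a di Bruno to the terms $\partial_p^{\gamma}f^{(k)}(a(x,p))$ and repeated Leibniz expansions of the products $\partial_p^{\beta-\gamma}[\partial_x^{\alpha_1}a\cdots\partial_x^{\alpha_k}a]$, followed by a combinatorial reorganisation that the paper only sketches. Your one-shot approach avoids that bookkeeping entirely and yields a complete proof, at the mild price of invoking the theorem in $\R^{2d}$ rather than keeping the $x$- and $p$-derivatives structurally separate; the paper's iterative route makes the separate roles of $x$ and $p$ derivatives more visible, which is the bookkeeping actually exploited later (e.g.\ in Lemma~\ref{B.approx_of_resolvent_lemma_2}), but as a proof of the stated identity your version is fully adequate.
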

We will only give a short sketch of the proof of this corollary.
%
%
\begin{proof}[Sketch]
We have by the {F}a\`a di {B}runo formula (Theoram~\ref{B.Faa_di_bruno_x}) the identity
\begin{equation}\label{B.cor_fa_proof_1}
	 \partial_x^\alpha f(a(x,p)) =  \sum_{k=1}^{\abs{\alpha}} f^{(k)}(a(x,p)) \sum_{\substack{\alpha_1 + \cdots+ \alpha_k = \alpha \\ \abs{\alpha_j} >0}} c_{\alpha_1\cdots\alpha_k}\partial_x^{\alpha_1} a(x,p) \cdots \partial_x^{\alpha_k}a(x,p)
\end{equation}
By Leibniz's formula we have
\begin{equation}\label{B.cor_fa_proof_2}
	\begin{aligned}
	   \partial_p^\beta &\partial_x^\alpha f(a(x,p))
	     \\
	     & = \sum_{\gamma\leq\beta} \binom{\beta}{\gamma}  \sum_{k=1}^{\abs{\alpha}}  \partial_p^{\gamma} f^{(k)}(a(x,p)) \sum_{\substack{\alpha_1 + \cdots+ \alpha_k = \alpha \\ \abs{\alpha_j} >0}}  c_{\alpha_1\cdots\alpha_k}  \partial_p^{\beta-\gamma} [\partial_x^{\alpha_1} a(x,p) \cdots \partial_x^{\alpha_k}a(x,p)].
	 \end{aligned}
\end{equation}
In order to obtain the form stated in the corollary we need to use the {F}a\`a di {B}runo formula on the terms 
\begin{equation}\label{B.cor_fa_proof_3}
  \partial_p^{\gamma} f^{(k)}(a(x,p)),
\end{equation}
and we need to use Leibniz's formula (multiple times) on the terms
\begin{equation}\label{B.cor_fa_proof_4}
	  \partial_p^{\beta-\gamma} [\partial_x^{\alpha_1} a(x,p) \cdots \partial_x^{\alpha_k}a(x,p)].
\end{equation}
If this is done, then by using some algebra the stated form can be obtained. The particular form of the index set $\mathcal{I}_k(\alpha,\beta)$ also follows from this algebra.
\end{proof}

\bibliographystyle{plain} \bibliography{Bib_paperB.bib}

\end{document}